\definecolor{linkred}{rgb}{0.7,0.2,0.2}
\definecolor{linkblue}{rgb}{0,0.2,0.6}
\numberwithin{figure}{section}
\DeclareFontFamily{OMS}{rsfs}{\skewchar\font'60}
\DeclareFontShape{OMS}{rsfs}{m}{n}{<-5>rsfs5 <5-7>rsfs7 <7->rsfs10 }{}
\DeclareSymbolFont{rsfs}{OMS}{rsfs}{m}{n}
\DeclareSymbolFontAlphabet{\scr}{rsfs}
\DeclareSymbolFontAlphabet{\scr}{rsfs}
\DeclareFontFamily{U}{mathx}{\hyphenchar\font45}
\DeclareFontShape{U}{mathx}{m}{n}{
      <5> <6> <7> <8> <9> <10>
      <10.95> <12> <14.4> <17.28> <20.74> <24.88>
      mathx10
      }{}
\DeclareSymbolFont{mathx}{U}{mathx}{m}{n}
\DeclareMathAccent{\wcheck}{0}{mathx}{"71}
\DeclareMathOperator{\Aut}{Aut}
\DeclareMathOperator{\codim}{codim}
\DeclareMathOperator{\Hom}{Hom}
\DeclareMathOperator{\img}{img}
\DeclareMathOperator{\Pic}{Pic}
\DeclareMathOperator{\rank}{rank}
\DeclareMathOperator{\reg}{reg}
\DeclareMathOperator{\sing}{sing}
\DeclareMathOperator{\Spec}{Spec}
\DeclareMathOperator{\Sym}{Sym}
\DeclareMathOperator{\supp}{supp}
\DeclareMathOperator{\tor}{tor}
\newcommand{\sA}{\scr{A}}
\newcommand{\sB}{\scr{B}}
\newcommand{\sC}{\scr{C}}
\newcommand{\sD}{\scr{D}}
\newcommand{\sE}{\scr{E}}
\newcommand{\sF}{\scr{F}}
\newcommand{\sG}{\scr{G}}
\newcommand{\sH}{\scr{H}}
\newcommand{\sHom}{\scr{H}\negthinspace om}
\newcommand{\sJ}{\scr{J}}
\newcommand{\sL}{\scr{L}}
\newcommand{\sO}{\scr{O}}
\newcommand{\sQ}{\scr{Q}}
\newcommand{\sS}{\scr{S}}
\newcommand{\sT}{\scr{T}}
\newcommand{\sW}{\scr{W}}
\newcommand{\cM}{\mathcal M}
\newcommand{\cN}{\mathcal N}
\newcommand{\bA}{\mathbb{A}}
\newcommand{\bB}{\mathbb{B}}
\newcommand{\bC}{\mathbb{C}}
\newcommand{\bD}{\mathbb{D}}
\newcommand{\bE}{\mathbb{E}}
\newcommand{\bF}{\mathbb{F}}
\newcommand{\bG}{\mathbb{G}}
\newcommand{\bL}{\mathbb{L}}
\newcommand{\bN}{\mathbb{N}}
\newcommand{\bP}{\mathbb{P}}
\newcommand{\bQ}{\mathbb{Q}}
\newcommand{\bR}{\mathbb{R}}
\newcommand{\bX}{\mathbb{X}}
\newcommand{\bZ}{\mathbb{Z}}
\theoremstyle{plain}
\newtheorem{thm}{Theorem}[section]
\newtheorem{cor}[thm]{Corollary}
\newtheorem{defn}[thm]{Definition}
\newtheorem{fact}[thm]{Fact}
\newtheorem{lem}[thm]{Lemma}
\newtheorem{prop}[thm]{Proposition}
\newtheorem{setup}[thm]{Setup}
\theoremstyle{remark}
\newtheorem{claim}[thm]{Claim}
\newtheorem{c-n-d}[thm]{Claim and Definition}
\newtheorem{example}[thm]{Example}
\newtheorem{notation}[thm]{Notation}
\newtheorem{rem}[thm]{Remark}
\newtheorem*{rem-nonumber}{Remark}
\newtheorem{setting}[thm]{Setting}
\numberwithin{equation}{thm}
\setlist[enumerate]{label=(\thethm.\arabic*), before={\setcounter{enumi}{\value{equation}}}, after={\setcounter{equation}{\value{enumi}}}}
\newcommand{\into}{\hookrightarrow}
\newcommand{\wtilde}{\widetilde}
\newcommand{\what}{\widehat}
\newcommand\CounterStep{\addtocounter{thm}{1}\setcounter{equation}{0}}
\newcommand{\factor}[2]{\left. \raise 2pt\hbox{$#1$} \right/\hskip -2pt\raise -2pt\hbox{$#2$}}
\newcommand{\Preprint}[1]{#1}
\newcommand{\Publication}[1]{}
\newcommand{\subversionInfo}{}
\newcommand{\svnid}[1]{}
\newcommand{\approvals}[2][Approval]{}
\renewcommand{\phi}{\varphi}
\tikzset{commutative diagrams/arrow style=Latin Modern}
\author{Stefan Kebekus} %
\address{Stefan Kebekus, Mathematisches Institut, Albert-Ludwigs-Universität
  Freiburg, Ernst-Zermelo-Straße 1, 79104 Freiburg im Breisgau, Germany \&
  Freiburg Institute for Advanced Studies (FRIAS), Freiburg im Breisgau,
  Germany} %
\email{\href{mailto:stefan.kebekus@math.uni-freiburg.de}{stefan.kebekus@math.uni-freiburg.de}} %
\urladdr{\url{https://cplx.vm.uni-freiburg.de}} %
\author{Christian Schnell} %
\address{Christian Schnell, Department of Mathematics, Stony Brook University,
  Stony Brook, NY 11794-3651, U.S.A.} %
\email{\href{mailto:christian.schnell@stonybrook.edu}{christian.schnell@stonybrook.edu}} %
\urladdr{\url{https://www.math.stonybrook.edu/~cschnell}}
\keywords{extension theorem, holomorphic form, complex space, mixed Hodge module,
decomposition theorem, rational singularities, pullback}
\subjclass[2010]{14B05, 14B15, 32S20}
\title[Extending holomorphic forms from the regular locus of a complex
space]{Extending holomorphic forms from the regular locus of a complex space to
  a resolution of singularities}%
\date{\today}
\newcommand{\Diff}{\operatorname{Diff}}
\newcommand{\MHM}{\operatorname{MHM}}
\newcommand{\Mmod}{\mathcal{M}}
\newcommand{\Nmod}{\mathcal{N}}
\newcommand{\HM}{\operatorname{HM}}
\newcommand{\Dbcoh}{\operatorname{D}_{\mathit{coh}}^{\mathit{b}}}
\newcommand{\Dtcoh}[1]{\operatorname{D}_{\mathit{coh}}^{#1}}
\newcommand{\Dbc}{\operatorname{D}_{\mathit{c}}^b}
\newcommand{\derR}{\mathbf{R}}
\newcommand{\decal}[1]{\lbrack #1 \rbrack}
\newcommand{\abs}[1]{\lvert #1 \rvert}
\newcommand{\dbar}{\bar{∂}}
\newcommand{\dz}{\mathit{dz}}
\DeclareMathOperator{\Ch}{Ch}
\newcommand{\dK}{\ensuremath{d_\text{\rm Kähler}}}
\newcommand{\drefl}{\ensuremath{d_\text{\rm refl}}}
\DeclareMathOperator{\DR}{DR}
\DeclareMathOperator{\GR}{GR}
\DeclareMathOperator{\gr}{gr}
\DeclareMathOperator{\h}{h}
\DeclareMathOperator{\rat}{rat}
\DeclareMathOperator{\restr}{restriction}
\DeclareMathOperator{\Supp}{Supp}
\newcommand{\define}[1]{\emph{#1}}
\newcommand{\argbl}{-}
\newcommand{\chapref}[1]{\hyperref[#1]{Chapter~\ref*{#1}}}
\newcommand{\lemmaref}[1]{\hyperref[#1]{Lemma~\ref*{#1}}}
\newcommand{\parref}[1]{\hyperref[#1]{Section~\ref*{#1}}}
\newcommand{\theoremref}[1]{\hyperref[#1]{Theorem~\ref*{#1}}}
\newcommand{\definitionref}[1]{\hyperref[#1]{Definition~\ref*{#1}}}
\newcommand{\propositionref}[1]{\hyperref[#1]{Proposition~\ref*{#1}}}
\newcommand{\conjectureref}[1]{\hyperref[#1]{Conjecture~\ref*{#1}}}
\newcommand{\corollaryref}[1]{\hyperref[#1]{Corollary~\ref*{#1}}}
\newcommand{\exampleref}[1]{\hyperref[#1]{Example~\ref*{#1}}}
\newcommand{\exerciseref}[1]{\hyperref[#1]{Exercise~\ref*{#1}}}
\newcommand{\factref}[1]{\hyperref[#1]{Fact~\ref*{#1}}}
\newcommand{\claimref}[1]{\hyperref[#1]{Claim~\ref*{#1}}}
\newcommand{\remarkref}[1]{\hyperref[#1]{Remark~\ref*{#1}}}
\newcommand{\settingref}[1]{\hyperref[#1]{Setting~\ref*{#1}}}
\newcommand{\appendixref}[1]{\hyperref[#1]{Appendix~\ref*{#1}}}
\theoremstyle{plain}
\newtheorem{propdef}[thm]{Proposition and Definition}
\theoremstyle{remark}
\newtheorem*{note}{Note}
\begin{document}

\maketitle
\approvals[Approval for Abstract]{Christian & yes \\Stefan & yes}
\begin{abstract}
We investigate under what conditions holomorphic forms defined on the regular
locus of a reduced complex space extend to holomorphic (or logarithmic) forms on
a resolution of singularities.  We give a simple necessary and sufficient
condition for this, whose proof relies on the Decomposition Theorem and Saito's
theory of mixed Hodge modules.  We use it to generalize the theorem of
Greb-Kebekus-Kovács-Peternell to complex spaces with rational singularities, and
to prove the existence of a functorial pull-back for reflexive differentials on
such spaces.  We also use our methods to settle the ``local vanishing
conjecture'' proposed by Mustaţă, Olano, and Popa.
\end{abstract}

\tableofcontents

\phantomsection\addcontentsline{toc}{part}{Introduction}
%
%
\svnid{$Id: S01-intro.tex 272 2020-01-21 14:29:32Z kebekus $}

\section{Overview of the paper}
\subversionInfo
\label{sec:sintro}

\subsection{Extension of holomorphic forms}
\approvals{Christian & yes \\Stefan & yes}

This paper is about the following ``extension problem'' for holomorphic
differential forms on complex spaces.  Let $X$ be a reduced complex space, and
let $r \colon \wtilde{X} → X$ be a resolution of singularities.  Which
holomorphic $p$-forms on the regular locus $X_{\reg}$ extend to holomorphic
$p$-forms on the complex manifold $\wtilde{X}$?  Standard facts about resolution
of singularities imply that the answer is independent of the choice of
resolution.  (If the exceptional locus of $r$ is a normal crossing divisor $E$,
one can also ask for an extension with at worst logarithmic poles along $E$.)

The best existing result concerning this problem is due to Greb, Kebekus,
Kovács, and Peternell \cite[Thm.~1.4]{GKKP11}.  They show that if $X$ underlies
a normal algebraic variety with Kawamata log terminal (=klt) singularities, then
all $p$-forms on $X_{\reg}$ extend to $\wtilde{X}$, for every $0 ≤ p ≤ \dim X$.
Their theorem has many applications, including hyperbolicity of moduli, the
structure of minimal varieties with trivial canonical class, the nonabelian
Hodge correspondence for singular spaces, and quasi-étale uniformisation.
\parref{ssec:app} recalls some of these in more detail and gives references.

In this paper, we use the Decomposition Theorem and Saito's theory of mixed
Hodge modules to solve the extension problem in general.  Our main result is a
simple necessary and sufficient condition for a holomorphic $p$-form on
$X_{\reg}$ to extend to a holomorphic (or logarithmic) $p$-form on $\wtilde{X}$.
One surprising consequence is that the extension problem for forms of a given
degree also controls what happens for forms of smaller degrees.  Another
consequence is that if $X$ is a complex space with rational singularities, then
all $p$-forms on $X_{\reg}$ extend to $\wtilde{X}$, for every $0 ≤ p ≤ \dim X$.
This result is a crucial step in the recent work of Bakker and Lehn \cite{BL18}
on the global moduli theory of symplectic varieties.

\subsection{Main result}
\approvals{Christian & yes \\Stefan & yes}

Let $X$ be a reduced complex space of pure dimension $n$.  It is well-known that
a holomorphic $n$-form $α ∈ H⁰(X_{\reg}, Ω^n_X)$ extends to a holomorphic
$n$-form on any resolution of singularities of $X$ if and only if
$α Λ \overline{α}$ is locally integrable on $X$.  Griffiths
\cite[§IIa]{griffiths-abel} gave a similar criterion for extension of $p$-forms
in terms of integrals over $p$-dimensional analytic cycles in $X$, but his
condition is not easy to verify in practice.  Our first main result is the
following intrinsic description of those holomorphic forms on $X_{\reg}$ that
extend holomorphically to one (and hence any) resolution of singularities.

\begin{thm}[Holomorphic forms]\label{thm:extension-Kahler}
  Let $X$ be a reduced complex space of pure dimension $n$, and
  $r \colon \wtilde{X} → X$ a resolution of singularities.  A holomorphic
  $p$-form $α ∈ H⁰(X_{\reg}, Ω^p_X)$ extends to a holomorphic $p$-form on
  $\wtilde{X}$ if, and only if, for every open subset $U ⊆ X$, and for every
  pair of Kähler differentials $β ∈ H⁰(U, Ω^{n-p}_X)$ and
  $γ ∈ H⁰(U, Ω^{n-p-1}_X)$, the holomorphic $n$-forms $α Λ β$ and $d α Λ γ$ on
  $U_{\reg}$ extend to holomorphic $n$-forms on $r^{-1}(U)$.
\end{thm}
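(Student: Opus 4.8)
\emph{Overall strategy.}
I would prove the two implications separately; the ``only if'' direction is soft, and the ``if'' direction is where the Decomposition Theorem and Saito's mixed Hodge modules do the work. Suppose first that $α$ extends to $\wtilde α ∈ H⁰(\wtilde X, Ω^p_{\wtilde X})$. Since Kähler differentials admit a pull-back $r^{*}Ω^{•}_X → Ω^{•}_{\wtilde X}$, every $β ∈ H⁰(U, Ω^{n-p}_X)$ and $γ ∈ H⁰(U, Ω^{n-p-1}_X)$ pull back to $r^{-1}(U)$, and then $\wtilde α Λ r^{*}β$ and $d\wtilde α Λ r^{*}γ$ are holomorphic $n$-forms on $r^{-1}(U)$ restricting to $αΛβ$ and $dαΛγ$ on $U_{\reg}$. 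This settles ``only if''.

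\emph{Reformulation of the criterion.}
Write $j \colon X_{\reg} ↪ X$ and $ω_{\wtilde X} := Ω^n_{\wtilde X}$, and set $\sL := r_{*}ω_{\wtilde X}$, a subsheaf of $j_{*}ω_{X_{\reg}}$. By the classical $L^2$-criterion, $\sL$ is exactly the sheaf of holomorphic $n$-forms on $X_{\reg}$ that extend across $X_{\sing}$. Wedging with a local section $α$ of $j_{*}Ω^p_{X_{\reg}}$ gives an $\sO_X$-linear map $Ω^{n-p}_X → j_{*}ω_{X_{\reg}}$, $β ↦ βΛα$, and the hypothesis ``$αΛβ$ extends for all $β$'' says precisely that this map has image in $\sL$; likewise for $dα$. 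On the other hand, the perfect pairing $Ω^p_{\wtilde X} ⊗ Ω^{n-p}_{\wtilde X} → ω_{\wtilde X}$ on the smooth space $\wtilde X$, the pull-back morphism $r^{*}Ω^{n-p}_X → Ω^{n-p}_{\wtilde X}$, and adjunction produce an embedding
\[
  r_{*}Ω^p_{\wtilde X} \;↪\; r_{*}\sHom_{\sO_{\wtilde X}}\bigl(r^{*}Ω^{n-p}_X,\, ω_{\wtilde X}\bigr) \;=\; \sHom_{\sO_X}\bigl(Ω^{n-p}_X,\, \sL\bigr)
\]
which is an isomorphism over $X_{\reg}$ and whose target is torsion-free. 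Combining the two observations, the theorem becomes the sheaf-level assertion that $r_{*}Ω^p_{\wtilde X}$ equals the subsheaf of $j_{*}Ω^p_{X_{\reg}}$ of those $α$ for which both $β ↦ βΛα$ and $γ ↦ γΛdα$ (for local sections $β$ of $Ω^{n-p}_X$, $γ$ of $Ω^{n-p-1}_X$) land in $\sL$. The inclusion ``$⊆$'' is exactly what was just shown; as both sheaves are torsion-free and agree over $X_{\reg}$, what remains is the reverse inclusion, i.e.\ the surjectivity of a morphism of coherent sheaves supported on $X_{\sing}$.

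\emph{The hard implication, via mixed Hodge modules.}
For the reverse inclusion I would bring in Saito's theory. Applying $\gr^F\DR$ to the pushforward $\derR r_{*}\bQ^H_{\wtilde X}[n]$ of the constant Hodge module on $\wtilde X$ recovers, degree by degree, the complexes $\derR r_{*}Ω^q_{\wtilde X}$, and the Decomposition Theorem splits $\derR r_{*}\bQ^H_{\wtilde X}[n]$ into the intersection-complex Hodge module $\mathrm{IC}^H_X$ of the components of $X$ plus summands of strictly smaller support. From this I want to extract a resolution-independent description of the sheaves $r_{*}Ω^q_{\wtilde X}$ together with the compatibility of the de Rham differential with the Hodge filtration, and — crucially — Saito's self-duality for the de Rham complex of a pure Hodge module, which interchanges the graded pieces in degrees $q$ and $n-q$ and is compatible with the pairing against $\sL$. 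Dualising converts the $β$-condition on $α$ into the statement that the class of $α$ lies in the image of the natural map from $Ω^p_X$ into the Hodge-theoretic sheaf of reflexive $p$-forms, up to an error term governed precisely by whether $dα$ again pairs into $\sL$; imposing the $γ$-condition kills this error, and the strictness of the Hodge filtration (the $E_1$-degeneration supplied by the theory) ensures that the higher cohomology sheaves appearing along the way do not obstruct the conclusion. Reassembling over $X_{\sing}$ yields the reverse inclusion.

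\emph{The main obstacle.}
The delicate step is the bookkeeping in the paragraph above. The ``naive dual'' sheaf $\sHom_{\sO_X}(Ω^{n-p}_X, \sL)$ is in general strictly larger than $r_{*}Ω^p_{\wtilde X}$: there exist holomorphic $p$-forms on $X_{\reg}$ that pair into $\sL$ against every Kähler $(n-p)$-form and yet do not extend, so the $dα$-condition is genuinely needed, and the content is to show it removes \emph{exactly} the surplus. This forces one to follow the de Rham differential through both the Decomposition Theorem and Saito's duality, and to verify that the summands of $\derR r_{*}\bQ^H_{\wtilde X}[n]$ supported on $X_{\sing}$ — which are what create the surplus — are annihilated once the $γ$-condition is imposed. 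By contrast, the ``only if'' direction, the duality reformulation, and the appeal to the $L^2$-criterion are formal.
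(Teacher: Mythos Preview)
Your outline has the right ingredients — the Decomposition Theorem, the identification of $r_*Ω^p_{\wtilde X}$ via the intersection-complex Hodge module, and Saito's duality — but the mechanism you describe for the ``if'' direction is not the one that actually makes the argument work, and as stated it contains a gap.

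The paper's proof is much more direct once one has the key identification (their Proposition~8.2): after locally embedding $X$ into a ball $Y\subseteq\bC^{n+c}$, one has
\[
f_*Ω^p_{\wtilde X}\;\cong\;\ker\Bigl(∇\colon Ω^{p+c}_Y\otimes F_c\Mmod_X \to Ω^{p+c+1}_Y\otimes\gr^F_{c+1}\Mmod_X\Bigr),
\]
where $(\Mmod_X,F_\bullet\Mmod_X)$ underlies the IC Hodge module and $F_{c-1}\Mmod_X=0$. The crucial and very concrete fact is that $F_c\Mmod_X$ is a \emph{rank-one} coherent sheaf on $X$ (its restriction to $X_{\reg}$ is $\det N_{X_{\reg}\mid Y}$). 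Hence a section of $Ω^{p+c}_Y\otimes F_c\Mmod_X$ extends across $X_{\sing}$ if and only if each of its coefficients in $F_c\Mmod_X$ does, and that in turn can be tested by wedging with all $\dz_{i_1}\wedge\cdots\wedge\dz_{i_{n-p}}$ into $Ω^{n+c}_Y\otimes F_c\Mmod_X\cong f_*ω_{\wtilde X}$. This is exactly the $β$-condition. The $γ$-condition arises because one must also check that $∇\wtilde α$ lies in $Ω^{p+c+1}_Y\otimes F_c\Mmod_X$ (not merely in $\gr^F_{c+1}$), i.e.\ that $\wtilde α$ is in the \emph{kernel}; since $∇\wtilde α$ corresponds to $dα$, the same rank-one argument reduces this to wedging $dα$ with $(n-p-1)$-forms.

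Your account locates the role of the $γ$-condition in the wrong place. You say the surplus in $\sHom_{\sO_X}(Ω^{n-p}_X,\sL)$ over $r_*Ω^p_{\wtilde X}$ is created by the summands of $\derR r_*\bQ^H_{\wtilde X}[n]$ supported on $X_{\sing}$, and that the $γ$-condition annihilates them. But those summands are already discarded \emph{before} this theorem, simply because $Ω^p_{\wtilde X}$ is torsion-free (this is how Proposition~8.2 is proved). The genuine surplus comes from inside the IC piece itself: the sheaf $Ω^{p+c}_Y\otimes F_c\Mmod_X$ is strictly larger than the kernel of $∇$, and the $γ$-condition cuts it down to the kernel. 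Your invocation of $E_1$-degeneration/strictness is likewise off-target here; no spectral-sequence argument is needed once the rank-one description of $F_c\Mmod_X$ is in hand. If you rewrite your ``hard implication'' paragraph around the kernel description and the rank-one fact, the proof goes through cleanly.
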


Our proof of this result is based on the Decomposition Theorem for Hodge
modules.  We think that it would also be very interesting to have an analytic
proof, in terms of $L²$-Hodge theory for the $\dbar$-operator.  The following
analogue of \theoremref{thm:extension-Kahler} for forms with logarithmic poles
needs some additional results about mixed Hodge modules.  Recall that a
resolution of singularities $r \colon \wtilde{X} → X$ of a complex space is
called a \emph{(strong) log resolution} if the $r$-exceptional set is a divisor
with (simple) normal crossings on $\wtilde{X}$.

\begin{thm}[Logarithmic forms]\label{thm:extension-Kahler-log}
  Let $X$ be a reduced complex space of pure dimension $n$, and
  $r \colon \wtilde{X} → X$ a log resolution of singularities with exceptional
  divisor $E ⊆ X$.  A holomorphic $p$-form $α ∈ H⁰(X_{\reg}, Ω^p_X)$ extends to
  a holomorphic section of the bundle $Ω_{\wtilde{X}}^p(\log E)$ on $\wtilde{X}$
  if, and only if, for every open subset $U ⊆ X$, and for every pair of Kähler
  differentials $β ∈ H⁰(U, Ω^{n-p}_X)$ and $γ ∈ H⁰(U, Ω^{n-p-1}_X)$, the
  holomorphic $n$-forms $α Λ β$ and $d α Λ γ$ on $U_{\reg}$ extend to
  holomorphic sections of the bundle $Ω_{\wtilde{X}}^n(\log E)$ on $r^{-1}(U)$.
\end{thm}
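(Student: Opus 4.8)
The plan is to obtain the ``only if'' part by a pullback computation, and the ``if'' part by adapting the proof of \theoremref{thm:extension-Kahler} to the logarithmic setting, the price being that the constant Hodge module there must be replaced by a suitable \emph{mixed} Hodge module. For ``only if'': if $\alpha$ extends to $\wtilde{\alpha} \in H^0\bigl(\wtilde{X},\, \Omega^p_{\wtilde{X}}(\log E)\bigr)$, then, the logarithmic de~Rham complex being closed under $d$, also $d\wtilde{\alpha} \in H^0\bigl(\wtilde{X},\, \Omega^{p+1}_{\wtilde{X}}(\log E)\bigr)$, and both restrict to $\alpha$, $d\alpha$ on $X_{\reg} \cong \wtilde{X}\setminus E$. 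The pullbacks $r^*\beta \in H^0\bigl(r^{-1}U,\, \Omega^{n-p}_{\wtilde{X}}\bigr)$ and $r^*\gamma \in H^0\bigl(r^{-1}U,\, \Omega^{n-p-1}_{\wtilde{X}}\bigr)$ of the Kähler differentials in the statement are honest holomorphic forms on the manifold $\wtilde{X}$, so $\wtilde{\alpha}\wedge r^*\beta$ and $d\wtilde{\alpha}\wedge r^*\gamma$ are sections of $\Omega^n_{\wtilde{X}}(\log E) = \omega_{\wtilde{X}}(E)$ over $r^{-1}U$ that restrict to $\alpha\wedge\beta$ and $d\alpha\wedge\gamma$ on $U_{\reg}$. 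For the rest of the argument I abbreviate $\sA^k := r_*\bigl(\Omega^k_{\wtilde{X}}(\log E)\bigr) \subseteq \jmath_*\Omega^k_{X_{\reg}}$, where $\jmath\colon X_{\reg}\into X$ and the inclusion is the (injective) restriction of forms; then ``$\alpha$ extends to a logarithmic $p$-form'' means ``$\alpha \in H^0(X,\sA^p)$'', and ``$\alpha\wedge\beta$ (resp.\ $d\alpha\wedge\gamma$) extends to a logarithmic $n$-form on $r^{-1}U$'' means that $\alpha\wedge\beta$ (resp.\ $d\alpha\wedge\gamma$) lies in $H^0(U,\sA^n)$.

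For ``if'' I would first rephrase membership in $\sA^p$ using the classical description of logarithmic forms: a holomorphic $p$-form on $\wtilde{X}\setminus E$ is the restriction of a section of $\Omega^p_{\wtilde{X}}(\log E)$ if and only if it extends to a section of $\Omega^p_{\wtilde{X}}(E)$ and its exterior derivative extends to a section of $\Omega^{p+1}_{\wtilde{X}}(E)$. Since $\Omega^{n-k}_{\wtilde{X}}$ is locally free and $\Omega^k_{\wtilde{X}}(E) = \sHom\bigl(\Omega^{n-k}_{\wtilde{X}},\, \omega_{\wtilde{X}}(E)\bigr)$, a holomorphic $k$-form on $\wtilde{X}\setminus E$ extends into $\Omega^k_{\wtilde{X}}(E)$ exactly when its wedge product with every holomorphic section of $\Omega^{n-k}_{\wtilde{X}}$ extends into $\omega_{\wtilde{X}}(E)$. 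Applying this with $k=p$ to $\alpha$ and with $k=p+1$ to $d\alpha$ shows that $\alpha$ extends to a logarithmic $p$-form on $\wtilde{X}$ if and only if, over every $r^{-1}U$, the forms $\alpha\wedge\eta$ and $d\alpha\wedge\zeta$ lie in $\sA^n$ for all holomorphic $\eta\in\Omega^{n-p}_{\wtilde{X}}$ and $\zeta\in\Omega^{n-p-1}_{\wtilde{X}}$. The assertion of the theorem is that it suffices to test this for the much smaller family consisting of the pullbacks $r^*\beta$, $r^*\gamma$ of Kähler differentials on the \emph{singular} space $X$; this reduction, which also underlies \theoremref{thm:extension-Kahler}, is the point where Hodge theory enters.

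To carry out the reduction I would run the proof of \theoremref{thm:extension-Kahler} with the constant Hodge module $\bQ^H_{\wtilde{X}}[n]$ replaced by the mixed Hodge module $\Mmod := j_*\,\bQ^H_{\wtilde{X}\setminus E}[n]$ on $\wtilde{X}$, where $j\colon\wtilde{X}\setminus E \into \wtilde{X}$; the ``additional results about mixed Hodge modules'' needed here are Deligne's logarithmic comparison theorem in its Saito-theoretic form. Concretely, the underlying filtered $\sD$-module of $\Mmod$ is $\bigl(\sO_{\wtilde{X}}(*E),\, F_\bullet\bigr)$ with $F_\bullet$ the order filtration, so $\DR(\Mmod)$ with its Hodge filtration is filtered quasi-isomorphic, up to a shift, to $\Omega^\bullet_{\wtilde{X}}(\log E)$ with its stupid filtration; in particular
\[
  \gr^F_{-p}\DR(\Mmod)\ \simeq\ \Omega^p_{\wtilde{X}}(\log E)\decal{n-p},
\]
with differentials induced by $d$. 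Since $r$ is proper, the formation of $\gr^F_\bullet\DR$ commutes with $\derR r_*$, and $r\circ j = \jmath$, so the sheaves $\sA^\bullet$ are recovered as cohomology sheaves of $\gr^F_\bullet\DR\bigl(\derR\jmath_*\,\bQ^H_{X_{\reg}}[n]\bigr)$ — an object intrinsic to $X$, which incidentally re-proves independence of the log resolution. Now $\bQ^H_{X_{\reg}}[n]$ is self-dual on the manifold $X_{\reg}$, so $\derR\jmath_*\,\bQ^H_{X_{\reg}}[n]$ is Verdier dual to $\derR\jmath_!\,\bQ^H_{X_{\reg}}[n]$, whose graded de~Rham pieces are built from the twisted logarithmic forms $\Omega^q_{\wtilde{X}}(\log E)\otimes\sO_{\wtilde{X}}(-E)$. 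Saito's compatibility of $\gr^F\DR$ with duality, combined with Grothendieck--Serre duality on $X$, then exhibits $\sA^p$, inside $\jmath_*\Omega^p_{X_{\reg}}$, as the subsheaf of forms whose wedge products with all local sections of these dual objects land in $\sA^n$; and since $\derR\jmath_*\,\bQ^H_{X_{\reg}}[n]$, $\derR\jmath_!\,\bQ^H_{X_{\reg}}[n]$ and the intersection Hodge module $\mathrm{IC}_X$ all agree over $X_{\reg}$, the obstruction to $\alpha\in\sA^p$ lives in a sheaf supported on $X_{\sing}$ that is already detected by the data intrinsic to $\mathrm{IC}_X$ — concretely, by the images of $\Omega^{n-p}_X$ (paired with $\alpha$) and of $\Omega^{n-p-1}_X$ (paired with $d\alpha$, through the de~Rham differential). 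This produces exactly the two families of test forms in the statement.

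The step I expect to be the main obstacle is the last one. It requires pinning down the shifts, Tate twists and signs relating $\gr^F\DR$ of $\derR\jmath_*$ and of $\derR\jmath_!$ through Saito's and Grothendieck's dualities; checking that only the two lowest cohomological degrees of the dual object impose conditions on $\alpha$ (a coherence argument, using the agreement of $\derR\jmath_!\,\bQ^H_{X_{\reg}}[n]$ with $\mathrm{IC}_X$ away from a small set); verifying that Kähler differentials on $X$ — rather than all holomorphic forms on $\wtilde{X}$ — already detect membership in $\sA^n$; and establishing the logarithmic comparison $\DR(\Mmod) \simeq \bigl(\Omega^\bullet_{\wtilde{X}}(\log E),\, \sigma\bigr)$ as an isomorphism of \emph{filtered} complexes, not merely of graded pieces, so that the occurrence of $d\alpha$ in the statement is genuine. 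Granting all of this, the argument closes just as in the proof of \theoremref{thm:extension-Kahler}.
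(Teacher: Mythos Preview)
Your ``only if'' direction is fine, and you correctly identify the relevant mixed Hodge module $j_*\,\bQ^H_{\wtilde{X}\setminus E}\decal{n}$. But your ``if'' direction diverges from the paper's argument in a way that creates a real difficulty.

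Your first reduction, via the classical characterisation of log forms, leaves you having to test $\alpha\wedge\eta$ and $d\alpha\wedge\zeta$ against \emph{all} holomorphic $\eta\in\Omega^{n-p}_{\wtilde X}$ and $\zeta\in\Omega^{n-p-1}_{\wtilde X}$ on the resolution, and then arguing that the pullbacks $r^*\beta$, $r^*\gamma$ of Kähler differentials already suffice. That reduction is the whole content of the theorem, and your sketch via $\derR\jmath_*$\,/\,$\derR\jmath_!$ duality and comparison with $\mathrm{IC}_X$ does not establish it; there are many more forms on $\wtilde X$ than pullbacks from $X$, and nothing you wrote pins down why the extra ones impose no new conditions. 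You also speak of mixed Hodge modules on the \emph{singular} space $X$, which is not where Saito's theory lives.

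The paper avoids this entirely by working on a smooth ambient space. Locally embed $i_X\colon X\hookrightarrow Y$ with $Y$ an open ball in $\bC^{n+c}$, set $f=i_X\circ r$, and let $N_0 = H^0 f_*\bigl(j_*\,\bQ^H_{\wtilde X\setminus E}\decal{n}\bigr)\in\MHM(Y)$ with underlying filtered $\sD_Y$-module $(\Nmod_0,F_\bullet\Nmod_0)$. \propositionref{prop:Xwrx} gives $f_*\Omega^p_{\wtilde X}(\log E)\cong\sH^{p-n}\gr_{-p}^F\DR(\Nmod_0)$, and since $F_{c-1}\Nmod_0=0$ (\propositionref{prop:Fc}) this is
\[
f_*\Omega^p_{\wtilde X}(\log E)\;\cong\;\ker\Bigl(\nabla\colon\Omega^{p+c}_Y\otimes F_c\Nmod_0\to\Omega^{p+c+1}_Y\otimes\gr_{c+1}^F\Nmod_0\Bigr),
\qquad
f_*\Omega^n_{\wtilde X}(\log E)\;\cong\;\Omega^{n+c}_Y\otimes F_c\Nmod_0.
\]
Now $F_c\Nmod_0$ is a rank-one coherent sheaf supported on $X$ (it agrees with $F_c\Mmod_X$ over $X_{\reg}$), so membership of a section of $\Omega^{p+c}_Y\otimes F_c\Nmod_0$ over $Y\setminus X_{\sing}$ in the image from $Y$ is detected by wedging with the coordinate differentials $dz_{i_1}\wedge\cdots\wedge dz_{i_{n-p}}$ on $Y$ --- exactly the argument of \claimref{claim:wedge-test}. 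These coordinate differentials restrict to generators of the Kähler differentials $\Omega^\bullet_X$, and the $\nabla$ in the kernel description accounts for the $d\alpha$ condition. This is a short linear-algebra argument once you have the right ambient description; no $\derR\jmath_!$ or intersection-complex comparison is needed.
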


\subsection{Consequences}
\approvals{Christian & yes \\Stefan & yes}

The extension problem for holomorphic (or logarithmic) forms on a complex space
$X$ is of course closely related to the singularities of $X$.  Since there might
not be any global $p$-forms on $X_{\reg}$, the effect of the singularities is
better captured by the following local version of the problem.  Given a
resolution of singularities $r \colon \wtilde{X} → X$ of a reduced complex space
$X$, and an arbitrary open subset $U ⊆ X$, which holomorphic $p$-forms on
$U_{\reg}$ extend to holomorphic $p$-forms on $r^{-1}(U)$?  If
$j \colon X_{\reg} ↪ X$ denotes the embedding of the regular locus, this amounts
to asking for a description of the subsheaf
$r_* Ω^p_{\wtilde{X}} ↪ j_* Ω^p_{X_{\reg}}$.  This subsheaf is $𝒪_X$-coherent
(by Grauert's theorem) and independent of the choice of resolution (because any
two resolutions are dominated by a common third).  If the exceptional locus of
$r$ is a normal crossing divisor $E$, one can also ask for a description of the
subsheaf $r_* Ω^p_{\wtilde{X}}(\log E) ↪ j_* Ω^p_{X_{\reg}}$, which has similar
properties.

\begin{example}
  When $X$ is reduced and irreducible, it is easy to see that
  $r_* 𝒪_{\wtilde{X}} ↪ j_* 𝒪_{X_{\reg}}$ is an isomorphism if and only if
  $\dim X_{\sing} ≤ \dim X-2$.  (Use the normalisation of $X$.)
\end{example}

One consequence of \theoremref{thm:extension-Kahler} is that the extension
problem for holomorphic forms of a given degree also controls what happens for
all forms of smaller degree.

\begin{thm}[Extension for $p$-forms]\label{thm:main-new}
  Let $X$ be a reduced and irreducible complex space.  Let
  $r \colon \wtilde{X} → X$ be any resolution of singularities, and
  $j \colon X_{\reg} ↪ X$ the inclusion of the regular locus.  If the morphism
  $r_* Ω^k_{\wtilde{X}} ↪ j_* Ω^k_{X_{\reg}}$ is an isomorphism for some
  $0 ≤ k ≤ \dim X$, then $\dim X_{\sing} ≤ \dim X-2$, and
  $r_* Ω^p_{\wtilde{X}} ↪ j_* Ω^p_{X_{\reg}}$ is an isomorphism for every
  $0 ≤ p ≤ k$.
\end{thm}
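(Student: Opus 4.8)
Set $n := \dim X$. The plan is to isolate a single descent step — namely: if $k ≥ 1$ and $r_* Ω^k_{\wtilde{X}} ↪ j_* Ω^k_{X_{\reg}}$ is an isomorphism, then $r_* Ω^{k-1}_{\wtilde{X}} ↪ j_* Ω^{k-1}_{X_{\reg}}$ is an isomorphism as well — and then to iterate it downwards from the hypothesised degree. Iteration produces an isomorphism in every degree $0 ≤ p ≤ k$; in particular $r_* 𝒪_{\wtilde{X}} ↪ j_* 𝒪_{X_{\reg}}$ is an isomorphism, and the Example recorded just before the theorem — this holds precisely when $\dim X_{\sing} ≤ n - 2$ — then supplies the stated bound on the singular locus. (If $k = 0$ there is nothing to iterate and only the Example is used.) Since these morphisms are injective and their cokernels commute with restriction to open subsets, surjectivity is a local question; moreover an extension to $\wtilde{X}$ of a holomorphic form given on $X_{\reg}$ is unique when it exists, because $r^{-1}(X_{\reg})$ is dense in $\wtilde{X}$. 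Hence it suffices to fix a point of $X$, a small open neighbourhood $U$, and a section $α ∈ H⁰(U_{\reg}, Ω^{k-1}_X)$, and to show that, after shrinking $U$ if necessary, $α$ extends to a holomorphic $(k-1)$-form on $r^{-1}(U)$.

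For this I would apply \theoremref{thm:extension-Kahler} to the complex space $U$ with the resolution $r^{-1}(U) → U$: the form $α$ extends if, and only if, for every open $W ⊆ U$ and every pair $β ∈ H⁰(W, Ω^{n-k+1}_X)$, $γ ∈ H⁰(W, Ω^{n-k}_X)$, the holomorphic $n$-forms $α Λ β$ and $dα Λ γ$ on $W_{\reg}$ extend over $r^{-1}(W)$. The whole argument rests on the observation that each of these $n$-forms can be rewritten so that the degree-$k$ hypothesis applies directly. For $dα Λ γ$ this is immediate: $dα$ is a section of $j_* Ω^k_{X_{\reg}}$, hence — by the degree-$k$ isomorphism — extends to a holomorphic $k$-form $\wtilde{dα}$ on $r^{-1}(W)$, and $\wtilde{dα} Λ r^*γ$, formed with the canonical pull-back map $r^* Ω^{n-k}_X → Ω^{n-k}_{\wtilde{X}}$ of Kähler differentials, is an extension of $dα Λ γ$ over $r^{-1}(W)$.

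The form $α Λ β$ is handled by transferring one exact factor of $β$ onto $α$, which is legitimate precisely because $n - k + 1 ≥ 1$. Over a sufficiently small open $W' ⊆ W$ we may write $β = \sum_i f_i\, dg_{i,1} Λ dg_{i,2} Λ ⋯ Λ dg_{i,n-k+1}$ with $f_i, g_{i,j} ∈ H⁰(W', 𝒪_X)$, using only that $Ω^m_X = \bigwedge^m Ω^1_X$ and that $Ω^1_X$ is locally generated by exact Kähler differentials. Then, on $W'_{\reg}$,
\[
  α Λ β \;=\; \sum_i f_i \, (α Λ dg_{i,1}) Λ dg_{i,2} Λ ⋯ Λ dg_{i,n-k+1} ,
\]
and each $α Λ dg_{i,1}$ is a section of $j_* Ω^k_{X_{\reg}}$ over $W'$, so by the degree-$k$ isomorphism it extends to a holomorphic $k$-form on $r^{-1}(W')$; wedging these extensions with the pulled-back Kähler forms $r^*(dg_{i,2}), …, r^*(dg_{i,n-k+1})$ and multiplying by $r^* f_i$ yields a holomorphic $n$-form on $r^{-1}(W')$ restricting to $α Λ β$. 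By uniqueness of extensions these local $n$-forms glue to an extension of $α Λ β$ over $r^{-1}(W)$. By \theoremref{thm:extension-Kahler}, $α$ extends, which completes the descent step.

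The only real content, I expect, is this rearrangement. \theoremref{thm:extension-Kahler} reduces the extendability of a $(k-1)$-form to the extendability of a whole family of auxiliary $n$-forms, a condition that at face value is hard to verify; the point is that every one of those $n$-forms — once a single exact Kähler factor has been moved across the wedge — takes the shape (holomorphic $k$-form) $Λ$ (pulled-back Kähler form), and holomorphic $k$-forms extend by assumption. Put differently, control of the extension problem in degree $k$ propagates, for free, to degree $k-1$. The remaining points (locality of the statement, the uniqueness and gluing of the locally built $n$-forms, and the reduction of the codimension bound to the degree-$0$ case via the Example) are routine and I would not dwell on them.
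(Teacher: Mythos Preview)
Your proof is correct and follows essentially the same approach as the paper's (first) proof in \parref{sec:pf-1-x}: reduce to the single descent step $k \to k-1$, invoke \theoremref{thm:extension-Kahler}, and handle the auxiliary $n$-forms by peeling one exact $1$-form off $β$ (legitimate since $n-k+1 ≥ 1$) so that $α Λ dg$ and $dα$ become $k$-forms covered by the hypothesis. The only cosmetic differences are that the paper embeds $X$ locally into a ball and uses the coordinate differentials $i_X^*(dz_j)$ in place of your $dg_{i,j}$, and that it phrases the extension of $α Λ dz_j Λ β_j$ and $dα Λ γ$ as a second application of \theoremref{thm:extension-Kahler} (its trivial ``only if'' direction) rather than writing out the pull-back construction explicitly as you do.
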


An outline of the proof can be found in \parref{par:outline} below.  The key
idea is to use the Decomposition Theorem \cite{BBD, Saito:HodgeModules}, in
order to relate the coherent $𝒪_X$-module $r_* Ω^p_{\wtilde{X}}$ to the
intersection complex of $X$, viewed as a polarisable Hodge module.  In
\appendixref{app:cones}, we look at the example of cones over smooth projective
varieties; it gives a hint that the extension problem for all $p$-forms should
be governed by what happens for $n$-forms.  When $X$ is normal, an equivalent
formulation of \theoremref{thm:main-new} is that, if the coherent $𝒪_X$-module
$r_* Ω^k_{\wtilde{X}}$ is reflexive for some $k ≤ \dim X$, then
$r_* Ω^p_{\wtilde{X}}$ is reflexive for every $p ≤ k$.

\begin{note}
  One can easily generalise \theoremref{thm:main-new} to arbitrary reduced
  complex spaces.  The precise (but somewhat cumbersome) statement is that if
  the morphism $r_* Ω^k_{\wtilde{X}} ↪ j_* Ω^k_{X_{\reg}}$ is an isomorphism for
  some $k ≥ 0$, and if $Z ⊆ X$ denotes the union of all the irreducible
  components of $X$ of dimension $≥ k$, then $\dim Z_{\sing} ≤ k-2$, and the
  restriction to $Z$ of the morphism $r_* Ω^p_{\wtilde{X}} ↪ j_* Ω^p_{X_{\reg}}$
  is an isomorphism for every $0 ≤ p ≤ k$.  The reason is that the irreducible
  components of $X$ are separated in any resolution of singularities, and so one
  can simply apply \theoremref{thm:main-new} one component at a time.
\end{note}

We also establish a version of \theoremref{thm:main-new} with log poles, by
adapting the techniques in the proof to a certain class of mixed Hodge modules.

\begin{thm}[Extension for log $p$-forms]\label{thm:main-log}
  Let $X$ be a reduced and irreducible complex space.  Let
  $r \colon \wtilde{X} → X$ be a log resolution with exceptional divisor
  $E ⊆ \wtilde{X}$, and $j \colon X_{\reg} ↪ X$ the inclusion of the regular
  locus.  If the morphism $r_* Ω^k_{\wtilde{X}}(\log E) ↪ j_* Ω^k_{X_{\reg}}$ is
  an isomorphism for some $0 ≤ k ≤ \dim X$, then $\dim X_{\sing} ≤ \dim X-2$,
  and $r_* Ω^p_{\wtilde{X}}(\log E) ↪ j_* Ω^p_{X_{\reg}}$ is an isomorphism for
  every $0 ≤ p ≤ k$.
\end{thm}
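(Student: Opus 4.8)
The plan is to derive both assertions from the logarithmic extension criterion of \theoremref{thm:extension-Kahler-log} by a purely formal manipulation of differential forms; I do not expect to need any Hodge theory beyond what already goes into that theorem. I anticipate that the argument parallels the one for \theoremref{thm:main-new}, with \theoremref{thm:extension-Kahler-log} replacing \theoremref{thm:extension-Kahler}.

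\emph{Step 1: a reformulation.} Working locally on $X$, and using that $r_* Ω^n_{\wtilde{X}}(\log E)$ is an $𝒪_X$-submodule of $j_* Ω^n_{X_{\reg}}$ while any extension of an $n$-form to a section of $Ω^n_{\wtilde{X}}(\log E)$ is unique (as $\wtilde{X}$ is smooth), \theoremref{thm:extension-Kahler-log} says that $r_* Ω^p_{\wtilde{X}}(\log E) ↪ j_* Ω^p_{X_{\reg}}$ is an isomorphism if and only if, for every open $U ⊆ X$, every $α ∈ H⁰(U_{\reg}, Ω^p_X)$, and every pair $β ∈ H⁰(U, Ω^{n-p}_X)$, $γ ∈ H⁰(U, Ω^{n-p-1}_X)$, the $n$-forms $α Λ β$ and $dα Λ γ$ extend over $r^{-1}(U)$. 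By the same uniqueness, with $α$ fixed, those $β$ for which $α Λ β$ extends form a subsheaf of $Ω^{n-p}_X$, and similarly for $γ$; since $Ω^m_X$ is locally generated by decomposable Kähler forms, it is therefore enough to treat $β = δ Λ β'$ and $γ = δ Λ γ'$, with $δ$ a Kähler $(k-p)$-form, $β'$ a Kähler $(n-k)$-form, and $γ'$ a Kähler $(n-k-1)$-form.

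\emph{Step 2: degree reduction.} Assuming $r_* Ω^k_{\wtilde{X}}(\log E) ≅ j_* Ω^k_{X_{\reg}}$ and fixing $p ≤ k$ together with $α, δ, β', γ'$ as above, I would argue as follows. By hypothesis, the $k$-form $η := α Λ δ$ on $U_{\reg}$ extends to a section $\wtilde{η}$ of $Ω^k_{\wtilde{X}}(\log E)$ over $r^{-1}(U)$; since $β'$ is a Kähler differential, its pullback $r^*β'$ is a \emph{holomorphic} $(n-k)$-form on $r^{-1}(U)$, so $\wtilde{η} Λ r^*β'$ is a section of $Ω^n_{\wtilde{X}}(\log E)$ restricting to $α Λ β$. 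As this covers all decomposable $β$, the subsheaf remark of Step 1 gives that $α Λ β$ extends for every $β ∈ H⁰(U, Ω^{n-p}_X)$. Next, $d\wtilde{η}$ is a section of $Ω^{k+1}_{\wtilde{X}}(\log E)$, and the Leibniz rule $dη = dα Λ δ + (-1)^p α Λ dδ$ on $U_{\reg}$ yields
\[
  dα Λ γ = (dη) Λ γ' - (-1)^p\, α Λ (dδ Λ γ').
\]
Here $(dη) Λ γ'$ extends to $d\wtilde{η} Λ r^*γ'$, a section of $Ω^n_{\wtilde{X}}(\log E)$, while $α Λ (dδ Λ γ')$ extends because $dδ Λ γ'$ is a Kähler $(n-p)$-form and the condition on $β$ has just been verified for all such. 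So the criterion of \theoremref{thm:extension-Kahler-log} is met, and $r_* Ω^p_{\wtilde{X}}(\log E) ↪ j_* Ω^p_{X_{\reg}}$ is an isomorphism for every $0 ≤ p ≤ k$. The degenerate range $k = n$ is handled directly: there the hypothesis already says that every holomorphic $n$-form on $U_{\reg}$ extends, and $α Λ β$, $dα Λ γ$ are such forms. Finally, taking $p = 0$ gives $r_* 𝒪_{\wtilde{X}} ≅ j_* 𝒪_{X_{\reg}}$, which for $X$ irreducible is equivalent to $\dim X_{\sing} ≤ \dim X - 2$.

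\emph{Main obstacle.} Granting \theoremref{thm:extension-Kahler-log}, nothing here is hard; the work is bookkeeping---tracking degrees, justifying the two subsheaf reductions of Step 1 via uniqueness of extensions on the smooth $\wtilde{X}$, and treating the degenerate case $k = n$ separately. The genuine content lies in \theoremref{thm:extension-Kahler-log} itself (where the Decomposition Theorem and Saito's mixed Hodge modules enter) and in the elementary, but essential, fact---proved via the normalisation---that $r_* 𝒪_{\wtilde{X}} ≅ j_* 𝒪_{X_{\reg}}$ is equivalent to $\dim X_{\sing} ≤ \dim X - 2$.
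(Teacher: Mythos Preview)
Your proof is correct and follows essentially the same approach as the paper: both deduce \theoremref{thm:main-log} from the logarithmic extension criterion \theoremref{thm:extension-Kahler-log}, and both reduce the verification to wedging with Kähler differentials and invoking the hypothesis for $k$-forms.  The only organisational difference is that the paper argues by a single inductive step $k \to k-1$ (where $\delta = i_X^{\ast}(dz_j)$ is a Kähler $1$-form, so $d\delta = 0$ and the Leibniz cross-term disappears) and then iterates, whereas you jump directly from $k$ to arbitrary $p \le k$, compensating with the identity $d\alpha \wedge \gamma = d\eta \wedge \gamma' - (-1)^p \alpha \wedge (d\delta \wedge \gamma')$ and the already-established $\beta$-case.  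Both organisations are equally valid; the paper's is marginally shorter because the induction step avoids the cross-term, while yours is self-contained in a single pass.
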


By a result of Kovács, Schwede, and Smith \cite[Thm.~1]{KSS10}, a complex
algebraic variety $X$ that is normal and Cohen-Macaulay has Du Bois
singularities if and only if $r_* ω_{\wtilde{X}}(E)$ is a reflexive $𝒪_X$-module
for some log resolution $r \colon \wtilde{X} → X$.  We think that it would be
interesting to know the precise relationship between Du Bois singularities and
the extension problem for logarithmic $n$-forms.  The tools we develop for the
proof of \theoremref{thm:main-log} also lead to a slightly better answer in the
case of holomorphic forms of degree $\dim X - 1$.

\begin{thm}[Extension for $(n-1)$-forms]\label{thm:extension-n-1}
  Let $X$ be a reduced and irreducible complex space.  Let
  $r \colon \wtilde{X} → X$ be a log resolution with exceptional divisor
  $E ⊆ \wtilde{X}$, and $j \colon X_{\reg} ↪ X$ the inclusion of the regular
  locus.  If the natural morphism $r_* Ω^n_{\wtilde{X}} ↪ j_* Ω^n_{X_{\reg}}$ is
  an isomorphism, where $n = \dim X$, then the two morphisms
  $$
  r_* \bigl( Ω^{n-1}_{\wtilde{X}}(\log E)(-E) \bigr) %
  ↪ r_* Ω^{n-1}_{\wtilde{X}} %
  ↪ j_* Ω^{n-1}_{X_{\reg}}
  $$
  are also isomorphisms.
\end{thm}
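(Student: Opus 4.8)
The strategy is to reduce \theoremref{thm:extension-n-1} to \theoremref{thm:main-new}, \theoremref{thm:main-log} and Grothendieck--Serre duality, and then to invoke the mixed Hodge module techniques developed for the proof of \theoremref{thm:main-log} to control one error term. First I would observe that the hypothesis that $r_*\Omega^n_{\widetilde{X}}\hookrightarrow j_*\Omega^n_{X_{\reg}}$ be an isomorphism is exactly the case $k=n$ of \theoremref{thm:main-new}; applying that theorem gives $\dim X_{\sing}\le n-2$ and the isomorphism $r_*\Omega^{n-1}_{\widetilde{X}}\cong j_*\Omega^{n-1}_{X_{\reg}}$ — this is the second of the two asserted isomorphisms, and it shows in particular that $r_*\Omega^{n-1}_{\widetilde{X}}$ satisfies Serre's condition $S_2$. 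Moreover $\Omega^n_{\widetilde{X}}(\log E)=\omega_{\widetilde{X}}(E)$, and the coherent sheaf $r_*\omega_{\widetilde{X}}(E)$ is squeezed between $r_*\omega_{\widetilde{X}}=j_*\Omega^n_{X_{\reg}}$ and $j_*\Omega^n_{X_{\reg}}$, so the case $k=n$ of \theoremref{thm:main-log} also applies and yields $r_*\Omega^p_{\widetilde{X}}(\log E)\cong j_*\Omega^p_{X_{\reg}}$ for every $0\le p\le n$. Hence $\mathcal A:=r_*\bigl(\Omega^{n-1}_{\widetilde{X}}(\log E)(-E)\bigr)$ and $r_*\Omega^{n-1}_{\widetilde{X}}=r_*\Omega^{n-1}_{\widetilde{X}}(\log E)=j_*\Omega^{n-1}_{X_{\reg}}$ are subsheaves of $j_*\Omega^{n-1}_{X_{\reg}}$ which agree over $X_{\reg}$, whose complement has codimension $\ge 2$; since the larger sheaf is $S_2$, it suffices to prove that $\mathcal A$ is likewise $S_2$.

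To show that $\mathcal A$ is $S_2$ I would argue by duality. Set $\mathcal G:=\derR r_*\bigl(\Omega^{n-1}_{\widetilde{X}}(\log E)(-E)\bigr)\in\Dbcoh(X)$. The Esnault--Viehweg--Steenbrink generalisation of Grauert--Riemenschneider vanishing gives $R^q r_*\bigl(\Omega^{n-1}_{\widetilde{X}}(\log E)(-E)\bigr)=0$ for $q\ge 2$, so $\mathcal G$ lives in cohomological degrees $0$ and $1$, with $\mathcal H^0\mathcal G=\mathcal A$ and $\mathcal H^1\mathcal G$ supported on $X_{\sing}$. The wedge pairing $\Omega^{n-1}_{\widetilde{X}}(\log E)(-E)\otimes\Omega^1_{\widetilde{X}}(\log E)\to\omega_{\widetilde{X}}$ is perfect, so relative Grothendieck duality for the proper morphism $r$ gives
\[
\derR\mathcal{H}om_{\mathcal{O}_X}\bigl(\mathcal G,\,\omega_X^{\bullet}\bigr)\;\cong\;\derR r_*\bigl(\Omega^1_{\widetilde{X}}(\log E)\bigr)[n],
\]
with $\omega_X^{\bullet}$ the dualising complex of $X$. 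The lowest cohomology sheaf of the right-hand side is $r_*\Omega^1_{\widetilde{X}}(\log E)\cong j_*\Omega^1_{X_{\reg}}$, which is $S_2$, and the higher ones $R^q r_*\Omega^1_{\widetilde{X}}(\log E)$ ($q\ge 1$) are supported on $X_{\sing}$.

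It remains to deduce that $\mathcal A$ is $S_2$ from this. By the cohomological characterisation of Serre's condition, $\mathcal A$ is $S_2$ once the cohomology sheaves $\mathcal H^{-n+i}\derR\mathcal{H}om_{\mathcal{O}_X}(\mathcal A,\omega_X^{\bullet})$ are supported in codimension $\ge i+2$ for every $i\ge 1$. Feeding the triangle $\mathcal A\to\mathcal G\to(\mathcal H^1\mathcal G)[-1]\xrightarrow{+1}$ into the displayed isomorphism, this comes down to the assertion that, for $q\ge 1$, the sheaves $R^q r_*\Omega^1_{\widetilde{X}}(\log E)$ and $R^q r_*\bigl(\Omega^{n-1}_{\widetilde{X}}(\log E)(-E)\bigr)$ are supported in codimension $\ge q+2$; the naive estimate ``supported on $X_{\sing}$, hence codimension $\ge 2$'' is short by $q$, and this is precisely where I would bring in the mixed Hodge module machinery developed for \theoremref{thm:main-log}. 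Concretely, $\derR r_*\bigl(\Omega^{n-1}_{\widetilde{X}}(\log E)(-E)\bigr)$ and $\derR r_*\Omega^1_{\widetilde{X}}(\log E)$ are, up to a shift, graded pieces of $\DR$ of the mixed Hodge modules $j_!\,\mathbb{Q}^{\mathrm{H}}_{X_{\reg}}[n]$ and $\derR j_*\,\mathbb{Q}^{\mathrm{H}}_{X_{\reg}}[n]$ on $X$ (with $j\colon X_{\reg}\hookrightarrow X$), and one applies the Decomposition Theorem to $\derR r_*\,\mathbb{Q}^{\mathrm{H}}_{\widetilde{X}}[n]$: its summands other than the intersection complex $\mathrm{IC}_X$ are polarisable Hodge modules supported on $X_{\sing}$, and the desired support bound on the low-degree cohomology sheaves of their de Rham complexes is the strictness and weight estimate already present in the proof of \theoremref{thm:main-log}; the one new point, that $j_!\,\mathbb{Q}^{\mathrm{H}}_{X_{\reg}}[n]$ itself does not satisfy the Decomposition Theorem, is handled through its weight filtration and the canonical surjection onto $\mathrm{IC}_X$. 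The hypothesis $r_*\Omega^n_{\widetilde{X}}\cong j_*\Omega^n_{X_{\reg}}$ — equivalently, the reflexivity of the bottom graded de Rham sheaf of $\mathrm{IC}_X$ — is exactly what keeps the error terms small enough, and this weight-and-codimension bookkeeping, which transports reflexivity from degree $n$ down to the $S_2$-property in degree $n-1$, is the main obstacle.
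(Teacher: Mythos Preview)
Your overall strategy—apply Grothendieck duality to identify the dual of $\mathbf{R}r_*\bigl(\Omega^{n-1}_{\widetilde X}(\log E)(-E)\bigr)$ with $\mathbf{R}r_*\Omega^1_{\widetilde X}(\log E)[n]$, and then establish codimension bounds $\dim\Supp R^q r_*\Omega^1_{\widetilde X}(\log E)\le n-2-q$ via mixed Hodge modules—is exactly the paper's. Two places where the paper is sharper than your sketch:

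First, the paper applies its extension criterion for complexes (\propositionref{prop:reflexive-complex-ng}) directly to $\mathcal G=\mathbf{R}r_*\bigl(\Omega^{n-1}_{\widetilde X}(\log E)(-E)\bigr)$; this yields the $S_2$-property of $\mathcal A=\mathcal H^0\mathcal G$ as soon as the dual complex satisfies the dimension inequalities, with no need for Steenbrink vanishing and no separate treatment of $\mathcal H^1\mathcal G$. Your triangle $\mathcal A\to\mathcal G\to\mathcal H^1\mathcal G[-1]$ works too, but the extra requirement you state—a codimension bound on $R^1 r_*\bigl(\Omega^{n-1}_{\widetilde X}(\log E)(-E)\bigr)$—is not needed: for any coherent sheaf one has $\dim\Supp\mathcal H^\ell\mathbf{R}\mathcal{H}om(-,\omega_X^\bullet)\le -\ell$, which already handles that term. (Similarly, the paper does not invoke \theoremref{thm:main-new} or \theoremref{thm:main-log} as black boxes: once $r_*\bigl(\Omega^{n-1}_{\widetilde X}(\log E)(-E)\bigr)\cong j_*\Omega^{n-1}_{X_{\reg}}$ is known, the sandwich with $r_*\Omega^{n-1}_{\widetilde X}$ gives both asserted isomorphisms.)

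Second, your description of the crucial mixed Hodge module step is where the real content lies, and here your sketch is imprecise. The paper does not use $j_!\,\mathbb Q^H_{X_{\reg}}[n]$, nor the Decomposition Theorem applied to $\mathbf{R}r_*\mathbb Q^H_{\widetilde X}[n]$, at this stage. Instead it identifies $\mathbf{R}f_*\Omega^1_{\widetilde X}(\log E)[n-1]\cong\gr_{-1}^F\DR(\mathcal N_0)$ (\propositionref{prop:1-forms-N0}) and analyses the \emph{weight filtration} on $N_0$: only the graded pieces of weight $n$ and $n+1$ contribute to $\gr_{-1}^F\DR$, giving the exact sequence of \corollaryref{cor:x7},
\[
\mathcal H^j\gr_{-1}^F\DR(\mathcal M_X)\;\to\;\mathcal H^j\gr_{-1}^F\DR(\mathcal N_0)\;\to\;\bigoplus_{i\in I} R^{n-1+j}f_*\mathcal O_{E_i}.
\]
The first term is bounded by \propositionref{prop:main-ineq}—this is precisely where the hypothesis on $r_*\Omega^n_{\widetilde X}$ enters—and the third by the short exact sequence $0\to\mathcal O_{\widetilde X}(-E_i)\to\mathcal O_{\widetilde X}\to\mathcal O_{E_i}\to 0$ together with a second application of \propositionref{prop:main-ineq}. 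The ``weight-and-codimension bookkeeping'' you allude to is exactly this, but the concrete mechanism—that only $\mathcal M_X$ and the structure sheaves $\mathcal O_{E_i}$ survive in $\gr_{-1}^F\DR$ of the weight-graded pieces—is the part your proposal leaves unarticulated.
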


\subsection{Rational and weakly rational singularities}
\label{sssec:wratl}
\approvals{Christian & yes \\Stefan & yes}

An important class of singular spaces where \theoremref{thm:main-new} applies is
normal complex spaces with rational singularities.  Recall that $X$ has
\define{rational singularities} if the following equivalent conditions hold.  We
refer to \cite[§5.1]{KM98} for details.\CounterStep

\begin{enumerate}
\item\label{il:A1} $X$ is normal, and if $r \colon \wtilde{X} → X$ is any
  resolution of singularities, then $Rⁱ r_* 𝒪_{\wtilde{X}} = 0$ for every
  $i ≥ 1$.
  
\item\label{il:B1} $X$ is Cohen-Macaulay and $ω_X^{\GR} = ω_X$.
  
\item\label{il:C1} $X$ is Cohen-Macaulay and $ω_X^{\GR}$ is reflexive.
\end{enumerate}

Here $ω_X^{\GR} = r_* ω_{\wtilde{X}}$ is sometimes called the
\emph{Grauert-Riemenschneider sheaf}, because it appears in the
Grauert-Riemenschneider vanishing theorem.  In view of Condition~\ref{il:C1}, we
say that a normal space $X$ has \define{weakly rational singularities} if the
Grauert-Riemenschneider sheaf $ω_X^{\GR}$ is reflexive.  With this notation,
\theoremref{thm:main-new} has the following immediate corollary.

\begin{cor}[Extension in the case of rational singularities]\label{cor:rational}
  Let $X$ be a normal complex space with weakly rational singularities, and let
  $r \colon \wtilde{X} → X$ be a resolution of singularities.  Then every
  holomorphic form defined on $X_{\reg}$ extends uniquely to a holomorphic form
  on $\wtilde{X}$.  \qed
\end{cor}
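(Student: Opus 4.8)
The plan is to deduce the Corollary from the top-degree case $k = \dim X$ of \theoremref{thm:main-new}. Write $n = \dim X$. Since the assertion is local on $X$, and a normal complex space is locally irreducible, we may assume that $X$ is irreducible, so that \theoremref{thm:main-new} applies; it then suffices to verify its hypothesis for $k = n$, namely that the natural inclusion $r_* Ω^n_{\wtilde{X}} \into j_* Ω^n_{X_{\reg}}$ is an isomorphism. Granting this, \theoremref{thm:main-new} yields that $r_* Ω^p_{\wtilde{X}} \into j_* Ω^p_{X_{\reg}}$ is an isomorphism for every $0 ≤ p ≤ n$; passing to sections over $X$ and using $Ω^p_X|_{X_{\reg}} = Ω^p_{X_{\reg}}$, this says precisely that every holomorphic $p$-form on $X_{\reg}$ is the restriction of a holomorphic $p$-form on $\wtilde{X}$. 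Such an extension is automatically unique, because $r^{-1}(X_{\reg})$ is dense in the manifold $\wtilde{X}$.

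To verify the hypothesis, I would rewrite both sheaves. Since $\wtilde{X}$ is an $n$-dimensional complex manifold, $Ω^n_{\wtilde{X}} = ω_{\wtilde{X}}$, so the left-hand side is the Grauert–Riemenschneider sheaf $ω_X^{\GR} = r_* ω_{\wtilde{X}}$. The right-hand side $j_* Ω^n_{X_{\reg}} = j_* ω_{X_{\reg}}$ is a coherent, reflexive (in fact $S_2$) $𝒪_X$-module, being the direct image of a locally free sheaf from the complement of $X_{\sing}$, which has codimension $≥ 2$ because $X$ is normal. Over $X_{\reg}$ the morphism $r$ restricts to a proper modification of a complex manifold, for which $r_* ω = ω$; hence the inclusion $ω_X^{\GR} \into j_* ω_{X_{\reg}}$ is an isomorphism over the big open set $X_{\reg}$, and therefore identifies $j_* Ω^n_{X_{\reg}}$ with the reflexive hull $\bigl( ω_X^{\GR} \bigr)^{\vee\vee}$. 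In particular, the inclusion $r_* Ω^n_{\wtilde{X}} \into j_* Ω^n_{X_{\reg}}$ is an isomorphism if, and only if, $ω_X^{\GR}$ is reflexive.

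Finally, this reflexivity is part of the definition of rational singularities: spaces with rational singularities are Cohen–Macaulay, and for such spaces conditions~\ref{il:B1}--\ref{il:C1} above state exactly that $ω_X^{\GR} = ω_X$ is reflexive (see \cite[§5.1]{KM98}). This verifies the hypothesis of \theoremref{thm:main-new} for $k = n$, and the Corollary follows. I expect no genuine obstacle in this argument, since \theoremref{thm:main-new} — which rests on the Decomposition Theorem and Saito's theory of mixed Hodge modules — carries all of the substantial content; the only points requiring (routine) care are the codimension bookkeeping used to identify $j_* Ω^n_{X_{\reg}}$ with $\bigl( ω_X^{\GR} \bigr)^{\vee\vee}$, and the appeal to the standard fact that rational singularities are Cohen–Macaulay with reflexive Grauert–Riemenschneider sheaf.
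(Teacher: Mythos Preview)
Your proposal is correct and is exactly the paper's intended argument: the corollary is marked with \qed\ in the text because it follows immediately from \theoremref{thm:main-new} applied with $k = n$, using the characterisation \ref{il:C1} of rational singularities (reflexivity of $ω_X^{\GR} = r_* Ω^n_{\wtilde{X}}$) to verify the hypothesis. The only extra you supply is the routine unpacking of why reflexivity of $ω_X^{\GR}$ is equivalent to $r_* Ω^n_{\wtilde{X}} \into j_* Ω^n_{X_{\reg}}$ being an isomorphism, which the paper leaves implicit.
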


\begin{rem}
  Rational singularities are weakly rational by definition.  In particular,
  recall from \cite[Thm.~5.22 and references there]{KM98} that klt spaces have
  rational (and hence weakly rational) singularities.  For algebraic klt
  varieties, the extension result was shown previously in
  \cite[Thm.~1.4]{GKKP11}.
\end{rem}

As we will see in \parref{sec:extshf-ng}, having weakly rational singularities
turns out to be equivalent to the collection of inequalities
\[
  \dim \Supp Rⁱ r_* 𝒪_{\wtilde{X}} ≤ \dim X - 2 - i
  \quad \text{for every $i ≥ 1$.}
\]
One can also describe the class of weakly rational singularities in more
analytic terms: a normal complex space $X$ of dimension $n$ has weakly rational
singularities if and only if, for every open subset $U ⊆ X$ and every
holomorphic $n$-form $ω ∈ H⁰(U_{\reg}, Ω^n_{U_{\reg}})$, the $(n,n)$-form
$ω Λ \overline{ω}$ on $U_{\reg}$ is locally integrable on all of $U$.
\appendixref{sec:wratlSings} discusses examples and establishes elementary
properties of this class of singularities.

\subsection{Local vanishing conjecture}
\label{ssec:localVanishing}
\approvals{Christian & yes \\Stefan & yes}

The methods developed in this paper also settle the ``local vanishing
conjecture'' proposed by Mustaţă, Olano, and Popa
\cite[Conj.~A]{Mustata+Olano+Popa:LocalVanishing}.  The original conjecture
contained the assumption that $X$ is a normal algebraic variety with rational
singularities.  In fact, the weaker assumption
$R^{\dim X-1} r_* 𝒪_{\wtilde{X}} = 0$ is sufficient.

\begin{thm}[Local vanishing]\label{thm:MOP}
  Let $X$ be a reduced and irreducible complex space of dimension $n$.  Let
  $r \colon \wtilde{X} → X$ be a log resolution, with exceptional divisor
  $E ⊆ \wtilde{X}$.  If $R^{n-1} r_* 𝒪_{\wtilde{X}} = 0$, then
  $R^{n-1} r_* Ω¹_{\wtilde{X}}(\log E) = 0$.
\end{thm}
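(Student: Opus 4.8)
The plan is to deduce this from Saito's theory of mixed Hodge modules, running the same machinery that underlies the earlier extension theorems, but now at cohomological level $n-1$ rather than $0$.

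\textbf{Reduction to $X$ normal.} Let $\nu\colon X^\nu\to X$ be the normalisation and factor $r$ through a log resolution $\rho\colon\wtilde X\to X^\nu$. Since $\nu_*$ is exact and faithful, $R^{n-1}r_*\sF=\nu_*R^{n-1}\rho_*\sF$ for $\sF\in\{\sO_{\wtilde X},\Omega^1_{\wtilde X}(\log E)\}$, so neither hypothesis nor conclusion changes on replacing $(X,r)$ by $(X^\nu,\rho)$. Thus assume $X$ normal, hence $\codim_X X_{\sing}\ge 2$; we may also assume $n\ge 2$ (the statement is vacuous otherwise) and that $r$ restricts to an isomorphism over $X_{\reg}$.

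\textbf{Hodge-theoretic reformulation.} Write $j'\colon X_{\reg}\into X$. The sheaves $\sO_{\wtilde X}$ and $\Omega^1_{\wtilde X}(\log E)$ are, up to shift, graded pieces of the Hodge-filtered de Rham complex of the mixed Hodge module $\bQ^H_{\wtilde X}(*E)[n]$ that enters the proof of \theoremref{thm:extension-Kahler-log}: $\gr^F_0\DR_{\wtilde X}=\sO_{\wtilde X}[n]$ and $\gr^F_{-1}\DR_{\wtilde X}=\Omega^1_{\wtilde X}(\log E)[n-1]$. Since $r$ is proper, Saito's strictness of direct images gives that $\derR r_*$ commutes with $\gr^F\DR$; putting $N:=\derR r_*\bigl(\bQ^H_{\wtilde X}(*E)[n]\bigr)$ --- a complex of mixed Hodge modules on $X$, intrinsic to $X$, with $\DR_X(N)\simeq\derR j'_*\bC_{X_{\reg}}[n]$ --- we obtain
$$\derR r_*\sO_{\wtilde X}[n]\cong\gr^F_0\DR_X(N),\qquad \derR r_*\Omega^1_{\wtilde X}(\log E)[n-1]\cong\gr^F_{-1}\DR_X(N).$$
Passing to cohomology sheaves, the hypothesis reads $\mathcal H^{-1}\bigl(\gr^F_0\DR_X(N)\bigr)=0$ and the claim reads $\mathcal H^{0}\bigl(\gr^F_{-1}\DR_X(N)\bigr)=0$.

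\textbf{Propagating from $p=0$ to $p=1$.} Apply the Decomposition Theorem and split off the intersection-complex summand $\mathbf{IC}_X$; all remaining summands of $\derR r_*\bQ^H_{\wtilde X}[n]$ --- and of its $(*E)$-variant --- are supported on $X_{\sing}$. The contribution of $\mathbf{IC}_X$ to $\mathcal H^0\bigl(\gr^F_{-1}\DR_X(N)\bigr)$ is handled as in the proof of \theoremref{thm:main-new}, using purity. For the summands supported on $X_{\sing}$: Grauert--Riemenschneider vanishing together with relative Grothendieck duality --- which identifies $\derR r_*\sO_{\wtilde X}$ with the Grothendieck dual of $\omega_X^{\GR}$ --- turns the hypothesis into a smallness statement for these Hodge modules, in the spirit of the theory of weakly rational singularities; combined with $\dim X_{\sing}\le n-2$ and the Steenbrink-type vanishing $R^q r_*\Omega^p_{\wtilde X}(\log E)=0$ for $p+q>n$ (which controls $\derR r_*\Omega^\bullet_{\wtilde X}(\log E)\simeq\derR j'_*\bC_{X_{\reg}}$ and yields the $E_1$-degeneration of its Hodge--de Rham spectral sequence), this kills their contribution. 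Hence $\mathcal H^0\bigl(\gr^F_{-1}\DR_X(N)\bigr)=0$.

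\textbf{The main obstacle} is this last step: carrying the Hodge and weight filtrations through the decomposition and, above all, pinning down how the positive-dimensional part of $X_{\sing}$ can feed into the degree-$0$ cohomology sheaf of $\gr^F_{-1}\DR_X(N)$. It is precisely because $p=1$ is the smallest nontrivial degree that the relevant support and amplitude estimates are sharp enough to conclude --- consistent with the fact that the hypothesis $R^{n-1}r_*\sO_{\wtilde X}=0$ is strictly weaker than weak rationality and does not control log $p$-forms for $p\ge 2$.
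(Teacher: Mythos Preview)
Your framework is correct: identify $R^{n-1}r_*Ω^1_{\wtilde X}(\log E)$ with $ℋ^0\gr_{-1}^F\DR$ of the relevant mixed Hodge module on the target, then isolate the intersection-complex contribution. But the execution of the last step has a real gap. The Decomposition Theorem does not split the mixed object $N$: there is no direct-sum decomposition with $\mathbf{IC}_X$ as a summand, and your phrase ``and of its $(*E)$-variant'' is where the argument breaks. The correct substitute is the \emph{weight filtration} on $N_0 = H^0 f_*\bigl(j_*ℚ_{\wtilde X∖E}^H[n]\bigr)$. One shows $W_n N_0 ≅ M_X$, and that only weights $n$ and $n{+}1$ contribute to $\gr_{-1}^F\DR$; the weight-$(n{+}1)$ piece is then computed to be $\bigoplus_i \derR f_* 𝒪_{E_i}[n-1]$, where the $E_i$ are the components of $E$. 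This yields an exact sequence
\[
ℋ^0 \gr_{-1}^F \DR(\Mmod_X) \;→\; R^{n-1} r_* Ω^1_{\wtilde X}(\log E) \;→\; \bigoplus_i R^{n-1} r_* 𝒪_{E_i},
\]
not a direct sum.

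The right-hand term vanishes for an elementary reason you do not mention: each $𝒪_{E_i}$ is a quotient of $𝒪_{\wtilde X}$, and $R^n r_*$ of anything vanishes by dimension, so $R^{n-1}r_*𝒪_{\wtilde X}=0$ forces $R^{n-1}r_*𝒪_{E_i}=0$. Your appeal to Grauert--Riemenschneider, Grothendieck duality, and ``Steenbrink-type vanishing for $p+q>n$'' is both unnecessary and inconclusive here (the last gives nothing at $p+q=n$). For the left-hand term, ``as in the proof of \theoremref{thm:main-new}'' is also off: that argument assumes $n$-forms extend, strictly stronger than your hypothesis. What one actually uses is $ℋ^{-1}\gr_0^F\DR(\Mmod_X) ≅ R^{n-1}r_*𝒪_{\wtilde X}=0$ together with $ℋ^0 F_0\DR(\Mmod_X)=0$ (a consequence of strict support, via \corollaryref{cor:IC-vanishing}), followed by a short-exact-sequence chase through $F_{-2}⊂ F_{-1}⊂ F_0$ to conclude $ℋ^0\gr_{-1}^F\DR(\Mmod_X)=0$.
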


As shown in \cite{Mustata+Olano+Popa:LocalVanishing}, this result has
interesting consequences for the Hodge filtration on the complement of a
hypersurface with at worst rational singularities.

\subsection{Functorial pull-back}
\label{sec:fpb}
\approvals{Christian & yes \\Stefan & yes}

One can interpret \theoremref{thm:main-new} as saying that any differential form
$σ ∈ H⁰ \bigl( X_{\reg},\, Ω¹_{X_{\reg}} \bigr) = H⁰ \bigl( X,\, Ω^{[1]}_X
\bigr)$ induces a \emph{pull-back form}
$\wtilde{σ} ∈ H⁰ \bigl( \wtilde{X},\, Ω¹_{\wtilde{X}} \bigr)$.  More generally,
we show that pull-back exists for reflexive differentials and arbitrary
morphisms between varieties with rational singularities.  The paper
\cite{MR3084424} discusses these matters in detail.

\begin{thm}[Functorial pull-back for reflexive differentials]\label{thm:pullBack}
  Let $f \colon X → Y$ be any morphism between normal complex spaces with
  rational singularities.  Write $Ω^{[p]}_X := \bigl(Ω^p_X\bigr)^{**}$, ditto
  for $Ω^{[p]}_Y$.  Then there exists a pull-back morphism
  $$
  \drefl f \colon f^* Ω^{[p]}_Y → Ω^{[p]}_X,
  $$
  uniquely determined by natural universal properties.
\end{thm}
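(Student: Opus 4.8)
The plan is to build the morphism $\drefl f$ by hand on a well-chosen pair of resolutions, where it is nothing but the ordinary pull-back of Kähler differentials, and then to inherit uniqueness and the compatibility (``functoriality'') properties from the general formalism of \cite{MR3084424}. The only input of that formalism that is not purely formal is the statement that reflexive forms extend over a resolution, and for spaces with rational singularities this is supplied by \corollaryref{cor:rational}.

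First I would fix a resolution $r_Y \colon \wtilde Y \to Y$ that is an isomorphism over $Y_{\reg}$, and then, by resolving the closure of the graph of $f$ inside $\wtilde X_0 \times \wtilde Y$ for some auxiliary resolution $\wtilde X_0 \to X$, produce a resolution $r_X \colon \wtilde X \to X$ together with a holomorphic map $\wtilde f \colon \wtilde X \to \wtilde Y$ satisfying $f \circ r_X = r_Y \circ \wtilde f$. Between the complex manifolds $\wtilde X$ and $\wtilde Y$ there is the usual pull-back of Kähler differentials $d\wtilde f \colon \wtilde f^* \Omega^p_{\wtilde Y} \to \Omega^p_{\wtilde X}$. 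Since $Y$ has rational singularities, \corollaryref{cor:rational} identifies $r_{Y,*}\Omega^p_{\wtilde Y}$ with $\Omega^{[p]}_Y$, so the counit of the adjunction $r_Y^* \dashv r_{Y,*}$ becomes a natural morphism $\varepsilon \colon r_Y^*\Omega^{[p]}_Y \to \Omega^p_{\wtilde Y}$. Using the canonical identification $r_X^* f^* = (f \circ r_X)^* = (r_Y \circ \wtilde f)^* = \wtilde f^* r_Y^*$, I would then form
$$
r_X^* f^* \Omega^{[p]}_Y \;=\; \wtilde f^* r_Y^* \Omega^{[p]}_Y \;\xrightarrow{\ \wtilde f^*\varepsilon\ }\; \wtilde f^* \Omega^p_{\wtilde Y} \;\xrightarrow{\ d\wtilde f\ }\; \Omega^p_{\wtilde X},
$$
which, via the adjunction $r_X^* \dashv r_{X,*}$, corresponds to a morphism $f^* \Omega^{[p]}_Y \to r_{X,*}\Omega^p_{\wtilde X}$; and since $X$ has rational singularities, \corollaryref{cor:rational} identifies this target with $\Omega^{[p]}_X$. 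I would take $\drefl f$ to be the resulting morphism.

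The next step is to check that $\drefl f$ is independent of the choices of $r_Y$, $r_X$ and $\wtilde f$: any two such choices are dominated by a common one, and the construction is visibly compatible with dominations, by naturality of the cotangent pull-back and of the adjunction counits. The remaining assertions — uniqueness, together with compatibility with composition of morphisms, with the natural maps $\Omega^p \to \Omega^{[p]}$ (so that over the preimage of $Y_{\reg}$ one recovers the ordinary pull-back of forms), and with wedge products — are exactly what is proved in \cite{MR3084424}, and they apply verbatim here because the only property of $X$ and $Y$ that enters their argument is the extension property of \corollaryref{cor:rational}.

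I expect the genuine difficulty to lie in the uniqueness, which is precisely why it is cleanest to quote \cite{MR3084424}. The tempting shortcut would be to pin $\drefl f$ down on the open set $X_{\reg} \cap f^{-1}(Y_{\reg})$, where it must agree with the usual pull-back, and then extend over the complement using reflexivity of $\Omega^{[p]}_X$; but that complement may well have codimension one — for instance when $f$ contracts a divisor into $Y_{\sing}$, or is constant onto a singular point — so no such automatic extension exists, and the uniqueness must instead be phrased through compatibility with resolutions. A lesser, routine point that still deserves some care is the construction of the simultaneous resolution $(\wtilde X, \wtilde f)$ for a holomorphic map $f$ between complex spaces that need not be proper.
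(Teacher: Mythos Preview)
Your construction of the sheaf morphism is correct, and it is genuinely different from the route taken in the paper. You build $\drefl f$ directly from a commuting square of resolutions via the adjunction $r_X^{\ast} \dashv r_{X,\ast}$, using only \corollaryref{cor:rational} to identify $r_{\bullet,\ast}\Omega^p_{\wtilde{\bullet}}$ with $\Omega^{[p]}_{\bullet}$. The paper instead follows the original construction of \cite{MR3084424}: one factors through a desingularisation $\wtilde{T}$ of the closure of the image $T=\overline{f(X)}$, and the key step is showing that the extension $\wtilde{\sigma}$ of a reflexive form to a resolution $\wtilde{Y}\to Y$, when restricted to the exceptional divisor over $T$, descends to a form on $T_{\reg}$. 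This descent uses a vanishing $H^0\bigl(E_t,\Omega^p_{E_t}/\tor\bigr)=0$ for fibres over $T$, which in the klt case comes from Hacon--McKernan and in the rational case from Namikawa \cite[Lem.~1.2]{MR1819886}. Your construction bypasses this step entirely, which is a genuine simplification; the paper's route, on the other hand, hews closer to \cite{MR3084424} and so imports its verification of the functoriality and uniqueness package with minimal rewriting.

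One point deserves correction. Your claim that the arguments of \cite{MR3084424} ``apply verbatim here because the only property of $X$ and $Y$ that enters their argument is the extension property'' is not accurate: as the paper explicitly notes, the construction in \cite{MR3084424} also requires the fibre vanishing above, not just extension. This does not invalidate your argument, because your construction is different and does not need it, and because the \emph{uniqueness} statement---which is what you actually need to quote---can be obtained from extension alone: any functor satisfying the compatibility is pinned down on morphisms between manifolds by the K\"ahler pull-back, on resolutions $r$ by the fact that $\drefl r$ is the inverse of the extension isomorphism, and hence on an arbitrary $f$ by composing through a square of resolutions. It would strengthen your write-up to make this explicit rather than asserting that \cite{MR3084424} carries over unchanged.
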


We refer to \theoremref{thm:PB-thmA} and \parref{ssec:pbform} for a precise
formulation of the ``natural universal properties'' mentioned in
\theoremref{thm:pullBack}.  In essence, it is required that the pull-back
morphisms agree with the pull-back of Kähler differentials wherever this makes
sense, and that they satisfy the composition law.

\begin{note}
  \theoremref{thm:pullBack} applies to morphisms $X → Y$ whose image is entirely
  contained in the singular locus of $Y$.  Taking the inclusion of the singular
  set for a morphism, \theoremref{thm:pullBack} implies that every differential
  form on $Y_{\reg}$ induces a differential form on every stratum on the
  singularity stratification.
\end{note}

\subsubsection{$\h$-differentials}
\approvals{Christian & yes \\Stefan & yes}

One can also reformulate \theoremref{thm:pullBack} in terms of
$\h$-differentials; these are obtained as the sheafification of Kähler
differentials with respect to the $\h$-topology on the category of complex
spaces, as introduced by Voevodsky.  We refer the reader to \cite{MR3272910} and
to the survey \cite{MR3580792} for a gentle introduction to these matters.
Using the description of $\h$-differentials found in \cite[Thm.~1]{MR3272910},
the following is an immediate consequence of \theoremref{thm:pullBack}.

\begin{cor}[$\h$-differentials on spaces with rational singularities]
  Let $X$ be a normal complex space with rational singularities.  Write
  $Ω^{[p]}_X := \bigl(Ω^p_X\bigr)^{**}$.  Then, $\h$-differentials and reflexive
  differentials agree: $Ω^p_{\h}(X) = Ω^{[p]}_X(X)$.  \qed
\end{cor}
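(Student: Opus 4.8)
The plan is to combine the explicit description of $\h$-differentials due to Huber and Jörder with \corollaryref{cor:rational}. Recall from \cite[Thm.~1]{MR3272910} that, for a normal variety $X$ with resolution of singularities $r \colon \wtilde{X} → X$ and regular locus $j \colon X_{\reg} ↪ X$, a Kähler $p$-form on $X_{\reg}$ lies in $Ω^p_{\h}(X)$ if and only if it extends to a holomorphic $p$-form on $\wtilde{X}$; equivalently, as subsheaves of $j_* Ω^p_{X_{\reg}}$ one has
\[
  Ω^p_{\h}(X) \;=\; r_* Ω^p_{\wtilde{X}},
\]
the right-hand side being independent of the resolution because any two resolutions are dominated by a common third. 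Under this identification the corollary reduces to the sheaf equality $r_* Ω^p_{\wtilde{X}} = Ω^{[p]}_X$, after which one takes global sections over $X$.

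First I would use normality of $X$ to identify $j_* Ω^p_{X_{\reg}}$ with the reflexive hull $Ω^{[p]}_X$: since $X_{\sing}$ has codimension at least $2$, the reflexive coherent sheaf $Ω^{[p]}_X$ equals the pushforward $j_*(j^* Ω^{[p]}_X) = j_* Ω^p_{X_{\reg}}$, by the standard fact that a reflexive sheaf on a normal complex space is recovered from its restriction to any open subset whose complement has codimension $≥ 2$. It then remains to check that the inclusion $r_* Ω^p_{\wtilde{X}} ↪ j_* Ω^p_{X_{\reg}}$ is an isomorphism for every $0 ≤ p ≤ \dim X$, which is precisely \corollaryref{cor:rational}. (Concretely: working on each connected component, rational singularities give $r_* Ω^{n}_{\wtilde{X}} = r_* ω_{\wtilde{X}} = ω_X^{\GR} = ω_X = j_* Ω^{n}_{X_{\reg}}$ with $n = \dim X$, so the top-degree instance of the inclusion is an isomorphism; \theoremref{thm:main-new}, applied with $k = n$, then propagates this down to all degrees $0 ≤ p ≤ n$.) Combining, $Ω^p_{\h}(X) = r_* Ω^p_{\wtilde{X}} = Ω^{[p]}_X$, and passing to sections over $X$ yields $Ω^p_{\h}(X) = Ω^{[p]}_X(X)$.

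Since all the substance is already contained in \corollaryref{cor:rational} — hence ultimately in \theoremref{thm:main-new} and the Decomposition Theorem — there is essentially no obstacle; the argument above is bookkeeping. The one point that deserves care is the transition between Huber and Jörder's algebraic framework and the complex-analytic setting of the present paper: one either restricts attention to complex algebraic varieties, where \cite[Thm.~1]{MR3272910} applies verbatim, or appeals to the corresponding description of $\h$-differentials for complex spaces. In either case the remaining ingredient, the identification $Ω^{[p]}_X = j_* Ω^p_{X_{\reg}}$ for normal $X$, is local and purely analytic and goes through unchanged.
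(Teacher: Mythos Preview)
Your argument is correct and, in fact, slightly more economical than the paper's own derivation. The paper presents the corollary as an immediate consequence of \theoremref{thm:pullBack} --- the existence of a functorial pull-back for reflexive differentials on spaces with rational singularities --- combined with the Huber--J\"order description of $\h$-differentials. You instead bypass \theoremref{thm:pullBack} entirely and appeal directly to \corollaryref{cor:rational}, using the characterisation $\Omega^p_{\h}(X) \cong r_* \Omega^p_{\wtilde{X}}$ for normal $X$ from \cite{MR3272910}. Since the proof of \theoremref{thm:pullBack} itself rests on the extension theorem (see \parref{ssec:pbform}), your shortcut is genuine: it avoids the extra machinery needed to construct pull-back along morphisms whose image may lie inside the singular locus, which is irrelevant for the present statement. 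Your caveat about the algebraic versus analytic setting of \cite{MR3272910} is well taken; the paper is silent on this point.
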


The sheaf $Ω^p_{\h}$ of $\h$-differentials appears under a different name in the
work of Barlet, \cite{MR3899534}, who describes it in analytic terms (``integral
dependence equations for differential forms'') as a subsheaf of $Ω^{[p]}_X$ and
relates it to the normalised Nash transform.

\subsection{Applications}
\label{ssec:app}
\approvals{Christian & yes \\Stefan & yes}

The main results of this paper allow to study (sheaves of) reflexive
differential forms on singular spaces, by pulling them back to a resolution of
singularities.  At times, this allows to prove results of Hodge-theoretic
flavour in settings where classic Hodge-theory is not readily available.  We
give two immediate application of \theoremref{thm:main-new}, which can be proven
in just a few lines, following \cite[Sect.~6 and 7]{GKKP11} verbatim.

\begin{thm}[Closedness of forms and Bogomolov-Sommese vanishing]\label{thm:BSV}
  Let $X$ be a normal complex projective variety.  If $ω_X^{\GR}$ is reflexive,
  then any holomorphic differential form on $X_{\reg}$ is closed.  If
  $𝒜 ⊆ Ω^{[p]}_X$ is an invertible subsheaf, then $κ(𝒜) ≤ p$.  \qed
\end{thm}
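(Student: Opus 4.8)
The plan is to derive both assertions from \theoremref{thm:main-new}, together with classical Hodge theory and the classical Bogomolov--Sommese vanishing theorem applied on a smooth projective model; once \theoremref{thm:main-new} is available, everything is essentially formal.

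\emph{Reduction of the hypothesis.} Fix a resolution $r\colon \wtilde{X} → X$ that is an isomorphism over $X_{\reg}$ and for which $\wtilde{X}$ is projective; this is possible because $X$ is a projective variety, and the sheaf $r_* Ω^p_{\wtilde{X}}$ does not depend on the resolution anyway. Write $n = \dim X$ and $j\colon X_{\reg} ↪ X$ for the inclusion. Since $X$ is normal, $X_{\sing}$ has codimension $≥ 2$, so $j_* Ω^n_{X_{\reg}} = Ω^{[n]}_X$ is reflexive and equals the reflexive hull of $r_* Ω^n_{\wtilde{X}} = r_* ω_{\wtilde{X}} = ω_X^{\GR}$, the two sheaves agreeing over $X_{\reg}$. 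Hence $ω_X^{\GR}$ is reflexive if and only if the natural inclusion $r_* Ω^n_{\wtilde{X}} ↪ j_* Ω^n_{X_{\reg}}$ is an isomorphism. Applying \theoremref{thm:main-new} with $k = n$ then shows that $r_* Ω^p_{\wtilde{X}} ↪ j_* Ω^p_{X_{\reg}}$ is an isomorphism for every $0 ≤ p ≤ n$. In particular, on the level of global sections every holomorphic $p$-form on $X_{\reg}$ extends uniquely to a holomorphic $p$-form on $\wtilde{X}$, and $Ω^{[p]}_X = r_* Ω^p_{\wtilde{X}}$ as subsheaves of $j_* Ω^p_{X_{\reg}}$.

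\emph{Closedness.} Let $α ∈ H⁰(X_{\reg}, Ω^p_{X_{\reg}})$ and let $\wtilde{α} ∈ H⁰(\wtilde{X}, Ω^p_{\wtilde{X}})$ be the extension from the previous step. As $\wtilde{X}$ is a compact Kähler manifold, every global holomorphic form on $\wtilde{X}$ is $d$-closed: a holomorphic $p$-form is $\bar{∂}$-harmonic, hence $d$-harmonic on a compact Kähler manifold, hence closed. Thus $d \wtilde{α} = 0$; restricting to the dense open subset $X_{\reg}$, over which $\wtilde{α}$ restricts to $α$, gives $dα = 0$.

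\emph{Bogomolov--Sommese vanishing.} Let $𝒜 ⊆ Ω^{[p]}_X$ be an invertible subsheaf. Composing with the identification $Ω^{[p]}_X = r_* Ω^p_{\wtilde{X}}$ from the first step and using the adjunction $r^* \dashv r_*$, we obtain a morphism $φ\colon r^* 𝒜 → Ω^p_{\wtilde{X}}$. Over $r^{-1}(X_{\reg})$ this morphism agrees with the given inclusion $𝒜|_{X_{\reg}} ↪ Ω^p_{X_{\reg}}$, so $φ ≠ 0$; since $r^* 𝒜$ is invertible and $Ω^p_{\wtilde{X}}$ is torsion free, $φ$ is injective. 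Hence $r^* 𝒜$ is an invertible subsheaf of $Ω^p_{\wtilde{X}}$ on the smooth projective variety $\wtilde{X}$, and the classical Bogomolov--Sommese vanishing theorem yields $κ(r^* 𝒜) ≤ p$. Finally $r_* 𝒪_{\wtilde{X}} = 𝒪_X$ by normality of $X$, so the projection formula identifies $H⁰\bigl(X,\, 𝒜^{⊗ m}\bigr)$ with $H⁰\bigl(\wtilde{X},\, (r^* 𝒜)^{⊗ m}\bigr)$ for every $m ≥ 0$; therefore $κ(𝒜) = κ(r^* 𝒜) ≤ p$. (If one wishes to allow a locally free subsheaf $𝒜 ⊆ Ω^{[1]}_X$ of higher rank, with $κ(𝒜) := κ(\det 𝒜)$, then $\det 𝒜 ⊆ Ω^{[q]}_X$ with $q = \rank 𝒜$ reduces this to the case just treated.)

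\emph{Main obstacle.} There is no serious difficulty: the whole substance lies in \theoremref{thm:main-new}. The only points requiring a little care are the reflexivity bookkeeping that converts ``$ω_X^{\GR}$ reflexive'' into the hypothesis of \theoremref{thm:main-new}, and the choice of a \emph{projective} resolution so that both Hodge theory and the vanishing theorem are available on $\wtilde{X}$.
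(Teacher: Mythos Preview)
Your proof is correct and matches the paper's intended argument: the theorem is stated with a \qed\ and no proof, as an immediate application of \theoremref{thm:main-new}, and you have spelled out precisely the standard deduction via extension to a projective resolution, Hodge theory for closedness, and classical Bogomolov--Sommese for the Kodaira dimension bound.
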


\begin{thm}[Lipman-Zariski conjecture for weakly rational complex spaces]\label{thm:LZ}
  Let $X$ be a normal complex space where $ω_X^{\GR}$ is reflexive.  If the
  tangent sheaf $𝒯_X$ is locally free, then $X$ is smooth.  \qed
\end{thm}

To illustrate the range of applicability, we mention some recent (and much more
substantial) results that rely on the previously known extension theorem for klt
spaces by Greb, Kebekus, Kovács, and Peternell.

\begin{itemize}
\item The standard conjectures of minimal model theory predict that the minimal
  model of any projective manifold $X$ of Kodaira dimension $κ(X)=0$ is a
  singular space with vanishing first Chern class.  A series of papers
  \cite{GKP16, Dru16, MR3988092, MR3953506} extended the classic
  Beauville-Bogomolov Decomposition Theorem to the singular setting.  A recent
  paper \cite{MR3959097} studies degenerations of hyper-Kähler manifolds, using
  the minimal model program to reduce any degeneration to (singular) ``Kulikov
  type form''.

\item A series of papers \cite{GKPT19b, GKPT17, GKPT19} extends the classic
  non-abelian Hodge correspondence from Kähler manifolds to singular spaces.
  For klt spaces, this relates representations of the fundamental group with
  (singular) Higgs sheaves and yields new results on quasi-étale uniformisation,
  \cite{GKP13, LT18, GKT16}.
  
\item The extension results are used analysis in the study of (singular)
  Kähler-Einstein metrics, \cite{MR3283927, MR3996488}
  
\item The extension result is used in holomorphic dynamics and foliations, for
  the classification of foliations \cite{MR3273645, MR3033631}, but also in the
  study of compactifications of Drinfeld half-spaces over a finite field,
  \cite{MR3902334}.
\end{itemize}

\subsubsection*{Very recent work}

Our generalisation of the extension theorem to (possibly non-algebraic) complex
spaces with rational singularities (in \corollaryref{cor:rational} above) is
used in a crucial way in the recent work of Bakker and Lehn \cite{BL18} on
global moduli for symplectic varieties, where a ``symplectic variety'' is a
normal Kähler space $X$ with a nondegenerate holomorphic $2$-form on $X_{\reg}$
that extends holomorphically to any resolution of singularities.

\subsection{Earlier results}
\approvals{Christian & yes \\Stefan & yes}

As mentioned above, \theoremref{thm:main-new} was already known for spaces with
Kawamata log terminal (=klt) singularities, where $r_* ω_{\wtilde{X}}$ is
reflexive by definition \cite{GKK08, GKKP11}.  If one is only interested in
$p$-forms of small degree (compared to $\dim X$), there are earlier results of
Steenbrink-van Straten \cite{SS85} and Flenner \cite{Flenner88}.  In the special
case where $p=1$, Graf-Kovács relate the extension problem to the notion of Du
Bois singularities \cite{MR3247804}.  For morphisms between varieties with klt
singularities, the existence of a pull-back functor was shown in
\cite{MR3084424}.

We refer the reader to the paper \cite{GKKP11} or to the survey
\cite[§4]{Keb13a} for a more detailed introduction, and for remarks on the
history of the problem.  The book \cite[§8.5]{MR3057950} puts the results into
perspective.

\subsection{Acknowledgements}
\approvals{Christian & yes \\Stefan & yes}
  
Both authors would like to thank Mark de Cataldo, Bradley Drew, Philippe
Eyssidieux, Annette Huber-Klawitter, Florian Ivorra, Mihai Păun, and Mihnea Popa
for helpful discussions, and the Freiburg Institute for Advanced Studies for
support and for its stimulating atmosphere.  We thank Fabio Bernasconi for
showing us a counterexample to the extension theorem in positive characteristic,
and two anonymous referees for careful reading and for suggestions that help to
improve the exposition of this paper.

\subsubsection{Funding Information}
\approvals{Christian & yes \\Stefan & yes}
  
Stefan Kebekus gratefully acknowledges the support through a senior fellowship
of the Freiburg Institute of Advanced Studies (FRIAS).

During the preparation of this paper, Christian Schnell was supported by a
Mercator Fellowship from the Deutsche Forschungsgemeinschaft (DFG), by research
grant DMS-1404947 from the National Science Foundation (NSF), and by the Kavli
Institute for the Physics and Mathematics of the Universe (IPMU) through the
World Premier International Research Center Initiative (WPI initiative), MEXT,
Japan.

%
%
\svnid{$Id: S02-outline.tex 269 2020-01-20 11:28:53Z kebekus $}

\section{Techniques and main ideas}
\label{par:outline}
\subversionInfo
\approvals{Christian & yes \\Stefan & yes}

In this section, we sketch some of the ideas that go into the proof of
\theoremref{thm:main-new}.  The one-line summary is that it is a consequence of
the Decomposition Theorem \cite{BBD,Saito:MixedHodgeModules}.
\appendixref{app:cones} contains a short section on cones over projective
manifolds that illustrates the extension problem in a particularly transparent
case and explains why one might even expect a result such as
\theoremref{thm:main-new} to hold true.

\subsection{First proof of Theorem~\ref*{thm:main-new}}
\approvals{Christian & yes \\Stefan & yes}

We actually give two proofs for \theoremref{thm:main-new}.  The first proof (in
\parref{sec:pf-1-x}) relies on \theoremref{thm:extension-Kahler}, which
characterises those holomorphic forms on the regular locus of a complex space
that extend holomorphically to any resolution of singularities.  This proof is
very short and, shows clearly why the extension problem for $k$-forms also
controls the extension problem for $(k-1)$-forms (and hence for all forms of
smaller degrees).

\subsection{Second proof of Theorem~\ref*{thm:main-new}}
\label{sec:second-proof}
\approvals{Christian & yes \\Stefan & yes}

To illustrate the main ideas and techniques used in this paper, we are now going
to describe a second, more systematic proof for \theoremref{thm:main-new}.  It
is longer, and covers only the case where $k = n$, but it has the advantage of
producing a stronger result that has other applications (such as the proof of
the local vanishing conjecture).  We hope that the description below will make
it clear why the Decomposition Theorem is useful in studying the extension
problem for holomorphic forms.

\subsection*{Setup}
\approvals{Christian & yes \\Stefan & yes}

We fix a reduced and irreducible complex space $X$ of dimension $n$, and a
resolution of singularities $r \colon \wtilde{X} → X$.  We denote by
$j \colon X_{\reg} ↪ X$ the embedding of the set of regular points, and assume
that the natural morphism $r_* Ω_{\wtilde{X}}^n ↪ j_* Ω_{X_{\reg}}^n$ is an
isomorphism.  This means concretely that, locally on $X$, holomorphic $n$-forms
extend from the regular locus to the resolution.  Rather than using the given
resolution $\wtilde{X}$ to show that $p$-forms extend, we are going to prove
directly that the natural morphism $r_* Ω^p_{\wtilde{X}} ↪ j_* Ω^p_{X_{\reg}}$
is an isomorphism for every $p ∈ \{0, 1, …, n\}$.  This is a statement about $X$
itself, because the subsheaf $r_* Ω^p_{\wtilde{X}}$ does not depend on the
choice of resolution, as we have seen in the introduction.

\begin{note}
  Using independence of the resolution, we may assume without loss of generality
  that the resolution $r \colon \wtilde{X} → X$ is projective, and an
  isomorphism over $X_{\reg}$.  Such resolutions exist for every reduced complex
  space by \cite[Thm.~10.7]{BM96}.
\end{note}

\subsection*{Criteria for extension}
\approvals{Christian & yes \\Stefan & yes}

The first idea in the proof of \theoremref{thm:main-new} is to use
duality.\footnote{For the sake of exposition, we work directly on $X$ in this
  section.  In the actual proof, we only use duality for coherent sheaves on
  complex manifolds, after locally embedding $X$ into a complex manifold.} Let
$ω_X^• ∈ \Dbcoh(𝒪_X)$ denote the dualizing complex of $X$; on the
$n$-dimensional complex manifold $\wtilde{X}$, one has
$ω_{\wtilde{X}}^• ≅ ω_{\wtilde{X}} \decal{n}$.  The dualizing complex gives rise
to a simple numerical criterion for whether sections of a coherent $𝒪_X$-module
extend uniquely over $X_{\sing}$.  Indeed, \propositionref{prop:ccor-ng} -- or
rather its generalisation to singular spaces -- says that sections of a coherent
$𝒪_X$-module $ℱ$ extend uniquely over $X_{\sing}$ if and only if
\[
  \dim \bigl( X_{\sing} ∩ \Supp R^k \sHom_{𝒪_X}(ℱ, ω_X^•) \bigr) ≤ -(k+2) \quad
  \text{for every $k ∈ ℤ$.}
\]
When the support of $ℱ$ has pure dimension $n$, as is the case for the
$𝒪_X$-module $r_* Ω^p_{\wtilde{X}}$ that we are interested in, this amounts to
the following two conditions:
\begin{enumerate}
\item $\dim X_{\sing} ≤ n-2$
  
\item $\dim \Supp R^k \sHom_{𝒪_X}(ℱ, ω_X^{•}) ≤ -(k+2)$ for every $k ≥ -n+1$
\end{enumerate}

Unfortunately, there is no good way to compute the dual complex of
$r_* Ω^p_{\wtilde{X}}$.  But if we work instead with the entire complex
$\derR r_* Ω^p_{\wtilde{X}}$, things get better: Grothendieck duality
\cite{MR0308439}, applied to the proper holomorphic mapping
$r \colon \wtilde{X} → X$, yields
\begin{equation}\label{eq:outline-duality}
  \derR \sHom_{𝒪_X} \Bigl( \derR r_* Ω^p_{\wtilde{X}}, ω_X^• \Bigr) %
  \overset{\text{duality}}{≅} \derR r_* \derR \sHom \Bigl( Ω^p_{\wtilde{X}}, ω_{\wtilde{X}} \decal{n} \Bigr) %
  ≅ \derR r_* Ω^{n-p}_{\wtilde{X}} \decal{n}.
\end{equation}
In \propositionref{prop:reflexive-complex-ng}, we prove the following variant of
the criterion for section extension: if $K ∈ \Dbcoh(𝒪_X)$ is a complex with
$ℋ^j K = 0$ for $j < 0$, and if
\begin{equation}\label{eq:outline-inequality}
  \dim \bigl( X_{\sing} ∩ \Supp R^k \sHom_{𝒪_X}(K, ω_X^•) \bigr)
  ≤ -(k+2) \quad \text{for every $k ∈ ℤ$},
\end{equation}
then sections of the coherent $𝒪_X$-module $ℋ⁰ K$ extend uniquely over
$X_{\sing}$.  This observation transforms the problem of showing that sections
of $r_* Ω^p_{\wtilde{X}}$ extend uniquely over $X_{\sing}$ into the problem of
showing that
\[
  \dim \Supp R^k r_* Ω^{n-p}_{\wtilde{X}} ≤ n-2-k \quad \text{for every
    $k ≥ 1$.}
\]
In summary, we see that a good upper bound for the dimension of the support of
$R^k r_* Ω^{n-p}_{\wtilde{X}}$ would be enough to conclude that $p$-forms
extend.  Or, to put it more simply, ``vanishing implies extension''.

\subsection*{Hodge modules and the Decomposition Theorem}
\approvals{Christian & yes \\Stefan & yes}

The problem with the approach outlined above is that the complex
$\derR r_* Ω^{n-p}_{\wtilde{X}}$ has too many potentially nonzero cohomology
sheaves, which makes it hard to prove the required vanishing.  For example, if
the preimage of a singular point $x ∈ X_{\sing}$ is a divisor in the resolution
$\wtilde{X}$, then $R^{n-1} r_* Ω^{n-p}_{\wtilde{X}}$ might be supported at $x$,
violating the inequality in \eqref{eq:outline-inequality}.  Since we are not
assuming that the singularities of $X$ are klt, we also do not have enough
information about the fibres of $r \colon \wtilde{X} → X$ to prove vanishing by
restricting to fibres as in \cite[§18]{GKKP11}.
 
The second idea in the proof, which completely circumvents this problem, is to
relate the $𝒪_X$-module $r_* Ω^p_{\wtilde{X}}$ to the intersection complex of
$X$, viewed as a polarisable Hodge module\footnote{Since the intersection
  complex is intrinsic to $X$, this also serves to explain once again why the
  $𝒪_X$-module $r_* Ω^p_{\wtilde{X}}$ does not depend on the choice of
  resolution.}.  In the process, we make use of the Decomposition Theorem.
Roughly speaking, the Decomposition Theorem decomposes the push-forward of the
constant sheaf into a ``generic part'' (that only depends on $X$) and a
``special part'' (that is affected by the positive-dimensional fibres of $r$).
The upshot is that the generic part carries all the relevant information, and
that the positive-dimensional fibres of $r$ are completely irrelevant for the
extension problem.  To be more precise, the Decomposition Theorem for the
projective morphism $r$, together with Saito's formalism of Hodge modules, leads
to a (non-canonical) decomposition
\begin{equation}\label{eq:decomposition}
  \derR r_* Ω^p_{\wtilde{X}} ≅ K_p ⊕ R_p
\end{equation}
into two complexes $K_p, R_p ∈ \Dbcoh(𝒪_X)$ with the following properties:
\begin{enumerate}
\item\label{il:emerson} The support of $R_p$ is contained in the singular locus
  $X_{\sing}$.
  
\item\label{il:lake} One has $ℋ^k K_p = 0$ for $k ≥ n-p+1$.
  
\item\label{il:palmer} The complexes $K_p$ and $K_{n-p}$ are related by
  Grothendieck duality in the same way that the complexes
  $\derR r_* Ω^p_{\wtilde{X}}$ and $\derR r_* Ω^{n-p}_{\wtilde{X}}$ are related
  in \eqref{eq:outline-duality}.  More precisely, one has
  $\derR \sHom_{𝒪_X}(K_p, ω_X^•) ≅ K_{n-p} \decal{n}$.
\end{enumerate}

\subsection*{An improved criterion}
\approvals{Christian & yes \\Stefan & yes}

As an immediate consequence of the decomposition in \eqref{eq:decomposition}, we
obtain a decomposition of the $0$-th cohomology sheaves
\[
  r_* Ω^p_{\wtilde{X}} ≅ ℋ⁰ K_p ⊕ ℋ⁰ R_p.
\]
Because $ℋ⁰ R_p$ is supported inside $X_{\sing}$, whereas $Ω^p_{\wtilde{X}}$ is
torsion free, we deduce that $ℋ⁰ R_p = 0$, and hence that
$r_* Ω^p_{\wtilde{X}} ≅ ℋ⁰ K_p$.  According to the criterion for section
extension in \propositionref{prop:reflexive-complex-ng}, now applied to the
complex $K_p$, all we therefore need for sections of $r_* Ω^p_{\wtilde{X}}$ to
extend uniquely over $X_{\sing}$ is to establish the collection of inequalities
\begin{equation}\label{eq:outline-inequalities-Kp}
  \dim \bigl( X_{\sing} ∩ \Supp ℋ^k K_{n-p} \bigr) ≤ n-2-k
  \quad \text{for all $k ∈ ℤ$.}
\end{equation}
Property~\ref{il:lake} makes this a much more manageable task, compared to the
analogous problem for the original complex $\derR r_* Ω^{n-p}_{\wtilde{X}}$.  We
stress that, except in the case $p = n$, these inequalities are stronger than
asking that sections of $r_* Ω^p_{\wtilde{X}}$ extend uniquely over $X_{\sing}$.

\subsection*{The case of isolated singularities}
\approvals{Christian & yes \\Stefan & yes}

We conclude this outline with a brief sketch how
\eqref{eq:outline-inequalities-Kp} is proved in the case of isolated
singularities.  In \parref{par:pure}, we more or less reduce the general case to
this special case by locally cutting with hypersurfaces; note that this works
because we are proving a stronger statement than just extension of $p$-forms.

Because of Property~\ref{il:lake}, we have $ℋ^k K_{n-p} = 0$ for $k ≥ p+1$.
Since $\dim X_{\sing} = 0$, the inequality in \eqref{eq:outline-inequalities-Kp}
is therefore true by default as long as $p ≤ n-2$.  In this way, we recover the
result of Steenbrink and van Straten \cite[Thm.~1.3]{SS85} mentioned in the
introduction: on an $n$-dimensional complex space with isolated singularities,
$p$-forms extend for every $p ≤ n-2$.  This only leaves two cases, namely
$p = n-1$ and $p = n$.

The case $p = n$ is covered by the assumption that $n$-forms extend.  We have
$K_n ≅ ℋ⁰ K_n ≅ r_* Ω_{\wtilde{X}}^n$, and sections of $r_* Ω_{\wtilde{X}}^n$
extend uniquely over $X_{\sing}$.  Because of the isomorphism
$\derR \sHom_{𝒪_X} \bigl( K_n, ω_X^• \bigr) ≅ K_0 \decal{n}$,
\propositionref{prop:ccor-ng} gives us the desired inequalities
\[
  \dim \bigl( X_{\sing} ∩ \Supp ℋ^k K_0 \bigr) %
  = \dim \Bigl( X_{\sing} ∩ \Supp R^{k-n} \sHom_{𝒪_X}(K_n, ω_X^•) \Bigr) %
  ≤ n-k-2
\]
for every $k ∈ ℤ$.

In the other case $p = n-1$, the inequalities in
\eqref{eq:outline-inequalities-Kp} are easily seen to be equivalent to the
single vanishing $ℋ^{n-1} K_1 = 0$.  Using the fact that $ℋ^k K_0 = 0$ for
$k ≥ n-1$, one shows that the $𝒪_X$-module $ℋ^{n-1} K_1$ is a quotient of the
(constructible) $0$-th cohomology sheaf of the intersection complex of $X$.  But
the intersection complex is known to be concentrated in strictly negative
degrees, and therefore $ℋ^{n-1} K_1 = 0$.

%
%
\svnid{$Id: S03-notation.tex 269 2020-01-20 11:28:53Z kebekus $}

\section{Conventions}
\subversionInfo
\label{sect:notation}

\subsection{Global conventions}
\approvals{Christian & yes \\Stefan & yes}

Throughout this paper, all complex spaces are assumed to be countable at
infinity.  All schemes and algebraic varieties are assumed to be defined over
the field of complex numbers.  We follow the notation used in the standard
reference books \cite{Ha77, CAS}.  In particular, varieties are assumed to be
irreducible, and the support of a coherent sheaf $ℱ$ on $X$ is a closed subset
of $X$, with the induced reduced structure.  For clarity, we will always say
explicitly when a complex space needs to be reduced, irreducible, or of pure
dimension.

\subsection{$𝒟$-modules}
\approvals{Christian & yes \\Stefan & yes}

Unless otherwise noted, we use left $𝒟$-modules throughout this paper.  This
choice agrees with the notation of the paper \cite{saito-vanishing}, which we
will frequently cite.  It is, however, incompatible with the conventions of the
reference papers \cite{Saito:HodgeModules, Saito:MixedHodgeModules} and of the
survey \cite{Schnell14a} that use right $𝒟$-modules throughout.  We refer the
reader to \cite[§A.5]{saito-vanishing}, where the conversion rules for left and
right $𝒟$-modules are recalled.

\subsection{Complexes}
\approvals{Christian & yes \\Stefan & yes}

Let $K$ be a complex of sheaves of Abelian groups on a topological space, for
example a complex of sheaves of $𝒪_X$-modules (or $𝒟_X$-modules) on a complex
manifold $X$.  We use the notation $ℋ^j K$ for the $j$-th cohomology sheaf of
the complex.  We use the notation $K \decal{n}$ for the shift of $K$.  We have
$ℋ^j K \decal{n} = ℋ^{j+n} K$, and all differentials in the shifted complex are
multiplied by $(-1)^n$.

\subsection{The dualizing complex}
\approvals{Christian & yes \\Stefan & yes}

If $X$ is any complex space, we write $ω_X^• ∈ \Dbcoh(𝒪_X)$ for the dualizing
complex as introduced by Ramis and Ruget, see
\cite[VII~Thm.~2.6]{BS76}\Preprint{\ and the original reference
  \cite{MR0279338}}.  Given a complex of $𝒪_X$-modules $K ∈ \Dbcoh(𝒪_X)$ with
bounded coherent cohomology, we call the complex
$\derR \sHom_{𝒪_X}(K, ω_X^•) ∈ \Dbcoh(𝒪_X)$ the \define{dual complex} of $K$.

\begin{note}
  When $X$ is a complex manifold of pure dimension, one has
  $ω_X^• ≅ ω_X \decal{\dim X}$.
\end{note}

\subsection{Reflexive sheaves on normal spaces}
\label{ssec:ccorS}
\approvals{Christian & yes \\Stefan & yes}

Let $X$ be a normal complex space, and $ℱ$ a coherent $𝒪_X$-module.  Recall that
$ℱ$ is called \define{reflexive} if the natural morphism from $ℱ$ to its double
dual $ℱ^{**} := \sHom_{𝒪_X} \bigl( \sHom_{𝒪_X}(ℱ, 𝒪_X), 𝒪_X \bigr)$ is an
isomorphism.  The following notation will be used.

\begin{notation}[Reflexive hull]
  Given a normal complex space $X$ and a coherent sheaf $ℱ$ on $X$, write
  $Ω^{[p]}_X := \bigl(Ω^p_X \bigr)^{**}$, $ℱ^{[m]} := \bigl(ℰ^{⊗ m} \bigr)^{**}$
  and $\det ℱ := \bigl( Λ^{\rank ℰ} ℱ \bigr)^{**}$.  Given any morphism
  $f \colon Y → X$ of normal complex spaces, write $f^{[*]} ℱ := (f^* ℱ)^{**}$,
  etc.  Ditto for quasi-projective varieties.
\end{notation}

\phantomsection\addcontentsline{toc}{part}{Mixed Hodge modules}
%
%
\svnid{$Id: S04-MHM.tex 270 2020-01-20 11:33:46Z kebekus $}

\section{Mixed Hodge modules}
\subversionInfo

\subsection{Mixed Hodge modules}
\approvals{Christian & yes \\Stefan & yes}

For the convenience of the reader, we briefly recall a number of facts
concerning mixed Hodge modules, and lay down the notation that will be used
throughout.  In a nutshell, a (mixed) Hodge module is something like a variation
of (mixed) Hodge structure with singularities, in the sense that the vector
bundles with connection (respectively locally constant sheaves) in the
definition of a variation of Hodge structure are replaced by regular holonomic
$𝒟$-modules (respectively perverse sheaves).  The standard references for mixed
Hodge modules are the original papers by Saito \cite{Saito:HodgeModules,
  Saito:MixedHodgeModules}.  The survey articles \cite{Saito:Introduction,
  Saito:Theory, Schnell14a} review some aspects of the theory in a smaller
number of pages.  A good reference for $ 𝒟$-modules is the book \cite{HTT}.  We
consider the following setting throughout the present section.

\begin{setting}\label{set:MHM}
  Assume that a complex manifold $Y$ of pure dimension $d$ and a
  graded-polarisable mixed Hodge module $M$ on $Y$ are given.
\end{setting}

More precisely, a \emph{polarisable Hodge module} $M$ of weight $w$ on $Y$ has
three components: a regular holonomic left $𝒟_Y$-module $\Mmod$, called the
\emph{underlying $𝒟$-module}; an increasing good filtration $F_{•} \Mmod$
by coherent $𝒪_Y$-modules, called the \emph{Hodge filtration}; and a perverse
sheaf of $ℚ$-vector spaces $\rat M$, called the \emph{underlying perverse
  sheaf}.  These are subject to a number of conditions, including the existence
of a polarisation, which together ensure that $M$ is determined by finitely many
polarisable variations of Hodge structure of weight $w - \dim Z_j$ on locally
closed submanifolds $Z_j ⊆ Y$.  A \emph{graded-polarisable mixed Hodge
  module} $M$ on $Y$ is an object of the same kind, but with an additional
increasing filtration $W_{•} M$, called the \emph{weight filtration}, such
that each subquotient
\[
  \gr_ℓ^W \!  M := \factor{W_ℓ M}{W_{ℓ-1}} M
\]
is a polarisable Hodge module of weight $ℓ$.  The \define{support} of $M$,
denoted by $\Supp M$, is by definition the support of the $𝒟_Y$-module $\Mmod$
(or, equivalently, of the perverse sheaf $\rat M$).

\begin{notation}[Category of mixed Hodge modules]
  In \settingref{set:MHM}, we denote by $\MHM(Y)$ the Abelian category of
  \define{graded-polarisable mixed Hodge modules} on $Y$, and by $\HM(Y, w)$ the
  Abelian category of \define{polarisable Hodge modules} of weight $w$.
\end{notation}

\begin{note}
  In \settingref{set:MHM}, we have
  \[
    \gr_ℓ^W \!  M := \factor{W_ℓ M}{W_{ℓ-1}} M ∈ \HM(Y, ℓ), \quad \text{for
      every } ℓ ∈ ℤ.
  \]
  Conversely, every polarisable Hodge module $N ∈ \HM(Y, w)$ may be viewed as a
  graded-polarisable mixed Hodge module $N$ with $W_{w-1} N = 0$ and
  $W_w N = N$.
\end{note}

\subsubsection{Tate twist}
\approvals{Christian & yes \\Stefan & yes}

Maintain \settingref{set:MHM}.  Given any integer $k ∈ ℤ$, define
$ℚ(k) = (2 π i)^k ℚ ⊆ ℂ$.  The \define{Tate twist} $M(k)$ is the mixed Hodge
module whose underlying perverse sheaf is $ℚ(k) ⊗ \rat M$, whose underlying
filtered $𝒟_Y$-module is $(\Mmod, F_{• - k} \Mmod)$, and whose weight filtration
is given by $W_ℓ \, M(k) = W_{ℓ+2k} M$.  When $M$ is pure of weight $w$, it
follows that $M(k)$ is again pure of weight $w-2k$.

\subsubsection{Decomposition by strict support}
\approvals{Christian & yes \\Stefan & yes}

In \settingref{set:MHM}, one says that the mixed Hodge module $M$ has
\define{strict support} if the support of every nontrivial subquotient of $M$ is
equal to $\Supp M$.  Ditto for perverse sheaves and regular holonomic
$𝒟_Y$-modules.  Note that the strict support property is generally \emph{not}
preserved by restriction to open subsets; for example, $\Supp M$ may be globally
irreducible, but locally reducible.  We use the symbol $\HM_X(Y, w)$ to denote
the Abelian category of polarisable Hodge modules on $Y$ of weight $w$ with
strict support $X$; this is a full subcategory of $\HM(Y,w)$.

If $M$ is a polarisable Hodge module, then $M$ has strict support if and only if
the support of every nontrivial subobject (or quotient object) is equal to
$\Supp M$; the reason is that polarisable Hodge modules are semisimple
\cite[Cor.~5.2.13]{Saito:HodgeModules}.  By definition, every polarisable Hodge
module admits, on every open subset of $X$, a \define{decomposition by strict
  support} as a (locally finite) direct sum of polarisable Hodge modules with
strict support \cite[§5.1.6]{Saito:HodgeModules}.

\subsubsection{Weight filtration and dual module}
\approvals{Christian & yes \\Stefan & yes}

In \settingref{set:MHM}, we write $M' = 𝔻 M ∈ \MHM(Y)$ for the \define{dual
  mixed Hodge module}.  This is again a graded-polarisable mixed Hodge module
\cite[Prop.~2.6]{Saito:MixedHodgeModules}, with the property that
\[
  𝔻 \bigl( \gr_ℓ^W \!  M \bigr) ≅ \gr_{-ℓ}^W 𝔻 M.
\]
In particular, if $M$ is pure of weight $ℓ$, then $𝔻 M$ is again pure of weight
$-ℓ$.  The underlying perverse sheaf $\rat M'$ is isomorphic to the Verdier dual
\cite[Def.~4.5.2]{HTT} of $\rat M$.  The regular holonomic left $𝒟_Y$-module
$(\Mmod', F_• \Mmod')$ underlying $M' = 𝔻 M$ is isomorphic to the holonomic dual
\cite[Def.~2.6.1]{HTT}
\[
  R^d \sHom_{𝒟_Y} \bigl( ω_Y ⊗_{𝒪_Y} \Mmod, 𝒟_Y \bigr)
\]
of the regular holonomic left $𝒟_Y$-module $\Mmod$.

\subsubsection{The de Rham complex}
\approvals{Christian & yes \\Stefan & yes}

In \settingref{set:MHM}, the complex of sheaves of $ℂ$-vector spaces
\[
  \DR(\Mmod) = \Bigl\lbrack \Mmod → Ω¹_Y ⊗_{𝒪_Y} \Mmod → \dotsb → Ω^d_Y ⊗_{𝒪_Y}
  \Mmod \Bigr\rbrack \decal{d},
\]
concentrated in degrees $-d, …, 0$ is called the \define{de Rham complex} of
$\Mmod$.  Since $\Mmod$ is a regular holonomic $𝒟_Y$-module, the de Rham complex
$\DR(\Mmod)$ has constructible cohomology sheaves, and is in fact a perverse
sheaf on $Y$ by a theorem of Kashiwara \cite[Thm.~4.6.6]{HTT}.  In particular,
it is always \define{semiperverse}, which means concretely that
\[
  \dim \Supp ℋ^j \DR(\Mmod) ≤ -j \quad \text{for every $j ∈ ℤ$.}
\]
The perverse sheaf $\rat M$ and the de Rham complex of $\Mmod$ are related
through an isomorphism $ℂ ⊗_ℚ \rat M ≅ \DR(\Mmod)$ that is part of the data of a
mixed Hodge module.

\subsubsection{Subquotients of the de Rham complex}
\label{ssec:sqdR}
\approvals{Christian & yes \\Stefan & yes}

Assume \settingref{set:MHM}.  The filtration $F_• \Mmod$ induces an increasing
filtration on the de Rham complex by\CounterStep
\begin{equation}\label{eq:Aw}
  F_p \DR(\Mmod) = \Bigl\lbrack F_p \Mmod → Ω¹_Y ⊗_{𝒪_Y} F_{p+1} \Mmod → \dotsb
  → Ω^d_Y ⊗_{𝒪_Y} F_{p+d}\Mmod \Bigr\rbrack \decal{d}.
\end{equation}
The $p$-th subquotient of this filtration is the complex of $𝒪_Y$-modules
\begin{equation}\label{eq:Ax}
  \gr_p^F \DR(\Mmod) = \Bigl\lbrack \gr_p^F \!  \Mmod → Ω¹_Y ⊗_{𝒪_Y} \gr_{p+1}^F
  \!  \Mmod → \dotsb → Ω^d_Y ⊗_{𝒪_Y} \gr_{p+d}^F \!  \Mmod \Bigr\rbrack
  \decal{d}.
\end{equation}
For a more detailed discussion of these complexes, see for example
\cite[§7]{saito-vanishing}.  The following simple lemma will be useful later.

\begin{lem}\label{lem:5-2}
  In \settingref{set:MHM}, if $\gr_p^F \DR(\Mmod)$ is acyclic for every $p ≤ m$,
  then $F_{m + \dim Y} \Mmod = 0$.
\end{lem}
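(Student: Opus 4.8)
The plan is to read the conclusion off directly from the explicit description of the complex $\gr_p^F \DR(\Mmod)$ given in \eqref{eq:Ax}. Its term in cohomological degree $k - \dim Y$ is $Ω^k_Y ⊗_{𝒪_Y} \gr_{p+k}^F \Mmod$ for $0 ≤ k ≤ \dim Y$, so in particular its rightmost term, sitting in degree $0$, is $Ω^{\dim Y}_Y ⊗_{𝒪_Y} \gr_{p + \dim Y}^F \Mmod$. The key observation is this: if $p$ is chosen so that the lower Hodge pieces $\gr_p^F \Mmod, \dotsc, \gr_{p + \dim Y - 1}^F \Mmod$ all vanish while $\gr_{p + \dim Y}^F \Mmod$ does not, then $\gr_p^F \DR(\Mmod)$ collapses to a single nonzero coherent sheaf concentrated in degree $0$, and therefore cannot be acyclic.

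To turn this into a proof I would argue by contradiction. Suppose $F_{m + \dim Y}\Mmod ≠ 0$ and choose a point $y ∈ Y$ with $\bigl( F_{m + \dim Y}\Mmod \bigr)_y ≠ 0$. The Hodge filtration of a mixed Hodge module is bounded below, so there is a smallest integer $q_0$ with $(F_{q_0}\Mmod)_y ≠ 0$; then $q_0 ≤ m + \dim Y$, the stalk $(\gr_{q_0}^F\Mmod)_y$ is nonzero, and $(\gr_q^F\Mmod)_y = 0$ for every $q < q_0$. Now set $p := q_0 - \dim Y$, so that $p ≤ m$. In $\gr_p^F\DR(\Mmod)$ every term $Ω^k_Y ⊗_{𝒪_Y} \gr_{p+k}^F\Mmod$ with $k < \dim Y$ has vanishing stalk at $y$, because $p + k < q_0$, while the degree-$0$ term $Ω^{\dim Y}_Y ⊗_{𝒪_Y} \gr_{q_0}^F\Mmod$ does not, since $Ω^{\dim Y}_Y$ is an invertible sheaf. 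Hence the stalk at $y$ of $\gr_p^F\DR(\Mmod)$ is a complex whose only nonzero term sits in degree $0$, so $\sH^0\bigl( \gr_p^F\DR(\Mmod) \bigr)_y ≠ 0$ and $\gr_p^F\DR(\Mmod)$ is not acyclic — contradicting the hypothesis, because $p ≤ m$. This forces $F_{m + \dim Y}\Mmod = 0$.

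I do not expect a substantial obstacle here: once the degree bookkeeping in \eqref{eq:Ax} is set up, the argument is a direct inspection of that complex. The two points that need a word of justification are that the Hodge filtration $F_\bullet\Mmod$ is bounded below — which is what makes the stalkwise minimum $q_0$ well defined — and that acyclicity of a complex of sheaves may be tested on stalks (cohomology commuting with the filtered colimit defining the stalk), which is what legitimises performing the vanishing computation pointwise at $y$.
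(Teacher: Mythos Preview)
Your proof is correct and follows essentially the same approach as the paper's: both use the explicit description \eqref{eq:Ax} together with the fact that a good filtration is locally bounded below, and observe that acyclicity forces the degree-$0$ term $\Omega_Y^{\dim Y} \otimes \gr_{p+\dim Y}^F \Mmod$ to vanish once all lower graded pieces already do. The only cosmetic difference is that the paper argues by induction on $p$ starting from a local $p_0$ with $F_{p_0}\Mmod = 0$, while you phrase it as a contradiction by picking the minimal index $q_0$ at a stalk; these are equivalent reformulations of the same idea.
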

\begin{proof}
  Since $F_• \Mmod$ is a good filtration, there is, at least locally on $Y$, an
  integer $p_0$ such that $F_{p_0} \Mmod = 0$ and, hence, $\gr_p^F \Mmod = 0$
  for every $p ≤ p_0$.  To show that $F_{m + d} \Mmod = 0$, it is therefore
  enough to prove that $\gr_p^F \Mmod = 0$ for every $p ≤ m+d$.  Because
  $\gr_p^F \DR(\Mmod)$ is acyclic for $p ≤ m$, this follows from \eqref{eq:Ax}
  by induction on $p ≥ p_0$.
\end{proof}

\subsection{Duality}
\approvals{Christian & yes \\Stefan & yes}

Next, we review how the duality functor for mixed Hodge modules affects the
subquotients of the de Rham complex.  The following nontrivial result by Saito
shows that the dual complex of $\gr_p^F \DR(\Mmod)$ is nothing but
$\gr_{-p}^F \DR(\Mmod')$.

\begin{prop}[Duality, mixed case]\label{prop:duality-mixed}
  Assume \settingref{set:MHM}.  Then,
  \[
    \derR \sHom_{𝒪_Y} \Bigl( \gr_p^F \DR(\Mmod), ω^•_Y \Bigr) ≅ \gr_{-p}^F
    \DR(\Mmod') \quad \text{for every $p ∈ ℤ$},
  \]
  where $(\Mmod', F_• \Mmod')$ is the filtered $𝒟_Y$-module underlying
  $M' = 𝔻 M$.
\end{prop}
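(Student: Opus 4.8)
The plan is to reduce the statement to a local, linear-algebra-flavored computation using Grothendieck–Serre duality on the complex manifold $Y$, combined with Saito's duality theorem for the filtered de Rham complex of a mixed Hodge module. The key external input is the isomorphism of filtered $𝒟_Y$-modules relating $(\Mmod, F_•\Mmod)$ with $(\Mmod', F_•\Mmod')$, where $M' = 𝔻 M$; Saito proves that the holonomic dual $\Mmod' = R^d\sHom_{𝒟_Y}(ω_Y ⊗_{𝒪_Y} \Mmod, 𝒟_Y)$ carries the Hodge filtration that is compatible, under the de Rham functor, with Verdier duality of the underlying perverse sheaves. Concretely, I expect the argument to proceed by resolving $\Mmod$ by induced $𝒟_Y$-modules $𝒟_Y ⊗_{𝒪_Y} (-)$ (the filtered Spencer-type resolution), and then observing that the filtered de Rham complex $F_p\DR(\Mmod)$ can be computed from this resolution in a way that makes the $𝒪_Y$-linear dual visible.

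First I would recall the Spencer resolution: there is a filtered quasi-isomorphism from the complex
\[
  \Bigl\lbrack 𝒟_Y ⊗_{𝒪_Y} Λ^d Θ_Y → \dotsb → 𝒟_Y ⊗_{𝒪_Y} Θ_Y → 𝒟_Y \Bigr\rbrack
\]
(with suitable filtration shifts by the de Rham degree) to $𝒪_Y$, and tensoring over $𝒟_Y$ with $\Mmod$ recovers, up to the shift $[d]$, the de Rham complex $\DR(\Mmod)$, while keeping track of the filtration $F_•$ it recovers $F_p\DR(\Mmod)$ as in \eqref{eq:Aw}. Dually, applying $\derR\sHom_{𝒟_Y}(-,𝒟_Y)$ to this resolution produces the co-Spencer complex computing holonomic duality, and its associated graded pieces are again de Rham-type complexes built from $\gr^F$. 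The main step is then to run Grothendieck duality $\derR\sHom_{𝒪_Y}(-,ω_Y^•)$ through these identifications. Since $\gr_p^F\DR(\Mmod)$ is a bounded complex of coherent (indeed locally free, after choosing local frames) $𝒪_Y$-modules, its $𝒪_Y$-dual is straightforward termwise: $\sHom_{𝒪_Y}(Ω^i_Y ⊗ \gr_{p+i}^F\Mmod, 𝒪_Y) ≅ Θ^i_Y ⊗ (\gr_{p+i}^F\Mmod)^\vee$, and one has to match this, after the shift and the sign conventions of $\decal{d}$, with the corresponding term of $\gr_{-p}^F\DR(\Mmod')$. The compatibility $𝔻(\gr_ℓ^W M) ≅ \gr_{-ℓ}^W 𝔻 M$ and the behaviour of the Hodge filtration under holonomic duality — specifically that $F_q\Mmod'$ is built from $\gr_{-q}^F\Mmod$ and its $𝒪_Y$-duals via the structure of $\Mmod' = R^d\sHom_{𝒟_Y}(ω_Y ⊗ \Mmod, 𝒟_Y)$ — is precisely what makes the index $p$ flip to $-p$.

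The hard part will be the careful bookkeeping of filtration indices and shifts: the Hodge filtration on $\Mmod'$ is defined via the holonomic dual, which involves $ω_Y$ (hence a shift by the canonical degree) and a homological shift by $d = \dim Y$, and one must verify that these combine to turn $F_p\DR(\Mmod)$ into precisely $\gr_{-p}^F\DR(\Mmod')[d]$ under $\derR\sHom_{𝒪_Y}(-,ω_Y^•) = \derR\sHom_{𝒪_Y}(-,ω_Y\decal{d})$, with no off-by-one error. I would handle this by working first étale-locally (or on a coordinate polydisc) where $\Mmod$ admits a filtered free resolution by induced modules, checking the identification on associated graded pieces there, and then noting that both sides are defined functorially and glue. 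The statement is essentially a repackaging of Saito's self-duality of the de Rham functor for pure Hodge modules (e.g. \cite[§2.4]{Saito:MixedHodgeModules} in the pure case) extended to the filtered level; in the mixed case one simply applies it to each weight-graded piece using $𝔻(\gr_ℓ^W M) ≅ \gr_{-ℓ}^W 𝔻 M$, and the extension problem for the filtration is formal because the $\gr^F$ functor is exact on the relevant category. So the only genuine obstacle is the indexing, and once that is pinned down the isomorphism is forced termwise.
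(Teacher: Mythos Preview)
Your proposal has a genuine gap. You write that ``$\gr_p^F\DR(\Mmod)$ is a bounded complex of coherent (indeed locally free, after choosing local frames) $𝒪_Y$-modules, its $𝒪_Y$-dual is straightforward termwise,'' and later that ``the only genuine obstacle is the indexing.'' This is false: the terms $Ω_Y^i ⊗ \gr_{p+i}^F \Mmod$ are \emph{not} locally free over $𝒪_Y$ in general. Already when $\Supp M = X ⊊ Y$, every $\gr_k^F \Mmod$ is supported on $X$ and cannot be locally free on $Y$; even when the support is all of $Y$, the graded pieces of the Hodge filtration are typically only coherent. So the termwise $𝒪_Y$-dual does not compute $\derR\sHom_{𝒪_Y}(-, ω_Y^•)$, and your identification $(\gr_{p+i}^F\Mmod)^\vee$ with the corresponding piece on the dual side has no meaning without further input.

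The missing ingredient is exactly what the paper singles out: the fact that $\gr_•^F \Mmod$ is \emph{Cohen--Macaulay} as a module over $\gr_•^F 𝒟_Y ≅ \Sym 𝒯_Y$. This is a nontrivial consequence of $(\Mmod, F_•\Mmod)$ underlying a mixed Hodge module (it fails for an arbitrary well-filtered holonomic $𝒟$-module), and it is precisely what forces $\derR\sHom_{\Sym 𝒯_Y}(\gr_•^F \Mmod, \Sym 𝒯_Y)$ to be concentrated in a single degree, so that the graded dual is again the associated graded of a good filtration on the holonomic dual. Without this Cohen--Macaulay property there would be higher $\sE\!xt$ contributions and no clean identification $\gr_p^F \leftrightarrow \gr_{-p}^F$. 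Your Spencer-resolution bookkeeping and the reduction to the pure case via the weight filtration are reasonable organizational devices, but they cannot substitute for this homological input; the result is not ``forced termwise'' once the indices are pinned down. See \cite[§2.4.3]{Saito:HodgeModules} or \cite[Lem.~7.4]{saito-vanishing} for how the Cohen--Macaulay property enters.
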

\begin{proof}
  This is proved in \cite[§2.4.3]{Saito:HodgeModules}; see also
  \cite[Lem.~7.4]{saito-vanishing}.  The crucial point in the proof is that
  $\gr_•^F \Mmod$ is a Cohen-Macaulay module over $\gr_•^F 𝒟_Y$, due to the fact
  that $(\Mmod, F_• \Mmod)$ underlies a mixed Hodge module.
\end{proof}

In the special case where $M$ is a polarisable Hodge module, the de Rham complex
is self-dual, up to a shift in the filtration.  Duality therefore relates
different subquotients of $\DR(\Mmod)$, in a way that will be very useful for
the proof of \theoremref{thm:main-new}.

\begin{cor}[Duality, pure case]\label{cor:duality-pure}
  Let $M ∈ \HM(Y, w)$ be a polarisable Hodge module of weight $w$ on a complex
  manifold $Y$.  Any polarisation on $M$ induces an isomorphism
  \[
    \derR \sHom_{𝒪_Y} \Bigl( \gr_p^F \DR(\Mmod), ω^•_Y \Bigr) ≅ \gr_{-p-w}^F
    \DR(\Mmod) \quad\text{for every $p ∈ ℤ$.}
  \]
\end{cor}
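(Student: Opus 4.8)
The plan is to derive the corollary from \propositionref{prop:duality-mixed} by a purely formal manipulation, using only the definition of a polarisation together with the bookkeeping rules for the Tate twist recalled earlier in this section. Indeed, \propositionref{prop:duality-mixed}, applied verbatim to the graded-polarisable mixed Hodge module $M$, already identifies $\derR\sHom_{\sO_Y}\bigl(\gr_p^F\DR(\Mmod),\omega^\bullet_Y\bigr)$ with $\gr_{-p}^F\DR(\Mmod')$, where $(\Mmod',F_\bullet\Mmod')$ is the filtered $\sD_Y$-module underlying $M'=\mathbb{D}M$. So the entire content of the corollary is the identification of $\gr_{-p}^F\DR(\Mmod')$ with $\gr_{-p-w}^F\DR(\Mmod)$, which is where the polarisation enters.

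The first step is to recall that a polarisation on a polarisable Hodge module $M\in\HM(Y,w)$ is, by definition, an isomorphism of Hodge modules $M\cong(\mathbb{D}M)(-w)$. Passing to underlying filtered $\sD_Y$-modules and using that the Tate twist $(-)(-w)$ shifts the Hodge filtration by $F_{\bullet-(-w)}=F_{\bullet+w}$, this becomes a $\sD_Y$-linear isomorphism $\Mmod\cong\Mmod'$ under which $F_p\Mmod$ corresponds to $F_{p+w}\Mmod'$. Reindexing, $F_p\Mmod'\cong F_{p-w}\Mmod$ for every $p$, and hence $\gr_q^F\Mmod'\cong\gr_{q-w}^F\Mmod$ for every $q\in\bZ$.

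The second step is to propagate this shift through the de Rham complex. Each term of $\gr_q^F\DR(\Mmod')$ in the formula \eqref{eq:Ax} is of the shape $Ω_Y^k\otimes_{\sO_Y}\gr_{q+k}^F\Mmod'$, and the isomorphisms $\gr_{q+k}^F\Mmod'\cong\gr_{q+k-w}^F\Mmod$ assemble --- compatibly with the differentials, since the underlying isomorphism $\Mmod\cong\Mmod'$ respects the $\sD_Y$-action --- into an isomorphism of complexes of $\sO_Y$-modules $\gr_q^F\DR(\Mmod')\cong\gr_{q-w}^F\DR(\Mmod)$. Taking $q=-p$ and composing with the isomorphism from \propositionref{prop:duality-mixed} gives
\[
\derR\sHom_{\sO_Y}\bigl(\gr_p^F\DR(\Mmod),\omega^\bullet_Y\bigr)\;\cong\;\gr_{-p}^F\DR(\Mmod')\;\cong\;\gr_{-p-w}^F\DR(\Mmod),
\]
which is the asserted formula.

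I do not expect a genuine obstacle here; the only point requiring real care is the matching of conventions. One must make sure that the polarisation is normalised as an isomorphism onto $(\mathbb{D}M)(-w)$ rather than $\mathbb{D}\bigl(M(w)\bigr)$ or a variant differing by a sign, that the Tate-twist shift $F_{\bullet-k}$ carries the sign fixed in the discussion of the Tate twist above, and that the translation from Saito's right $\sD$-modules to the left $\sD$-modules used in this paper introduces no spurious shift. Getting these straight is precisely what pins down the index as $-p-w$ rather than, say, $-p+w$; the structure of the argument is otherwise completely insensitive to these choices.
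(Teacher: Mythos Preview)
Your proof is correct and is essentially the same as the paper's: the paper writes the polarisation as $\mathbb{D}M \cong M(w)$, reads off $(\Mmod',F_\bullet\Mmod') \cong (\Mmod,F_{\bullet-w}\Mmod)$, and then invokes \propositionref{prop:duality-mixed}. Your version differs only in writing the polarisation as $M \cong (\mathbb{D}M)(-w)$ and in spelling out the propagation through the de Rham complex termwise, which the paper leaves implicit.
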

\begin{proof}
  A polarisation on $M$ induces an isomorphism $𝔻 M ≅ M(w)$
  \cite[§5.2.10]{Saito:HodgeModules}, and therefore an isomorphism
  $(\Mmod', F_• \Mmod') ≅ (\Mmod, F_{•-w} \Mmod)$.  Now apply
  \propositionref{prop:duality-mixed}.
\end{proof}

The following proposition contains an acyclicity criterion for subquotients of
the de Rham complex, involving both the weight filtration $W_• M$ and the Hodge
filtration $F_• \Mmod$.

\begin{prop}[Acyclic subquotients]\label{prop:DR-acyclic}
  Assume \settingref{set:MHM}.  If $w$, $c ∈ ℤ$ are such that $W_{w-1} M = 0$
  and $F_{c-1} \Mmod = 0$, then $\gr_p^F \DR(\Mmod)$ is acyclic unless
  $c-d ≤ p ≤ d-w-c$.
\end{prop}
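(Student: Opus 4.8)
The plan is to treat the two inequalities separately. The lower bound $c-d\le p$ is elementary: if $p\le c-d-1$, then $p+i\le c-1$ for every $0\le i\le d$, so the hypothesis $F_{c-1}\Mmod=0$ forces $\gr_{p+i}^F\Mmod=0$, and hence by the explicit description \eqref{eq:Ax} the complex $\gr_p^F\DR(\Mmod)$ is identically zero, in particular acyclic. From now on only the upper bound $p\le d-w-c$ is at issue.

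For the upper bound I would argue by induction on the number of nonzero graded pieces $\gr_\ell^W M$, which is finite because the weight filtration of a mixed Hodge module is finite. The base case is that $M$ is a polarisable Hodge module, pure of some weight $w_0$; since $W_{w-1}M=0$ we have $w_0\ge w$. Here the key input is \corollaryref{cor:duality-pure}: a polarisation yields an isomorphism $\derR\sHom_{\mathcal{O}_Y}\bigl(\gr_p^F\DR(\Mmod),\omega_Y^\bullet\bigr)\cong\gr_{-p-w_0}^F\DR(\Mmod)$. Since $\derR\sHom_{\mathcal{O}_Y}(-,\omega_Y^\bullet)$ is a duality on $\Dbcoh(\mathcal{O}_Y)$ and both sides lie in that category, the left-hand side is acyclic if and only if $\gr_{-p-w_0}^F\DR(\Mmod)$ is. By the lower-bound paragraph (applied with the same $c$), the latter is acyclic as soon as $-p-w_0<c-d$, i.e.\ $p>d-w_0-c$; and $d-w_0-c\le d-w-c$ because $w_0\ge w$. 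So in the pure case $\gr_p^F\DR(\Mmod)$ is acyclic for every $p>d-w-c$, which together with the lower bound settles the proposition (even with the sharper range $c-d\le p\le d-w_0-c$).

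For the inductive step, suppose $M$ is not pure and let $w_0$ be the smallest integer with $\gr_{w_0}^W M\ne 0$; then $w_0\ge w$ and $W_{w_0-1}M=0$, so $W_{w_0}M\cong\gr_{w_0}^W M\in\HM(Y,w_0)$ is a polarisable Hodge module. The short exact sequence $0\to W_{w_0}M\to M\to M/W_{w_0}M\to 0$ in $\MHM(Y)$ is strict for the Hodge filtration, so it induces a short exact sequence of filtered $\mathcal{D}_Y$-modules; taking $\gr_q^F$ for every $q$ and tensoring with the locally free sheaves $\Omega_Y^i$ then produces a short exact sequence of complexes
\[
0\to\gr_p^F\DR\bigl(W_{w_0}\Mmod\bigr)\to\gr_p^F\DR(\Mmod)\to\gr_p^F\DR\bigl(\Mmod/W_{w_0}\Mmod\bigr)\to 0.
\]
Both $W_{w_0}M$ and $M/W_{w_0}M$ inherit $F_{c-1}=0$ (a subobject has an even smaller $F_{c-1}$; for a quotient, $F_{c-1}$ is the image of $F_{c-1}\Mmod=0$), while $M/W_{w_0}M$ still satisfies $W_{w-1}=0$ and has strictly fewer nonzero weight-graded pieces. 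Fix $p>d-w-c$. By the base case $\gr_p^F\DR(W_{w_0}\Mmod)$ is acyclic (since $p>d-w-c\ge d-w_0-c$); by the induction hypothesis $\gr_p^F\DR(\Mmod/W_{w_0}\Mmod)$ is acyclic; and then the long exact cohomology sequence forces $\gr_p^F\DR(\Mmod)$ to be acyclic too. This completes the induction.

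The only non-formal ingredients are \corollaryref{cor:duality-pure}, already available, and the strictness of morphisms of mixed Hodge modules with respect to the Hodge filtration (a foundational theorem of Saito); this strictness is exactly what makes the weight-filtration induction go through, and it is the one place where we genuinely use that $M$ underlies a mixed Hodge module rather than being an arbitrary filtered $\mathcal{D}_Y$-module, so it is the step I expect to require the most care in the write-up. Everything else is bookkeeping with the integers $c,d,w,w_0$; two minor points to watch are that ``acyclic'' must mean ``all cohomology sheaves vanish'' so that it is detected by $\derR\sHom_{\mathcal{O}_Y}(-,\omega_Y^\bullet)$, and that the hypothesis $F_{c-1}\Mmod=0$ is global, as stated, so the lower-bound computation applies verbatim. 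Alternatively, one may run the same induction to establish $F_{w+c-1}(\mathbb{D}\Mmod)=0$ directly and then invoke \propositionref{prop:duality-mixed} once; this is the same argument reorganised.
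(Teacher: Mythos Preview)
Your proof is correct and follows essentially the same route as the paper's: the lower bound comes directly from $F_{c-1}\Mmod=0$ and the formula \eqref{eq:Ax}, the pure case of the upper bound is handled by \corollaryref{cor:duality-pure}, and the mixed case is reduced to the pure case via the weight filtration. The paper dispatches this last step in a single sentence (``the general case follows from this by considering the subquotients of the weight filtration''), whereas you spell out the induction and the strictness of the Hodge filtration explicitly; that extra care is justified and does no harm.
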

\begin{proof}
  Since $F_{c-1} \Mmod = 0$ and $d = \dim Y$, a look at the formula
  \eqref{eq:Ax} for the $p$-th subquotient of $\DR(\Mmod)$ reveals that
  $\gr_p^F \DR(\Mmod) = 0$ for $p ≤ c-1-d$.  The other inequality is going to
  follow by duality.  Let us first consider the pure case, meaning that
  $M ∈ \HM(Y, w')$ is a polarisable Hodge module of weight $w'$.  By
  \corollaryref{cor:duality-pure}, we have
  \[
    \gr_p^F \DR(\Mmod) ≅ \derR \sHom_{𝒪_Y} \Bigl( \gr_{-p-w'}^F \DR(\Mmod),
    ω^•_Y \Bigr)
  \]
  and since the complex on the right-hand side is acyclic for $-p-w' ≤ c-1-d$,
  we get the result when $M$ is pure.  The general case follows from this by
  considering the subquotients of the weight filtration $W_• M$.
\end{proof}

\propositionref{prop:DR-acyclic} is especially useful when combined with the
following general fact, which an easy consequence of the filtration $F_• \Mmod$
being exhaustive.

\begin{prop}\label{prop:DR-qiso}
  Let $Y$ be a complex manifold.  Let $(\Mmod, F_• \Mmod)$ be a coherent
  $𝒟_Y$-module with a good filtration.  If $\gr_p^F \DR(\Mmod)$ is acyclic for
  all $p ≥ p_0+1$, then the inclusion $F_{p_0} \DR(\Mmod) ↪ \DR(\Mmod)$ is a
  quasi-isomorphism.  \qed
\end{prop}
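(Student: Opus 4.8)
The plan is to use that $F_\bullet \DR(\Mmod)$ is an \emph{exhaustive} increasing filtration of $\DR(\Mmod)$ by subcomplexes, with successive quotients the $\gr_p^F \DR(\Mmod)$, and to combine the acyclicity hypothesis with the exactness of filtered colimits in the category of sheaves.

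First I would record that, by formula \eqref{eq:Aw}, each $F_p \DR(\Mmod)$ is a subcomplex of $F_{p+1} \DR(\Mmod)$, with quotient complex $\gr_p^F \DR(\Mmod)$ as displayed in \eqref{eq:Ax}. Since $F_\bullet \Mmod$ is a good, hence exhaustive, filtration, one has $\Mmod = \bigcup_p F_p \Mmod$, and therefore $\Omega^k_Y \otimes_{\sO_Y} \Mmod = \bigcup_p \Omega^k_Y \otimes_{\sO_Y} F_{p+k} \Mmod$ term by term. Consequently $\DR(\Mmod) = \varinjlim_p F_p \DR(\Mmod)$, the transition maps being the injective inclusions. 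As the cohomology sheaves of a complex of sheaves of abelian groups commute with filtered colimits, this gives $\sH^j \DR(\Mmod) = \varinjlim_p \sH^j F_p \DR(\Mmod)$ for every $j \in \bZ$.

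Next, for each integer $p$ I would look at the short exact sequence of complexes
\[
  0 \longrightarrow F_{p-1} \DR(\Mmod) \longrightarrow F_p \DR(\Mmod) \longrightarrow \gr_p^F \DR(\Mmod) \longrightarrow 0
\]
and its associated long exact cohomology sequence: whenever $\gr_p^F \DR(\Mmod)$ is acyclic --- which holds by hypothesis for every $p \geq p_0 + 1$ --- the inclusion $F_{p-1} \DR(\Mmod) \hookrightarrow F_p \DR(\Mmod)$ is a quasi-isomorphism. An induction on $p$ then shows that the transition map $\sH^j F_{p_0} \DR(\Mmod) \to \sH^j F_p \DR(\Mmod)$ is an isomorphism for all $p \geq p_0$ and all $j$. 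Passing to the colimit yields $\sH^j F_{p_0} \DR(\Mmod) \xrightarrow{\ \sim\ } \sH^j \DR(\Mmod)$ for every $j$, which is precisely the assertion that $F_{p_0} \DR(\Mmod) \hookrightarrow \DR(\Mmod)$ is a quasi-isomorphism.

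I do not expect any real obstacle here; the one hypothesis that is genuinely used is exhaustiveness of $F_\bullet \Mmod$, which is part of the definition of a good filtration on a coherent $\sD_Y$-module, and everything else is formal manipulation with short exact sequences of complexes of sheaves together with the exactness of filtered colimits. Since the statement and all the maps involved are local, no additional care about gluing or locality is needed.
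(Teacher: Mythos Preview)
Your argument is correct and is precisely the standard one the paper has in mind: the paper does not spell out a proof at all, merely remarking that the statement is ``an easy consequence of the filtration $F_\bullet \Mmod$ being exhaustive'' and marking it with a \qed. Your proposal fills in exactly that detail, using the short exact sequences $0 \to F_{p-1}\DR(\Mmod) \to F_p\DR(\Mmod) \to \gr_p^F\DR(\Mmod) \to 0$ together with the filtered colimit.
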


\subsection{Direct images and the Decomposition Theorem}
\label{sec:push-forward}
\approvals{Christian & yes \\Stefan & yes}

Let $f \colon X → Y$ be a projective holomorphic mapping between two complex
manifolds, and let $M ∈ \MHM(X)$ be a graded-polarisable mixed Hodge module on
$X$.  One of the most important results in Saito's theory is that, in this
setting, one can define a direct image functor, compatible with the direct image
functor for perverse sheaves and filtered $𝒟$-modules, and that the $i$-th
higher direct image $Hⁱ f_* M$ is again a graded-polarisable mixed Hodge module
on $Y$.  In this section, we briefly review this result and its implications for
the underlying filtered $𝒟_X$-module $(\Mmod, F_• \Mmod)$ and the de Rham
complex $\DR(\Mmod)$.

\subsubsection{Filtered $𝒟$-modules and strictness}
\approvals{Christian & yes \\Stefan & yes}

Let $X$ be a complex manifold.  Following Saito, we denote by $\Dbcoh F(𝒟_X)$
the derived category of (certain cohomologically bounded and coherent complexes
of) filtered $𝒟_X$-modules, as defined in \cite[§2.1.15]{Saito:HodgeModules}.
The category of filtered $𝒟_X$-modules is only an exact category, but it embeds
into the larger Abelian category of graded $R_F 𝒟_X$-modules, where
\[
  R_F 𝒟_X = \bigoplus_{p ∈ ℕ} F_p 𝒟_X
\]
is the Rees algebra of $𝒟_X$ with respect to the order filtration.  The
embedding takes a coherent filtered $𝒟_X$-module $(\Mmod, F_• \Mmod)$ to the
associated Rees module
\[
  R_F \Mmod = \bigoplus_{p ∈ ℤ} F_p \Mmod,
\]
which is coherent over $R_F 𝒟_X$.  Let $\Dbcoh G(R_F 𝒟_X)$ be the derived
category of (cohomologically bounded and coherent complexes of) graded
$R_F 𝒟_X$-modules.  Then the Rees module construction gives an equivalence of
categories
\[
  \Dbcoh F(𝒟_X) ≅ \Dbcoh G(R_F 𝒟_X),
\]
according to \cite[Prop.~2.1.16]{Saito:HodgeModules}.  The cohomology modules of
an object in $\Dbcoh F(𝒟_X)$ are therefore in general not filtered
$𝒟_X$-modules, but graded $R_F 𝒟_X$-modules.

\begin{defn}[Strictness]
  A graded $R_F 𝒟_X$-module is called \define{strict} if it is isomorphic to the
  Rees module of a coherent filtered $𝒟_X$-module.  A complex
  $K ∈ \Dbcoh G(R_F 𝒟_X)$ is called \define{strict} if all of its cohomology
  modules $ℋ^j K$ are strict.
\end{defn}

The functor that takes a coherent filtered $𝒟_X$-module $(\Mmod, F_• \Mmod)$ to
the underlying $𝒟_X$-module $\Mmod$ extends uniquely to an exact functor
\[
  \Dbcoh G(R_F 𝒟_X) → \Dbcoh(𝒟_X).
\]
Indeed, if we denote by $z ∈ R_F 𝒟_X$ the degree-one element obtained from
$1 ∈ F_1 𝒟_X$, then the functor is simply the derived tensor product with
$R_F 𝒟_X / (1-z) R_F 𝒟_X$.  Similarly, the functor that takes a coherent
filtered $𝒟_X$-module $(\Mmod, F_• \Mmod)$ to the coherent graded
$\Sym 𝒯_X$-module $\gr_•^F \Mmod$ extends uniquely to an exact functor
\[
  \gr_•^F \colon \Dbcoh G(R_F 𝒟_X) → \Dbcoh G(\Sym 𝒯_X).
\]
This time, the functor is given by the derived tensor product with
$R_F 𝒟_X / z R_F 𝒟_X$.  Lastly, for every $p ∈ ℤ$, the functor that takes a
coherent filtered $𝒟_X$-module $(\Mmod, F_• \Mmod)$ to the complex of coherent
$𝒪_X$-modules $\gr_p^F \DR(\Mmod)$ extends uniquely to an exact functor
\[
  \gr_p^F \DR \colon \Dbcoh G(R_F 𝒟_X) → \Dbcoh(𝒪_X).
\]
Indeed, by \cite[Prop.~2.2.10]{Saito:HodgeModules}, the de Rham functor (which
Saito denotes by the symbol $\wtilde{DR}$) defines an equivalence of categories
between $\Dbcoh F(𝒟_X)$ and the derived category of filtered differential
complexes $\Dbcoh F^f(𝒪_X, \Diff)$, and $\gr_p^F$ of a filtered differential
complex is by construction a (cohomologically bounded and coherent) complex of
$𝒪_X$-modules \cite[§2.2.4]{Saito:HodgeModules}.

\subsubsection{Direct image functor for filtered $𝒟$-modules}
\approvals{Christian & yes \\Stefan & yes}

Now suppose that $f \colon X → Y$ is a proper holomorphic mapping between
complex manifolds.  In this setting, one can construct a direct image functor
\[
  f_+ \colon \Dbcoh G(R_F 𝒟_X) → \Dbcoh G(R_F 𝒟_Y);
\]
see \cite[§2.3.5]{Saito:HodgeModules} for the precise definition.  This functor
is compatible with the functor $\gr_p^F \DR$ in the following manner
\cite[§2.3.7]{Saito:HodgeModules}.

\begin{prop}\label{prop:fl-grDR}
  Let $f \colon X → Y$ be a proper holomorphic mapping between complex
  manifolds.  For every $p ∈ ℤ$, one has a natural isomorphism of functors
  \[
    \derR f_* ◦ \gr_p^F \DR ≅ \gr_p^F \DR ◦ f_+
  \]
  as functors from $\Dbcoh G(R_F 𝒟_X)$ to $\Dbcoh(𝒪_Y)$.
\end{prop}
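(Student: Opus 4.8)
The plan is to unwind Saito's construction of the direct image $f_+$ and to observe that the de Rham functor on the target $Y$ converts $f_+$ into $\derR f_*$ of the de Rham complex on the source $X$, compatibly with the Hodge filtration. First I would invoke the equivalence $\wtilde{\DR} \colon \Dbcoh F(𝒟_X) ≅ \Dbcoh F^f(𝒪_X, \Diff)$ with the derived category of filtered differential complexes \cite[Prop.~2.2.10]{Saito:HodgeModules}, the very equivalence used above to make $\gr_p^F \DR$ into an exact functor. Under $\wtilde{\DR}$ the functor $\gr_p^F \DR$ corresponds to the honest associated‐graded functor $\gr_p^F$, which sends a filtered differential complex to a bounded coherent complex of $𝒪_Y$-modules \cite[§2.2.4]{Saito:HodgeModules}. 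On the $𝒟$-module side, $f_+$ of a filtered $𝒟_X$-module is by definition $\derR f_* \bigl( (𝒟_{Y\leftarrow X}, F_•) ⊗^{\mathbf{L}}_{𝒟_X} (\Mmod, F_• \Mmod) \bigr)$, where $F_• 𝒟_{Y\leftarrow X}$ is a suitably shifted order filtration on the transfer bimodule; passing to the de Rham complex on $Y$ rewrites this, exactly as in the classical unfiltered theory of $𝒟$-module push-forward, as $\derR f_*$ of the absolute de Rham complex $\DR(\Mmod)$, and the two filtrations match. In short, $\wtilde{\DR}$ intertwines $f_+$ with $\derR f_*$ of filtered differential complexes; this is the content of \cite[§2.3.5]{Saito:HodgeModules} and \cite[§2.3.7]{Saito:HodgeModules}.

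Granting this, the proposition reduces to the assertion that $\gr_p^F$ commutes with $\derR f_*$ on filtered differential complexes. For a filtered differential complex $(L^•, F_•)$ one has, for each $p ∈ ℤ$, the distinguished triangle $F_{p-1} L^• → F_p L^• → \gr_p^F L^• \xrightarrow{+1}$; applying the triangulated functor $\derR f_*$, and using that the induced filtration on $\derR f_* L^•$ has $F_p$ computing $\derR f_*(F_p L^•)$, one obtains a canonical isomorphism $\gr_p^F(\derR f_* L^•) ≅ \derR f_*(\gr_p^F L^•)$, functorial in $L^•$. Composing the three identifications then gives, for every $K ∈ \Dbcoh G(R_F 𝒟_X)$,
\[
  \gr_p^F \DR ◦ f_+ ≅ \gr_p^F ◦ \derR f_* ◦ \wtilde{\DR} ≅ \derR f_* ◦ \gr_p^F ◦ \wtilde{\DR} ≅ \derR f_* ◦ \gr_p^F \DR ,
\]
which is the required natural isomorphism. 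Alternatively, one may reduce by dévissage to the case where $K$ is a single coherent filtered $𝒟_X$-module before running the same argument, as all functors in sight are triangulated.

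I expect the main obstacle to lie not in the homotopical formalism but in the bookkeeping of the filtrations involved --- the order filtration on $𝒟_{Y\leftarrow X}$, the Hodge filtration $F_• \Mmod$, and the induced filtration \eqref{eq:Aw} on the de Rham complexes. One must check that these interact so that, after $\gr_p^F$, one recovers precisely the complex displayed in \eqref{eq:Ax} on the $Y$-side, pulled back along the comparison of the relative and absolute de Rham complexes attached to $0 → f^* Ω¹_Y → Ω¹_X → Ω¹_{X/Y} → 0$. This is a local Koszul-type computation reconciling $\gr_•^F 𝒟_X$ with $\gr_•^F 𝒟_Y ≅ \Sym 𝒯_Y$ and with $\gr_•^F 𝒟_{Y\leftarrow X}$, and it is here that Saito's normalisation of the shift in the definition of $f_+$ gets pinned down so that the statement holds with no residual shift. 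It is worth noting that no strictness hypothesis is needed anywhere --- which matters, since $f_+$ does not preserve strictness in general --- because the whole argument takes place in the filtered derived categories, where $\gr_p^F$ and $\derR f_*$ are merely triangulated functors.
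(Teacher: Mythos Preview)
Your proposal is correct and follows essentially the same two-step strategy as the paper: first use Saito's result that the de Rham functor intertwines $f_+$ with the direct image $f_*$ of filtered differential complexes (the paper cites \cite[Lem.~2.3.6]{Saito:HodgeModules} rather than §2.3.5 and §2.3.7), then observe that $\gr_p^F$ commutes with this $f_*$. The paper dispatches the second step in one clause (``clear from the construction in \cite[§2.3.7]{Saito:HodgeModules}''), whereas you spell it out via the distinguished triangle $F_{p-1} L^• \to F_p L^• \to \gr_p^F L^•$; your closing paragraph about filtration bookkeeping and the Koszul computation is extra caution that is not actually needed, since all of that is already absorbed into Saito's construction of $f_*$ on $\Dbcoh F^f(𝒪_X,\Diff)$.
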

\begin{proof}
  By \cite[Lem.~2.3.6]{Saito:HodgeModules}, the de Rham functor exchanges the
  direct image functor $f_+ \colon \Dbcoh G(R_F 𝒟_X) → \Dbcoh G(R_F 𝒟_Y)$ and
  the direct image functor
  \[
    f_* \colon \Dbcoh F^f(𝒪_X, \Diff) → \Dbcoh F^f(𝒪_Y, \Diff)
  \]
  for filtered differential complexes.  But the latter commutes with taking
  $\gr_p^F$, as is clear from the construction in
  \cite[§2.3.7]{Saito:HodgeModules}.
\end{proof}

\begin{note}
  In the case of a single coherent filtered $𝒟_X$-module, this says that
  \[
    \derR f_* \gr_p^F \DR(\Mmod) ≅ \gr_p^F \DR \bigl( f_+(R_F \Mmod) \bigr),
  \]
  as objects of the derived category $\Dbcoh(𝒪_Y)$.
\end{note}

\subsubsection{Direct image theorem, pure case}
\approvals{Christian & yes \\Stefan & yes}

We now assume that the proper holomorphic mapping $f \colon X → Y$ is actually
projective.  Then we have the following important ``direct image theorem'' due
to Saito \cite[§5.3]{Saito:HodgeModules}.

\begin{thm}[Direct image theorem, pure case]\label{thm:direct-image}
  Let $f \colon X → Y$ be a projective morphism between complex manifolds, and
  let $ℓ ∈ H²(X, ℤ(1))$ be the first Chern class of a relatively ample line
  bundle.  If $M ∈ \HM(X, w)$ is a polarisable Hodge module $X$, then:
  \begin{enumerate}
  \item The complex $f_+(R_F \Mmod)$ is strict, and each $ℋⁱ f_+(R_F \Mmod)$ is
    the filtered $𝒟_Y$-module underlying a polarisable Hodge module
    $Hⁱ f_* M ∈ \HM(Y,w+i)$.
    
  \item\label{en:direct-image-2} For every $i ≥ 0$, the Lefschetz morphism
    \[
      ℓⁱ \colon H^{-i} f_* M → Hⁱ f_* M(i)
    \]
    is an isomorphism between Hodge modules of weight $w-i$.
    
  \item Any polarisation on $M$ induces a polarisation on $\bigoplus_i Hⁱ f_* M$
    in the Hodge-Lefschetz sense (= on primitive parts with respect to the
    action of $ℓ$).  \qed
  \end{enumerate}
\end{thm}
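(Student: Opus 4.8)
The plan is not to give an independent proof but to invoke Saito's \emph{direct image theorem} (the ``stability theorem'') from \cite[§5.3]{Saito:HodgeModules}, whose proof occupies a large part of that paper; what follows is only a sketch of its architecture, since that is what makes clear where each of the three assertions comes from. The statement is local on $Y$, so we may shrink $Y$; factoring the projective morphism $f$ as a closed embedding $X \into \bP^N \times Y$ followed by the projection $\bP^N \times Y → Y$, and using that closed embeddings are exact and compatible with all the relevant structure at the level of filtered $𝒟$-modules (Kashiwara's equivalence), one reduces to the projection case, which is then attacked by an induction on $\dim \Supp M$ using hyperplane sections. In this induction, \emph{all} of the assertions — strictness of $f_+(R_F \Mmod)$, the Hodge-module structure of weight $w+i$ on $H^i f_* M$, the Lefschetz isomorphism, and the inherited polarisation — are proved simultaneously; this entanglement is unavoidable and is the conceptual heart of the theory.

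The ``generic'' contribution to the induction is classical Hodge theory. A polarisable Hodge module with strict support $Z$ is, by Saito's structure theory, the intermediate extension of a polarisable variation of Hodge structure on a dense open subset $U ⊆ Z_{\reg}$; over $U$, the required strictness, hard Lefschetz, and semisimple decomposition of the direct image are the theorems of Deligne and Zucker on the cohomology of variations of Hodge structure, together with Schmid's limit mixed Hodge structure near the punctures. The passage from $U$ to all of $Y$ is effected by the nearby- and vanishing-cycle functors $ψ$, $φ$, which Saito builds into the formalism and which strictly decrease $\dim \Supp$: cutting out the locus where $M$ fails to be a variation by a holomorphic function, applying $ψ$ and $φ$, and invoking the inductive hypothesis there, one extends strictness and the Hodge-module structure across that locus, the weight-monodromy properties of $ψ$ and $φ$ pinning down the weight $w+i$.

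Once strictness is established, the remaining assertions follow relatively formally. By \propositionref{prop:fl-grDR} and the de Rham comparison $ℂ ⊗_ℚ \rat M ≅ \DR(\Mmod)$, the perverse sheaf underlying $H^i f_+(R_F \Mmod)$ is identified with $ℋ^i \derR f_* \rat M$, so $H^i f_* M$ has the expected rational structure and is semisimple by the decomposition theorem for perverse sheaves \cite{BBD}. The second assertion, hard Lefschetz for Hodge modules, holds because, with strictness in hand, the operator $ℓ$ acts compatibly with the Hodge filtration and its generic behaviour is the classical hard Lefschetz for the variation — the bad locus again being handled by $ψ$, $φ$ and a weight count. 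The third assertion, the induced polarisation on $\bigoplus_i H^i f_* M$, is produced from the polarisation of $M$ by the same mechanism, with positivity on the primitive parts reduced to positivity for the generic variation via the Hodge-Lefschetz formalism.

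The genuine obstacle throughout is \textbf{strictness} of $f_+(R_F \Mmod)$. A priori $H^i f_+(R_F \Mmod)$ is merely a graded $R_F 𝒟_Y$-module, and unless it is the Rees module of a filtered $𝒟_Y$-module none of the Hodge-theoretic statements even makes sense; strictness is false for general filtered $𝒟$-modules and holds here only because $(\Mmod, F_• \Mmod)$ underlies a Hodge module, which is exactly why one is forced to carry the full package — weights, polarisations, and the behaviour of $ψ$ and $φ$ — simultaneously through every inductive step. For this reason I would not attempt to prove the theorem in isolation; I would cite \cite[§5.3]{Saito:HodgeModules} and refer the reader to the survey \cite{Schnell14a} for an overview.
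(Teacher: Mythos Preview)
Your approach is correct and matches the paper's: the paper does not prove this theorem at all but simply attributes it to Saito with the citation \cite[§5.3]{Saito:HodgeModules} and a \qed. Your sketch of the architecture of Saito's argument goes well beyond what the paper provides, but since the paper's ``proof'' is a bare citation, there is nothing to compare against and your proposal is entirely adequate.
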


One consequence of \theoremref{thm:direct-image} is a version of the
Decomposition Theorem for those filtered $𝒟$-modules that underlie polarisable
Hodge modules.

\begin{cor}[Decomposition Theorem]\label{cor:decomposition-MF}
  Let $f \colon X → Y$ be a projective morphism between complex manifolds.  Let
  $M ∈ \HM(X,w)$ be a polarisable Hodge module on $X$, and let
  $M_i = Hⁱ f_* M ∈ \HM(Y,w+i)$.  Write $(\Mmod, F_• \Mmod)$ respectively
  $(\Mmod_i, F_• \Mmod_i)$ for the underlying filtered $𝒟$-modules.  Then
  \[
    f_+(R_F \Mmod) \simeq \bigoplus_{i ∈ ℤ} ℋⁱ f_+(R_F \Mmod) \, \decal{-i} ≅
    \bigoplus_{i ∈ ℤ} R_F \Mmod_i \, \decal{-i},
  \]
  in the derived category $\Dbcoh G(R_F 𝒟_Y)$.
\end{cor}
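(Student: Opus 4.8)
The plan is to derive the whole statement from \theoremref{thm:direct-image}, which already supplies strictness of $f_+(R_F\Mmod)$, the identification of its cohomology modules, and the relative Hard Lefschetz isomorphism. The right-hand isomorphism $\sH^i f_+(R_F\Mmod) \cong R_F\Mmod_i$ is little more than a restatement of the first assertion of that theorem: strictness means precisely that each cohomology module $\sH^i f_+(R_F\Mmod)$ is isomorphic to the Rees module of some coherent filtered $\sD_Y$-module, and \theoremref{thm:direct-image} identifies this filtered $\sD_Y$-module with the one underlying $M_i = H^i f_* M$, namely $(\Mmod_i, F_\bullet\Mmod_i)$. So the real content lies in the left-hand isomorphism, i.e.\ in splitting $f_+(R_F\Mmod)$ into the direct sum of its shifted cohomology objects inside $\Dbcoh G(R_F\sD_Y)$.

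For that splitting I would apply Deligne's decomposition criterion \cite{BBD}, working in the bounded derived category of the abelian category of graded $R_F\sD_Y$-modules, of which $\Dbcoh G(R_F\sD_Y)$ is a full triangulated subcategory. The first Chern class $\ell \in H^2(X,\bZ(1))$ of the chosen relatively ample bundle acts on $R_F\Mmod$ by cup product, in a way compatible with the Hodge filtration, and hence induces a Lefschetz morphism $L \colon f_+(R_F\Mmod) \to f_+(R_F\Mmod)\decal{2}$ (up to a Tate twist, see below). By the relative Hard Lefschetz statement in \theoremref{thm:direct-image}, for every $i \geq 0$ the $i$-fold iterate $L^i$ induces an isomorphism $\sH^{-i} f_+(R_F\Mmod) \cong \sH^i f_+(R_F\Mmod)$ on cohomology modules. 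Deligne's lemma then produces a (non-canonical) isomorphism $f_+(R_F\Mmod) \simeq \bigoplus_{i\in\bZ} \sH^i f_+(R_F\Mmod)\decal{-i}$; combining this with the previous paragraph, and applying $\sH^i f_+(R_F\Mmod)\cong R_F\Mmod_i$ termwise, gives exactly the asserted chain of isomorphisms.

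The step that requires genuine care — and the main obstacle — is the bookkeeping with Tate twists: the honest Lefschetz operator is really a morphism $f_+(R_F\Mmod) \to \bigl(f_+(R_F\Mmod)\bigr)(-1)\decal{2}$, where the twist $(-1)$ shifts the grading of the Rees module by $1$. Since the Tate twist is an exact auto-equivalence of graded $R_F\sD_Y$-modules commuting with the $t$-structure, Deligne's argument applies verbatim to an operator of the form $L \colon K \to \Phi(K)\decal{2}$ for such a $\Phi$; one simply re-runs the construction of the splitting carrying $\Phi$ along. Once this is checked, the remaining verifications — that the cup-product operator is well defined at the level of filtered $\sD$-modules, and that it is the one for which \theoremref{thm:direct-image} asserts Hard Lefschetz — are purely formal, and the entire argument is carried out in \cite[§5.3]{Saito:HodgeModules}.
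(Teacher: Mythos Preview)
Your proposal is correct and follows essentially the same approach as the paper: the paper's proof is a two-line remark that the splitting is ``a formal consequence'' of the Hard Lefschetz isomorphism in \theoremref{thm:direct-image}~\ref{en:direct-image-2} and that the identification with $R_F\Mmod_i$ follows from strictness, which is exactly what you have spelled out via Deligne's criterion. Your added discussion of the Tate-twist bookkeeping is a genuine technical point that the paper leaves implicit, but it does not constitute a different route.
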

\begin{proof}
  The first isomorphism is a formal consequence of \ref{en:direct-image-2}.  The
  second isomorphism follows because the complex $f_+(R_F \Mmod)$ is strict.
\end{proof}

\subsubsection{Direct image theorem, mixed case}
\approvals{Christian & yes \\Stefan & yes}

In the case of mixed Hodge modules, there are some additional results, having to
do with the weight filtration.  We summarise them in the following theorem
\cite[Thm.~2.14 and Prop.~2.15]{Saito:MixedHodgeModules}.

\begin{thm}[Direct image theorem, mixed case]\label{thm:fl-mixed}
  Let $f \colon X → Y$ be a projective morphism between complex manifolds, and
  let $M ∈ \MHM(X)$ be a graded-polarisable mixed Hodge module on $X$.
  \begin{enumerate}
  \item\label{en:fl-mixed-SS} The complex $f_+(R_F \Mmod)$ is strict, and each
    $ℋⁱ f_+(R_F \Mmod)$ is the filtered $𝒟_Y$-module underlying a
    graded-polarisable mixed Hodge module $Hⁱ f_* M ∈ \MHM(Y)$.
    
  \item\label{en:fl-mixed-SS2} One has a convergent \emph{weight spectral
      sequence}
    \[
      E_1^{p,q} = H^{p+q} f_* \gr_{-p}^W M \Longrightarrow H^{p+q} f_* M,
    \]
    and each differential $d_1 \colon E_1^{p,q} → E_1^{p+1,q}$ is a morphism in
    $\HM(Y, q)$.
    
  \item The weight spectral sequence degenerates at $E_2$, and one has
    \[
      \gr_q^W H^{p+q} f_* M ≅ E_2^{p,q}\quad \text{ for every $p,q ∈ ℤ$.} \eqno\qed
    \]
  \end{enumerate}
\end{thm}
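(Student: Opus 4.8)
The plan is to deduce the mixed case from the pure direct image theorem \theoremref{thm:direct-image} by means of the weight spectral sequence, reducing the assertion that the direct images are mixed Hodge modules to the corresponding fact for the pure pieces, by induction on the length of the weight filtration. Recall that $W_\bullet M$ is a finite increasing filtration of $M$ by sub-mixed-Hodge-modules with $\gr_ℓ^W M ∈ \HM(X,ℓ)$ for every $ℓ$. Since $f_+ \colon \Dbcoh G(R_F 𝒟_X) → \Dbcoh G(R_F 𝒟_Y)$ is triangulated, applying it to the short exact sequences built from $W_\bullet M$ equips $f_+(R_F \Mmod)$ with a finite filtration, whose associated spectral sequence is the one in the statement,
\[
  E_1^{p,q} = H^{p+q} f_* \gr_{-p}^W M \Longrightarrow H^{p+q} f_* M,
\]
where $H^j f_*(-)$ stands for the $j$-th cohomology of $f_+(R_F(-))$; the abutment filtration is the one induced by $W$, and boundedness of $W_\bullet M$ together with finiteness of the cohomological dimension of $f$ makes the spectral sequence convergent. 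Applying the pure case \theoremref{thm:direct-image} to $\gr_{-p}^W M ∈ \HM(X,-p)$, the complex $f_+(R_F \gr_{-p}^W \Mmod)$ is strict, so that $E_1^{p,q}$ is the Rees module of the filtered $𝒟_Y$-module underlying the polarisable Hodge module $H^{p+q} f_* \gr_{-p}^W M$, which is pure of weight $(-p)+(p+q) = q$. In particular every $E_1^{p,q}$ is strict and lies in the semisimple Abelian category $\HM(Y,q)$.

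Next I would analyse the differentials. The differential $d_1 \colon E_1^{p,q} → E_1^{p+1,q}$ is the connecting morphism obtained by applying $f_+$ to a short exact sequence $0 → \gr_ℓ^W M → W_{ℓ+1}M/W_{ℓ-1}M → \gr_{ℓ+1}^W M → 0$; it underlies a morphism of mixed Hodge modules — this is built into the inductive construction of the direct image, or can be seen directly from the rigidity of the Hodge filtration on the semisimple terms $E_1^{p,q}$ — and, source and target being polarisable Hodge modules of the same weight $q$ with $\HM(Y,q)$ a full subcategory of $\MHM(Y)$, each $d_1$ is a morphism in $\HM(Y,q)$; this proves the assertion about $d_1$ in the statement. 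Since $E_1^{p,q}$ is semisimple, any subquotient of it is a direct summand, so $E_2^{p,q} = \ker d_1 / \Image d_1$ is again strict and lies in $\HM(Y,q)$.

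The spectral sequence degenerates at $E_2$ for reasons of weight: for $r ≥ 2$ the map $d_r \colon E_r^{p,q} → E_r^{p+r,\,q-r+1}$ is a $𝒟_Y$-linear map between the $𝒟_Y$-modules underlying polarisable Hodge modules of the distinct weights $q$ and $q-r+1$, so its image would carry two pure Hodge structures of different weights and must therefore vanish. Hence $E_2^{p,q} = E_\infty^{p,q}$ for all $p,q$, and each $ℋⁱ f_+(R_F \Mmod)$ is a finite iterated extension of the strict modules $E_\infty^{p,q}$ with $p+q = i$. Because $z$-torsion-freeness is preserved under extensions of coherent graded $R_F 𝒟_Y$-modules, $ℋⁱ f_+(R_F \Mmod)$ is itself strict; it is therefore the Rees module of a filtered $𝒟_Y$-module, which we take to underlie $Hⁱ f_* M$, and endowing $Hⁱ f_* M$ with the abutment filtration as its weight filtration we obtain $\gr_q^W H^{p+q} f_* M ≅ E_2^{p,q} ∈ \HM(Y,q)$. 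Graded-polarisability holds because each $E_2^{p,q}$ is a direct summand, hence polarisable by \theoremref{thm:direct-image}, of $H^{p+q} f_* \gr_{-p}^W M$.

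The step I expect to be the main obstacle is showing that $Hⁱ f_* M$, equipped with the weight filtration constructed above, is \emph{genuinely} a graded-polarisable mixed Hodge module on $Y$; equivalently, that the class $\MHM$ is stable under projective direct image. The spectral-sequence bookkeeping above only produces a candidate with the correct pure graded pieces and the correct strictness, whereas verifying Saito's full, inductively formulated admissibility conditions — compatibility with the nearby and vanishing cycle functors along arbitrary holomorphic functions, together with the existence of the required relative monodromy filtrations — is the genuinely hard part, and it uses the full force of Saito's theory, notably the Decomposition Theorem in the pure case. This is precisely the content of \cite[Thm.~2.14 and Prop.~2.15]{Saito:MixedHodgeModules}, which I would invoke to complete the proof.
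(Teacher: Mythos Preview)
The paper does not prove this theorem: it is stated as a summary of \cite[Thm.~2.14 and Prop.~2.15]{Saito:MixedHodgeModules} and carries only a citation in lieu of proof. Your proposal and the paper therefore agree at the endpoint, since both defer the substance to Saito.

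Your sketch goes further than the paper by outlining how the weight spectral sequence, combined with the pure direct image theorem, yields parts~(2) and~(3) and the strictness assertion in~(1). You also correctly isolate the genuinely hard step --- showing that the resulting object, equipped with its candidate weight filtration, actually lies in $\MHM(Y)$, i.e.\ satisfies Saito's recursive admissibility conditions --- and rightly point to the cited reference for it. One place deserves tightening: in the $E_2$-degeneration step you call $d_r$ ``a $𝒟_Y$-linear map'' between modules underlying pure Hodge modules of different weights and conclude it vanishes. Mere $𝒟_Y$-linearity, even filtered, does not force this. What makes the argument work is that the weight spectral sequence exists compatibly on the filtered $𝒟$-module side and on the $ℚ$-perverse sheaf side, so each $d_r$ is in fact a morphism of Hodge modules (by fullness of $\HM$ inside the category of filtered regular holonomic $𝒟$-modules with rational structure); only then does the weight disparity force $d_r = 0$.
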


One can use this result to control the range in which the Hodge filtration on
the direct image of a graded-polarisable mixed Hodge module is nontrivial.

\begin{prop}\label{prop:Fc}
  Let $f \colon X → Y$ be a projective morphism between complex manifolds, and
  let $M ∈ \MHM(X)$ be a graded-polarisable mixed Hodge module on $X$.  Suppose
  that the underlying filtered $𝒟_X$-module $(\Mmod, F_• \Mmod)$ satisfies
  $F_{m-1} \Mmod = 0$.  Then one has
  \[
    F_{m+c-1} ℋⁱ f_+(R_F \Mmod) = 0
  \]
  for every $i ∈ ℤ$, where $c = \dim Y - \dim X$.
\end{prop}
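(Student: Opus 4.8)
The plan is to reduce to the case of a polarisable Hodge module, and then to combine the compatibility of direct images with the graded de Rham complex (\propositionref{prop:fl-grDR}) with the Decomposition Theorem (\corollaryref{cor:decomposition-MF}).

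\emph{Pure case.} Assume first that $M ∈ \HM(X,w)$ is polarisable, and set $d = \dim X$. The hypothesis $F_{m-1}\Mmod = 0$ forces $\gr_q^F\Mmod = 0$ for all $q ≤ m-1$, and then the formula \eqref{eq:Ax} for the subquotients of the de Rham complex shows that $\gr_p^F\DR(\Mmod) = 0$ for every $p ≤ m-1-d$, since every term $Ω^k_X ⊗_{𝒪_X} \gr_{p+k}^F\Mmod$ with $0 ≤ k ≤ d$ then vanishes. By \propositionref{prop:fl-grDR} this gives $\gr_p^F\DR\bigl(f_+(R_F\Mmod)\bigr) ≅ \derR f_* \gr_p^F\DR(\Mmod) = 0$ for all such $p$. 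Writing $\Mmod_i$ for the filtered $𝒟_Y$-module underlying $H^i f_* M$, \corollaryref{cor:decomposition-MF} gives $f_+(R_F\Mmod) ≅ \bigoplus_i R_F\Mmod_i \decal{-i}$; applying the exact functor $\gr_p^F\DR$ and using that a direct summand of an acyclic complex is acyclic, we conclude that $\gr_p^F\DR(\Mmod_i)$ is acyclic for every $i ∈ ℤ$ and every $p ≤ m-1-d$. \lemmaref{lem:5-2}, applied on $Y$ to the good filtration $F_•\Mmod_i$, then yields $F_{(m-1-d)+\dim Y}\Mmod_i = 0$, which is exactly the assertion $F_{m+c-1}\Mmod_i = 0$ with $c = \dim Y - \dim X$.

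\emph{Mixed case.} For a general graded-polarisable $M$ I would induct on the length of the (finite) weight filtration $W_• M$. If $M$ is not pure, pick $ℓ$ with $W_ℓ M = M ≠ W_{ℓ-1}M$ and use the short exact sequence $0 → W_{ℓ-1}M → M → \gr_ℓ^W M → 0$ in $\MHM(X)$, in which $\gr_ℓ^W M ∈ \HM(X,ℓ)$ is pure and $W_{ℓ-1}M$ has strictly shorter weight filtration. Since the Hodge filtrations on $W_{ℓ-1}\Mmod$ and $\gr_ℓ^W\Mmod$ are induced from $F_•\Mmod$, both still satisfy $F_{m-1} = 0$, so by the pure case and the inductive hypothesis the $𝒟_Y$-modules underlying $H^i f_* \gr_ℓ^W M$ and $H^i f_* W_{ℓ-1}M$ have vanishing $F_{m+c-1}$ for every $i$. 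Applying $f_+$ produces a long exact sequence $\dotsb → H^i f_* W_{ℓ-1}M → H^i f_* M → H^i f_* \gr_ℓ^W M → \dotsb$ of mixed Hodge modules on $Y$, all of whose maps are morphisms of mixed Hodge modules and hence strict with respect to the Hodge filtrations on the underlying $𝒟_Y$-modules; since in an exact sequence $A → B → C$ of filtered modules with strict maps $F_p A = F_p C = 0$ implies $F_p B = 0$, we conclude that $F_{m+c-1}$ of the $𝒟_Y$-module underlying $H^i f_* M$, namely $F_{m+c-1}ℋⁱ f_+(R_F\Mmod)$, vanishes. (Alternatively, one can run the same bookkeeping with the weight spectral sequence of \theoremref{thm:fl-mixed}.)

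\emph{Main difficulty.} The crucial step is the passage in the pure case from the vanishing of $\gr_p^F\DR$ of the complex $f_+(R_F\Mmod)$ to the vanishing of $\gr_p^F\DR$ of each individual cohomology module $\Mmod_i$; this is precisely where the Decomposition Theorem enters, and it is the reason for first reducing everything to polarisable Hodge modules, where \corollaryref{cor:decomposition-MF} is available. Everything else is routine manipulation with the indices in \eqref{eq:Ax} and with the standard strictness properties of morphisms of mixed Hodge modules.
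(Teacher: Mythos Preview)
Your proof is correct and follows essentially the same route as the paper: in the pure case you use \propositionref{prop:fl-grDR}, \corollaryref{cor:decomposition-MF}, and \lemmaref{lem:5-2} exactly as the paper does, and in the mixed case your induction on the length of the weight filtration is just an unwound version of the weight spectral sequence argument the paper invokes (and which you yourself mention as an alternative).
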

\begin{proof}
  One can deduce this from the construction of the direct image functor in
  \cite[§2.3]{Saito:HodgeModules}.  Here we outline another proof based on
  \theoremref{thm:direct-image} and \theoremref{thm:fl-mixed}.

  We first deal with the case where $M ∈ \HM(X, W)$ is a polarisable pure Hodge
  module.  By \propositionref{prop:fl-grDR} and
  \corollaryref{cor:decomposition-MF}, we have for every $p ∈ ℤ$ an isomorphism
  \[
    \derR f_* \gr_p^F \DR(\Mmod) ≅ \gr_p^F \DR \bigl( f_+(R_F \Mmod) \bigr) %
    ≅ \bigoplus_{i ∈ ℤ} \gr_p^F \DR(\Mmod_i) \decal{-i},
  \]
  where $(\Mmod_i, F_• \Mmod_i)$ is the filtered $𝒟_Y$-module underlying
  $Hⁱ f_* M ∈ \HM(Y, w+i)$.  Since $F_{m-1} \Mmod = 0$, we get
  $\gr_p^F \DR(\Mmod) = 0$ for all $p ≤ m-1-\dim X$, and $\gr_p^F \DR(\Mmod_i)$
  is therefore acyclic as long as $p ≤ m-1-\dim X$.  According to
  \lemmaref{lem:5-2}, this is enough to conclude that
  $F_{m+c-1} \Mmod_i = F_{m-1-\dim X + \dim Y} \Mmod_i = 0$ for every $i ∈ ℤ$.
  
  Now suppose that $M ∈ \MHM(X)$ is a graded-polarisable mixed Hodge module.
  The underlying $𝒟_X$-module of the Hodge module $\gr_w^F M ∈ \HM(X, w)$ is
  $\gr_w^W \Mmod$, with the induced Hodge filtration; because
  $F_{m-1} \Mmod = 0$, we have $F_{m-1} \gr_w^F \Mmod = 0$.  Since we already
  have the result in the pure case, the assertion now follows by looking at the
  spectral sequence in \ref{en:fl-mixed-SS2}.
\end{proof}

\subsection{Non-characteristic restriction to hypersurfaces}
\label{sec:non-characteristic}
\approvals{Christian & yes \\Stefan & yes}

We briefly review the non-characteristic restriction of a mixed Hodge module to
a hypersurface.  For a more general discussion of non-characteristic
restriction, see \cite[§3.5]{Saito:HodgeModules} or \cite[§8]{saito-vanishing}.

\begin{defn}[Non-characteristic hypersurfaces]\label{def:nonchar}
  Let $X$ be a complex manifold, and let $D ⊆ X$ be a smooth hypersurface.  The
  inclusion $i_D \colon D ↪ X$ gives rise to the following morphisms between
  cotangent bundles:
  \begin{equation}\label{eq:diagram-D}
    \begin{tikzcd}
      (T^{\ast} X) ⨯_X D \rar \dar{p_1} & T^{\ast} D \\
      T^{\ast} X
    \end{tikzcd}
  \end{equation}
  Given a regular holonomic left $𝒟_X$-module $\Mmod$ on $X$, let
  $\Ch(\Mmod) ⊆ T^{\ast} X$ denote its characteristic variety.  We say that
  $D ⊆ X$ is \define{non-characteristic} for $\Mmod$ if $p_1^{-1} \Ch(\Mmod)$ is
  finite over its image in $T^{\ast} D$.
\end{defn}

\begin{note}
  As explained for example in \cite[§8]{saito-vanishing}, $D ⊆ X$ is
  non-characteristic for $\Mmod$ if and only if $D$ is transverse to every
  stratum in a Whitney stratification of $X$ that is adapted to the perverse
  sheaf $\DR(\Mmod)$.  In particular, generic hyperplane sections (in $ℙ^n$ or
  $ℂ^n$) are always non-characteristic.
\end{note}

The following result of Saito \cite[Lem.~2.25]{Saito:MixedHodgeModules} describes
what happens to mixed Hodge modules under non-characteristic restriction to smooth
hypersurfaces.

\begin{thm}[Restriction to non-characteristic hypersurfaces]\label{thm:restr}
  Let $X$ be a complex manifold, and let $M ∈ \MHM(X)$ be a graded-polarisable
  mixed Hodge module on $X$, with underlying filtered $𝒟_X$-module
  $(\Mmod, F_• \Mmod)$.  Suppose that $i_D \colon D ↪ X$ is a smooth
  hypersurface that is non-characteristic for $\Mmod$.  Then there is a
  graded-polarisable mixed Hodge module $H^{-1} i_D^* M ∈ \MHM(D)$, whose
  underlying filtered $𝒟_D$-module is isomorphic to
  \[
    \bigl( 𝒪_D ⊗_{i_D^{-1}𝒪_X} i_D^{-1} \Mmod, 𝒪_D ⊗_{i_D^{-1}𝒪_X} i_D^{-1} F_•
    \Mmod \bigr),
  \]
  and whose de Rham complex is quasi-isomorphic to
  \[
    i_D^{-1} \DR(\Mmod) \decal{-1}.
  \]
  Moreover, if $M$ is pure of weight $w$, then $H^{-1} i_D^* M$ is again pure of
  weight $w-1$.
\end{thm}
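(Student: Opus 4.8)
This is \cite[Lem.~2.25]{Saito:MixedHodgeModules}; I sketch the shape of the argument I would give. Everything in the statement is local on $X$, so I may assume that $X \subseteq \bC^n$ is open with coordinates $x_1, \dots, x_n$, that $D = \{x_n = 0\}$, and I write $t = x_n$. In these terms the hypothesis says that $\Ch(\Mmod) \subseteq T^{\ast}X$ meets the conormal bundle $T^{\ast}_D X$ only along the zero section, equivalently (as recalled after \definitionref{def:nonchar}) that $D$ is transverse to every stratum of a Whitney stratification adapted to $\DR(\Mmod)$. The plan is: first identify the underlying filtered $\sD$-module, then the de Rham complex and hence the mixed Hodge module $H^{-1}i_D^* M$ itself, and finally prove purity.

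For the filtered $\sD$-module, the starting point is Kashiwara's theorem on non-characteristic inverse images \cite{HTT}: it gives that $\sO_D \otimes^{\mathbf{L}}_{i_D^{-1}\sO_X} i_D^{-1}\Mmod$ has cohomology only in degree $0$ --- concretely, that $t$ acts injectively on $\Mmod$ --- and that the naive restriction $\Mmod_D := \sO_D \otimes_{i_D^{-1}\sO_X} i_D^{-1}\Mmod$ is regular holonomic over $\sD_D$, with characteristic variety contained in the image of $p_1^{-1}\Ch(\Mmod)$ in $T^{\ast}D$. To pass to the filtered level I would use the property of modules underlying Hodge modules already invoked for \propositionref{prop:duality-mixed}: $\gr_\bullet^F\Mmod$ is Cohen--Macaulay over $\gr_\bullet^F\sD_X = \Sym\sT_X$. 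Since no component of $\Ch(\Mmod) = \Supp\gr_\bullet^F\Mmod$ projects into $D$ --- this is the non-characteristic condition --- and there are no embedded primes, $t$ is a non-zero-divisor on $\gr_\bullet^F\Mmod$; hence $F_p\Mmod \cap t\Mmod = tF_p\Mmod$ for every $p$, which is precisely what is needed for the filtered inverse image of $R_F\Mmod$ to be strict and concentrated in degree $0$, with associated filtered $\sD_D$-module $\bigl( \Mmod_D,\, \sO_D \otimes_{i_D^{-1}\sO_X} i_D^{-1}F_\bullet\Mmod \bigr)$.

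Next I would restrict the de Rham complex $\DR(\Mmod) = \bigl[ \Mmod \to \Omega^1_X \otimes \Mmod \to \cdots \bigr]\decal{n}$ to $D$: splitting $\Omega_X^\bullet|_D \cong \Omega_D^\bullet \oplus \bigl( dx_n \wedge \Omega_D^{\bullet - 1} \bigr)$ and using the injectivity of $t$ on $\Mmod$, the ``$dx_n$-direction'' becomes acyclic, and the natural map identifies $i_D^{-1}\DR(\Mmod)$ with $\DR_D(\Mmod_D)\decal{1}$, that is $\DR_D(\Mmod_D) \cong i_D^{-1}\DR(\Mmod)\decal{-1}$; the shift records the perverse normalisation in codimension one (sanity check: $\DR(\sO_X) = \bC_X\decal{n}$ while $\DR_D(\sO_D) = \bC_D\decal{n-1}$). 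Saito's formalism supplies the functor $i_D^* \colon D^b\MHM(X) \to D^b\MHM(D)$, and the two steps above show that for non-characteristic $D$ its value on $M$ is concentrated in cohomological degree $-1$; hence $H^{-1}i_D^* M \in \MHM(D)$ is a genuine mixed Hodge module, and by construction its underlying filtered $\sD_D$-module and de Rham complex are the ones just computed.

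It remains to prove purity: for $M \in \HM(X, w)$ I want $H^{-1}i_D^* M \in \HM(D, w-1)$. I would pinch between $i_D^*$ and $i_D^!$. Because $D$ is non-characteristic, $\rat M$ is transverse to $D$, so the Gysin isomorphism gives $i_D^! M \cong i_D^* M(-1)\decal{-2}$, which Saito's formalism promotes to an isomorphism in $D^b\MHM(D)$; in particular $i_D^! M$ is concentrated in degree $1$ with $H^1 i_D^! M \cong H^{-1}i_D^* M(-1)$. Now Saito's weight estimates \cite{Saito:MixedHodgeModules} say that $H^j i_D^* M$ has weights $\le w + j$, while $H^j i_D^! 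M$ has weights $\ge w + j$; applied with $j = -1$ and $j = 1$, the first gives that $H^{-1}i_D^* M$ has weights $\le w - 1$, and the second, after the Tate twist, that $H^{-1}i_D^* M \cong H^1 i_D^! M(1)$ has weights $\ge (w+1) - 2 = w - 1$. Hence $H^{-1}i_D^* M$ is pure of weight $w-1$. The $\sD$-module and de Rham steps are essentially a repackaging of Kashiwara's non-characteristic pullback together with the Cohen--Macaulayness of $\gr_\bullet^F\Mmod$, and should be routine; the step I expect to be the real obstacle is the purity claim --- specifically, upgrading the Gysin identification $i_D^! M \cong i_D^* M(-1)\decal{-2}$ from an isomorphism of underlying perverse sheaves to one of objects of $D^b\MHM(D)$ --- since it is this that collapses the two one-sided weight bounds to a single weight, and here one genuinely needs Saito's machinery. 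An alternative would be to prove purity directly, by showing that a polarisation on $M$ induces one on $H^{-1}i_D^* M$ via the strict-support decomposition of $M$ and the compatibility of non-characteristic restriction with nearby cycles.
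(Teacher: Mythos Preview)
Your proof is correct but differs from the paper's in two places, and the comparison is instructive.

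For strictness ($F_p \Mmod \cap t\Mmod = tF_p\Mmod$), the paper first computes the rational $V$-filtration explicitly (\lemmaref{lem:saito-nonchar}): one has $V^0\Mmod = \Mmod$ and $V^k\Mmod = t^k\Mmod$ for $k \ge 0$. It then invokes the Hodge-module axiom \cite[§3.2.1]{Saito:HodgeModules} that $t \colon F_p V^{\alpha} \Mmod \to F_p V^{\alpha+1}\Mmod$ is an isomorphism for $\alpha > -1$, and specialises to $\alpha = 0$. Your Cohen--Macaulay argument is a clean alternative and reaches the same conclusion without computing the $V$-filtration first; but the paper needs the $V$-filtration anyway for what comes next.

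For purity, the paper avoids the Gysin identification altogether. From the explicit $V$-filtration one reads off $\gr_V^0 \Mmod \cong \Mmod/t\Mmod$ with the nilpotent operator $N = t\partial_t$ acting \emph{trivially}. By Saito's inductive definition, the weight filtration on $H^{-1}i_D^*M$ is the monodromy filtration of $N$ relative to the induced filtration $W_\bullet\Mmod/tW_\bullet\Mmod$; since $N = 0$, this relative monodromy filtration coincides with the induced filtration itself. When $M$ is pure of weight $w$ the induced filtration has a single jump, so $H^{-1}i_D^*M$ is pure of weight $w-1$. This is shorter and more self-contained than your pinching argument via $i_D^! M \cong i_D^* M(-1)\decal{-2}$ together with the two one-sided weight bounds---and it sidesteps precisely the point you flagged as the obstacle, since the MHM-level Gysin isomorphism in the non-characteristic case is itself usually established through this same $V$-filtration analysis. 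Your alternative suggestion at the end (show that $N$ is trivial and use the induced weight filtration) is in fact exactly what the paper does.
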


As the discussion in Saito's paper is rather brief, we include a sketch of the
proof of \theoremref{thm:restr} for the convenience of the reader.  It relies on
the following result of Saito \cite[Lem.~3.5.6]{Saito:HodgeModules} whose proof we
reproduce here.

\begin{lem}[Existence of $V$-filtration]\label{lem:saito-nonchar}
  In the setting of \definitionref{def:nonchar}, suppose that the smooth
  hypersurface $D ⊆ X$ is non-characteristic for $\Mmod$.  Then the rational
  V-filtration of $\Mmod$ relative to $D$ exists and is given by
  \[
    V^{α} \Mmod =
    \begin{cases}
      \Mmod & \text{for $α ≤ 0$,} \\
      𝒥_D^{⌈ α ⌉} \Mmod & \text{for $α ≥ 0$,}
    \end{cases}
  \]
  where $𝒥_D ⊆ 𝒪_X$ denotes the coherent ideal sheaf of $D$.
\end{lem}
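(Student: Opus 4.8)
The statement is local on $X$, so I would begin by shrinking to a polydisc with coordinates $(t, x_1, \dotsc, x_{d-1})$ in which $D = \{t = 0\}$ and $\sJ_D = t\sO_X$. Since the rational $V$-filtration of $\Mmod$ relative to $D$ is \emph{unique} whenever it exists, it suffices to exhibit one filtration with the required properties and to observe that it agrees with the formula in the statement. I take as candidate
\[
  V^{\alpha}\Mmod := \sJ_D^{\lceil \alpha \rceil}\Mmod = t^{\max(0,\,\lceil \alpha \rceil)}\Mmod, \qquad \alpha \in \bQ,
\]
and check the defining axioms: $(\mathrm{V}1)$ each $V^{\alpha}\Mmod$ is coherent over $V^{0}\sD_X$ (the subring of operators preserving the $\sJ_D$-adic filtration, locally $\sO_X\langle t\partial_t, \partial_{x_1}, \dotsc, \partial_{x_{d-1}}\rangle$); $(\mathrm{V}2)$ the filtration is decreasing, exhaustive, separated, and discretely and rationally indexed; $(\mathrm{V}3)$ $t \cdot V^{\alpha}\Mmod \subseteq V^{\alpha+1}\Mmod$ with equality for $\alpha > 0$, and $\partial_t \cdot V^{\alpha}\Mmod \subseteq V^{\alpha-1}\Mmod$; $(\mathrm{V}4)$ the canonical operator $t\partial_t - \alpha$ acts nilpotently on $\gr_V^{\alpha}\Mmod$.

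Only two points genuinely use the non-characteristic hypothesis, and both are standard consequences of transversality for regular holonomic modules (see \cite[\S 3.5]{Saito:HodgeModules} or \cite[\S 8]{saito-vanishing}). First, $\Mmod$ contains no nonzero subsheaf supported on $D$; equivalently $\Mmod$ is $t$-torsion free, so $t \colon \Mmod \to \Mmod$ is injective, multiplication by $t^{k}$ identifies $\Mmod/\sJ_D\Mmod$ with $\sJ_D^{k}\Mmod/\sJ_D^{k+1}\Mmod$ for every $k \geq 0$, and $\bigcap_{k} \sJ_D^{k}\Mmod = 0$ (this last point also uses regularity of $\Mmod$). Second, the naive restriction $i_D^{*}\Mmod = \Mmod/\sJ_D\Mmod$ is a coherent, regular holonomic $\sD_D$-module, with $\Tor_{>0}^{\sO_X}(\sO_D, \Mmod) = 0$ and with $\Ch(i_D^{*}\Mmod)$ equal to the image of $\Ch(\Mmod)|_{D}$ in $T^{\ast}D$ -- this image being \emph{finite} over $T^{\ast}D$ is precisely the content of \definitionref{def:nonchar}. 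Granting these, $(\mathrm{V}2)$, $(\mathrm{V}3)$ and $(\mathrm{V}4)$ are immediate: $V^{\alpha}\Mmod = \Mmod$ for $\alpha \leq 0$ gives exhaustiveness; $\bigcap_{k} t^{k}\Mmod = 0$ gives separatedness; the jumps occur only at $\alpha \in \bZ_{\geq 0}$; the identities $t \cdot t^{k}\Mmod = t^{k+1}\Mmod$ (using injectivity of $t$) and $\partial_t(t^{k}m) = k\,t^{k-1}m + t^{k}\partial_t m$ give $(\mathrm{V}3)$; and on $\gr_V^{k}\Mmod = t^{k}\Mmod/t^{k+1}\Mmod$ the operator $t\partial_t$ acts as multiplication by $k$, so $t\partial_t - \alpha$ acts as multiplication by $\lceil \alpha \rceil - \alpha$, which vanishes on every nonzero graded piece. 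The vanishing of this operator is the precise sense in which $D$ carries no vanishing cycles and the nearby cycles along $D$ have trivial monodromy -- exactly what one expects for a non-characteristic hypersurface.

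The one substantive point, and the step where I expect the real work, is $(\mathrm{V}1)$, the coherence of $V^{\alpha}\Mmod$ over $V^{0}\sD_X$. Since $V^{\alpha}\Mmod = t^{\lceil \alpha \rceil}\Mmod$ is the image of $\Mmod$ under the $V^{0}\sD_X$-linear map ``multiplication by $t^{\lceil \alpha \rceil}$'', it is enough to treat $\alpha = 0$, that is, to prove that $\Mmod$ itself is $V^{0}\sD_X$-coherent near $D$. I would choose a coherent $\sO_X$-submodule $F \subseteq \Mmod$ whose image generates the $\sD_D$-coherent module $\Mmod/\sJ_D\Mmod$, so that $V^{0}\sD_X \cdot F + \sJ_D\Mmod = \Mmod$, and then argue that in fact $V^{0}\sD_X \cdot F = \Mmod$ by a Nakayama/Artin--Rees argument along $D$, using $\bigcap_{k}\sJ_D^{k}\Mmod = 0$ together with the Noetherianity of $V^{0}\sD_X$ and the fact that $\sJ_D \cdot V^{0}\sD_X$ is a two-sided ideal. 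This is where the non-characteristic hypothesis is essential rather than merely convenient: passing to symbols for a good filtration, transversality says that the image of $\Ch(\Mmod)|_{D}$ in $T^{\ast}D$ is finite, hence that $\sigma(\partial_t)$ satisfies a monic polynomial relation over the subalgebra generated by the $\sigma(\partial_{x_i})$ along $\Ch(\Mmod)|_{D}$; lifting this relation shows that, modulo $\sJ_D$, a bounded power $\partial_t^{N}$ is already expressible through lower powers of $\partial_t$ and the operators in $V^{0}\sD_X$, which is exactly what lets the finitely generated $V^{0}\sD_X$-module $V^{0}\sD_X \cdot F$ absorb all of $\Mmod$. Once all four axioms are verified, uniqueness of the rational $V$-filtration identifies $V^{\bullet}\Mmod$ with the filtration in the statement.
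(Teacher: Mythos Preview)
Your outline is correct and lands on the same key point as the paper: properties (V2)--(V4) are immediate, and the only substantive step is the $V^0\sD_X$-coherence of $\Mmod$. However, your route to (V1) is more circuitous than necessary, and the Nakayama/Artin--Rees portion as written is not self-supporting: Artin--Rees for the submodule $V^0\sD_X\cdot F \subseteq \Mmod$ would require $\Mmod$ to already be finitely generated over $V^0\sD_X$, which is exactly what you are trying to prove. The real content is your second ingredient, the integrality of $\sigma(\partial_t)$ over the horizontal symbols, and this is precisely what the paper exploits---but more directly. The paper fixes a good filtration $F_\bullet\Mmod$ and observes that it suffices to show $\gr_\bullet^F\Mmod$ is coherent over $\gr_\bullet^F V^0\sD_X \cong \Sym\sT_{X/\bC}[t\partial_t]$; since finiteness is an open condition, one shrinks $X$ so that $\Ch(\Mmod)$ is finite over its image in the \emph{relative} cotangent bundle $T^*(X/\bC)$, not merely along $D$, and then coherence of $\gr_\bullet^F\Mmod$ over $\Sym\sT_{X/\bC}$ follows immediately because pushforward by a finite map preserves coherence. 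This bypasses any Nakayama-type argument entirely. Your approach would converge to the same thing once the symbolic step is carried out carefully, but the paper's formulation via $\gr_\bullet^F$ and finite pushforward is cleaner and avoids the circularity. (Two minor remarks: separatedness is not among the axioms the paper checks, so you need not invoke $\bigcap_k t^k\Mmod = 0$; and the isomorphism $t\colon V^\alpha\Mmod \xrightarrow{\sim} V^{\alpha+1}\Mmod$ must hold for all $\alpha > -1$, not just $\alpha > 0$---this uses the $t$-torsion-freeness you correctly note.)
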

\begin{proof}
  The problem is local, and after shrinking $X$, we may assume that
  $D = t^{-1}(0)$, where $t \colon X → ℂ$ is holomorphic and submersive.  We may
  also assume that we have a global holomorphic vector field $∂_t$ with the
  property that $\lbrack ∂_t, t \rbrack = 1$.  In this situation, the rational
  V-filtration is the unique exhaustive decreasing filtration $V^• \Mmod$,
  indexed discretely and left-continuously by the set of rational numbers, with
  the following properties:
  \begin{enumerate}
  \item Each $V^{α} \Mmod$ is coherent over $V⁰ 𝒟_X$, the $𝒪_X$-subalgebra of
    $𝒟_X$ preserving $𝒥_D$.
    
  \item One has $t · V^{α} \Mmod ⊆ V^{α+1} \Mmod$ and
    $∂_t · V^{α} \Mmod ⊆ V^{α-1} \Mmod$ for every $α ∈ ℚ$.
    
  \item For $α > -1$, multiplication by $t$ induces an isomorphism
    $V^{α} \Mmod ≅ V^{α+1} \Mmod$.
    
  \item The operator $t ∂_t - α$ acts nilpotently on
    $\gr_V^{α} \Mmod = V^{α} \Mmod / V^{>α} \Mmod$.
  \end{enumerate}
  If we define the filtration $V^• \Mmod$ as in the statement of the lemma, then
  the last three properties are immediate; the only thing we need to check is
  that $\Mmod$ itself is coherent over $V⁰ 𝒟_X$.  After choosing a good
  filtration $F_• \Mmod$, it is enough to show that $\gr_•^F \Mmod$ is coherent
  over $\gr_•^F V⁰ 𝒟_X$.  Note that $\gr_•^F \Mmod$ is always coherent over
  $\gr_•^F 𝒟_X ≅ \Sym 𝒯_X$.

  To prove the required coherence, we denote by $𝒯_{X/ℂ}$ the relative tangent
  sheaf, and by $T^{\ast}(X/ℂ)$ the relative cotangent bundle.  The fact that
  $t$ is submersive means that we have a surjective bundle morphism
  $T^{\ast} X → T^{\ast}(X/ℂ)$ on $X$; its restriction to $D$ is the horizontal
  arrow in \eqref{eq:diagram-D}.  By assumption, $p_1^{-1} \Ch(\Mmod)$ is finite
  over its image in $T^{\ast} D$, and because finiteness is an open condition,
  we can replace $X$ by a suitable open neighbourhood of $D$ and arrange that
  $\Ch(\Mmod)$ is actually finite over its image in $T^{\ast}(X/ℂ)$.  By
  definition of the characteristic variety, the support of the coherent sheaf on
  $T^{\ast} X$ corresponding to $\gr_•^F \Mmod$ is precisely $\Ch(\Mmod)$.
  Because push forward by finite holomorphic mappings preserves coherence, it
  follows that $\gr_•^F \Mmod$ is coherent over the subalgebra
  $\Sym 𝒯_{X/ℂ} ⊆ \Sym 𝒯_X$.  Now it is easy to see that
  \[
    \gr_•^F V⁰ 𝒟_X ≅ \Sym 𝒯_{X/ℂ} \, \lbrack t ∂_t \rbrack,
  \]
  and so $\gr_•^F \Mmod$ is coherent over this larger $𝒪_X$-algebra as well.
\end{proof}

We use the above description of the rational V-filtration to prove
\theoremref{thm:restr}.

\begin{proof}[Proof of Theorem~\ref*{thm:restr}]
  Since all the assertions are local on $X$, we may assume that $D = t^{-1}(0)$,
  where $t \colon X → ℂ$ is submersive.  We keep the notation introduced during
  the proof of \lemmaref{lem:saito-nonchar}.  Since $(\Mmod, F_• \Mmod)$
  underlies a mixed Hodge module, multiplication by $t$ induces an isomorphism
  between $F_p V^{α} \Mmod$ and $F_p V^{α+1} \Mmod$ for every $α > -1$; see
  \cite[§3.2.1]{Saito:HodgeModules}, but keep in mind that we are talking about
  left $𝒟$-modules.  Specialising to $α = 0$, we conclude that
  \[
    F_p \Mmod ∩ t \Mmod = t F_p \Mmod.
  \]
  It follows that $t \colon \gr_•^F \Mmod → \gr_•^F \Mmod$ is injective, and
  hence that $𝒪_D ⊗_{i_D^{-1} 𝒪_X} i_D^{-1} F_• \Mmod$ defines a good filtration
  of $𝒪_D ⊗_{i_D^{-1} 𝒪_X} i_D^{-1} \Mmod$ by coherent $𝒪_D$-submodules.  In
  particular, $i_D \colon D ↪ X$ is strictly non-characteristic for
  $(\Mmod, F_• \Mmod)$, in the terminology of \cite[§8]{saito-vanishing}.

  According to \lemmaref{lem:saito-nonchar}, we have
  \[
  \gr_V⁰ \Mmod ≅ \Mmod / t \Mmod ≅ 𝒪_D ⊗_{i_D^{-1} 𝒪_X} i_D^{-1} \Mmod,
  \]
  and the action of the (nilpotent) operator $N = t ∂_t$ is trivial.
  Consequently, the relative weight filtration of $N$ is equal to the filtration
  $W_• \Mmod / t W_• \Mmod ≅ 𝒪_D ⊗_{i_D^{-1} 𝒪_X} i_D^{-1} W_• \Mmod$ induced by
  the weight filtration of $M$ itself \cite[§2.3]{Saito:MixedHodgeModules}.  Now
  Saito's inductive definition of the category of (mixed) Hodge modules in
  \cite[§5.1]{Saito:HodgeModules} and \cite[(2.d)]{Saito:MixedHodgeModules}
  implies the first and third assertion.  The second assertion is a special case
  of Kashiwara's version of the Cauchy-Kovalevskaya theorem
  \cite[Cor.~4.3.4]{HTT}, which says that non-characteristic restriction is
  compatible with passage to the de Rham complex.
\end{proof}

We end this section by describing the relation between the de Rham complexes of the
two mixed Hodge modules $M$ and $H^{-1} i_D^{\ast} M$; see
\cite[(13.3)]{saito-vanishing} for the proof.

\begin{prop}[Comparison of de Rham complexes]\label{prop:restrComparison}
  In the setting of \theoremref{thm:restr}, denote by $(\Mmod_D, F_• \Mmod_D)$
  the filtered $𝒟_D$-module underlying the mixed Hodge module
  $M_D = H^{-1} i_D^{\ast} M$.  Given any $p ∈ ℤ$, one has a short exact
  sequence of complexes
  \[
    0 → N_{D \mid X}^{\ast} ⊗_{𝒪_D} \gr_{p+1}^F \DR(\Mmod_D) → 𝒪_D ⊗_{𝒪_Y}
    \gr_p^F \DR(\Mmod) → \gr_p^F \DR(\Mmod_D) \decal{1} → 0,
  \]
  where $N_{D \mid X}^{\ast}$ means the conormal bundle for the inclusion
  $D ⊆ X$.  \qed
\end{prop}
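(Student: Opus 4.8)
The plan is to reduce everything to a local computation with the conormal exact sequence. Since the statement is local on $X$, I would choose coordinates $t, x_2, \dots, x_n$ on $X$ with $D = \{t = 0\}$, and write $\partial_t, \partial_2, \dots, \partial_n$ for the corresponding vector fields; then $N^{\ast}_{D\mid X} = \mathcal{O}_D\, dt$ and $i_D^{\ast}\Omega^k_X = \Omega^k_D \oplus \bigl( dt \wedge \Omega^{k-1}_D \bigr)$. Two preliminary identifications do most of the work. First, by \lemmaref{lem:saito-nonchar} and the argument in the proof of \theoremref{thm:restr}, multiplication by $t$ is injective on $\gr_{\bullet}^F\Mmod$, so that $\gr_q^F\Mmod_D \cong i_D^{\ast}\gr_q^F\Mmod$ for every $q$; since the terms of $\gr_p^F\DR(\Mmod)$ are the locally free sheaves $\Omega^k_X$ tensored with the modules $\gr_{p+k}^F\Mmod$, its termwise restriction $i_D^{\ast}\gr_p^F\DR(\Mmod)$ has $k$-th term $i_D^{\ast}\Omega^k_X \otimes_{\mathcal{O}_D}\gr_{p+k}^F\Mmod_D$, and this termwise tensor also computes the derived restriction $\mathcal{O}_D\otimes^{\mathbf{L}}_{\mathcal{O}_X}\gr_p^F\DR(\Mmod)$, because $t$ acts injectively on each term. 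Second, on the associated graded the de Rham differential becomes $\mathcal{O}_X$-linear: in $\gr_p^F\DR(\Mmod)$ the summand $d\omega\otimes m$ lands in $F_{p-1}$ and therefore dies, so the differential is $\omega\otimes\bar m \mapsto (-1)^k\sum_i (dx_i\wedge\omega)\otimes\overline{\partial_i m}$, i.e.\ wedging with the symbol of the connection; and the same formula, involving only $\partial_2, \dots, \partial_n$, describes the differential of $\gr_q^F\DR(\Mmod_D)$.

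With these in place the proof is a direct check. For each $k$, wedging with $dt$ gives the conormal short exact sequence $0 \to N^{\ast}_{D\mid X}\otimes\Omega^{k-1}_D \to i_D^{\ast}\Omega^k_X \to \Omega^k_D \to 0$ of locally free $\mathcal{O}_D$-modules; it is locally split, hence remains exact after $\otimes_{\mathcal{O}_D}\gr_{p+k}^F\Mmod_D$. I would then observe that the subsheaves $dt\wedge\Omega^{k-1}_D\otimes\gr_{p+k}^F\Mmod_D$ form a subcomplex of $i_D^{\ast}\gr_p^F\DR(\Mmod)$: applying the symbol differential to $dt\wedge\eta\otimes\bar m$ kills the $\partial_t$-term because $dt\wedge dt = 0$, and what remains, $\pm\, dt\wedge\sum_{i\ge 2}(dx_i\wedge\eta)\otimes\overline{\partial_i m}$, is again divisible by $dt$. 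Matching filtration indices (note $p + k = (p+1) + (k-1)$) identifies this subcomplex, via $\eta \mapsto dt\wedge\eta$, with $N^{\ast}_{D\mid X}\otimes_{\mathcal{O}_D}\gr_{p+1}^F\DR(\Mmod_D)$, and there is no homological shift because the drop by one in the exterior degree is compensated by the drop by one in the filtration index. The quotient complex has $k$-th term $\Omega^k_D\otimes\gr_{p+k}^F\Mmod_D$ with the symbol differential that only uses $\partial_2, \dots, \partial_n$, which is precisely the de Rham complex of $\Mmod_D$ at graded level $p$; here the $\Omega^k_X$-term sits in cohomological degree $k-n$ whereas the $\Omega^k_D$-term of $\gr_p^F\DR(\Mmod_D)$ sits in degree $k - (n-1)$, so the quotient is $\gr_p^F\DR(\Mmod_D)\decal{1}$. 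Assembling the three complexes yields the asserted short exact sequence.

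I expect the only genuine obstacle to be bookkeeping rather than anything conceptual: one must verify that the maps of the conormal sequences are honest chain maps, so that what one gets is an exact sequence of complexes and not merely of sheaves degree by degree, and one must pin down the shift and the signs in the identification of the quotient with $\gr_p^F\DR(\Mmod_D)\decal{1}$ — this is exactly where the shift $\decal{1}$, together with the sign twist it carries, is forced. Once the two structural facts above are isolated — that $\gr_q^F\Mmod_D \cong i_D^{\ast}\gr_q^F\Mmod$ and that "$\gr^F$ of the de Rham complex" is a symbol (Koszul-type) complex — the remaining computation is routine, and both facts are essentially already contained in \lemmaref{lem:saito-nonchar} and the proof of \theoremref{thm:restr}.
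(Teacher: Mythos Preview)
Your argument is correct and is exactly the standard computation: the two structural inputs you isolate --- that $t$ acts injectively on $\gr_{\bullet}^F\Mmod$, so $\gr_q^F\Mmod_D \cong i_D^{\ast}\gr_q^F\Mmod$, and that the differential on $\gr_p^F\DR$ is the $\mathcal{O}$-linear symbol map --- reduce everything to the conormal sequence for $\Omega^k$, and your degree-and-shift bookkeeping is right. The paper itself does not give a proof here; it simply cites \cite[(13.3)]{saito-vanishing}, and the argument there is precisely the local Koszul-type computation you have written down.
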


%
%
\svnid{$Id: S05-perverse.tex 269 2020-01-20 11:28:53Z kebekus $}

\section{A vanishing theorem for intersection complexes}
\subversionInfo
\approvals{Christian & yes \\Stefan & yes}

We briefly discuss a vanishing theorem for certain perverse sheaves that applies
in particular to intersection complexes.  Recall that a perverse sheaf $K$ on a
complex manifold $Y$ is, by definition, always \define{semiperverse}, meaning
that\CounterStep
\begin{equation}\label{eq:hgjh}
  \dim \Supp ℋ^j K ≤ -j, \quad\text{for every }j ∈ ℤ.
\end{equation}
These inequalities can be improved, provided that $K$ does not admit any
nontrivial morphisms to perverse sheaves whose support is properly contained in
$\Supp K$.  This applies for example to the intersection complex on any
irreducible complex space, and more generally to the de Rham complex of any
polarisable Hodge module with strict support.

\begin{prop}\label{prop:perverse-quotients}
  Let $K$ be a perverse sheaf on a complex manifold $Y$, and assume that
  $\Supp K$ has pure dimension $n$.  Then the following two conditions are
  equivalent:
  \begin{enumerate}
  \item\label{prop:quot-a} If $L$ is a perverse sheaf on $Y$ with
    $\dim \Supp L ≤ n-1$, then $\Hom(K, L) = 0$.
    
  \item \label{prop:quot-b} For every $j ≥ -n+1$, one has
    $\dim \Supp ℋ^j K ≤ -(j+1)$.
  \end{enumerate}
\end{prop}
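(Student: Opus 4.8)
The plan is to deduce the equivalence from the recollement formalism for perverse sheaves \cite{BBD}, after rephrasing condition~\ref{prop:quot-a} as the statement that $K$ admits no nonzero perverse quotient whose support is a proper closed analytic subset of $Y$. Throughout I work with the perverse $t$-structure for the middle perversity, for which a bounded constructible complex $A$ lies in ${}^{p}D^{≤ k}$ if and only if $\dim \Supp ℋ^m A ≤ k - m$ for every $m ∈ ℤ$. In these terms, the semiperversity inequality \eqref{eq:hgjh} records exactly that a perverse sheaf lies in ${}^{p}D^{≤ 0}$, and condition~\ref{prop:quot-b} asserts precisely that $ℋ^m K$ has support of dimension $≤ -m-1$ for all $m ≥ -n+1$.

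First I would reformulate condition~\ref{prop:quot-a}. The image of any morphism $K → L$ with $L$ perverse and $\dim \Supp L ≤ n-1$ is at once a perverse quotient of $K$ and a perverse subobject of $L$, hence a perverse sheaf with support of dimension $≤ n-1$; conversely every perverse quotient of $K$ of support-dimension $≤ n-1$ arises this way. So condition~\ref{prop:quot-a} is equivalent to requiring that, for every closed analytic $Z ⊆ Y$ with $\dim Z ≤ n-1$, the perverse sheaf $K$ has no nonzero perverse quotient supported on $Z$. Writing $i \colon Z ↪ Y$ for the inclusion and $j$ for the complementary open embedding, the recollement triangle $j_! j^* K → K → i_* i^* K → {}$, combined with the right $t$-exactness of $i^*$ and $j_!$, shows that $i_* \,{}^{p}H^0(i^* K) = {}^{p}H^0(i_* i^* K)$ is the largest perverse quotient of $K$ supported on $Z$ (the surjectivity of $K = {}^{p}H^0 K → {}^{p}H^0(i_* i^* K)$ uses ${}^{p}H^1(j_! j^* K) = 0$). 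Since $i^* K ∈ {}^{p}D^{≤ 0}(Z)$ always holds, condition~\ref{prop:quot-a} is thus equivalent to the assertion that $i^* K ∈ {}^{p}D^{≤ -1}(Z)$ for every closed analytic $Z ⊆ Y$ with $\dim Z ≤ n-1$.

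It then remains to identify this last condition with condition~\ref{prop:quot-b}. Since $i^{-1}$ is exact on sheaves, $ℋ^m(i^* K) = \bigl(ℋ^m K\bigr)|_Z$ has support $\Supp ℋ^m K ∩ Z$. For the direction \ref{prop:quot-b} $⇒$ \ref{prop:quot-a}, I would bound $\dim(\Supp ℋ^m K ∩ Z)$ by $-m-1$ by a three-case argument: for $m ≥ -n+1$ this is condition~\ref{prop:quot-b}; for $m = -n$ it follows from $\dim Z ≤ n-1 = -m-1$; and for $m ≤ -n-1$ from $\dim Z ≤ n-1 ≤ -m-1$. Hence $i^* K ∈ {}^{p}D^{≤ -1}(Z)$, and the reformulation above gives condition~\ref{prop:quot-a}. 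For the converse, the negation of condition~\ref{prop:quot-b} produces some $j_0 ≥ -n+1$ with $\dim \Supp ℋ^{j_0} K ≥ -j_0$, which combined with \eqref{eq:hgjh} forces $\dim \Supp ℋ^{j_0}K = -j_0 ≤ n-1$; taking $Z := \Supp ℋ^{j_0} K$, the sheaf $ℋ^{j_0}(i^* K)$ has support all of $Z$, of dimension $-j_0 > -j_0 - 1$, so $i^* K ∉ {}^{p}D^{≤ -1}(Z)$, and by the reformulation condition~\ref{prop:quot-a} fails as well.

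I expect the one genuinely non-formal point to be the identification of $i_* \,{}^{p}H^0(i^* K)$ as the \emph{maximal} perverse quotient of $K$ supported on $Z$, which is where the recollement structure is really used (via ${}^{p}H^1(j_! j^* K) = 0$, itself a consequence of $j_! j^* K ∈ {}^{p}D^{≤ 0}(Y)$). Everything else is bookkeeping with support dimensions, exactness of $i^{-1}$, and the standard vanishing $\Hom(A, B) = 0$ for $A ∈ {}^{p}D^{≤ -1}$ and $B ∈ {}^{p}D^{≥ 0}$ — which is what actually makes the $\Hom$ in condition~\ref{prop:quot-a} vanish once $i^* K ∈ {}^{p}D^{≤ -1}(Z)$ is known.
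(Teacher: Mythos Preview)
Your proof is correct, but it follows a genuinely different route from the paper's.

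The paper argues both implications by direct manipulation of truncations. For \ref{prop:quot-a}~$\Rightarrow$~\ref{prop:quot-b} it sets $K' := \tau_{\geq -n+1} K$ (standard truncation), notes that $K'$ is semiperverse with support of dimension $\leq n-1$, and uses \ref{prop:quot-a} to kill the composite $K \to K' \to {}^{p}\mathcal{H}^{0} K'$; the map $K \to K'$ then lifts through ${}^{p}\tau_{\leq -1} K'$, and comparing ordinary cohomology sheaves gives the bound. For \ref{prop:quot-b}~$\Rightarrow$~\ref{prop:quot-a} the paper replaces $L$ by the image of $\varphi \colon K \to L$ and bounds $\dim \Supp \mathcal{H}^{j} L$ via the exact sequence $\mathcal{H}^{j} K \to \mathcal{H}^{j} L \to \mathcal{H}^{j+1}(\ker \varphi)$, forcing $L = 0$.

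Your argument instead packages condition~\ref{prop:quot-a} as the vanishing of the maximal perverse quotient $i_{*}\,{}^{p}H^{0}(i^{*}K)$ supported on each closed $Z$ of dimension $\leq n-1$, equivalently $i^{*}K \in {}^{p}D^{\leq -1}(Z)$, and then reads off both directions from the support characterisation of ${}^{p}D^{\leq -1}$. This is more conceptual and makes the role of the closed subset explicit; the price is that you invoke the recollement formalism (right $t$-exactness of $j_{!}$ and $i^{*}$, the adjunction $\Hom(K, i_{*}M) = \Hom(i^{*}K, M)$) rather than just truncation functors. The paper's approach is slightly more elementary and self-contained; yours identifies the obstruction more intrinsically. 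Both are short and valid.
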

\begin{proof}
  Let us show that \ref{prop:quot-a} implies \ref{prop:quot-b}.  Since $K$ is a
  perverse sheaf, one has $ℋ^j K = 0$ for $j ≤ -n-1$, and the inequalities in
  \eqref{eq:hgjh} imply that $ℋ^{-n} K$ is supported on all of $X$, whereas
  $\dim \Supp ℋ^j K ≤ -j$ for every $j ≥ -n+1$.  If we truncate $K$ with respect
  to the standard t-structure on $\Dbc(ℂ_X)$, the resulting constructible
  complex $K' := τ_{≥ -n+1} K$ is still semiperverse, and supported in a complex
  subspace that is properly contained in $X$.  By \ref{prop:quot-a}, the natural
  composed morphism
  \[
    K → K' → {}^p ℋ⁰ K'
  \]
  to the $0$-th cohomology sheaf for the perverse t-structure must therefore be
  trivial, which implies that the morphism $K → K'$ factors through
  $K'' := {}^p τ_{≤ -1} K'$, truncated with respect to the perverse t-structure
  on $\Dbc(ℂ_X)$.  For each $j ≥ -n+1$, this gives us a factorisation
  \[
    ℋ^j K → ℋ^j K'' → ℋ^j K'
  \]
  of the identity morphism.  By construction, $\dim \Supp ℋ^j K'' ≤ -(j+1)$, and
  therefore also $\dim \Supp ℋ^j K ≤ -(j+1)$ for every $j ≥ -n+1$, proving
  \ref{prop:quot-b}.

  It remains to show that, conversely, \ref{prop:quot-b} implies
  \ref{prop:quot-a}.  Suppose we are given a morphism of perverse sheaves
  $φ \colon K → L$ with $\dim \Supp L ≤ n-1$.  After replacing $L$ by $\img φ$,
  we can assume that $φ$ is surjective.  As before, we have $ℋ^j L = 0$ for
  $j ≤ -n$.  Now fix some $j ≥ -n+1$, and consider the short exact sequence
  \[
    ℋ^j K → ℋ^j L → ℋ^{j+1} (\ker φ).
  \]
  We have $\dim \Supp ℋ^j K ≤ -(j+1)$ by \ref{prop:quot-b}, and
  $\dim \Supp ℋ^{j+1}(\ker φ) ≤ -(j+1)$ by \eqref{eq:hgjh}.  Consequently,
  $\dim \Supp ℋ^j L ≤ -(j+1)$ for every $j ∈ ℤ$, and since $L$ is a perverse
  sheaf, the properties of the perverse t-structure imply that $L = 0$.
\end{proof}

The following vanishing theorem for the de Rham complex plays a crucial role in
the proof of our main theorem, and so we state it as a corollary.

\begin{cor}\label{cor:IC-vanishing}
  Let $Y$ be a complex manifold, and let $M ∈ \HM_X(Y, w)$ be a polarisable
  Hodge module of weight $w$ with strict support an irreducible complex subspace
  $X ⊆ Y$.  If $F_{c - 1} \Mmod = 0$ for some $c ∈ ℤ$, one has
  $ℋ⁰ F_{\dim Y - (w+c)} \DR(\Mmod) = 0$.
\end{cor}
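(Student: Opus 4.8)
The plan is to chain together three results already established in the excerpt: the acyclicity ranges for $\gr_p^F\DR(\Mmod)$ from \propositionref{prop:DR-acyclic}, the quasi-isomorphism $F_{p_0}\DR(\Mmod)\hookrightarrow\DR(\Mmod)$ from \propositionref{prop:DR-qiso}, and the perverse vanishing of \propositionref{prop:perverse-quotients}.

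First I would reduce to a statement about the full de Rham complex. Since $M\in\HM_X(Y,w)$ is pure of weight $w$, one has $W_{w-1}M=0$; feeding this, together with the hypothesis $F_{c-1}\Mmod=0$, into \propositionref{prop:DR-acyclic} shows that $\gr_p^F\DR(\Mmod)$ is acyclic whenever $p\ge \dim Y-(w+c)+1$. \propositionref{prop:DR-qiso}, applied with $p_0=\dim Y-(w+c)$, then gives that the inclusion $F_{p_0}\DR(\Mmod)\hookrightarrow\DR(\Mmod)$ is a quasi-isomorphism, so that $\mathcal{H}^0F_{\dim Y-(w+c)}\DR(\Mmod)\cong\mathcal{H}^0\DR(\Mmod)$. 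It therefore remains to prove that $\mathcal{H}^0\DR(\Mmod)=0$.

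For this I would invoke \propositionref{prop:perverse-quotients}. The de Rham complex $\DR(\Mmod)$ is a perverse sheaf on $Y$ (Kashiwara), canonically $\cong\bC\otimes_\bQ\rat M$, with support the irreducible subspace $X$, which has constant dimension $n:=\dim X$. Because $M$ is a polarisable Hodge module with strict support $X$, it is semisimple \cite[Cor.~5.2.13]{Saito:HodgeModules} and $\rat M$ is an intermediate extension from a dense open subset of $X$; hence $\DR(\Mmod)$ admits no nonzero morphism to any perverse sheaf whose support is a proper closed subset of $X$. That is precisely condition \ref{prop:quot-a} of \propositionref{prop:perverse-quotients}, so condition \ref{prop:quot-b} gives $\dim\Supp\mathcal{H}^j\DR(\Mmod)\le -(j+1)$ for all $j\ge -n+1$; taking $j=0$ forces $\mathcal{H}^0\DR(\Mmod)=0$.

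The one step requiring genuine care is the verification of condition \ref{prop:quot-a} for $K=\DR(\Mmod)$: one must upgrade the statement ``$M$ has no nonzero sub- or quotient Hodge module supported on a proper closed subset'' to the analogous statement for arbitrary perverse quotients of $\rat M$, whose targets need not underlie Hodge modules. I expect to handle this by restricting a hypothetical nonzero perverse quotient to the dense open locus of $X$ over which $\rat M$ is a shifted variation of Hodge structure, ruling it out there because a shifted local system on a connected manifold has no perverse quotient of strictly smaller-dimensional support, and then using the defining property of the intermediate extension to rule out quotients supported in the complement. Everything else is bookkeeping with the cited propositions.
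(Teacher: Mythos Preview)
Your proposal is correct and follows essentially the same path as the paper's proof: apply \propositionref{prop:DR-acyclic} and \propositionref{prop:DR-qiso} to reduce to showing $\mathcal{H}^0\DR(\Mmod)=0$, then invoke \propositionref{prop:perverse-quotients}. The paper simply asserts that strict support of $M$ implies $\DR(\Mmod)$ has no nontrivial perverse quotients with support properly contained in $X$; your extra paragraph justifying this via the intermediate-extension characterisation is correct but more than the paper deems necessary, since in Saito's theory strict support for a Hodge module is equivalent to the underlying perverse sheaf being an intermediate extension.
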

\begin{proof}
  According to \propositionref{prop:DR-acyclic}, the complex
  $\gr_p^F \DR(\Mmod)$ is acyclic for $p ≥ \dim Y - (w+c) + 1$.  By
  \propositionref{prop:DR-qiso}, this implies that the inclusion of the
  subcomplex $F_{p_0} \DR(\Mmod)$ into $\DR(\Mmod)$ is a quasi-isomorphism for
  $p_0 = \dim Y - (w+c)$.  In particular, the inclusion induces an isomorphism
  $ℋ⁰ F_{p_0} \DR(\Mmod) ≅ ℋ⁰ \DR(\Mmod)$.  But now $M$ has strict support $X$,
  and so the perverse sheaf $\DR(\Mmod)$ does not have nontrivial quotient
  objects whose support is properly contained in $X$.  We conclude that
  $ℋ⁰ \DR(\Mmod) = 0$, by \propositionref{prop:perverse-quotients}.
\end{proof}

%
%
\svnid{$Id: S06-reflexivity.tex 269 2020-01-20 11:28:53Z kebekus $}

\section{Coherent sheaves and Mixed Hodge modules}
\subversionInfo
\approvals{Christian & yes \\Stefan & yes}

The present section forms the technical core of the present paper.  Its main
results, \theoremref{thm:reflexive-pure} and \theoremref{thm:reflexive-mixed},
as well as \corollaryref{cor:reflexive-pure-ng} and
\corollaryref{cor:reflexive-mixed-ng} are criteria to guarantee that sections of
certain coherent sheaves derived from the de Rham complex of certain (mixed)
Hodge modules on $X$ extend across the singular locus $X_{\sing}$.

\subsection{Extending sections of coherent sheaves}
\approvals{Christian & yes \\Stefan & yes}
\label{sec:extshf-ng}

In this paragraph, we give a homological formulation of the property that
sections of a coherent sheaf extend uniquely over a given complex subspace.  The
material covered here will be known to experts.

\begin{propdef}[Extension across subsets]\label{prop:ccor-ng}
  Let $Y$ be a complex manifold.  Let $A ⊆ Y$ be a complex subspace,
  and let $j \colon Y ∖ A ↪ Y$ be the open embedding.  If $ℱ$ is a coherent
  sheaf of $𝒪_Y$-modules, then the following conditions are equivalent:
  \begin{enumerate}
  \item\label{il:P1} The natural morphism $ℱ → j_* j^* ℱ$ is an isomorphism.
    
  \item\label{il:P2} For every $k ∈ ℤ$, one has
    $\dim \bigl( A ∩ \Supp R^k \sHom_{𝒪_Y}(ℱ, ω_Y^•) \bigr) ≤ -(k+2)$.
  \end{enumerate}
  If these conditions are satisfied, we say that \define{sections of $ℱ$ extend
    uniquely across $A$}.
\end{propdef}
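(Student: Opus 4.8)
The plan is to rephrase both conditions in terms of the local cohomology sheaves $\sH^i_A(\sF)$ and then compare them by Grothendieck duality on the manifold $Y$. First I would record the exact sequence
\[
  0 \to \sH^0_A(\sF) \to \sF \to j_* j^* \sF \to \sH^1_A(\sF) \to 0
\]
together with the isomorphisms $R^{i-1} j_* (j^* \sF) \cong \sH^i_A(\sF)$ for $i \geq 2$; these show that \ref{il:P1} is equivalent to $\sH^0_A(\sF) = \sH^1_A(\sF) = 0$, i.e.\ to the statement that $\derR \Gamma_A(\sF)$ is concentrated in cohomological degrees $\geq 2$.

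Next I would translate this into a statement about the dual complex $E := \derR \sHom_{\sO_Y}(\sF, \omega^{\bullet}_Y)$. Since $Y$ is a manifold, biduality gives $\sF \cong \derR \sHom_{\sO_Y}(E, \omega^{\bullet}_Y)$, and combining the presentation $\derR \Gamma_A(-) \cong \varinjlim_m \derR \sHom_{\sO_Y}(\sO_Y/\sJ_A^m, -)$ (with $\sJ_A$ an ideal sheaf defining $A$) with the tensor-hom adjunction yields
\[
  \derR \Gamma_A(\sF) \;\cong\; \varinjlim_m \derR \sHom_{\sO_Y}\bigl( \sO_Y/\sJ_A^m \otimes^{\mathbf{L}}_{\sO_Y} E,\; \omega^{\bullet}_Y \bigr).
\]
The cohomology sheaves of $E$ are the $R^k \sHom_{\sO_Y}(\sF, \omega^{\bullet}_Y)$, and Condition~\ref{il:P2} is exactly what makes the right-hand side vanish in degrees $0$ and $1$. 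Indeed, tensoring with $\sO_Y/\sJ_A^m$ confines supports to $A$, so \ref{il:P2} forces $\sO_Y/\sJ_A^m \otimes^{\mathbf{L}}_{\sO_Y} E$ to be concentrated in degrees $\leq -2$ with its degree-$q$ cohomology sheaf of support-dimension $\leq -(q+2)$; then, applying $\derR \sHom_{\sO_Y}(-, \omega^{\bullet}_Y)$ and using that for a coherent sheaf $\sG$ with $\dim \Supp \sG \leq \delta$ the complex $\derR \sHom_{\sO_Y}(\sG, \omega^{\bullet}_Y)$ sits in degrees $\geq -\delta$ with $\dim \Supp \sH^k \leq -k$, one finds that every surviving term lands in degree $\geq 2$. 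Chasing the resulting hyper-Tor and hyper-Ext spectral sequences gives the implication \ref{il:P2} $\Rightarrow$ \ref{il:P1}.

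For the converse I would argue at a point, using that the inequalities in \ref{il:P2} may be tested on germs and that $\sH^i_A(\sF)_y \cong H^i_{\sJ_{A,y}}(\sF_y)$. At $y \in A$, writing $R = \sO_{Y,y}$, $M = \sF_y$, $I = \sJ_{A,y}$, the assertion reduces to the algebraic fact that $\operatorname{depth}_I(M) \geq 2$ if and only if $\dim\bigl(R/(I + \operatorname{Ann} \Ext^{j}_R(M,R))\bigr) \leq n-j-2$ for every $j$. The ``if'' direction is the localised version of the previous paragraph. For ``only if'', suppose $\operatorname{depth}_I(M) \geq 2$ but that some inequality fails, and pick a prime $\mathfrak{p} \supseteq I$ lying in $\Supp \Ext^j_R(M,R)$ with $\dim R/\mathfrak{p}$ as large as possible, so $\dim R/\mathfrak{p} \geq n-j-1$. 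Then $\Ext^j_{R_{\mathfrak{p}}}(M_{\mathfrak{p}}, R_{\mathfrak{p}}) \neq 0$ gives $\operatorname{pd}_{R_{\mathfrak{p}}} M_{\mathfrak{p}} \geq j$, so by the Auslander--Buchsbaum formula $\operatorname{depth}_{R_{\mathfrak{p}}} M_{\mathfrak{p}} = \operatorname{ht}(\mathfrak{p}) - \operatorname{pd}_{R_{\mathfrak{p}}} M_{\mathfrak{p}} \leq \bigl(n - \dim R/\mathfrak{p}\bigr) - j \leq 1$, contradicting $\operatorname{depth}_{R_{\mathfrak{p}}} M_{\mathfrak{p}} \geq \operatorname{depth}_I(M) \geq 2$ (grade does not decrease under localisation at a larger prime).

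The step I expect to require the most care is the numerology in the second paragraph: one must run the hyper-Tor spectral sequence for $\sO_Y/\sJ_A^m \otimes^{\mathbf{L}}_{\sO_Y} E$ and the hyper-Ext spectral sequence for $\derR \sHom_{\sO_Y}(-, \omega^{\bullet}_Y)$ of the result simultaneously, and verify that the exponent $-(k+2)$ in \ref{il:P2} is exactly the one that pushes every surviving contribution into cohomological degree $\geq 2$ --- and, if one wanted to run the converse sheaf-theoretically rather than via localisation, that the extremal term genuinely cannot be killed by a differential. All the remaining ingredients are standard: the exact sequence of the first paragraph, the support and vanishing behaviour of $\derR \sHom_{\sO_Y}(-, \omega^{\bullet}_Y)$, the identification of the stalks of local cohomology, the Auslander--Buchsbaum formula, and the monotonicity of grade under localisation.
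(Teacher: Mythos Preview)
Your argument is correct, and in fact it unpacks what the paper treats as black boxes. Both proofs begin identically, reducing \ref{il:P1} to the vanishing $\sH^0_A(\sF) = \sH^1_A(\sF) = 0$ via the local cohomology exact sequence. From there the paper proceeds in two quick steps: it invokes the Scheja--Trautmann vanishing theorem, which says this vanishing is equivalent to the inequalities $\dim\bigl(A \cap S_m(\sF)\bigr) \leq m-2$ for the singular sets $S_m(\sF) = \{y : \operatorname{depth}_y \sF \leq m\}$, and then rewrites the singular sets as $S_m(\sF) = \bigcup_{k \geq 0} \Supp R^{k-m}\sHom_{\sO_Y}(\sF, \omega_Y^{\bullet})$ to recover \ref{il:P2}.

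Your route is longer but more transparent: the biduality/spectral-sequence computation for \ref{il:P2} $\Rightarrow$ \ref{il:P1} is essentially a direct proof of (one direction of) Scheja--Trautmann in this setting, and your Auslander--Buchsbaum argument at a prime $\mathfrak{p} \supseteq I$ is exactly the mechanism underlying the identification of $S_m(\sF)$ with a union of $\Ext$-supports. The numerology you flagged does work out: the Tor spectral sequence gives $\dim \Supp \sH^q\bigl(\sO_Y/\sJ_A^m \otimes^{\mathbf{L}} E\bigr) \leq -(q+2)$, and then the Ext spectral sequence pushes everything into degree $\geq 2$ as you say. The grade-under-localisation step is fine because $\mathfrak{p} \in \Supp \Ext^j_R(M,R) \subseteq \Supp M$ guarantees $M_{\mathfrak{p}} \neq 0$, so the localised regular sequence remains regular. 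What the paper's approach buys is brevity and a clean reference; what yours buys is self-containment and a clearer view of why the exponent $-(k+2)$ is the right one.
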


We will often apply \propositionref{prop:ccor-ng} in the following form.

\begin{cor}\label{cor:ccor-ng}
  Let $Y$ be a complex manifold, and let $ℱ$ be a coherent sheaf of
  $𝒪_Y$-modules.  If $\Supp ℱ$ has pure dimension $n$, then the following
  conditions are equivalent:
  \begin{enumerate}
  \item Sections of $ℱ$ extend uniquely across any $A ⊆ Y$ with $\dim A ≤ n-2$.
    
  \item\label{il:P3} For every $k ≥ -n+1$, one has
    $\dim \Supp R^k \sHom_{𝒪_Y}(ℱ, ω_Y^•) ≤ -(k+2)$.
  \end{enumerate}
\end{cor}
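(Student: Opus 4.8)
The plan is to deduce \corollaryref{cor:ccor-ng} from \propositionref{prop:ccor-ng} by a bookkeeping argument on dimensions. By the definition of ``sections of $ℱ$ extend uniquely across $A$'' given in \propositionref{prop:ccor-ng}, the first condition of the corollary is equivalent to the assertion that, for every complex subspace $A ⊆ Y$ with $\dim A ≤ n-2$, the inequality in \ref{il:P2} holds for every $k ∈ ℤ$. Abbreviate $Z_k := \Supp R^k \sHom_{𝒪_Y}(ℱ, ω_Y^•)$; then what has to be analysed is how large $\dim(A ∩ Z_k)$ can be as $A$ varies.

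First I would dispose of the range $k ≤ -n$. There $-(k+2) = -k-2 ≥ n-2$, so for every $A$ with $\dim A ≤ n-2$ one automatically has $\dim(A ∩ Z_k) ≤ \dim A ≤ n-2 ≤ -(k+2)$; the inequality in \ref{il:P2} is therefore vacuous in this range, which is exactly why only the exponents $k ≥ -n+1$ appear in \ref{il:P3}. No vanishing of $Z_k$ is needed for this.

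For $k ≥ -n+1$ I would use the elementary fact that a complex subspace $Z_k ⊆ Y$ contains closed analytic subsets of every dimension up to $\dim Z_k$. Consequently, as $A$ ranges over the complex subspaces of $Y$ with $\dim A ≤ n-2$, the supremum of $\dim(A ∩ Z_k)$ is attained and equals $\min(\dim Z_k,\, n-2)$. Hence ``$\dim(A ∩ Z_k) ≤ -(k+2)$ for all such $A$'' is equivalent to ``$\min(\dim Z_k,\, n-2) ≤ -(k+2)$''; and since $k ≥ -n+1$ forces $-(k+2) ≤ n-3 < n-2$, this last inequality holds if and only if $\dim Z_k ≤ -(k+2)$. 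Combining the two ranges of $k$, the first condition of the corollary is equivalent to the family of inequalities $\dim Z_k ≤ -(k+2)$ for all $k ≥ -n+1$, which is precisely \ref{il:P3}.

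I expect the only subtle point to be the boundary case $k = -n+1$: the support $Z_{-n+1}$ could a priori have dimension as large as $n-1$, which exceeds the dimension $n-2$ of the admissible test subspaces $A$, so one cannot simply take $A = Z_{-n+1}$. The numerology above resolves this, since choosing $A ⊆ Z_{-n+1}$ of dimension exactly $n-2$ already yields $n-2 = \dim(A ∩ Z_{-n+1}) ≤ -(k+2) = n-3$, a contradiction; thus the extension property alone forces $\dim Z_{-n+1} ≤ n-3$. Everything else is a direct unwinding of the definitions in \propositionref{prop:ccor-ng}.
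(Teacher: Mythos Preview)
Your argument is correct and follows the same overall plan as the paper—reduce to \propositionref{prop:ccor-ng} and split into the ranges $k ≤ -n$ and $k ≥ -n+1$—but with one simplification worth noting. For $k ≤ -n$ the paper invokes the vanishing $R^k \sHom_{𝒪_Y}(ℱ, ω_Y^•) = 0$ from \cite[\href{http://stacks.math.columbia.edu/tag/0A7U}{Tag 0A7U}]{stacks-project}, whereas you observe directly that $\dim(A ∩ Z_k) ≤ \dim A ≤ n-2 ≤ -(k+2)$, so \ref{il:P2} is automatic in this range and no vanishing result is needed. You are also more explicit than the paper about the implication from the extension property to \ref{il:P3}: the paper's terse ``and so the condition in \ref{il:P3} is equivalent to the condition in \ref{il:P2}'' leaves this direction to the reader, while you spell out that one tests with $A ⊆ Z_k$ of dimension $\min(\dim Z_k,\, n-2)$ and uses $-(k+2) ≤ n-3$ to rule out $\dim Z_k ≥ n-1$. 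Your version is thus a touch more elementary and self-contained; the paper's citation to the Ext vanishing is not actually needed for the corollary itself.
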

\begin{proof}
  According to \cite[\href{http://stacks.math.columbia.edu/tag/0A7U}{Tag
    0A7U}]{stacks-project}, one has $R^k \sHom_{𝒪_Y}(ℱ, ω_Y^{•}) = 0$ for every
  $k ≤ -n$.  If $A ⊆ Y$ is a complex subspace with $\dim A ≤ n-2$, then of
  course
  \[
    \dim \bigl( A ∩ \Supp R^{-n} \sHom_{𝒪_Y}(ℱ, ω_Y^{•}) \bigr) ≤ n-2,
  \]
  and so the condition in \ref{il:P3} is equivalent to the condition in
  \ref{il:P2}.  The assertion now follows from \propositionref{prop:ccor-ng}.
\end{proof}

Before giving the proof of \propositionref{prop:ccor-ng}, we briefly review some
facts about singular sets of coherent sheaves.  Let $Y$ be a complex manifold,
and $ℱ$ a coherent sheaf of $𝒪_Y$-modules.  Recall that the \define{singular
  sets} of $ℱ$ are defined as
\[
  S_m(ℱ) := \bigl\{ \, y ∈ Y \,\big\vert \, \operatorname{depth}_y ℱ ≤ m \,
  \bigr\}.
\]
The singular sets $S_m(ℱ)$ are closed complex subspaces of $Y$; we refer the
reader to \cite[Chapt.~II.2]{BS76} for a detailed discussion.  The following
homological fact about regular local rings
\cite[\href{http://stacks.math.columbia.edu/tag/0A7U}{Tag 0A7U}]{stacks-project}
relates the singular sets to the dualizing complex.  In the smooth case at hand,
observe that the dualizing complex of
\cite[\href{http://stacks.math.columbia.edu/tag/0A7U}{Tag 0A7U}]{stacks-project}
agrees with the analytic dualizing complex, as both equal the canonical bundle
shifted by the dimension.

\begin{prop}[Singular sets and duality] \label{fact:stack} If $ℱ$ is a coherent
  sheaf of $𝒪_Y$-modules on a complex manifold $Y$, then the singular sets of
  $ℱ$ are described as
  \[
    S_m(ℱ) = \bigcup_{k ≥ 0} \Supp R^{k-m} \sHom_{𝒪_Y}(ℱ, ω_Y^•),
  \]
  where $ω_Y^•$ is the dualizing complex.  \qed
\end{prop}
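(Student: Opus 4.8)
The plan is to reduce the identity to two standard facts of commutative algebra --- the Auslander--Buchsbaum formula and the computation of projective dimension via $\Ext$ --- applied to the regular local rings $\sO_{Y,y}$.

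First I would note that both sides of the asserted equality are determined stalk by stalk: the singular set $S_m(\sF)$ is defined pointwise, and the support of a coherent sheaf is precisely the closed set of points where its stalk does not vanish. So it suffices to fix $y ∈ Y$, write $R := \sO_{Y,y}$ and $M := \sF_y$, and establish the equivalence
\[
  \operatorname{depth}_R M ≤ m \quad\Longleftrightarrow\quad \bigl( R^{k-m}\sHom_{\sO_Y}(\sF, \omega_Y^{\bullet}) \bigr)_y ≠ 0 \ \text{ for some } k ≥ 0 .
\]
Passing to a neighbourhood of $y$ on which $Y$ has constant dimension $n = \dim_y Y$, the ring $R$ is a regular local ring of dimension $n$ (a convergent power series ring), $M$ is a finitely generated $R$-module, and the Ramis--Ruget dualizing complex is $\omega_Y^{\bullet} ≅ \omega_Y \decal{n}$ with $\omega_Y$ an invertible sheaf. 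Consequently $R^{j}\sHom_{\sO_Y}(\sF, \omega_Y^{\bullet})_y ≅ \Ext^{j+n}_R(M, \omega_{Y,y}) ≅ \Ext^{j+n}_R(M, R)$, the last isomorphism because $\omega_{Y,y}$ is free of rank one over $R$; in particular the twist by $\omega_Y$ does not affect supports.

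The remaining step is bookkeeping with cohomological degrees. As $k$ runs over $\{0, 1, 2, \dots\}$ the exponent $k - m + n$ runs over all integers $≥ n - m$, so the right-hand condition above is equivalent to $\Ext^i_R(M, R) ≠ 0$ for some $i ≥ n - m$, that is, to $\operatorname{pd}_R M ≥ n - m$, using that $\operatorname{pd}_R M = \max\{\, i : \Ext^i_R(M, R) ≠ 0 \,\}$ (visible from a minimal free resolution over the regular local ring $R$). On the other hand, since $R$ is regular --- hence Cohen--Macaulay --- of dimension $n$, the Auslander--Buchsbaum formula gives $\operatorname{depth}_R M + \operatorname{pd}_R M = \operatorname{depth} R = n$ for $M ≠ 0$, which turns the previous inequality into $\operatorname{depth}_R M ≤ m$, that is, $y ∈ S_m(\sF)$. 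The degenerate case $M = 0$, that is, $y ∉ \Supp \sF$, makes both sides false. Assembling these pointwise equivalences yields the claimed equality of closed subsets.

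I do not expect a genuine obstacle: the mathematical content is exactly the two classical facts above, and the statement is essentially a reformulation of \cite[\href{http://stacks.math.columbia.edu/tag/0A7U}{Tag 0A7U}]{stacks-project} and \cite[Chapt.~II.2]{BS76}. The only points needing a little care are the cohomological shift identifying $\omega_Y^{\bullet}$ with $\omega_Y \decal{n}$ on the regular locus, the check that the analytic-local notions of depth and projective dimension entering the definition of $S_m(\sF)$ coincide with the algebraic ones over the Noetherian ring $R$ (they are insensitive to completion), and localising in order to treat components of $Y$ of differing dimension.
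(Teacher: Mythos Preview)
Your argument is correct and is precisely the standard expansion of the reference the paper gives: the paper does not actually prove this proposition but states it with a \qed and a citation to \cite[Tag~0A7U]{stacks-project}, treating it as a known homological fact about regular local rings. Your stalkwise reduction via Auslander--Buchsbaum and the identification $\operatorname{pd}_R M = \max\{i : \Ext^i_R(M,R) \neq 0\}$ over a regular local ring is exactly what underlies that citation, so there is nothing to compare.
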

\begin{proof}[Proof of Proposition~\ref*{prop:ccor-ng}]
  We consider the standard exact sequence for sheaves of local cohomology with
  supports, see for example \cite[II~Cor.~1.10]{BS76}.
  \[
    0 → ℋ⁰_A ℱ → ℱ → j_* j^* ℱ → ℋ¹_A ℱ → 0
  \]
  Because of this sequence, \ref{il:P1} is equivalent to the condition that
  $ℋ_A⁰ ℱ = ℋ_A¹ ℱ = 0$.  The vanishing theorem for local cohomology of
  Scheja-Trautmann \cite[II~Thm.~3.6]{BS76} relates this to the singular sets of
  $ℱ$: it asserts that $ℋ_A⁰ ℱ = ℋ_A¹ ℱ = 0$ is equivalent to the collection of
  inequalities
  \[
    \dim \bigl( A ∩ S_m(ℱ) \big) ≤ m-2 \quad \text{for all $m ∈ ℤ$.}
  \]
  But \propositionref{fact:stack} shows that this last line is in turn
  equivalent to \ref{il:P2}.
\end{proof}

We will later need the following variant of \propositionref{prop:ccor-ng} that
works for complexes of $𝒪_Y$-modules rather than single sheaves.  We stress
that, in the case of a complex with two or more nonzero cohomology sheaves, the
condition below is \emph{stronger} than asking that sections of $ℋ⁰ K$ extend
uniquely across $A$.

\begin{prop}\label{prop:reflexive-complex-ng}
  Let $Y$ be a complex manifold, let $A ⊆ Y$ be a complex subspace, and let
  $K ∈ \Dbcoh(𝒪_Y)$ be a complex with $ℋ^j K = 0$ for $j < 0$.  If
  \[
    \dim \bigl( A ∩ \Supp R^k \sHom_{𝒪_Y}(K, ω_Y^•) \bigr) ≤ -(k+2) \quad
    \text{for every $k ∈ ℤ$,}
  \]
  then sections in $ℋ⁰ K$ extend uniquely across $A$.
\end{prop}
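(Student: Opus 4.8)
The plan is to deduce the statement from \propositionref{prop:ccor-ng}: I will show that the coherent sheaf $\sF := \sH^0 K$ satisfies condition~\ref{il:P2} of that proposition relative to $A$, namely that $\dim\bigl(A \cap \Supp R^k \sHom_{\sO_Y}(\sF, \omega_Y^\bullet)\bigr) \leq -(k+2)$ for every $k$; the asserted extension property for sections of $\sH^0 K$ then follows at once. Since the hypothesis supplies the corresponding bound for $K$ itself rather than for $\sF$, the first task is to compare the dual complexes of $K$ and of $\sF$.

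First I would use the assumption $\sH^j K = 0$ for $j < 0$ to identify $\tau_{\leq 0} K$ with $\sF$, placed in degree $0$, so that the canonical truncation triangle takes the form $\sF \to K \to \tau_{\geq 1} K \xrightarrow{+1}$ in $\Dbcoh(\sO_Y)$. Applying the contravariant functor $\derR\sHom_{\sO_Y}(\argbl, \omega_Y^\bullet)$ and passing to the long exact sequence of cohomology sheaves yields, for every $k$, a three-term exact sequence
\[
  R^k \sHom_{\sO_Y}(K, \omega_Y^\bullet) \longrightarrow R^k \sHom_{\sO_Y}(\sF, \omega_Y^\bullet) \longrightarrow R^{k+1} \sHom_{\sO_Y}(\tau_{\geq 1} K, \omega_Y^\bullet),
\]
and therefore an inclusion of supports
\[
  \Supp R^k \sHom_{\sO_Y}(\sF, \omega_Y^\bullet) \subseteq \Supp R^k \sHom_{\sO_Y}(K, \omega_Y^\bullet) \cup \Supp R^{k+1} \sHom_{\sO_Y}(\tau_{\geq 1} K, \omega_Y^\bullet).
\]
The first term on the right is controlled directly by the hypothesis. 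For the second term, I would invoke the elementary fact that a complex $L \in \Dbcoh(\sO_Y)$ with $\sH^j L = 0$ for all $j < a$ satisfies $\dim \Supp R^m \sHom_{\sO_Y}(L, \omega_Y^\bullet) \leq -m - a$ for every $m$. This reduces, by dévissage along the canonical truncations of $L$, to the case of a single coherent sheaf $\sG$, for which $\dim \Supp R^q \sHom_{\sO_Y}(\sG, \omega_Y^\bullet) \leq -q$ — the same consequence of \cite[\href{http://stacks.math.columbia.edu/tag/0A7U}{Tag~0A7U}]{stacks-project} (equivalently, of \propositionref{fact:stack}) already used in the proof of \corollaryref{cor:ccor-ng}. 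Taking $L = \tau_{\geq 1} K$, so that $a = 1$, and $m = k+1$, this gives $\dim \Supp R^{k+1} \sHom_{\sO_Y}(\tau_{\geq 1} K, \omega_Y^\bullet) \leq -(k+2)$.

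Combining the two estimates gives $\dim\bigl(A \cap \Supp R^k \sHom_{\sO_Y}(\sF, \omega_Y^\bullet)\bigr) \leq -(k+2)$ for all $k$, which is precisely condition~\ref{il:P2} of \propositionref{prop:ccor-ng} for $\sF = \sH^0 K$; this finishes the proof. I do not anticipate a genuine obstacle here: the argument is essentially bookkeeping with the truncation triangle and the dimension estimate for the dual of a coherent sheaf, and the assumption $\sH^j K = 0$ for $j < 0$ enters only through the identification $\tau_{\leq 0} K = \sH^0 K$. The point demanding a little care is keeping track of the index shifts — in particular the shift by one in the long exact sequence, by which the contribution of the tail $\tau_{\geq 1} K$ to $\Supp R^k \sHom_{\sO_Y}(\sF, \omega_Y^\bullet)$ comes from $R^{k+1} \sHom_{\sO_Y}(\tau_{\geq 1} K, \omega_Y^\bullet)$; it is that extra $+1$ which makes the relevant support small enough for the argument to close.
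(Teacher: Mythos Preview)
Your proof is correct and follows essentially the same approach as the paper: the same truncation triangle $\sH^0 K \to K \to \tau_{\ge 1} K \to$, the same long exact sequence after dualising, and the same appeal to \propositionref{prop:ccor-ng}. The only cosmetic difference is that the paper bounds $\dim \Supp R^{k+1}\sHom_{\sO_Y}(\tau_{\ge 1}K,\omega_Y^{\bullet})$ by writing out the hypercohomology spectral sequence $E_2^{p,q} = R^p\sHom_{\sO_Y}(\sH^{-q}\tau_{\ge 1}K,\omega_Y^{\bullet})$ explicitly, whereas you phrase the same reduction as d\'evissage along canonical truncations.
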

\begin{proof}
  Let $τ_{≥ 1} K$ denote the truncation of the complex $K$ in cohomological
  degree $≥ 1$.  In the derived category $\Dbcoh(𝒪_Y)$, one has a distinguished
  triangle
  \[
    ℋ⁰ K → K → τ_{≥ 1} K → \bigl( ℋ⁰ K \bigr) \decal{1}.
  \]
  After applying the functor $\derR \sHom_{𝒪_Y}(\argbl, ω_Y^•)$ and taking
  cohomology, we obtain the following exact sequence:
  \[
    R^k \sHom_{𝒪_Y} \bigl( K, ω_Y^• \bigr) %
    → R^k \sHom_{𝒪_Y} \bigl( ℋ⁰ K, ω_Y^• \bigr) %
    → R^{k+1} \sHom_{𝒪_Y} \bigl( τ_{≥ 1} K, ω_Y^• \bigr)
  \]
  Thus $A ∩ \Supp R^k \sHom_{𝒪_Y} \bigl( ℋ⁰ K, ω_Y^• \bigr)$ is contained in the
  union of the two sets
  \[
    A ∩ \Supp R^k \sHom_{𝒪_Y} \bigl( K, ω_Y^• \bigr) \quad \text{and} \quad
    \Supp R^{k+1} \sHom_{𝒪_Y} \bigl( τ_{≥ 1} K, ω_Y^• \bigr)
  \]
  By assumption, the dimension of the first set is at most $-(k+2)$ for every
  $k ∈ ℤ$.  As $τ_{≥ 1} K ∈ \Dtcoh{≥ 1}(𝒪_Y)$, the same is true for the second
  set; this follows from \cite[\href{http://stacks.math.columbia.edu/tag/0A7U}%
  	{Tag 0A7U}]{stacks-project} by considering the spectral sequence
	\[
	E_2^{p,q} = R^p \sHom_{𝒪_Y} \bigl( ℋ^{-q} τ_{≥ 1} K, ω_Y^{•} \bigr)
	\Longrightarrow R^{p+q} \sHom_{𝒪_Y} \bigl( τ_{≥ 1} K, ω_Y^{•} \bigr).
	\]
  We conclude the proof by applying \propositionref{prop:ccor-ng} to the
  coherent $𝒪_Y$-module $ℋ⁰ K$.
\end{proof}

\subsection{The case of Hodge modules}
\approvals{Christian & yes \\Stefan & yes}
\label{par:pure}

In this section, we apply the criteria from \parref{sec:extshf-ng} to certain
coherent sheaves derived from the de Rham complex of certain Hodge modules.  We
specify the precise setting first.

\begin{setting}\label{set:5.8}
  Let $Y$ be a complex manifold, and let $X ⊆ Y$ be a reduced and irreducible complex
  subspace of dimension $n$.  Let $c$ be the codimension of the closed embedding
  $i_X \colon X ↪ Y$, so that $\dim Y = n + c$.  Suppose that $M ∈ \HM_X(Y, n)$
  is a polarisable Hodge module of weight $n$ with strict support equal to $X$.
  We denote the underlying filtered left $𝒟_Y$-module by $(\Mmod, F_• \Mmod)$,
  and make the following assumptions about $M$.
  \begin{enumerate}
  \item\label{en:pure-1} One has $F_{c-1} \Mmod = 0$.
    
  \item\label{en:pure-2} One has $\dim \Supp ℋ^j \gr_0^F \DR(\Mmod) ≤ -(j+2)$
    for every $j ≥ -n+1$.
  \end{enumerate}
\end{setting}

\begin{note}
  By \cite[Thm.~3.21]{Saito:MixedHodgeModules}, there is a dense Zariski-open
  subset of $X$ on which $M$ is a polarisable variation of Hodge structure of
  weight $0$.  The condition $F_{c-1} \Mmod = 0$ is equivalent to asking that
  the variation of Hodge structure is entirely of type $(0,0)$; being
  polarisable, it must therefore be a unitary flat bundle.  Now $F_c \Mmod$ is a
  certain extension of this unitary flat bundle to a coherent $𝒪_Y$-module, and
  \ref{en:pure-2} is equivalent to asking that sections of $F_c \Mmod$ extend
  uniquely over any complex subspace of $X$ of dimension at most $n - 2$.
\end{note}

\begin{thm}[Inequalities for Hodge modules]\label{thm:reflexive-pure}
  Assume \settingref{set:5.8} and let $p ∈ ℤ$ be any integer.  Then one has
  \begin{equation}\label{eq:reflexive-pure}
    \dim \Supp ℋ^j \gr_p^F \DR(\Mmod) ≤ -(p+j+2) \quad\text{for every $j$ with } p + j ≥ -n + 1.
  \end{equation}
\end{thm}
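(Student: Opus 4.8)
The plan is to prove \eqref{eq:reflexive-pure} by a downward induction on $p$, using duality to pass from large $p$ to small $p$ and thereby leveraging the hypotheses of \settingref{set:5.8}, which are exactly the case $p = 0$. First I would record the two boundary cases. When $p$ is large, specifically $p \geq \dim Y - (n + c) + 1$, \propositionref{prop:DR-acyclic} (applied with $w = n$, since $M \in \HM_X(Y,n)$, and $c$ as given by assumption \ref{en:pure-1}) shows that $\gr_p^F \DR(\Mmod)$ is acyclic, so \eqref{eq:reflexive-pure} holds trivially. At the other end, $p = 0$ is precisely assumption \ref{en:pure-2}. So it remains to connect intermediate values of $p$ to these, and the tool for that is \corollaryref{cor:duality-pure}: since $M$ is pure of weight $n$, any polarisation gives
\[
  \derR \sHom_{𝒪_Y}\bigl( \gr_p^F \DR(\Mmod), ω_Y^• \bigr) ≅ \gr_{-p-n}^F \DR(\Mmod).
\]

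Next I would exploit the semiperversity of the de Rham complex. Because $\Mmod$ is regular holonomic, each $\gr_p^F \DR(\Mmod)$ is a cohomologically bounded complex of coherent $𝒪_Y$-modules supported on $X$, whose total de Rham complex $\DR(\Mmod)$ is perverse with strict support $X$; in particular \eqref{eq:hgjh}-type bounds hold and, by \propositionref{prop:perverse-quotients}, the sharper inequality $\dim \Supp ℋ^j \DR(\Mmod) ≤ -(j+1)$ holds for $j ≥ -n+1$. The strategy is to feed the inductive hypothesis on $\gr_{-p-n}^F$ through the duality isomorphism and a spectral-sequence argument (the hyperext spectral sequence $E_2^{a,b} = R^a \sHom_{𝒪_Y}(ℋ^{-b} \gr_{-p-n}^F \DR(\Mmod), ω_Y^•) \Rightarrow R^{a+b} \sHom(\gr_{-p-n}^F \DR(\Mmod), ω_Y^•)$, combined with the Stacks-project bound $R^k \sHom_{𝒪_Y}(\sG, ω_Y^•) = 0$ for $k < -\dim \Supp \sG$ and the refined bound on supports) to extract dimension estimates on the cohomology sheaves of $\gr_p^F \DR(\Mmod)$. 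Concretely: if the bound \eqref{eq:reflexive-pure} is known for the index $-p-n$ in the range where $(-p-n) + j' \geq -n+1$, i.e.\ $j' \geq p+1$, then via the duality isomorphism this controls $R^k \sHom$ of $\gr_p^F \DR(\Mmod)$ for suitable $k$, and unwinding the shift and the indexing yields the desired inequality for the index $p$ in the stated range $p + j \geq -n+1$.

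The main obstacle, I expect, is bookkeeping: getting the index shifts through the duality isomorphism exactly right so that the induction closes. The duality swaps $p \leftrightarrow -p-n$, which maps the ``interesting'' range $\{p + j \geq -n+1\}$ to itself in a reversed fashion, so one has to check that the induction is genuinely descending and that the base cases (large $p$ acyclic, $p=0$ the hypothesis) sit at the right ends. A secondary subtlety is that the refined semiperversity from \propositionref{prop:perverse-quotients} is needed — not just the naive $\dim \Supp ℋ^j \DR(\Mmod) \leq -j$ — in order to gain the crucial extra $+1$ in the exponent on the right-hand side of \eqref{eq:reflexive-pure}; this is exactly where the strict-support hypothesis on $M$ is used, and one should make sure it is invoked precisely when passing the estimate from the total complex $\DR(\Mmod)$ down to individual graded pieces. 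Once the indexing is pinned down, the argument is a formal consequence of \corollaryref{cor:duality-pure}, \propositionref{prop:DR-acyclic}, \propositionref{prop:perverse-quotients}, and the standard duality vanishing for coherent sheaves on the complex manifold $Y$.
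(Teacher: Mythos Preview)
Your proposal has a genuine gap: the ``downward induction on $p$ via duality'' does not close. The duality in \corollaryref{cor:duality-pure} sends $p \mapsto -p-n$, which is an \emph{involution} on the interesting range $-n \le p \le 0$, not a decrement. From your base cases ($p \ge 1$ acyclic, $p = 0$ the hypothesis) duality hands you back only $p \le -n-1$ (acyclic again) and $p = -n$; the intermediate values $p = -1, \dotsc, -n+1$ are paired among themselves and are never reached. No amount of hyperext-spectral-sequence bookkeeping can fix this, because the input and output of your inductive step live at the same pair $\{p, -p-n\}$. The refined semiperversity from \propositionref{prop:perverse-quotients} that you invoke is a statement about the \emph{total} de~Rham complex $\DR(\Mmod)$, and there is no mechanism in your sketch for pushing that estimate down to an individual graded piece $\gr_p^F \DR(\Mmod)$ beyond degree~$0$.

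The paper's proof is organised completely differently: the induction is on $n = \dim X$, not on $p$. One first proves a lemma (\lemmaref{lem:reflexive-pure}) that when $p+j \ge \max(-n+1,-1)$ the cohomology sheaf actually vanishes; this uses exactly the ingredients you name (duality, the strict-support vanishing via \corollaryref{cor:IC-vanishing}, and the short exact sequences of the filtration $F_{\bullet}\DR$), and it settles the cases $n \le 2$ outright. For $n \ge 3$ one cuts by a generic hyperplane $H$: the restriction $M_H = H^{-1} i_H^{\ast} M$ is non-characteristic (\theoremref{thm:restr}), still satisfies \ref{en:pure-1} and \ref{en:pure-2}, and has support of dimension $n-1$. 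The short exact sequence of \propositionref{prop:restrComparison} relating $\gr_p^F \DR(\Mmod_H)$ to $\mathscr{O}_H \otimes \gr_p^F \DR(\Mmod)$ then transports the inductive bound from $M_H$ back to $M$. The missing idea in your attempt is precisely this hyperplane-section step; without it there is no way to propagate the $p=0$ hypothesis to the other graded pieces.
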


A proof of \theoremref{thm:reflexive-pure} is given in
\parref{pf:thm:reflexive-pure1} and \parref{pf:thm:reflexive-pure2} below.
First, however, we note that the dimension estimates in
\theoremref{thm:reflexive-pure} imply the promised extension property for
certain coherent sheaves derived from the de Rham complex.

\begin{cor}[Extending sections]\label{cor:reflexive-pure-ng}
  Assume \settingref{set:5.8}.  Then for any $p ∈ ℤ$, sections of
  $ℋ^{-(n-p)} \gr_{-p}^F \DR(\Mmod)$ extend uniquely across any complex subspace
  of dimension $≤ n-2$.
\end{cor}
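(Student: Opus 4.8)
The plan is to realise the sheaf $\sH^{-(n-p)}\gr_{-p}^F\DR(\Mmod)$ as the zeroth cohomology sheaf of a complex to which the extension criterion \propositionref{prop:reflexive-complex-ng} applies, and to feed the dimension estimates of \theoremref{thm:reflexive-pure} into that criterion via the self-duality of the de Rham complex. Concretely, set
\[
  K := \gr_{-p}^F\DR(\Mmod)\decal{-(n-p)} \in \Dbcoh(\sO_Y),
\]
so that $\sH^0 K = \sH^{-(n-p)}\gr_{-p}^F\DR(\Mmod)$. It then suffices to verify, for every complex subspace $A \subseteq Y$ with $\dim A \le n-2$, the two hypotheses of \propositionref{prop:reflexive-complex-ng}: that $\sH^j K = 0$ for $j < 0$, and that $\dim\bigl(A \cap \Supp R^k\sHom_{\sO_Y}(K,\omega_Y^\bullet)\bigr) \le -(k+2)$ for every $k \in \bZ$.

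For the first hypothesis I would read off from \eqref{eq:Ax} that the term of $\gr_{-p}^F\DR(\Mmod)$ sitting in cohomological degree $i-d$ is $\Omega^i_Y \otimes_{\sO_Y}\gr_{-p+i}^F\Mmod$, where $0 \le i \le d$ and $d = \dim Y = n+c$. Condition~\ref{en:pure-1} of \settingref{set:5.8} gives $F_{c-1}\Mmod = 0$, hence $\gr_q^F\Mmod = 0$ for every $q \le c-1$; consequently the displayed term vanishes whenever $i \le p+c-1$, i.e.\ the complex $\gr_{-p}^F\DR(\Mmod)$ is a bounded complex of coherent $\sO_Y$-modules concentrated in cohomological degrees $\ge p-n = -(n-p)$. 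After the shift, $K$ is concentrated in degrees $\ge 0$, which is stronger than $\sH^j K = 0$ for $j<0$.

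For the second hypothesis I would dualise $K$. Since $M \in \HM_X(Y,n)$ is polarisable of weight $n$, \corollaryref{cor:duality-pure} (with $w=n$, applied with its index replaced by $-p$) gives $\derR\sHom_{\sO_Y}\bigl(\gr_{-p}^F\DR(\Mmod),\omega_Y^\bullet\bigr) \cong \gr_{p-n}^F\DR(\Mmod)$, and therefore
\[
  R^k\sHom_{\sO_Y}\bigl(K,\omega_Y^\bullet\bigr) \cong \sH^{\,k+(n-p)}\gr_{p-n}^F\DR(\Mmod)\qquad\text{for every } k\in\bZ.
\]
Now I would invoke \theoremref{thm:reflexive-pure} with its integer ``$p$'' replaced by $p-n$ and its index ``$j$'' by $k+(n-p)$; since $(p-n)+\bigl(k+(n-p)\bigr)=k$, the hypothesis $p+j\ge -n+1$ of that theorem becomes precisely $k\ge -n+1$, and its conclusion reads $\dim\Supp R^k\sHom_{\sO_Y}(K,\omega_Y^\bullet)\le -(k+2)$ for all such $k$. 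For $k\le -n$ there is nothing to check, because $\dim\bigl(A \cap \Supp R^k\sHom_{\sO_Y}(K,\omega_Y^\bullet)\bigr)\le \dim A\le n-2\le -(k+2)$ holds automatically. Thus the hypotheses of \propositionref{prop:reflexive-complex-ng} are met for $K$ and for every $A$ with $\dim A\le n-2$, and the proposition yields that sections of $\sH^0 K = \sH^{-(n-p)}\gr_{-p}^F\DR(\Mmod)$ extend uniquely across $A$.

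Granting \theoremref{thm:reflexive-pure}, the argument is essentially bookkeeping with cohomological shifts; the points requiring care are matching up the three shifts correctly and observing that the inequalities for $k\le -n$ come for free from $\dim A\le n-2$. The genuinely substantive input is \theoremref{thm:reflexive-pure} itself, whose proof is carried out separately.
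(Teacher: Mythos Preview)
Your proof is correct and follows essentially the same route as the paper's: define the same shifted complex $K = \gr_{-p}^F\DR(\Mmod)\decal{p-n}$, verify $K \in \Dtcoh{\ge 0}(\sO_Y)$ from $F_{c-1}\Mmod = 0$, compute the dual via \corollaryref{cor:duality-pure} as $\gr_{p-n}^F\DR(\Mmod)\decal{n-p}$, and feed the inequalities of \theoremref{thm:reflexive-pure} into \propositionref{prop:reflexive-complex-ng}. Your explicit treatment of the range $k \le -n$ (where the bound $\dim A \le n-2$ makes the inequality automatic) is a small clarification beyond what the paper spells out, but the argument is otherwise the same.
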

\begin{proof}
  Recall from \propositionref{prop:DR-acyclic} that $\gr_{-p}^F \DR(\Mmod)$ is
  acyclic, unless $0 ≤ p ≤ n$.  Assuming that $p$ is in this range, we aim to
  apply \propositionref{prop:reflexive-complex-ng} to the complex
  \[
    K_p := \gr_{-p}^F \DR(\Mmod)[p-n],
  \]
  which requires first of all that $K_p$ is contained in $\Dtcoh{≥ 0}(𝒪_X)$.  To
  this end recall from Assumption~\ref{en:pure-1} that $F_{c-1} \Mmod = 0$.  An
  application of Formula~\eqref{eq:Ax} for the subquotients of the de Rham
  complex then shows that
  \[
    ℋ^j K_p = ℋ^{j+p-n} \gr_{-p}^F \DR(\Mmod) \overset{\text{\eqref{eq:Ax}}}{=}
    0, \quad\text{for every }j ≤ -1.
  \]
  So $K ∈ \Dtcoh{≥ 0}(𝒪_X)$, as desired.  Next, choose a polarisation on the
  Hodge module $M$, in order to obtain an isomorphism as follows,
  \[
    \derR \sHom_{𝒪_Y} \Bigl( K_p, ω_Y^• \Bigr)
    \overset{\text{\corollaryref{cor:duality-pure}}}{≅} \gr_{-(n-p)}^F
    \DR(\Mmod) \decal{n-p}.
   \]
   The Inequalities~\eqref{eq:reflexive-pure} of \theoremref{thm:reflexive-pure}
   therefore take the form
   \[
     \dim \Supp R^j \sHom_{𝒪_X} \bigl( K_p, ω_Y^• \bigr) = \dim \Supp ℋ^{j+n-p}
     \gr_{-(n-p)}^F \DR(\Mmod) ≤ -(j+2)
   \]
   for every $j ≥ -n + 1$.  We conclude from
   \propositionref{prop:reflexive-complex-ng} that sections of the coherent
   $𝒪_Y$-module $ℋ⁰ K_p = ℋ^{-(n-p)} \gr_{-p}^F \DR(\Mmod)$ extend uniquely
   across any complex subspace $A ⊆ Y$ with $\dim A ≤ n-2$.
\end{proof}

\subsubsection{Preparation for proof of Theorem~\ref*{thm:reflexive-pure}}
\approvals{Christian & yes \\Stefan & yes}
\label{pf:thm:reflexive-pure1}

In cases where $p + j ≥ \max(-n+1, -1)$, the inequality
\eqref{eq:reflexive-pure} in \theoremref{thm:reflexive-pure} is claiming that
$ℋ^j \gr_p^F \DR(\Mmod) = 0$.  As it turns out, the proof of this special case
is the core of the argument; the other cases follow quickly from the following
lemma by induction, taking repeated hyperplane sections.

\begin{lem}\label{lem:reflexive-pure}
  Assume \settingref{set:5.8}.  If $p+j ≥ \max(-n+1, -1)$, then
  $ℋ^j \gr_p^F \DR(\Mmod) = 0$.
\end{lem}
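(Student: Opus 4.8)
The plan is to separate out the one genuinely new piece of the statement and to read everything else straight off the hypotheses of \settingref{set:5.8}. First I would restrict the range of $(p,j)$ that needs attention. By \propositionref{prop:DR-acyclic}, applied with weight $w=n$ and with the codimension $c$ of \settingref{set:5.8} (so that $\dim Y=n+c$ and, by Assumption~\ref{en:pure-1}, $F_{c-1}\Mmod=0$), the complex $\gr_p^F\DR(\Mmod)$ is acyclic unless $-n≤p≤0$; and since by \eqref{eq:Ax} it is concentrated in cohomological degrees $-\dim Y,…,0$, one has $ℋ^j\gr_p^F\DR(\Mmod)=0$ whenever $j>0$. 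So I may assume $j≤0$ and $-n≤p≤0$, and then the hypothesis $p+j≥\max(-n+1,-1)≥-1$ leaves only $p∈\{-1,0\}$. If $p=0$, then $j=p+j≥\max(-n+1,-1)$, so in particular $j≥-n+1$ and $j≥-1$, and Assumption~\ref{en:pure-2} already gives $\dim\Supp ℋ^j\gr_0^F\DR(\Mmod)≤-(j+2)≤-1$, hence $ℋ^j\gr_0^F\DR(\Mmod)=0$. If $p=-1$, the hypothesis together with $j≤0$ forces $j=0$ and $n≥2$ (otherwise there is no admissible $j$), so the only assertion left to prove is $ℋ⁰\gr_{-1}^F\DR(\Mmod)=0$ for $n≥2$.

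For that last vanishing my plan is to stop arguing about the single graded piece and instead slide down the Hodge filtration on the \emph{full} de Rham complex, using the subcomplexes $F_p\DR(\Mmod)$. The starting point is that $ℋ⁰F_0\DR(\Mmod)=0$: this is \corollaryref{cor:IC-vanishing} applied to $M∈\HM_X(Y,n)$ with $F_{c-1}\Mmod=0$, since $\dim Y-(n+c)=0$; in spirit it is the statement that the intersection complex of $X$ has no cohomology in degree~$0$. I would then run the short exact sequence of complexes $0→F_{-1}\DR(\Mmod)→F_0\DR(\Mmod)→\gr_0^F\DR(\Mmod)→0$, whose long exact cohomology sequence exhibits $ℋ⁰F_{-1}\DR(\Mmod)$ as a quotient of $ℋ^{-1}\gr_0^F\DR(\Mmod)$; the latter vanishes by Assumption~\ref{en:pure-2}, which is applicable because $-1≥-n+1$ for $n≥2$ and then gives $\dim\Supp ℋ^{-1}\gr_0^F\DR(\Mmod)≤-1$. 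Finally the short exact sequence $0→F_{-2}\DR(\Mmod)→F_{-1}\DR(\Mmod)→\gr_{-1}^F\DR(\Mmod)→0$ exhibits $ℋ⁰\gr_{-1}^F\DR(\Mmod)$ as a quotient of $ℋ⁰F_{-1}\DR(\Mmod)=0$, the remaining term $ℋ¹F_{-2}\DR(\Mmod)$ being zero since $F_{-2}\DR(\Mmod)$ sits in nonpositive degrees. This closes the argument.

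The main obstacle is exactly the single vanishing $ℋ⁰\gr_{-1}^F\DR(\Mmod)=0$, and the point I expect to be least obvious is that it is not an intrinsic property of $\gr_{-1}^F\DR(\Mmod)$ on its own: it surfaces only by combining the strict-support vanishing for the full de Rham complex (\corollaryref{cor:IC-vanishing}) with the bound of Assumption~\ref{en:pure-2} on $ℋ^{-1}\gr_0^F\DR(\Mmod)$, which is precisely what measures the failure of top-degree sections to lift from $F_0\DR$ to $F_{-1}\DR$. Once one commits to working with the filtration $F_\bullet\DR(\Mmod)$ rather than with the graded pieces in isolation, the remaining steps are just bookkeeping with long exact sequences; the non-vanishing instances of \theoremref{thm:reflexive-pure} are then obtained from this lemma by repeated non-characteristic hyperplane sections, as the surrounding text indicates.
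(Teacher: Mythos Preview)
Your proof is correct and follows essentially the same approach as the paper: reduce to $p\in\{-1,0\}$ using \propositionref{prop:DR-acyclic} and the nonpositive degree bound, dispose of $p=0$ via Assumption~\ref{en:pure-2}, and handle $ℋ^0\gr_{-1}^F\DR(\Mmod)$ by sliding down the filtration $F_\bullet\DR(\Mmod)$ using \corollaryref{cor:IC-vanishing} and the two short exact sequences of subcomplexes. The only cosmetic difference is that the paper phrases the middle step as an isomorphism $ℋ^0F_{-1}\DR(\Mmod)\cong ℋ^0F_0\DR(\Mmod)$ (using both $ℋ^{-1}$ and $ℋ^0$ of $\gr_0^F$ vanishing), whereas you get $ℋ^0F_{-1}$ as a quotient of $ℋ^{-1}\gr_0^F$; both lead to the same conclusion.
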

\begin{proof}
  The complex $\gr_p^F \DR(\Mmod)$ is concentrated in non-positive degrees, and
  acyclic for $p ≥ 1$ by \propositionref{prop:DR-acyclic} and by
  Assumption~\ref{en:pure-1}.  This means that $ℋ^j \gr_p^F \DR(\Mmod) = 0$
  whenever $j ≥ 1$ or $p ≥ 1$.  Assumption~\ref{en:pure-2} implies the claim
  when $p = 0$.  This leaves only one case to consider, namely $p = -1$ and
  $j = 0$.  We shall argue that $ℋ⁰ \gr_{-1}^F \DR(\Mmod) = 0$, too.
  
  Recall that $M$ has strict support $X$.  Assumption~\ref{en:pure-1} therefore
  allows us to apply \corollaryref{cor:IC-vanishing}.  We obtain
  $ℋ⁰ F_0 \DR(\Mmod) = 0$.  Now consider the short exact sequence of complexes
  (of sheaves of $ℂ$-vector spaces)
  \[
    0 → F_{-1} \DR(\Mmod) → F_0 \DR(\Mmod) → \gr_0^F \DR(\Mmod) → 0.
  \]
  Since $ℋ^j \gr_0^F \DR(\Mmod) = 0$ for $j ≥ -1$, we get
  \begin{equation}\label{eq:fgh}
    ℋ⁰ F_{-1} \DR(\Mmod) ≅ ℋ⁰ F_0 \DR(\Mmod)
    \overset{\text{Cor.~\ref{cor:IC-vanishing}}}{=} 0
  \end{equation}
  from the long exact sequence in cohomology.  By the same logic, the short
  exact sequence of complexes (of sheaves of $ℂ$-vector spaces)
  \[
    0 → F_{-2} \DR(\Mmod) → F_{-1} \DR(\Mmod) → \gr_{-1}^F \DR(\Mmod) → 0
  \]
  gives us an exact sequence
  \[
    ⋯ → \underbrace{ℋ⁰ F_{-1} \DR(\Mmod)}_{= 0\text{ by \eqref{eq:fgh}}} → ℋ⁰
    \gr_{-1}^F \DR(\Mmod) → \underbrace{ℋ¹ F_{-2} \DR(\Mmod)}_{\mathclap{= 0
        \text{, since concentr.~in non-pos.  degrees}}} → ⋯.
  \]
  As a consequence, we obtain the desired vanishing
  $ℋ⁰ \gr_{-1}^F \DR(\Mmod) = 0$.
\end{proof}

\subsubsection{Proof of Theorem~\ref*{thm:reflexive-pure}}
\label{pf:thm:reflexive-pure2}
\approvals{Christian & yes \\Stefan & yes}
\CounterStep

We prove \theoremref{thm:reflexive-pure} by induction on $n = \dim X$.  If
$n = 1$ or $n = 2$, then the desired statement follows from
\lemmaref{lem:reflexive-pure} above, and we are done.  We will therefore assume
for the remainder of the proof that $n ≥ 3$, and that
\theoremref{thm:reflexive-pure} is already known for all strictly smaller values
of $n$.

\subsubsection*{Cutting down}

The statement we are trying to prove is local on $Y$, and so we can assume for
the remainder of this proof that $Y$ is an open ball in $ℂ^{n+c}$.  (If the
restriction of $M$ no longer has strict support, for example because $X$ was
locally reducible, then we simply replace $M$ by any of the summands in the
decomposition by strict support, and $X$ by the support of that summand.) Let
$H ⊆ Y$ be the intersection of $Y$ with a generic hyperplane in $ℂ^{n+c}$.  The
intersection $H ∩ X$ is then reduced and irreducible of dimension $n-1 ≥ 2$.
The inclusion mapping $i_H \colon H ↪ Y$ is non-characteristic for $M$, and the
inverse image $M_H = H^{-1} i^*_H M$ is a polarisable Hodge module of weight
$(n-1)$ with strict support $H ∩ X$; see \parref{sec:non-characteristic} for a
discussion of non-characteristic restriction to smooth hypersurfaces.  Denoting
the underlying filtered $𝒟_H$-module by $(\Mmod_H, F_• \Mmod_H)$, we have
moreover
\begin{equation}\label{eq:jfgjhh}
  \Mmod_H ≅ 𝒪_H ⊗_{i_H^{-1} 𝒪_Y} i_H^{-1} \Mmod \quad \text{and} \quad
  F_• \Mmod_H ≅ 𝒪_H ⊗_{i_H^{-1} 𝒪_Y} i_H^{-1} F_• \Mmod.
\end{equation}
This is explained in \theoremref{thm:restr}.

\subsubsection*{Properties of $M_H$}

The isomorphisms in \eqref{eq:jfgjhh} imply that $F_{c-1} \Mmod_H = 0$, and so
$M_H$ also satisfies Assumption~\ref{en:pure-1}.  We claim that $M_H$ also
satisfies Assumption~\ref{en:pure-2}.  To this end, recall from
\propositionref{prop:restrComparison} that there exists a short exact sequence
of complexes,
\begin{equation}\label{eq:cbc}
  0 → N_{H \mid Y}^{\ast} ⊗_{𝒪_H} \gr_{p+1}^F \DR(\Mmod_H) → 𝒪_H ⊗_{𝒪_Y}
  \gr_p^F \DR(\Mmod) → \gr_p^F \DR(\Mmod_H) \decal{1} → 0,
\end{equation}
where $N_{H \mid Y}^{\ast}$ is the conormal bundle for the inclusion $H ⊆ Y$.
As $F_{c-1} \Mmod_H = 0$, one shows as before that the complex
$\gr_p^F \DR(\Mmod_H)$ is acyclic for every $p ≥ 1$.  This gives us
\[
  𝒪_H ⊗_{𝒪_Y} ℋ^{j-1} \gr_0^F \DR(\Mmod) ≅ ℋ^j \gr_0^F \DR(\Mmod_H),
\]
and because Assumption~\ref{en:pure-2} holds for $M$, we obtain that
\[
  \dim \Supp ℋ^j \gr_0^F \DR(\Mmod_H) = -1 + \dim \Supp ℋ^{j-1} \gr_0^F
  \DR(\Mmod) ≤ -(j+2)
\]
for every $j ≥ -\dim(H ∩ X) + 1$.  But this is exactly \ref{en:pure-2} for
$M_H$.

\subsubsection*{Conclusion}

We have established that $M_H ∈ \HM_{H ∩ X}(H, n-1)$ again satisfies the two
assumptions in \ref{en:pure-1} and \ref{en:pure-2}.  Since $\dim (H ∩ X) = n-1$,
we can therefore conclude by induction that
\[
  \dim \Supp ℋ^j \gr_p^F \DR(\Mmod_H) ≤ -(p+j+2), \quad\text{whenever } p+j ≥
  -(n-1) + 1.
\]
Taking cohomology, \eqref{eq:cbc} gives us an exact sequence of $𝒪_H$-modules,
\[
  N_{H \mid Y}^{\ast} ⊗ ℋ^j \gr_{p+1}^F \DR(\Mmod_H) → 𝒪_H ⊗_{𝒪_Y} ℋ^j \gr_p^F
  \DR(\Mmod) → ℋ^{j+1} \gr_p^F \DR(\Mmod_H),
\]
and therefore the inequality
\[
  \dim \Supp \bigl( 𝒪_H ⊗_{𝒪_Y} ℋ^j \gr_p^F \DR(\Mmod) \bigr) ≤
  -(p+j+3),\quad\text{whenever } p+j ≥ -n + 1.
\]
Since $H ⊆ Y$ was a generic hyperplane section of $Y$, this inequality clearly
implies that
\[
  \dim \Supp ℋ^j \gr_p^F \DR(\Mmod) ≤ -\min(p+j+2, 0),\quad\text{whenever } p+j
  ≥ -n + 1.
\]
This is enough for our purposes, because we have already shown in
\lemmaref{lem:reflexive-pure} that $ℋ^j \gr_p^F \DR(\Mmod) = 0$ whenever
$p+j ≥ -1$.  The proof of \theoremref{thm:reflexive-pure} is thus complete.
\qed

\subsection{The case of mixed Hodge modules}
\approvals{Christian & yes \\Stefan & yes}
\label{ssec:tcomHm}

In this section, we generalise \theoremref{thm:reflexive-pure} and
\corollaryref{cor:reflexive-pure-ng} to a certain class of mixed Hodge modules.
The results presented here will later be relevant to establish the extension
results for logarithmic forms, \theoremref{thm:extension-Kahler-log},
\theoremref{thm:main-log} and \theoremref{thm:extension-n-1}, as well as the
proof of local vanishing, \theoremref{thm:MOP}.  The reader who is primarily
interested in the extension for $p$-forms, \theoremref{thm:main-new}, might wish
to avoid the additional complications arising from the use of mixed Hodge
modules and skip this section on first reading.

The main line of argument follows \parref{par:pure}, though there are some
noteworthy differences.  To keep the text readable, we chose to include full
arguments, at the cost of introducing some repetition.

\begin{setting}\label{set:5.13}
  Let $Y$ be a complex manifold of pure dimension $n+c$, and let $X ⊆ Y$ be a
  complex subspace of pure dimension $n$.  As before, $c$ is equal to the
  codimension of the closed embedding $i_X \colon X ↪ Y$.  Suppose that
  $M ∈ \MHM(Y)$ is a graded-polarisable mixed Hodge module with support equal to
  $X$.  We denote the underlying filtered left $𝒟_Y$-module by
  $(\Mmod, F_• \Mmod)$, and make the following assumptions about $M$:
  \begin{enumerate}
  \item\label{en:mixed-2} One has $\dim \Supp ℋ^j \DR(\Mmod) ≤ -(j+1)$ for every
    $j ≥ -n + 1$.

  \item\label{en:mixed-3} The complex of $𝒪_Y$-modules $\gr_p^F \DR(\Mmod)$ is
    acyclic for every $p ≥ 1$.

  \item\label{en:mixed-4} One has $\dim \Supp ℋ^j \gr_0^F \DR(\Mmod) ≤ -(j+2)$
    for every $j ≥ -n + 1$.
  \end{enumerate}
  These are the natural generalisations of \ref{en:pure-1} and \ref{en:pure-2}
  to the mixed case, formulated in a way that is convenient for a proof by
  induction on the dimension.  As before, write $M' := 𝔻 M ∈ \MHM(Y)$ to denote
  the dual mixed Hodge module, which is again graded-polarisable, and write
  $(\Mmod', F_• \Mmod')$ for its underlying filtered left $𝒟_Y$-module.  Recall
  that the support does not change when taking duals, so
  $\Supp M' = \Supp M = X$.
\end{setting}

\begin{note}
  The cohomology sheaves of the de Rham complex $\DR(\Mmod)$ are constructible
  sheaves on $Y$.  Since $\DR(\Mmod)$ is a perverse sheaf, the dimension of the
  support of $ℋ^j \DR(\Mmod)$ is always at most $-j$ for every $j ∈ ℤ$.  In
  light of \propositionref{prop:perverse-quotients}, the condition in
  \ref{en:mixed-2} is saying that $\DR(\Mmod)$ does not admit nontrivial
  quotients whose support has dimension $≤ n-1$.
\end{note}

\begin{thm}[Inequalities for mixed Hodge modules]\label{thm:reflexive-mixed}
  Assume \settingref{set:5.13} and let $p ∈ ℤ$ be any integer.  Then one has
  \begin{equation}\label{eq:reflexive-mixed}
    \dim \Supp ℋ^j \gr_p^F \DR(\Mmod) ≤ -(p+j+2)
    \quad\text{for every $j$ with } p + j ≥ -n+1.
  \end{equation}
\end{thm}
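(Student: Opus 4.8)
The plan is to run the argument of \theoremref{thm:reflexive-pure} almost verbatim, replacing the two places where purity of $M$ intervened --- \corollaryref{cor:IC-vanishing} and \corollaryref{cor:duality-pure} --- by arguments that use only the three hypotheses \ref{en:mixed-2}--\ref{en:mixed-4} of \settingref{set:5.13}.  As in \parref{pf:thm:reflexive-pure2}, the proof is an induction on $n = \dim X$: the cases $n ≤ 2$ come from a mixed analogue of \lemmaref{lem:reflexive-pure}, and the inductive step is carried out by restricting $M$ to a generic --- hence non-characteristic --- hyperplane section $H ⊆ Y$ and invoking \theoremref{thm:restr} and \propositionref{prop:restrComparison}.

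First I would establish the mixed counterpart of \lemmaref{lem:reflexive-pure}: \emph{in \settingref{set:5.13}, $ℋ^j \gr_p^F \DR(\Mmod) = 0$ whenever $p + j ≥ \max(-n+1, -1)$.}  If $j ≥ 1$ or $p ≥ 1$ this is immediate from \eqref{eq:Ax} and from \ref{en:mixed-3}; if $p = 0$ it is \ref{en:mixed-4} itself, the bound $-(j+2) < 0$ in the relevant range forcing the support to be empty.  The remaining case is $p = -1$, $j = 0$ (which only occurs for $n ≥ 2$), and this is where purity was used before.  Here I would argue as follows: by \ref{en:mixed-3} and \propositionref{prop:DR-qiso} the inclusion $F_0 \DR(\Mmod) ↪ \DR(\Mmod)$ is a quasi-isomorphism, while \ref{en:mixed-2} with $j = 0$ gives $\dim \Supp ℋ⁰ \DR(\Mmod) ≤ -1$, that is, $ℋ⁰ \DR(\Mmod) = 0$; hence $ℋ⁰ F_0 \DR(\Mmod) = 0$.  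Feeding this into the short exact sequences
\[
  0 → F_{-1} \DR(\Mmod) → F_0 \DR(\Mmod) → \gr_0^F \DR(\Mmod) → 0
\]
and
\[
  0 → F_{-2} \DR(\Mmod) → F_{-1} \DR(\Mmod) → \gr_{-1}^F \DR(\Mmod) → 0,
\]
using \ref{en:mixed-4} (with $j = -1$) to kill $ℋ^{-1} \gr_0^F \DR(\Mmod)$ and the fact that $F_p \DR(\Mmod)$ is concentrated in non-positive degrees, one gets $ℋ⁰ F_{-1} \DR(\Mmod) = 0$ and then $ℋ⁰ \gr_{-1}^F \DR(\Mmod) = 0$, exactly as in the proof of \lemmaref{lem:reflexive-pure}.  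This disposes of the base cases $n ≤ 2$.

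For the inductive step ($n ≥ 3$), the statement being local on $Y$, I would shrink $Y$ to a ball and take $H ⊆ Y$ a generic hyperplane section.  Then $i_H \colon H ↪ Y$ is non-characteristic for $\Mmod$, and by \theoremref{thm:restr} the restriction $M_H = H^{-1} i_H^* M$ is a graded-polarisable mixed Hodge module on $H$ with support $X ∩ H$ of constant dimension $n-1$ and codimension again $c$, satisfying $\DR(\Mmod_H) ≅ i_H^{-1} \DR(\Mmod) \decal{-1}$ and $F_• \Mmod_H ≅ 𝒪_H ⊗_{i_H^{-1} 𝒪_Y} i_H^{-1} F_• \Mmod$.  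That $M_H$ again satisfies \ref{en:mixed-2} follows because $\Supp ℋ^j \DR(\Mmod_H) = H ∩ \Supp ℋ^{j-1} \DR(\Mmod)$ and a generic hyperplane lowers the dimension of each $\Supp ℋ^{j-1} \DR(\Mmod)$ by one; granting \ref{en:mixed-3} for $M_H$, it also satisfies \ref{en:mixed-4}, by combining \propositionref{prop:restrComparison} (whose $\gr_{p+1}^F$ term then drops out) with the same generic-intersection estimate, as in the pure case.  The induction hypothesis applied to $M_H$ then gives $\dim \Supp ℋ^j \gr_p^F \DR(\Mmod_H) ≤ -(p+j+2)$ whenever $p + j ≥ -n+2$; plugging this into the exact sequence of $𝒪_H$-modules coming from \propositionref{prop:restrComparison} bounds $\dim \Supp \bigl( 𝒪_H ⊗_{𝒪_Y} ℋ^j \gr_p^F \DR(\Mmod) \bigr) ≤ -(p+j+3)$ for $p + j ≥ -n+1$, and since $H$ was generic this forces $\dim \Supp ℋ^j \gr_p^F \DR(\Mmod) ≤ -(p+j+2)$ in that range --- the cases $p + j ≥ -1$ not reached this way being exactly those already settled by the mixed analogue of \lemmaref{lem:reflexive-pure}.

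The step I expect to be the main obstacle is checking \ref{en:mixed-3} for the hyperplane section, i.e.\ that $\gr_p^F \DR(\Mmod_H)$ is acyclic for all $p ≥ 1$.  In the pure case this was automatic, because the genuinely robust hypothesis there is $F_{c-1} \Mmod = 0$, which restricts to $F_{c-1} \Mmod_H = 0$ and then yields the acyclicity via \propositionref{prop:DR-acyclic}.  Here we only assume the acyclicity itself, and \propositionref{prop:restrComparison} alone merely produces isomorphisms $\gr_p^F \DR(\Mmod_H) ≅ N_{H \mid Y}^{\ast} ⊗ \gr_{p+1}^F \DR(\Mmod_H)$ for $p ≥ 1$ --- using that $𝒪_H ⊗_{𝒪_Y} \gr_p^F \DR(\Mmod)$ is acyclic, which follows from \ref{en:mixed-3} for $M$ together with the fact (part of \theoremref{thm:restr}) that $i_H$ is \emph{strictly} non-characteristic for $(\Mmod, F_• \Mmod)$, so that multiplication by the equation of $H$ is injective on $\gr_•^F \Mmod$ and no higher Tor-sheaves interfere.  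To close the gap I would use that $\gr_•^F \Mmod_H$ is a coherent graded $\gr_•^F 𝒟_H$-module, so that the complexes $\gr_p^F \DR(\Mmod_H)$ --- being its Koszul-type homology against $𝒪_H$ --- vanish for $p$ small and are acyclic for $p$ large; chaining the above isomorphisms downward from large $p$ then propagates acyclicity to every $p ≥ 1$.  Making this bookkeeping precise, and verifying that the dimension estimates are unaffected by replacing derived with ordinary tensor products along the non-characteristic $H$, is the technical heart of the inductive step; everything else is a faithful transcription of \parref{par:pure}.
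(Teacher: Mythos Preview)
Your proof is correct and follows the paper's argument essentially line by line: your mixed analogue of \lemmaref{lem:reflexive-pure} is exactly \lemmaref{lem:reflexive-mixed}, and the inductive hyperplane-section step is the same as in \parref{pf:thm:reflexive-mixed2}. Your flagged concern about verifying \ref{en:mixed-3} for $M_H$ is well-placed --- the paper handles it by descending induction on $p$ from $p \gg 0$ via the short exact sequence of \propositionref{prop:restrComparison}, and your observation that strict non-characteristicness (injectivity of $t$ on $\gr_{\bullet}^F \Mmod$) makes $𝒪_H ⊗_{𝒪_Y} \gr_p^F \DR(\Mmod)$ acyclic for $p ≥ 1$ supplies exactly the justification the paper leaves implicit.
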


The proof of \theoremref{thm:reflexive-mixed} is given in
\parref{pf:thm:reflexive-mixed1} and \parref{pf:thm:reflexive-mixed2} below.  As
before, \theoremref{thm:reflexive-mixed} leads to extension theorems for certain
coherent sheaves derived from the de Rham complex.

\begin{cor}[Extending sections]\label{cor:reflexive-mixed-ng}
  Assume \settingref{set:5.13}.  Then for any $p ∈ ℤ$, sections of
  $ℋ^p \gr_p^F \DR(\Mmod')$ extend uniquely across any complex subspace of
  dimension $≤ n-2$.
\end{cor}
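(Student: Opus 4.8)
The plan is to mirror the proof of \corollaryref{cor:reflexive-pure-ng}, using \propositionref{prop:duality-mixed} together with the biduality of mixed Hodge modules in place of \corollaryref{cor:duality-pure}. The first observation is that there is nothing to prove unless $p = 0$. On the one hand, formula \eqref{eq:Ax}, applied to $\Mmod'$, exhibits $\gr_p^F \DR(\Mmod')$ as a complex concentrated in cohomological degrees $-\dim Y, \dotsc, 0$, so that $\sH^p \gr_p^F \DR(\Mmod') = 0$ for every $p \geq 1$. On the other hand, \propositionref{prop:duality-mixed}, applied to $M$, identifies $\gr_p^F \DR(\Mmod')$ with $\derR \sHom_{\sO_Y}\bigl( \gr_{-p}^F \DR(\Mmod),\, \omega_Y^\bullet \bigr)$; for $p \leq -1$ the complex $\gr_{-p}^F \DR(\Mmod)$ is acyclic by Assumption~\ref{en:mixed-3}, hence so is its dual complex, and thus $\sH^p \gr_p^F \DR(\Mmod') = 0$ as well. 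It therefore remains to prove that sections of $\sH^0 \gr_0^F \DR(\Mmod')$ extend uniquely across any complex subspace $A \subseteq Y$ with $\dim A \leq n - 2$.

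I would deduce this from \propositionref{prop:reflexive-complex-ng}, applied to the complex $K := \gr_0^F \DR(\Mmod') \in \Dbcoh(\sO_Y)$. The one substantial point is the hypothesis $\sH^j K = 0$ for $j < 0$. Here \propositionref{prop:duality-mixed} gives $K \cong \derR \sHom_{\sO_Y}\bigl( \gr_0^F \DR(\Mmod),\, \omega_Y^\bullet \bigr)$, and I would compute its cohomology from the spectral sequence $E_2^{s,t} = R^s \sHom_{\sO_Y}\bigl( \sH^{-t} \gr_0^F \DR(\Mmod),\, \omega_Y^\bullet \bigr) \Rightarrow R^{s+t} \sHom_{\sO_Y}\bigl( \gr_0^F \DR(\Mmod),\, \omega_Y^\bullet \bigr)$. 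Assumption~\ref{en:mixed-4} forces $\sH^j \gr_0^F \DR(\Mmod)$ to vanish in nonnegative degrees and bounds $\dim \Supp \sH^{-t} \gr_0^F \DR(\Mmod) \leq t - 2$ for $2 \leq t \leq n-1$, while in the remaining (finitely many) degrees one has at least the trivial bound $\dim \Supp \sH^{-t} \gr_0^F \DR(\Mmod) \leq n$; and by \cite[\href{http://stacks.math.columbia.edu/tag/0A7U}{Tag 0A7U}]{stacks-project} the term $E_2^{s,t}$ vanishes as soon as $s < -\dim \Supp \sH^{-t} \gr_0^F \DR(\Mmod)$. Feeding these bounds into the spectral sequence, a short count shows that every nonzero term has $s + t \geq 0$ (one gets $s+t \geq 2$ for $2 \leq t \leq n-1$, and $s+t \geq t-n \geq 0$ otherwise), so $\sH^j K = 0$ for $j < 0$; combined with \eqref{eq:Ax} this in fact shows that $K$ is quasi-isomorphic to the single coherent sheaf $\sH^0 \gr_0^F \DR(\Mmod')$.

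It remains to verify the dimension inequalities. Applying \propositionref{prop:duality-mixed} to $M'$ and using $\mathbb{D}\mathbb{D}M \cong M$, the dual complex of $K$ is $\derR \sHom_{\sO_Y}(K, \omega_Y^\bullet) \cong \gr_0^F \DR(\Mmod)$, so that $R^k \sHom_{\sO_Y}(K, \omega_Y^\bullet) \cong \sH^k \gr_0^F \DR(\Mmod)$. For every complex subspace $A \subseteq Y$ with $\dim A \leq n - 2$ one then has $\dim \bigl( A \cap \Supp \sH^k \gr_0^F \DR(\Mmod) \bigr) \leq -(k+2)$ for all $k \in \bZ$: this is Assumption~\ref{en:mixed-4} when $k \geq -n+1$; it follows from $\dim A \leq n-2$ when $k = -n$; and for $k < -n$ the sheaf $R^k \sHom_{\sO_Y}(K, \omega_Y^\bullet)$ vanishes because $K$ is a coherent sheaf with support of dimension $\leq n$. \propositionref{prop:reflexive-complex-ng} now yields that sections of $\sH^0 K = \sH^0 \gr_0^F \DR(\Mmod')$ extend uniquely across $A$, which is the assertion. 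The only genuinely delicate step is the degree-$0$ concentration of $\gr_0^F \DR(\Mmod')$ established in the middle paragraph; everything else is bookkeeping, transporting the inequalities of Assumption~\ref{en:mixed-4} through the duality isomorphism of \propositionref{prop:duality-mixed}.
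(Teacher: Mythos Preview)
Your proof is correct and follows the same overall plan as the paper's: reduce to checking the hypotheses of \propositionref{prop:reflexive-complex-ng} for a suitable shift of $\gr_p^F \DR(\Mmod')$, and compute the dual complex via \propositionref{prop:duality-mixed} and biduality. Your preliminary reduction to $p=0$ is a clean observation that the paper does not make explicit.

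The one genuine divergence is in how you establish that $K=\gr_0^F \DR(\Mmod')$ lies in $\Dtcoh{\geq 0}(\sO_Y)$. You dualise to $\gr_0^F \DR(\Mmod)$ and run a hypercohomology spectral sequence, feeding in the support bounds of Assumption~\ref{en:mixed-4}. The paper takes a shorter route: Assumption~\ref{en:mixed-3} plus duality says that $\gr_\ell^F \DR(\Mmod')$ is acyclic for every $\ell\leq -1$, and then \lemmaref{lem:5-2} (applied to $\Mmod'$ with $m=-1$) gives $F_{\dim Y-1}\Mmod'=0$ outright. Looking at \eqref{eq:Ax}, this means $\gr_0^F \DR(\Mmod')$ is literally the single sheaf $\omega_Y\otimes\gr_{\dim Y}^F\Mmod'$ in degree~$0$, with no spectral sequence needed. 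Your argument has the minor virtue of showing that Assumption~\ref{en:mixed-4} alone already forces this concentration, but the paper's route is more direct. For the support bounds on the dual you invoke Assumption~\ref{en:mixed-4} directly, while the paper cites \theoremref{thm:reflexive-mixed}; at $p=0$ these are the same statement, so there is no difference in content there.
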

\begin{proof}
  Write $K_p := \gr_{-p}^F \DR(\Mmod')[-p]$.  As in the proof of
  \corollaryref{cor:reflexive-pure-ng}, we begin by showing that
  $K_p ∈ \Dtcoh{≥ 0}(𝒪_X)$.  To this end, \propositionref{prop:duality-mixed},
  implies that
  \[
    \gr_ℓ^F \DR(\Mmod') ≅ \derR \sHom_{𝒪_Y} \Bigl( \gr_{-ℓ}^F \DR(\Mmod), ω_Y^•
    \Bigr) \quad\text{for every $ℓ ∈ ℤ$}.
  \]
  By \ref{en:mixed-3}, this complex is acyclic for all $ℓ ≤ -1$.  In particular,
  it follows from \lemmaref{lem:5-2} that $F_{d-1} \Mmod' = 0$.  The description
  \eqref{eq:Ax} of the graded pieces in the de Rham complex then implies that
  $ℋ^j \gr_p^F \DR(\Mmod') = 0$ for $j < -p$.  In other words, we obtain that
  $K_p ∈ \Dtcoh{≥ 0}(𝒪_X)$ as desired.
  
  As before, \propositionref{prop:duality-mixed} gives isomorphisms
  \[
    \derR \sHom_{𝒪_Y} \Bigl( K_p, ω_Y^• \Bigr) %
    = \derR \sHom_{𝒪_Y} \Bigl( \gr_{-p}^F \DR(\Mmod') \decal{-p}, ω_Y^• \Bigr) %
    ≅ \gr_p^F \DR(\Mmod) \decal{p}
  \]
  With these identifications, the inequalities \eqref{eq:reflexive-mixed} in
  \theoremref{thm:reflexive-mixed} take the form
  \[
    \dim \Supp R^j \sHom_{𝒪_X} \bigl( K_p, ω_Y^• \bigr) %
    = \dim \Supp ℋ^{j+p} \gr_{-p}^F \DR(\Mmod) ≤ -(j+2)
  \]
  for every $j ≥ -n+1$.  As before, we conclude from
  \propositionref{prop:reflexive-complex-ng} that sections of the coherent
  $𝒪_Y$-module $ℋ⁰ K_p = ℋ^p \gr_p^F \DR(\Mmod')$ extend uniquely across any
  complex subspace $A ⊆ Y$ with $\dim A ≤ n-2$.
\end{proof}

\subsubsection{Preparation for proof of Theorem~\ref*{thm:reflexive-mixed}}
\approvals{Christian & yes \\Stefan & yes}
\label{pf:thm:reflexive-mixed1}

In cases where $p+j ≥ \max(-n+1, -1)$, the inequality \eqref{eq:reflexive-mixed}
in \theoremref{thm:reflexive-mixed} is claiming that
$ℋ^j \gr_p^F \DR(\Mmod) = 0$.  We begin by proving that this is indeed the case.

\begin{lem}\label{lem:reflexive-mixed}
  Assume \settingref{set:5.13}.  If $p+j ≥ \max(-n+1, -1)$, then
  $ℋ^j \gr_p^F \DR(\Mmod) = 0$.
\end{lem}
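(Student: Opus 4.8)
The plan is to follow the proof of \lemmaref{lem:reflexive-pure} for the pure case as closely as possible, dispatching the easy values of $(p,j)$ first and then reducing the whole statement to the single vanishing $\sH^0 \gr_{-1}^F \DR(\Mmod) = 0$. The one step that does not carry over verbatim is that in \settingref{set:5.13} the mixed Hodge module $M$ is no longer assumed to have strict support, so \corollaryref{cor:IC-vanishing} is not directly available; Assumptions \ref{en:mixed-2} and \ref{en:mixed-3} are precisely what is needed to substitute for it.

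First I would observe that $\gr_p^F \DR(\Mmod)$ is concentrated in non-positive degrees by formula \eqref{eq:Ax}, and that it is acyclic for every $p \geq 1$ by Assumption \ref{en:mixed-3}; hence $\sH^j \gr_p^F \DR(\Mmod) = 0$ whenever $j \geq 1$ or $p \geq 1$. Next, if $p = 0$ then the hypothesis $p + j \geq \max(-n+1,-1)$ forces both $j \geq -n+1$ and $j \geq -1$, so Assumption \ref{en:mixed-4} gives $\dim \Supp \sH^j \gr_0^F \DR(\Mmod) \leq -(j+2) \leq -1$, and a non-zero coherent sheaf cannot have support of negative dimension, so $\sH^j \gr_0^F \DR(\Mmod) = 0$. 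Running through the remaining pairs $(p,j)$ with $p \leq 0$, $j \leq 0$ and $p+j \geq \max(-n+1,-1)$, the only one still to be treated is $p = -1$, $j = 0$, and this pair satisfies the hypothesis only when $n \geq 2$.

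For that last case I would first establish that $\sH^0 F_0 \DR(\Mmod) = 0$. Because of Assumption \ref{en:mixed-3}, \propositionref{prop:DR-qiso} applied with $p_0 = 0$ shows that the inclusion $F_0 \DR(\Mmod) \hookrightarrow \DR(\Mmod)$ is a quasi-isomorphism, so $\sH^0 F_0 \DR(\Mmod) \cong \sH^0 \DR(\Mmod)$; and by Assumption \ref{en:mixed-2}, applied with $j = 0$ (which is $\geq -n+1$ since $n \geq 1$), the support of $\sH^0 \DR(\Mmod)$ has dimension $\leq -1$, hence $\sH^0 \DR(\Mmod) = 0$. Then I would peel off the Hodge filtration as in the pure case: from the short exact sequence of complexes $0 \to F_{-1} \DR(\Mmod) \to F_0 \DR(\Mmod) \to \gr_0^F \DR(\Mmod) \to 0$, the long exact cohomology sequence together with $\sH^{-1} \gr_0^F \DR(\Mmod) = 0$ (Assumption \ref{en:mixed-4} at $j = -1$, legitimate since $n \geq 2$) and $\sH^0 F_0 \DR(\Mmod) = 0$ yields $\sH^0 F_{-1} \DR(\Mmod) = 0$; and from $0 \to F_{-2} \DR(\Mmod) \to F_{-1} \DR(\Mmod) \to \gr_{-1}^F \DR(\Mmod) \to 0$, using $\sH^1 F_{-2} \DR(\Mmod) = 0$ (the complex is concentrated in non-positive degrees), one obtains the desired vanishing $\sH^0 \gr_{-1}^F \DR(\Mmod) = 0$.

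The main obstacle is exactly the first step of this last case: recognising that the role played by strict support and \corollaryref{cor:IC-vanishing} in \lemmaref{lem:reflexive-pure} is here taken over by Assumption \ref{en:mixed-3} — which, through \propositionref{prop:DR-qiso}, lets us replace $F_0 \DR(\Mmod)$ by the full de Rham complex $\DR(\Mmod)$ — together with Assumption \ref{en:mixed-2}, which then directly forces $\sH^0 \DR(\Mmod) = 0$. Everything after that is the same filtration bookkeeping as in the pure case; indeed, the hypotheses collected in \settingref{set:5.13} were chosen precisely so that this argument goes through.
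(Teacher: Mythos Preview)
Your proof is correct and follows essentially the same approach as the paper: first dispatch the cases $j\ge 1$ or $p\ge 1$ via concentration in non-positive degrees and Assumption~\ref{en:mixed-3}, handle $p=0$ via Assumption~\ref{en:mixed-4}, and then treat the remaining case $p=-1$, $j=0$ by combining \propositionref{prop:DR-qiso} with Assumption~\ref{en:mixed-2} to get $\sH^0 F_0\DR(\Mmod)=0$, followed by the same two short exact sequences as in \lemmaref{lem:reflexive-pure}. Your extra remarks (the $n\ge 2$ check for the last case, and the explicit justification that a sheaf with support of negative dimension vanishes) are correct refinements the paper leaves implicit.
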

\begin{proof}
  The complex $\gr_p^F \DR(\Mmod)$ is concentrated in non-positive degrees, and
  is acyclic for $p ≥ 1$ by Assumption~\ref{en:mixed-3}.  This means that
  $ℋ^j \gr_p^F \DR(\Mmod) = 0$ whenever $j ≥ 1$ or $p ≥ 1$.
  Assumption~\ref{en:mixed-4} implies the claim when $p = 0$.  This leaves only
  one case to consider, namely $p = -1$ and $j = 0$.  We show that
  $ℋ⁰ \gr_{-1}^F \DR(\Mmod) = 0$, too.
  
  The inclusion $F_0 \DR(\Mmod) ⊆ \DR(\Mmod)$ is a quasi-isomorphism; this
  follows from Assumption~\ref{en:mixed-3} and \propositionref{prop:DR-qiso}.
  In particular, the inclusion induces an isomorphism
  $ℋ⁰ F_0 \DR(\Mmod) ≅ ℋ⁰ \DR(\Mmod)$.  The inequality in \ref{en:mixed-2} shows
  that $ℋ⁰ \DR(\Mmod) = 0$, and therefore $ℋ⁰ F_0 \DR(\Mmod) = 0$.  Now consider
  the short exact sequence of complexes (of sheaves of $ℂ$-vector spaces)
  \[
    0 → F_{-1} \DR(\Mmod) → F_0 \DR(\Mmod) → \gr_0^F \DR(\Mmod) → 0.
  \]
  Since $ℋ^j \gr_0^F \DR(\Mmod) = 0$ for $j ≥ -1$, we obtain
  \[
    ℋ⁰ F_{-1} \DR(\Mmod) ≅ ℋ⁰ F_0 \DR(\Mmod) = 0
  \]
  from the long exact sequence in cohomology.  The rest of the proof now
  proceeds exactly as in \lemmaref{lem:reflexive-pure}.
\end{proof}

\subsubsection{Proof of Theorem~\ref*{thm:reflexive-mixed}}
\approvals{Christian & yes \\Stefan & yes}
\label{pf:thm:reflexive-mixed2}

We prove \theoremref{thm:reflexive-mixed} by induction on $n = \dim X$.  If
$n = 1$ or $n = 2$, then the desired statement follows from
\lemmaref{lem:reflexive-mixed} above, and we are done.  We will therefore assume
for the remainder of the proof that $n ≥ 3$, and that
\theoremref{thm:reflexive-mixed} is already known for smaller values of $n$.

\subsubsection*{Cutting down}

The statement we are trying to prove is local on $Y$, and so we can assume for
the remainder of the argument that $Y$ is an open ball in $ℂ^{n+c}$, and that
$X ⊆ Y$ is connected.  Let $H ⊆ Y$ be the intersection of $Y$ with a generic
hyperplane in $ℂ^{n+c}$.  The intersection $H ∩ X$ is then a connected complex
subspace of pure dimension $n-1 ≥ 2$.  The inclusion mapping $i_H \colon H ↪ Y$
is non-characteristic for $M$, and the inverse image $M_H = H^{-1} i^*_H M$ is
again a graded-polarisable mixed Hodge module with support $H ∩ X$; see
\theoremref{thm:restr} for the details.  Note that the support of
$M_H ∈ \MHM(H)$ still has codimension $c$ in the ambient complex manifold $H$.
Denoting the underlying filtered $𝒟_H$-module by $(\Mmod_H, F_• \Mmod_H)$,
\theoremref{thm:restr} give
\[
  \Mmod_H ≅ 𝒪_H ⊗_{i_H^{-1} 𝒪_Y} i_H^{-1} \Mmod \quad \text{and} \quad %
  F_• \Mmod_H ≅ 𝒪_H ⊗_{i_H^{-1} 𝒪_Y} i_H^{-1} F_• \Mmod,
\]
as well as an isomorphism of perverse sheaves
\begin{equation}\label{eq:oiu}
  \DR(\Mmod_H) ≅ i_H^{-1} \DR(\Mmod) \decal{-1}.
\end{equation}

\subsubsection*{Properties of $M_H$}
  
As before, we claim that $M_H ∈ \MHM(H)$ satisfies all assumptions made in
\settingref{set:5.13}.  We consider the assumptions one by one.  Because $M$
satisfies Assumption~\ref{en:mixed-2} and because of the choice of $H$ as a
\emph{generic} hyperplane section, \eqref{eq:oiu} yields
\[
  \dim \Supp ℋ^j \DR(\Mmod_H) = \dim \Bigl( H ∩ \Supp ℋ^{j-1} \DR(\Mmod) \Bigr)
  ≤ -(j+1),
\]
for every $j ≥ -\dim(H ∩ X) + 1$.  In other words, $M_H$ satisfies
\ref{en:mixed-2} as well.

According \propositionref{prop:restrComparison}, one has a short exact sequence
of complexes
\begin{equation}\label{eq:cbc-mixed}
  0 → N_{H \mid Y}^{\ast} ⊗_{𝒪_H} \gr_{p+1}^F \DR(\Mmod_H) → 𝒪_H ⊗_{𝒪_Y}
  \gr_p^F \DR(\Mmod) → \gr_p^F \DR(\Mmod_H) \decal{1} → 0,
\end{equation}
where $N_{H \mid Y}^{\ast}$ is the conormal bundle for the inclusion $H ⊆ Y$.
Since $\gr_p^F \DR(\Mmod_H)$ is acyclic for $p ≫ 0$, and since
Assumption~\ref{en:mixed-3} holds for $M$, we can use descending induction on
$p$ to show that $\gr_p^F \DR(\Mmod_H)$ is acyclic for every $p ≥ 1$, and hence
that $M_H$ satisfies \ref{en:mixed-3}.  It also follows that
\[
  𝒪_H ⊗_{𝒪_Y} ℋ^{j-1} \gr_0^F \DR(\Mmod) ≅ ℋ^j \gr_0^F \DR(\Mmod_H),
\]
and because of Assumption~\ref{en:mixed-4}, we get
\[
  \dim \Supp ℋ^j \gr_0^F \DR(\Mmod_H) = -1 + \dim \Supp ℋ^{j-1} \gr_0^F
  \DR(\Mmod) ≤ -(j+2)
\]
for every $j ≥ -\dim(H ∩ X) + 1$.  But this is exactly \ref{en:mixed-4} for
$M_H$.

\subsubsection*{Conclusion}

In summary, we have established that $M_H ∈ \MHM(H)$ also has the three
properties in \ref{en:mixed-2} to \ref{en:mixed-4}, but with
$\dim \Supp M_H = \dim (H ∩ X) = n-1$.  We can therefore conclude by induction
on the dimension of the support that
\[
  \dim \Supp ℋ^j \gr_p^F \DR(\Mmod_H) ≤ -(p+j+2) \quad\text{whenever
    $p+j ≥ -\dim(H ∩ X) + 1$.}
\]
Taking cohomology in the short exact in \eqref{eq:cbc-mixed}, we obtain an exact
sequence of coherent $𝒪_H$-modules
\[
  N_{H \mid Y}^{\ast} ⊗ ℋ^j \gr_{p+1}^F \DR(\Mmod_H) → 𝒪_H ⊗_{𝒪_Y} ℋ^j \gr_p^F
  \DR(\Mmod) → ℋ^{j+1} \gr_p^F \DR(\Mmod_H),
\]
and therefore the inequality
\[
  \dim \Supp \Bigl( 𝒪_H ⊗_{𝒪_Y} ℋ^j \gr_p^F \DR(\Mmod) \Bigr) ≤ -(p+j+3)
  \quad\text{whenever $p+j ≥ -\dim X + 1$.}
\]
Since $H ⊆ Y$ was a generic hyperplane section of $Y$, this inequality clearly
implies that
\[
  \dim \Supp ℋ^j \gr_p^F \DR(\Mmod) ≤ -\min(p+j+2, 0) \quad\text{for
    $p+j ≥ -\dim X + 1$.}
\]
This is enough for our purposes, because we have already shown that
$ℋ^j \gr_p^F \DR(\Mmod) = 0$ whenever $p+q ≥ -1$.  The proof of
\theoremref{thm:reflexive-mixed} is thus complete.  \qed

\phantomsection\addcontentsline{toc}{part}{Proofs of the main theorems}
%
%
\svnid{$Id: S07-proof-setup.tex 269 2020-01-20 11:28:53Z kebekus $}

\section{Setup for the proof}
\label{sec:pf-setup}
\subversionInfo
\approvals{Christian & yes \\Stefan & yes}

We will prove the main results of the present paper in the following sections.
Since we want to work locally, and since an irreducible complex space is not
necessarily locally irreducible, we relax the assumptions a little bit and allow
any reduced complex space of pure dimension.  Except for
\theoremref{thm:pullBack}, the proofs all work in essentially the same setup.
We will therefore fix the setup here and introduce notation that will be
consistently be used throughout the following sections.

\begin{setup}\label{setting:7-1}
  Consider a reduced complex space $X$ of pure dimension $n$, together with an
  embedding $i_X \colon X ↪ Y$ into an open ball.  Choose a strong log
  resolution $r \colon \wtilde{X} → X$ that is projective as a morphism of
  complex spaces.
\end{setup}

\begin{notation}
  We denote dimensions and codimensions by
  $$
  n := \dim X \quad \text{and} \quad c := \codim_Y X,
  $$
  which means that $Y$ is an open ball in $ℂ^{n+c}$.  The assumption that $r$ is
  a \emph{strong} log resolution implies that $X_{\reg}$ is isomorphic to its
  preimage $r^{-1}(X_{\reg})$.  Finally, let $E := r^{-1}(X_{\sing})$ be the
  reduced $r$-exceptional set.  The assumption that $r$ is a strong log
  resolution implies that $E ⊊ \wtilde{X}$ is a divisor with simple normal
  crossings; we write its irreducible components as $E = ∪_{i ∈ I} E_i$.  The
  following diagram summarises the relevant morphisms in our setting.
  \[
    \begin{tikzcd}[row sep=huge, column sep=18 ex]
      \wtilde{X} ∖ E \rar{j \text{, open embedding}}
      \dar[swap]{r|_{\wtilde{X}°}\text{, isomorphism}} & \wtilde{X}
      \dar[swap]{\txt{\scriptsize$r$, strong\\\scriptsize log resolution}}
      \arrow[bend left=15]{dr}{f := i_X ◦ r} \\
      X_{\reg} \rar[swap]{j\text{, open embedding}} & X
      \rar[swap]{i_X\text{, closed embedding}} & Y
    \end{tikzcd}
  \]
\end{notation}

%
%
\svnid{$Id: S08-proof-pure.tex 261 2020-01-13 15:02:45Z kebekus $}

\section{Pure Hodge modules and differentials on the resolution}
\label{ssec:7.pm}
\subversionInfo
\approvals{Christian & yes \\Stefan & yes}

Maintaining the assumptions and notation of \settingref{setting:7-1}, we explain
in this section how the (higher) direct images of $Ω^p_{\wtilde{X}}$ are related
to the intersection complex on $X$.  We begin with a discussion of the constant
Hodge module on the complex manifold $\wtilde{X}$.

\subsection{The constant Hodge module on the resolution}
\label{ssec:tchmXt}
\approvals{Christian & yes \\Stefan & yes}

On the complex manifold $\wtilde{X}$, consider the locally constant sheaf
$ℚ_{\wtilde{X}}$, viewed as a polarised variation of Hodge structure of type
$(0,0)$.  Following Saito \cite[Thm.~5.4.3]{Saito:HodgeModules}, we denote by
$ℚ^H_{\wtilde{X}}[n] ∈ \HM(\wtilde{X}, n)$ the corresponding polarised Hodge
module of weight $n$; see also \cite[Sect.~2, Ex.~4]{MR3821162}.  Its underlying
regular holonomic left $𝒟_{\wtilde{X}}$-module is $𝒪_{\wtilde{X}}$, with the
usual action by differential operators, and the Hodge filtration
$F_• 𝒪_{\wtilde{X}}$ is given by
$$
F_p 𝒪_{\wtilde{X}} =
\begin{cases}
  0 & \text{if } p ≤ -1\\
  𝒪_{\wtilde{X}} & \text{otherwise.}
\end{cases}
$$
The de~Rham complex $\DR(𝒪_{\wtilde{X}})$, which is quasi-isomorphic to
$ℂ_{\wtilde{X}} \decal{n}$, is
$$
\DR(𝒪_{\wtilde{X}}) = \Bigl[ 𝒪_{\wtilde{X}} \xrightarrow{d} Ω¹_{\wtilde{X}}
\xrightarrow{d} ⋯ \xrightarrow{d} Ω^n_{\wtilde{X}} \Bigr][n].
$$
It is filtered in the usual way, by degree, and the $(-p)$-th graded piece is
then
\begin{equation}\label{eq:xx1}
  gr^F_{-p} DR(𝒪_{\wtilde{X}}) ≅ Ω^p_{\wtilde{X}}[n-p].
\end{equation}

Following the discussion in \parref{sec:push-forward}, we consider the direct
image $f_+(R_F 𝒪_{\wtilde{X}})$ of the filtered $𝒟_{\wtilde{X}}$-module
$(𝒪_{\wtilde{X}}, F_• 𝒪_{\wtilde{X}})$, as an object of the bounded derived
category of coherent graded $R_F 𝒟_Y$-modules.  The direct image functor
commutes with taking the associated graded of the de Rham complex by
\propositionref{prop:fl-grDR}, which allows us to identify the graded pieces of
the de Rham complex for $f_+(R_F 𝒪_{\wtilde{X}})$ as
\begin{equation}\label{eq:DICHM}
  \gr_{-p}^F \DR \Bigl( f_+(R_F 𝒪_{\wtilde{X}}) \Bigr) %
  ≅ \derR f_* \gr_{-p}^F \DR(𝒪_{\wtilde{X}}) %
  ≅ \derR f_* Ω^p_{\wtilde{X}} \decal{n-p}.
\end{equation}

\subsection{The intersection complex of $X$}
\label{ssec:icpx}
\approvals{Christian & yes \\Stefan & yes}

Consider the constant variation of Hodge structure of type $(0,0)$ on
$X_{\reg}$.  By Saito's fundamental theorem
\cite[Thm.~3.21]{Saito:MixedHodgeModules}, applied to each irreducible component
of the complex space $X$, it determines a polarised Hodge module
$M_X ∈ \HM(Y, n)$ of weight $n = \dim X$ on the complex manifold $Y$, with
support equal to $X$.  Its underlying perverse sheaf is the intersection complex
of $X$.  Denoting the filtered regular holonomic $𝒟_Y$-module underlying $M_X$
by $(\cM_X, F_• \cM_X)$, we have $F_{c-1} \cM_X = 0$ by construction.  The de
Rham complex $\DR(\cM_X)$ is again filtered, and its subquotients are
\[
  \gr_p^F \DR(\cM_X) = \Bigl\lbrack \gr_p^F \cM_X → Ω¹_Y ⊗ \gr_{p+1}^F \cM_X → ⋯
  → Ω^{n+c}_Y ⊗ \gr_{p+n+c}^F \cM_X \Bigr\rbrack \decal{n+c}.
\]
Note that this complex is concentrated in degrees $-(n+c), …, 0$.

\subsection{Decomposition}
\approvals{Christian & yes \\Stefan & yes}

As discussed in \parref{sec:push-forward}, the fact that the holomorphic mapping
$f \colon \wtilde{X} → Y$ is projective implies that each
$H^ℓ f_* ℚ_{\wtilde{X}}^H \decal{n}$ is again a polarisable Hodge module of
weight $n+ℓ$ on $Y$.  Using the decomposition by strict support, we obtain
moreover
\[
  H^ℓ f_* M_{\wtilde{X}} ≅
  \begin{cases}
    M_X ⊕ M_0 &\text{if $ℓ = 0$,} \\
    M_ℓ &\text{if $ℓ ≠ 0$,}
  \end{cases}
\]
where $M_X ∈ \HM(Y, n)$ is as above, and where the other summands
$M_ℓ ∈ \HM(Y, n+ℓ)$ are polarisable Hodge modules on $Y$ whose support is
contained inside $X_{\sing}$.  Denoting the associated $𝒟_Y$-modules by
$\Mmod_ℓ$, the properties of the direct image functor imply that
$F_c \Mmod_ℓ = 0$, as a special case of \propositionref{prop:Fc}.

\begin{note}
  For dimension reasons, one has $M_ℓ = 0$ once $\abs{ℓ}$ is greater than the
  ``defect of semismallness'' of $r \colon \wtilde{X} → X$; in particular, this
  holds for $\abs{ℓ} ≥ n-1$.
\end{note}

\subsection{Relation with differential forms}
\approvals{Christian & yes \\Stefan & yes}

Saito's version of the Decomposition Theorem,
\corollaryref{cor:decomposition-MF}, together with the isomorphism in
\eqref{eq:DICHM}, allows us to identify, for every $p ∈ ℤ$, the derived push
forward of the sheaf of $p$-forms on $\wtilde{X}$ as
\begin{equation}\label{eq:DS}
  \derR f_* Ω^p_{\wtilde{X}} \decal{n-p} %
  ≅ \gr_{-p}^F \DR(\Mmod_X) ⊕ \bigoplus_{ℓ ∈ ℤ} \gr_{-p}^F \DR(\Mmod_ℓ) \: \decal{-ℓ}.
\end{equation}
In the situation at hand, the relation between $f_* Ω^p_{\wtilde{X}}$ and the
intersection complex of $X$ is an almost direct consequence of the isomorphism
in \eqref{eq:DS} above.

\begin{prop}\label{prop:7-1-ng}
  Maintaining \settingref{setting:7-1} and using the notation introduced above,
  we have
  \[
    f_* Ω^p_{\wtilde{X}} ≅ ℋ^{-(n-p)} \gr_{-p}^F \DR(\cM_X) \quad\text{for every
      $p ∈ ℤ$.}
  \]
\end{prop}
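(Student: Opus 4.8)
The plan is to obtain the statement by taking degree-zero cohomology sheaves in the decomposition \eqref{eq:DS} and then discarding every summand supported on $X_{\sing}$, the point being that $f_* Ω^p_{\wtilde{X}}$ is torsion-free.

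First I would record the torsion-freeness. Since $r$ is a strong log resolution, it restricts to an isomorphism over $X_{\reg}$, and the exceptional set $E = r^{-1}(X_{\sing})$ is a proper closed analytic subset of the manifold $\wtilde{X}$, hence nowhere dense. As $Ω^p_{\wtilde{X}}$ is locally free, the natural map $r_* Ω^p_{\wtilde{X}} → j_* Ω^p_{X_{\reg}}$ (restriction to $X_{\reg}$, followed by pushforward along $j \colon X_{\reg} ↪ X$) has kernel the subsheaf of sections supported on $X_{\sing}$, and this kernel vanishes because a section of $Ω^p_{\wtilde{X}}$ over $r^{-1}(U)$ that is zero on the dense open subset $r^{-1}(U ∩ X_{\reg})$ is zero identically. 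Thus $r_* Ω^p_{\wtilde{X}}$ is torsion-free, and so $f_* Ω^p_{\wtilde{X}} = (i_X)_* r_* Ω^p_{\wtilde{X}}$ admits no nonzero subsheaf supported on $X_{\sing}$.

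Next I would rewrite \eqref{eq:DS} by moving the shift to the right-hand side,
\[
  \derR f_* Ω^p_{\wtilde{X}} ≅ \gr_{-p}^F \DR(\cM_X)\decal{-(n-p)} ⊕ \bigoplus_{ℓ ∈ ℤ} \gr_{-p}^F \DR(\Mmod_ℓ)\decal{-(n-p)-ℓ},
\]
and take $ℋ⁰$ of both sides. The left side gives $f_* Ω^p_{\wtilde{X}}$, while the right side gives $ℋ^{-(n-p)}\gr_{-p}^F \DR(\cM_X) ⊕ \bigoplus_ℓ ℋ^{-(n-p)-ℓ}\gr_{-p}^F \DR(\Mmod_ℓ)$. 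Since each $M_ℓ$ has support contained in $X_{\sing}$, so does each $𝒟_Y$-module $\Mmod_ℓ$, every term $Ω^k_Y ⊗_{𝒪_Y} \gr^F_q \Mmod_ℓ$ of the de Rham complex $\gr_{-p}^F \DR(\Mmod_ℓ)$, and hence every cohomology sheaf $ℋ^{-(n-p)-ℓ}\gr_{-p}^F \DR(\Mmod_ℓ)$. The finite direct sum over $ℓ$ (it is finite because $M_ℓ = 0$ for $\abs{ℓ} ≥ n-1$) is therefore a coherent $𝒪_Y$-module supported on $X_{\sing}$; being a direct summand of $f_* Ω^p_{\wtilde{X}}$, it is in particular a subsheaf of that torsion-free sheaf, so it vanishes. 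This leaves exactly $f_* Ω^p_{\wtilde{X}} ≅ ℋ^{-(n-p)}\gr_{-p}^F \DR(\cM_X)$, as claimed. For $p ∉ \{0, …, n\}$ both sides vanish and \eqref{eq:DS} reads $0 ≅ 0$, so the argument goes through unchanged.

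There is no genuine obstacle here: the Decomposition Theorem, in the form of \corollaryref{cor:decomposition-MF}, together with the identification \eqref{eq:DICHM}, has already supplied the essential content, and what remains is bookkeeping plus the elementary observation that a torsion-free sheaf has no subsheaf supported on $X_{\sing}$. The one place to be careful is tracking the shifts in \eqref{eq:DS}, so that the $\cM_X$-summand contributes precisely in cohomological degree $-(n-p)$ and not a neighbouring degree, and phrasing the torsion-freeness step so that it truly rules out a \emph{direct summand} supported on $X_{\sing}$ rather than merely a section of one.
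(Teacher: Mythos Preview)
Your proof is correct and follows essentially the same approach as the paper: take $ℋ⁰$ of the decomposition \eqref{eq:DS}, then kill the summand supported on $X_{\sing}$ using torsion-freeness. The only cosmetic difference is that the paper phrases the torsion-freeness step via adjunction (the inclusion of the bad summand $\sf B ↪ f_* Ω^p_{\wtilde{X}}$ corresponds to a map $f^* \sf B → Ω^p_{\wtilde{X}}$, which vanishes since $f^* \sf B$ is supported on $E$ and $Ω^p_{\wtilde{X}}$ is locally free), whereas you argue directly that $r_* Ω^p_{\wtilde{X}} ↪ j_* Ω^p_{X_{\reg}}$; both arguments are equivalent.
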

\begin{proof}
  Recall from \eqref{eq:DS} that we have a decomposition
  \[
    \derR f_* Ω^p_{\wtilde{X}} \decal{n-p} ≅ \gr_{-p}^F \DR(\cM_X) ⊕
    \operatorname{Rest}_p,
  \]
  in which the support of the complex $\operatorname{Rest}_p ∈ \Dbcoh(𝒪_Y)$ is
  contained inside $X_{\sing}$.  Taking cohomology in degree $-(n-p)$, we get
  \[
    f_* Ω^p_{\wtilde{X}} ≅ \underbrace{ℋ^{-(n-p)} \gr_{-p}^F \DR(\cM_X)}_{=: \sf
      A} ⊕ \underbrace{ℋ^{-(n-p)} \operatorname{Rest}_p}_{=: \sf B},
  \]
  and therefore $f_* Ω^p_{\wtilde{X}}$ is the direct sum of $\sf A$ and a
  coherent $𝒪_X$-module $\sf B$ supported on $X_{\sing}$.  The claim follows
  because $Ω^p_{\wtilde{X}}$ is torsion free: the functor $f^*$ is a left
  adjoint for $f_*$, and the adjoint morphism $f^* \sf B → Ω^p_{\wtilde{X}}$
  vanishes because $f^* \sf B$ is supported on $f^{-1}(X_{\sing})$.
\end{proof}

\begin{note}
  The proof shows once again that $F_c \Mmod_ℓ = 0$ for every $ℓ ∈ ℤ$.  (Use
  \lemmaref{lem:5-2}.) This fact is also proved in much greater generality in
  \cite[Prop.~2.6]{Saito:Kollar}.
\end{note}

The two values $p = n$ and $p = 0$ are special, because there is no contribution
from the Hodge modules $M_ℓ$ in those cases.

\begin{prop}\label{prop:spidz}
  Maintaining \settingref{setting:7-1} and using the notation introduced above,
  we have
  \[
    \derR f_* Ω^n_{\wtilde{X}} ≅ \gr_{-n}^F \DR(\Mmod_X) \quad \text{and} \quad
    \derR f_* 𝒪_{\wtilde{X}} \decal{n} ≅ \gr_0^F \DR(\Mmod_X).
  \]
\end{prop}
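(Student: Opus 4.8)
The plan is to read off both isomorphisms directly from the decomposition \eqref{eq:DS}, after checking that the extra summands contributed by the Hodge modules $M_ℓ$ (those whose support lies inside $X_{\sing}$) disappear at the two relevant values of $p$. For $p = n$, formula \eqref{eq:DS} reads
$\derR f_* Ω^n_{\wtilde{X}} ≅ \gr_{-n}^F \DR(\Mmod_X) ⊕ \bigoplus_{ℓ ∈ ℤ} \gr_{-n}^F \DR(\Mmod_ℓ) \decal{-ℓ}$
in $\Dbcoh(𝒪_Y)$, so it suffices to prove that $\gr_{-n}^F \DR(\Mmod_ℓ)$ is acyclic for every $ℓ ∈ ℤ$; an acyclic bounded complex of coherent sheaves is the zero object of $\Dbcoh(𝒪_Y)$, so the direct sum then collapses to its first summand.

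To establish this acyclicity I would apply \propositionref{prop:DR-acyclic} to each $M_ℓ$. Recall that $M_ℓ ∈ \HM(Y, n+ℓ)$, so $W_{(n+ℓ)-1} M_ℓ = 0$, and recall the sharp vanishing $F_c \Mmod_ℓ = 0$ recorded when the $M_ℓ$ were constructed, so $F_{(c+1)-1} \Mmod_ℓ = 0$. Taking $w = n+ℓ$, the integer ``$c$'' of \propositionref{prop:DR-acyclic} to be $c+1$, and using $d = \dim Y = n+c$, we find that $\gr_p^F \DR(\Mmod_ℓ)$ is acyclic unless $1 - n = (c+1) - (n+c) ≤ p ≤ (n+c) - (n+ℓ) - (c+1) = -ℓ - 1$. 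Since $-n$ lies strictly below $1-n$, the complex $\gr_{-n}^F \DR(\Mmod_ℓ)$ is acyclic for every $ℓ$, and hence $\derR f_* Ω^n_{\wtilde{X}} ≅ \gr_{-n}^F \DR(\Mmod_X)$.

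For the second isomorphism I would not rerun the argument with $p = 0$ — there the analogous range estimate fails to exclude $p = 0$ when $ℓ < 0$ — but instead deduce it from the first by Grothendieck duality. Since $f$ is projective, Grothendieck duality identifies $\derR \sHom_{𝒪_Y}(\derR f_* Ω^n_{\wtilde{X}}, ω_Y^•)$ with $\derR f_* \derR \sHom_{𝒪_{\wtilde{X}}}(Ω^n_{\wtilde{X}}, ω_{\wtilde{X}}^•)$, and the latter equals $\derR f_* 𝒪_{\wtilde{X}} \decal{n}$ because $ω_{\wtilde{X}}^• ≅ Ω^n_{\wtilde{X}} \decal{n}$ on the $n$-dimensional manifold $\wtilde{X}$, whence $\derR \sHom_{𝒪_{\wtilde{X}}}(Ω^n_{\wtilde{X}}, ω_{\wtilde{X}}^•) ≅ 𝒪_{\wtilde{X}} \decal{n}$. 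On the other side, choosing a polarisation on $M_X ∈ \HM(Y, n)$ and invoking \corollaryref{cor:duality-pure} with $p = -n$ and $w = n$ identifies $\derR \sHom_{𝒪_Y}(\gr_{-n}^F \DR(\Mmod_X), ω_Y^•)$ with $\gr_{0}^F \DR(\Mmod_X)$. Dualising the isomorphism $\derR f_* Ω^n_{\wtilde{X}} ≅ \gr_{-n}^F \DR(\Mmod_X)$ established above then yields $\derR f_* 𝒪_{\wtilde{X}} \decal{n} ≅ \gr_0^F \DR(\Mmod_X)$.

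The only delicate point is the index bookkeeping in \propositionref{prop:DR-acyclic}: one must feed in the sharp vanishing $F_c \Mmod_ℓ = 0$ (which holds because the $M_ℓ$ are supported on $X_{\sing}$, of dimension $< n$), and not merely the weaker bound $F_{c-1} \Mmod_ℓ = 0$ that follows formally from \propositionref{prop:Fc}; with the weaker bound the exceptional range would include $p = -n$ and the argument would break down. Everything else is formal manipulation in the derived category.
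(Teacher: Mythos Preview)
Your proof is correct and follows essentially the same route as the paper: vanishing of the extra summands $\gr_{-n}^F \DR(\Mmod_ℓ)$ using the sharp bound $F_c \Mmod_ℓ = 0$, combined with \eqref{eq:DS}, for the first isomorphism; then duality (Grothendieck duality for $f$ together with \corollaryref{cor:duality-pure}) for the second. The only cosmetic difference is that the paper argues the vanishing of $\gr_{-n}^F \DR(\Mmod_ℓ)$ directly from formula~\eqref{eq:Ax} --- since $F_c \Mmod_ℓ = 0$, every term $\gr_k^F \Mmod_ℓ$ with $k \le c$ is zero, so the complex is the zero complex --- whereas you package this into an invocation of \propositionref{prop:DR-acyclic}; the content is identical. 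Your closing remark that the sharp bound $F_c \Mmod_ℓ = 0$ (rather than merely $F_{c-1} \Mmod_ℓ = 0$) is essential is a correct and useful observation.
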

\begin{proof}
  By \propositionref{prop:Fc}, we have $F_c \Mmod_ℓ = 0$ for every $ℓ ∈ ℤ$, and
  so $\gr_{-n}^F \DR(\Mmod_ℓ) = 0$.  Together with \eqref{eq:DS}, this implies
  the first isomorphism.  The second isomorphism follows by duality, using
  \corollaryref{cor:duality-pure} and the fact that $M_X ∈ \HM_X(Y, n)$.
\end{proof}

The higher direct images of $Ω^p_{\wtilde{X}}$ can of course also be computed
from \eqref{eq:DS}, but they generally involve some of the other terms $M_ℓ$.
We give one example, in the special case $p=1$, that will serve to illustrate
the general technique.

\begin{prop}\label{prop:xa2}
  Maintaining \settingref{setting:7-1} and using the notation introduced above,
  we have
  \[
    R^{n-1} f_* Ω¹_{\wtilde{X}} %
    ≅ ℋ⁰ \gr_{-1}^F \DR(\Mmod_X) ⊕ ℋ⁰ \gr_{-1}^F \DR(\Mmod_0).
  \]
\end{prop}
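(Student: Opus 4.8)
The plan is to obtain the formula by specialising the Decomposition-Theorem isomorphism \eqref{eq:DS} to $p = 1$ and reading off the cohomology sheaf in the appropriate degree; the only real work is to check that the correction terms coming from the Hodge modules $M_ℓ$ with $ℓ ≠ 0$ do not contribute.

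First I would take the $(n-1)$-st higher direct image on both sides of \eqref{eq:DS} with $p=1$, that is, apply $ℋ⁰$ to the isomorphism $\derR f_* Ω¹_{\wtilde{X}} \decal{n-1} ≅ \gr_{-1}^F \DR(\Mmod_X) ⊕ \bigoplus_{ℓ ∈ ℤ} \gr_{-1}^F \DR(\Mmod_ℓ) \decal{-ℓ}$. Using the shift convention $ℋ^j(K \decal{m}) = ℋ^{j+m}(K)$, the left-hand side becomes $R^{n-1} f_* Ω¹_{\wtilde{X}}$, while $ℋ⁰$ of the $ℓ$-th summand on the right equals $ℋ^{-ℓ}\gr_{-1}^F \DR(\Mmod_ℓ)$. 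This gives
\[
  R^{n-1} f_* Ω¹_{\wtilde{X}} ≅ ℋ⁰ \gr_{-1}^F \DR(\Mmod_X) ⊕ \bigoplus_{ℓ ∈ ℤ} ℋ^{-ℓ} \gr_{-1}^F \DR(\Mmod_ℓ),
\]
so it suffices to prove that $ℋ^{-ℓ}\gr_{-1}^F \DR(\Mmod_ℓ) = 0$ for every $ℓ ≠ 0$.

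For $ℓ < 0$ this is immediate from \eqref{eq:Ax}: the complex $\gr_{-1}^F \DR(\Mmod_ℓ)$ is concentrated in non-positive cohomological degrees, whereas $-ℓ > 0$. For $ℓ ≥ 1$ I would invoke the acyclicity criterion \propositionref{prop:DR-acyclic}. Here $M_ℓ ∈ \HM(Y, n+ℓ)$ is pure of weight $n+ℓ$, so $W_{n+ℓ-1} M_ℓ = 0$; moreover $F_c \Mmod_ℓ = 0$, the sharp vanishing recorded after \propositionref{prop:7-1-ng} (a strengthening of what \propositionref{prop:Fc} yields directly), and hence $F_{(c+1)-1}\Mmod_ℓ = 0$. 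Feeding into \propositionref{prop:DR-acyclic} the weight $w = n+ℓ$, the vanishing $F_{(c+1)-1}\Mmod_ℓ = 0$, and $d = \dim Y = n+c$, one finds that $\gr_p^F \DR(\Mmod_ℓ)$ is acyclic unless $1-n ≤ p ≤ -ℓ-1$. Since $-1 > -ℓ-1$ for every $ℓ ≥ 1$, the index $p = -1$ lies outside this range, so $\gr_{-1}^F \DR(\Mmod_ℓ)$ is acyclic, and in particular $ℋ^{-ℓ}\gr_{-1}^F \DR(\Mmod_ℓ) = 0$. Combining the two cases leaves only the $ℓ = 0$ term, which yields $R^{n-1} f_* Ω¹_{\wtilde{X}} ≅ ℋ⁰ \gr_{-1}^F \DR(\Mmod_X) ⊕ ℋ⁰ \gr_{-1}^F \DR(\Mmod_0)$.

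I do not expect a serious obstacle: the argument is index bookkeeping built on \eqref{eq:DS}, \eqref{eq:Ax}, and \propositionref{prop:DR-acyclic}. The delicate point is the case $ℓ = 1$, where with only the weaker bound $F_{c-1}\Mmod_ℓ = 0$ the index $p = -1$ would sit on the boundary of the non-acyclic range in \propositionref{prop:DR-acyclic}; so one genuinely needs the optimal vanishing $F_c\Mmod_ℓ = 0$ (established via \lemmaref{lem:5-2}; see also \cite[Prop.~2.6]{Saito:Kollar}) to rule out a spurious contribution from $M_1$.
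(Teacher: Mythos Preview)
Your proof is correct and follows essentially the same approach as the paper: both start from \eqref{eq:DS} with $p=1$, take $ℋ^0$, and show that $\gr_{-1}^F \DR(\Mmod_ℓ)$ is acyclic for $ℓ \geq 1$ using $F_c \Mmod_ℓ = 0$. The only difference is cosmetic: the paper unwinds the duality of \corollaryref{cor:duality-pure} by hand to see that the dual complex $\gr_{1-(n+ℓ)}^F \DR(\Mmod_ℓ)$ vanishes, whereas you invoke the packaged form of that same argument via \propositionref{prop:DR-acyclic}.
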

\begin{proof}
  Formula~\eqref{eq:DS} identifies the left side of the desired equality as
  \[
    R^{n-1} f_* Ω¹_{\wtilde{X}} %
    ≅ ℋ⁰ \gr_{-1}^F \DR(\Mmod_X) ⊕ \bigoplus_{ℓ ≥ 0} ℋ^{-ℓ} \gr_{-1}^F
    \DR(\Mmod_ℓ).
  \]
  To prove \propositionref{prop:xa2}, it is therefore enough to show that
  $\gr_{-1}^F \DR(\Mmod_ℓ)$ is acyclic for every $ℓ ≥ 1$.  But using the fact
  that the Hodge modules $M_ℓ ∈ \HM(Y, n+ℓ)$ are polarisable of weight $n+ℓ$,
  \corollaryref{cor:duality-pure} yields
  \[
    \gr_{-1}^F \DR(\Mmod_ℓ) %
    ≅ \derR \sHom_{𝒪_Y} \Bigl( \gr_{1-(n+ℓ)}^F \DR(\Mmod_ℓ), ω_Y^• \Bigr).
  \]
  Now a look back at the description of the filtration on the de Rham complex,
  in \eqref{eq:Aw}, reveals that the complex $\gr_{1-(n+ℓ)}^F \DR(\Mmod_ℓ)$ only
  involves the $𝒪_Y$-modules $\gr_k^F \Mmod_ℓ$ with $k ≤ c + 1 - ℓ$.  As
  $F_c \Mmod_ℓ = 0$, it follows that $\gr_{1-(n+ℓ)}^F \DR(\Mmod_ℓ) = 0$ for
  every $ℓ ≥ 1$.
\end{proof}

\subsection{Application to the extension problem}
\label{sec:08eoetf}
\approvals{Christian & yes \\Stefan & yes}

We conclude this section with a brief discussion of the effect that
extendability of $n$-forms has on $\DR(\Mmod_X)$ and its subquotients.  The
following result, together with \corollaryref{cor:reflexive-pure-ng}, can be
used to prove that if $n$-forms extend, then all forms extend.  As explained in
\parref{sec:second-proof}, this gives another proof for
\theoremref{thm:main-new} in the (most important) case $k = n$.

\begin{prop}[Extension of $n$-forms and $M_X$]\label{prop:main-ineq}
  Maintaining \settingref{setting:7-1} and using the notation introduced above,
  assume that $r_* Ω^n_{\wtilde{X}} ↪ j_* Ω^n_{X_{\reg}}$ is an isomorphism.
  Then one has
  \[
    \dim \Supp ℋ^j \gr_p^F \DR(\Mmod_X) ≤ -(j+p+2)
  \]
  for all integers $p, j ∈ ℤ$ with $p+j ≥ -n + 1$.
\end{prop}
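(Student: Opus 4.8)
The plan is to show that the intersection–complex Hodge module $M_X$ satisfies the two hypotheses of \settingref{set:5.8} and then to invoke \theoremref{thm:reflexive-pure} verbatim. Assumption~\ref{en:pure-1}, i.e. $F_{c-1}\Mmod_X = 0$, is built into the construction of $M_X$, so the real work is to derive Assumption~\ref{en:pure-2} — the bound $\dim \Supp ℋ^j \gr_0^F \DR(\Mmod_X) ≤ -(j+2)$ for $j ≥ -n+1$ — from the hypothesis that $n$-forms extend. Since $X$ need not be irreducible, I would first pass to the decomposition of $M_X$ by strict support, writing $M_X ≅ \bigoplus_\alpha M_\alpha$ with $M_\alpha ∈ \HM_{X_\alpha}(Y,n)$ over the irreducible components $X_\alpha$ of $X$ (all of dimension $n$ and codimension $c$); because $\gr_0^F \DR(\argbl)$ commutes with finite direct sums, it suffices to verify both assumptions for $M_X$ as a whole and then read them off for each $M_\alpha$, after which \theoremref{thm:reflexive-pure} applied componentwise gives $\dim \Supp ℋ^j \gr_p^F \DR(\Mmod_X) = \max_\alpha \dim \Supp ℋ^j \gr_p^F \DR(\Mmod_\alpha) ≤ -(p+j+2)$ in the claimed range.

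To establish Assumption~\ref{en:pure-2} for $M_X$, I would argue by duality, in the spirit of \parref{sec:second-proof}. The first step is to note that, because $F_{c-1}\Mmod_X = 0$, the explicit description of the subquotients of $\DR(\Mmod_X)$ recalled in \parref{ssec:icpx} collapses $\gr_{-n}^F \DR(\Mmod_X)$ to the single term $Ω^{n+c}_Y ⊗ \gr_c^F \Mmod_X$ sitting in cohomological degree $0$; hence it is a genuine coherent sheaf, and \propositionref{prop:spidz} identifies it with $f_* Ω^n_{\wtilde{X}}$ in $\Dbcoh(𝒪_Y)$. Applying $\derR \sHom_{𝒪_Y}(\argbl, ω_Y^•)$ and invoking \corollaryref{cor:duality-pure} with $w = n$ and $p = -n$ then yields $\derR \sHom_{𝒪_Y}(f_* Ω^n_{\wtilde{X}}, ω_Y^•) ≅ \gr_0^F \DR(\Mmod_X)$. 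On the other side, the hypothesis that $r_* Ω^n_{\wtilde{X}} ↪ j_* Ω^n_{X_{\reg}}$ is an isomorphism says exactly — since $r$ is an isomorphism over $X_{\reg}$ — that sections of the coherent sheaf $f_* Ω^n_{\wtilde{X}}$ extend uniquely across $X_{\sing}$, so \propositionref{prop:ccor-ng} gives $\dim \bigl( X_{\sing} ∩ \Supp ℋ^k \gr_0^F \DR(\Mmod_X) \bigr) ≤ -(k+2)$ for every $k ∈ ℤ$.

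The last step is to remove the intersection with $X_{\sing}$ in the range $k ≥ -n+1$. For this I would use the second isomorphism of \propositionref{prop:spidz}, $\gr_0^F \DR(\Mmod_X) ≅ \derR f_* 𝒪_{\wtilde{X}} \decal{n}$, which gives $ℋ^k \gr_0^F \DR(\Mmod_X) ≅ R^{k+n} f_* 𝒪_{\wtilde{X}}$. When $k ≥ -n+1$ the index $k+n$ is at least $1$, and since $r$ is an isomorphism over $X_{\reg}$ one has $R^i r_* 𝒪_{\wtilde{X}} = 0$ away from $X_{\sing}$ for all $i ≥ 1$; thus $\Supp ℋ^k \gr_0^F \DR(\Mmod_X) ⊆ X_{\sing}$ for $k ≥ -n+1$, and the inequality above becomes $\dim \Supp ℋ^k \gr_0^F \DR(\Mmod_X) ≤ -(k+2)$, which is Assumption~\ref{en:pure-2}. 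Feeding this, together with Assumption~\ref{en:pure-1}, into \theoremref{thm:reflexive-pure} finishes the argument as described in the first paragraph.

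I expect the main obstacle to be the duality bookkeeping of the middle paragraph: one must realise that the vanishing $F_{c-1}\Mmod_X = 0$ is precisely what makes $\gr_{-n}^F \DR(\Mmod_X)$ a single sheaf — so that \propositionref{prop:ccor-ng}, which is a statement about coherent sheaves, applies on the nose rather than through its complex-valued refinement — and then to track the Grothendieck-duality isomorphism of \corollaryref{cor:duality-pure} carefully enough to turn the extension hypothesis for $n$-forms into a support bound for $\gr_0^F \DR(\Mmod_X)$. By comparison, the final upgrade from an $X_{\sing}$-intersected bound to an absolute one is essentially automatic, since the relevant cohomology sheaves are higher direct images $R^i f_* 𝒪_{\wtilde{X}}$ and are therefore supported on $X_{\sing}$ to begin with.
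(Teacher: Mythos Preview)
Your proposal is correct and follows essentially the same route as the paper: reduce to strict-support summands, verify the hypotheses of \settingref{set:5.8}, and invoke \theoremref{thm:reflexive-pure}. The only difference is cosmetic: where you use \propositionref{prop:ccor-ng} to get the bound with $X_{\sing}$ intersected in and then strip the intersection via $ℋ^k \gr_0^F \DR(\Mmod_X) ≅ R^{k+n} f_* 𝒪_{\wtilde{X}}$, the paper invokes \corollaryref{cor:ccor-ng} directly to obtain the absolute bound $\dim \Supp R^ℓ \sHom_{𝒪_Y}(f_* Ω^n_{\wtilde{X}}, ω_Y^{•}) ≤ -(ℓ+2)$ in one step (implicitly using that $f_* Ω^n_{\wtilde{X}} ≅ (i_X ◦ j)_* Ω^n_{X_{\reg}}$ already satisfies the extension property across \emph{any} $A$ of dimension $≤ n-2$, since $Ω^n_{X_{\reg}}$ is locally free on the smooth locus).
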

\begin{proof}
  After replacing the Hodge module $M_X ∈ \HM(Y, n)$ by any of the summands in
  its decomposition by strict support, and $X$ by the support of that summand,
  we may assume without loss of generality that $X$ is reduced, irreducible, and
  $n$-dimensional, and that $M_X$ has strict support $X$; in symbols,
  $M_X ∈ \HM_X(Y, w)$.  We aim to apply \theoremref{thm:reflexive-pure}.
  Recalling from \parref{ssec:icpx} that $F_{c-1} \cM_X = 0$, where
  $c = \dim Y - \dim X$, all the conditions in \theoremref{thm:reflexive-pure}
  hold in our context, provided we manage to prove the inequalities
  \[
    \dim \Supp ℋ^ℓ \gr_0^F \DR(\Mmod_X) ≤ -(ℓ+2)
  \]
  for every number $ℓ ≥ -n+1$.  But we have
  \begin{align*}
    -(ℓ+2) & ≥ \dim \Supp R^ℓ \sHom_{𝒪_Y}\bigl(f_* Ω^n_{\wtilde{X}}, ω_Y^•\bigr) && \text{by \corollaryref{cor:ccor-ng}} \\
           & = \dim \Supp R^ℓ \sHom_{𝒪_Y} \bigl( \gr_{-n}^F \DR(\Mmod_X), ω_Y^•
             \bigr) && \text{by \propositionref{prop:spidz}} \\
           & = \dim \Supp ℋ^ℓ \gr_0^F \DR(\Mmod_X) && \text{by \corollaryref{cor:duality-pure}}
  \end{align*}
  This completes the proof.
\end{proof}

%
%
\svnid{$Id: S09-proof-mixed.tex 269 2020-01-20 11:28:53Z kebekus $}

\section{Mixed Hodge modules and log differentials on the resolution}
\label{sec:pf-mixed}
\subversionInfo
\approvals{Christian & yes \\Stefan & yes}

We maintain the assumptions and notation of \settingref{setting:7-1}.  While the
direct images of $Ω^p_{\wtilde{X}}$ are described in terms of the pure Hodge
modules discussed in the previous \parref{ssec:7.pm}, the study of logarithmic
differentials requires us to look at certain mixed Hodge modules.  As with
\parref{ssec:tcomHm}, we feel that readers who are primarily interested in
extension results for (non-logarithmis) $p$-forms, \theoremref{thm:main-new} and
related results, might consider skipping this section on first reading.

\subsection{The mixed Hodge module on the complement of the exceptional divisor}
\approvals{Christian & yes \\Stefan & yes}

Recall that $X$ is a reduced complex space of pure dimension $n$, and that
$r \colon \wtilde{X} → X$ is a log resolution with exceptional divisor $E$.  We
denote by $j \colon \wtilde{X} ∖ E ↪ \wtilde{X}$ the open embedding of the
complement of the normal crossing divisor $E$.  By analogy with the argument in
\parref{ssec:tchmXt}, we consider the constant Hodge module
$ℚ_{\wtilde{X} ∖ E}^H \decal{n}$ on the complement of $E$, and its extension to
a mixed Hodge module
\[
  j_* ℚ_{\wtilde{X} ∖ E}^H \decal{n} ∈ \MHM(\wtilde{X})
\]
on $\wtilde{X}$, as discussed in \cite[Thm.~3.27]{Saito:MixedHodgeModules}.  For
the reader's convenience, we summarise its main properties, properly translated
to our convention of using left $𝒟$-modules.

\subsubsection{Perverse sheaf and filtered $𝒟$-module}
\approvals{Christian & yes \\Stefan & yes}

The underlying perverse sheaf of the mixed Hodge module
$j_* ℚ_{\wtilde{X} ∖ E}^H \decal{n}$ is, by construction,
$\derR j_* ℚ_{\wtilde{X} ∖ E} \decal{n}$.  The underlying regular holonomic
$𝒟_{\wtilde{X}}$-module is $𝒪_{\wtilde{X}}(*E)$, the sheaf of meromorphic
functions on the complex manifold $\wtilde{X}$ that are holomorphic outside the
normal crossing divisor $E$.  The Hodge filtration is given by
\[
  F_p 𝒪_{\wtilde{X}}(\ast E) =
  \begin{cases}
    0 & \text{if $p ≤ -1$,} \\
    F_p 𝒟_{\wtilde{X}} · 𝒪_{\wtilde{X}}(E) & \text{if $p ≥ 0$.}
  \end{cases}
\]
The de Rham complex of $𝒪_{\wtilde{X}}(\ast E)$ is the complex of meromorphic
differential forms
\[
  \DR \bigl( 𝒪_{\wtilde{X}}(\ast E) \bigr) = \Bigl\lbrack 𝒪_{\wtilde{X}}(\ast E)
  \xrightarrow{d} Ω¹_{\wtilde{X}}(\ast E) \xrightarrow{d} \dotsb \xrightarrow{d}
  Ω^n_{\wtilde{X}}(\ast E) \Bigr\rbrack \decal{n},
\]
placed in degrees $-n, …, 0$ as always.  Saito
\cite[Prop.~3.11]{Saito:MixedHodgeModules} has shown that this complex, with the
filtration induced by $F_p 𝒪_{\wtilde{X}}(*E)$, is filtered quasi-isomorphic to
the log de Rham complex $Ω^•_{\wtilde{X}}(\log E) \decal{n}$, with the usual
filtration by degree; in fact, the Hodge filtration on $𝒪_{\wtilde{X}}(*E)$ is
defined so as to make this true.

\begin{prop}\label{fact:log-DR}
  Maintaining \settingref{setting:7-1} and using the notation introduced above,
  the natural inclusion
  $Ω^•_{\wtilde{X}}(\log E) \decal{n} ↪ \DR \bigl( 𝒪_{\wtilde{X}}(*E) \bigr)$ is
  a filtered quasi-isomorphism.  In particular, we have canonical isomorphisms
  \[
    Ω^p_{\wtilde{X}}(\log E) \decal{n-p} %
    ≅ \gr_{-p}^F Ω^•_{\wtilde{X}}(\log E) \decal{n} %
    ≅ \gr_{-p}^F \DR \bigl( 𝒪_{\wtilde{X}}(\ast E) \bigr).  \eqno\qed
  \]
\end{prop}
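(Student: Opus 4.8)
The plan is to deduce the statement from Saito's explicit description of the Hodge filtration on $𝒪_{\wtilde{X}}(\ast E)$, reducing the actual work to a formal manipulation of filtered complexes. The heart of the matter is \cite[Prop.~3.11]{Saito:MixedHodgeModules}: Saito \emph{defines} the Hodge filtration $F_p 𝒪_{\wtilde{X}}(\ast E) = F_p 𝒟_{\wtilde{X}} · 𝒪_{\wtilde{X}}(E)$ precisely so that the inclusion of the log de Rham complex $Ω^•_{\wtilde{X}}(\log E) \decal{n} ↪ \DR\bigl(𝒪_{\wtilde{X}}(\ast E)\bigr)$ is a filtered quasi-isomorphism, when the left-hand side carries the filtration by degree (the bête filtration) and the right-hand side carries the filtration $F_\bullet \DR$ recorded in \eqref{eq:Aw}. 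Since our conventions differ from Saito's only by the standard left/right $𝒟$-module dictionary recalled in the conventions section, the first assertion is obtained by transcribing that result; the one thing I would check carefully is that the indexing and signs of \parref{ssec:sqdR} match Saito's.

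For completeness I would also recall why \cite[Prop.~3.11]{Saito:MixedHodgeModules} holds. The claim is local on $\wtilde{X}$, so one may assume $\wtilde{X}$ is a polydisc and $E = \{z_1 \cdots z_ℓ = 0\}$ a union of coordinate hyperplanes. Then $F_p 𝒪_{\wtilde{X}}(\ast E)$ is computed directly: applying the operators in $F_p 𝒟_{\wtilde{X}}$ to $𝒪_{\wtilde{X}}(E)$ raises pole orders along the $E_i$ in a controlled way, so that $\gr^F_\bullet 𝒪_{\wtilde{X}}(\ast E)$ has an explicit description in terms of pole-order filtrations. Passing to $\gr^F_\bullet$ then turns the inclusion into a collection of Koszul-type complexes which one checks, degree by degree, to be quasi-isomorphisms — this is the filtered refinement of the classical theorem of Grothendieck and Deligne that $Ω^•_{\wtilde{X}}(\log E) ↪ Ω^•_{\wtilde{X}}(\ast E)$ is already a quasi-isomorphism.

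The displayed canonical isomorphisms are then purely formal. Both complexes are bounded with coherent cohomology, and a filtered quasi-isomorphism induces, by definition, an isomorphism on each associated graded, whence $\gr^F_{-p} Ω^•_{\wtilde{X}}(\log E)\decal{n} ≅ \gr^F_{-p}\DR\bigl(𝒪_{\wtilde{X}}(\ast E)\bigr)$ in $\Dbcoh(𝒪_{\wtilde{X}})$. On the other hand, the $(-p)$-th graded piece of the degree filtration on $Ω^•_{\wtilde{X}}(\log E)\decal{n}$ retains only the single term of degree $p$, shifted by $n$, so $\gr^F_{-p} Ω^•_{\wtilde{X}}(\log E)\decal{n} ≅ Ω^p_{\wtilde{X}}(\log E)\decal{n-p}$. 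Combining the two identifications gives the claim.

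The only genuine obstacle is the local pole-order computation underlying \cite[Prop.~3.11]{Saito:MixedHodgeModules}; in practice I would cite it rather than reprove it, and devote the body of the proof to verifying the convention match with \parref{ssec:sqdR} and to the formal passage to $\gr^F_{-p}$.
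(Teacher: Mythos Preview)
Your proposal is correct and matches the paper's approach exactly: the paper does not give a proof at all, but simply states the proposition with a \qed, having noted in the preceding paragraph that the result is Saito's \cite[Prop.~3.11]{Saito:MixedHodgeModules} and that the Hodge filtration on $𝒪_{\wtilde{X}}(\ast E)$ is defined precisely so as to make this true. Your sketch of the local pole-order computation and the formal passage to $\gr^F_{-p}$ goes beyond what the paper writes, but is consistent with it.
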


\subsubsection{Weight filtration}
\label{subsubsec:weights}
\approvals{Christian & yes \\Stefan & yes}

The weight filtration on the mixed Hodge module
$j_* ℚ_{\wtilde{X} ∖ E}^H \decal{n}$ is governed by how the components of the
normal crossing divisor $E$ intersect.  Since this fact is not explicitly
mentioned in \cite[Thm.~3.27]{Saito:MixedHodgeModules}, we include a precise
statement and a proof.

\begin{prop}[Description of weight filtration]\label{prop:wf}
  Maintaining \settingref{setting:7-1} and using the notation introduced above,
  the first pieces of the weight filtration on the mixed Hodge module
  $j_* ℚ_{\wtilde{X} ∖ E}^H \decal{n}$ of the filtrations are given by
  \[
    W_{n-1} \: j_* ℚ_{\wtilde{X} ∖ E}^H \decal{n} = 0
    \qquad \text{and} \qquad
    W_n \: j_* ℚ_{\wtilde{X} ∖ E}^H \decal{n} ≅
    ℚ_{\wtilde{X}}^H \decal{n}.
  \]
  Likewise, for $ℓ ≥ 1$, the Hodge module
  $\gr_{n+ℓ}^W j_* ℚ_{\wtilde{X} ∖ E}^H \decal{n} ∈ \HM(\wtilde{X}, n+ℓ)$ is
  isomorphic to the direct sum, over all subsets $J ⊆ I$ of size $ℓ$, of the
  Hodge modules
  \[
    ℚ_{E_J}^H(-ℓ) \decal{n-ℓ} ∈ \HM(E_J, n+ℓ),
  \]
  pushed forward from the complex submanifold $E_J := \bigcap_{i ∈ J} E_i$ into
  $\wtilde{X}$.
\end{prop}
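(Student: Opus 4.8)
The statement depends only on the complex manifold $\wtilde{X}$ and the normal crossing divisor $E\subseteq\wtilde{X}$ (the resolution $r$ plays no role), so I would prove the following slightly more general assertion: for every normal crossing divisor $E=\bigcup_{i\in I}E_i$ on a complex manifold $Z$ of dimension $n$, the mixed Hodge module $j_*\bQ^H_{Z\setminus E}\decal{n}$ has the weight filtration described in the proposition, with $E_J:=\bigcap_{i\in J}E_i$ and $a_J\colon E_J\into Z$ the (closed) inclusions. The plan is induction on $m:=\#I$, reducing at each step to the case of a single smooth divisor. Two standard inputs are used. First, if $U\subseteq Z$ is the complement of a normal crossing divisor then $\derR(U\into Z)_*$ sends the shifted constant Hodge module $\bQ^H_U\decal{\dim Z}$ to an object concentrated in perverse degree $0$, i.e.\ to a single mixed Hodge module; this is the fact that underlies Saito's construction of $j_*$ across one component at a time, and in particular it shows that $j_*\bQ^H_{Z\setminus E}\decal{n}$ really is a single object of $\MHM(Z)$. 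Second, a finite increasing filtration of $M\in\MHM(Z)$ by sub-mixed-Hodge-modules whose graded pieces are pure of strictly increasing weights is automatically the weight filtration of $M$, because $\Hom_{\MHM}$ vanishes from an object of weights $\le a$ to one of weights $>a$.

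For $m=0$ the map $j$ is the identity and there is nothing to prove. For $m=1$, i.e.\ $E=E_1$ smooth, shifting by $\decal{n}$ and rotating the local cohomology triangle of the constant sheaf gives a distinguished triangle
\[
  \bQ^H_{Z}\decal{n}\longrightarrow j_*\bQ^H_{Z\setminus E_1}\decal{n}\longrightarrow (a_1)_*\bQ^H_{E_1}(-1)\decal{n-1}\xrightarrow{\;+1\;},
\]
all three terms of which are perverse sheaves; being a triangle of morphisms of mixed Hodge modules it is a short exact sequence in $\MHM(Z)$. The sub-object is pure of weight $n$, and the quotient is pure of weight $n+1$ (dimension $n-1$, and the Tate twist raises the weight by $2$), so by the uniqueness principle this is the weight filtration — which is the asserted statement for $m=1$.

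For the inductive step, assume $m\ge2$ and that the claim holds for normal crossing divisors with fewer than $m$ components (on any complex manifold). Write $E=E_1\cup E'$ with $E'=\bigcup_{i\ge2}E_i$, put $V:=Z\setminus E'$ and $D:=E_1\cap V$ (a smooth divisor in $V$, being open in $E_1$), so that $Z\setminus E=V\setminus D$, and factor $j$ as $Z\setminus E\xrightarrow{\,j_1\,}V\xrightarrow{\,b\,}Z$. By the smooth-divisor case, $(j_1)_*\bQ^H_{Z\setminus E}\decal{n}$ is an extension of $(a_D)_*\bQ^H_D(-1)\decal{n-1}$ by $\bQ^H_V\decal{n}$. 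Applying $\derR b_*$, its value on the sub-object is $(Z\setminus E'\into Z)_*\bQ^H_{Z\setminus E'}\decal{n}$, its value on the quotient is $(c_{E_1})_*e_*\bQ^H_D(-1)\decal{n-1}$ — where $c_{E_1}\colon E_1\into Z$ is the closed embedding and $e\colon D\into E_1$ is the complement of the normal crossing divisor $\bigcup_{i\ge2}(E_1\cap E_i)$ on $E_1$ — and its value on the middle term is $j_*\bQ^H_{Z\setminus E}\decal{n}$; each of these is a single mixed Hodge module by the first input above (together with exactness of pushforward along closed embeddings). Hence the long exact sequence collapses to a short exact sequence $0\to N_1\to j_*\bQ^H_{Z\setminus E}\decal{n}\to N_2\to0$ in $\MHM(Z)$. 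Now the inductive hypothesis computes $\gr^W_\bullet N_1$ (the claim on $Z$ for the $(m-1)$-component divisor $E'$) and $\gr^W_\bullet N_2$ (the claim on the $(n-1)$-dimensional manifold $E_1$ for $\bigcup_{i\ge2}(E_1\cap E_i)$, then twisted by $(-1)$ and pushed forward by the weight-preserving closed embedding $c_{E_1}$). Using $E_1\cap E_{J'}=E_{\{1\}\cup J'}$, one gets $\gr^W_k N_1=\gr^W_k N_2=0$ for $k<n$ except $\gr^W_n N_1=\bQ^H_Z\decal{n}$, and for $\ell\ge1$
\[
  \gr^W_{n+\ell}N_1=\bigoplus_{\#J=\ell,\,1\notin J}(a_J)_*\bQ^H_{E_J}(-\ell)\decal{n-\ell},\qquad \gr^W_{n+\ell}N_2=\bigoplus_{\#J=\ell,\,1\in J}(a_J)_*\bQ^H_{E_J}(-\ell)\decal{n-\ell}.
\]
Since morphisms of mixed Hodge modules are strictly compatible with $W$, the functor $\gr^W_k$ takes the short exact sequence to a short exact sequence, which splits because $\HM(Z,k)$ is semisimple; hence $\gr^W_k\bigl(j_*\bQ^H_{Z\setminus E}\decal{n}\bigr)\cong\gr^W_kN_1\oplus\gr^W_kN_2$, while strictness also gives $W_{n-1}=0$ and $W_n=\bQ^H_Z\decal{n}$. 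As every $J\subseteq I$ of size $\ell$ either contains $1$ or avoids it, this is exactly the claimed description, and the induction closes.

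The one point that needs care is the claim that $\derR b_*$ keeps all three terms of the short exact sequence concentrated in perverse degree $0$, so that pushing the sequence forward preserves exactness — this is the perversity of $\derR(\text{inclusion})_*\bQ^H[\dim]$ for complements of normal crossing divisors, the same fact that makes Saito's one-component-at-a-time construction of $j_*$ work. The rest is bookkeeping of the index subsets and of Tate twists against weight shifts, together with the elementary comparison of a two-step filtration with strictly increasing pure graded pieces to the weight filtration.
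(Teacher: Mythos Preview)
Your proof is correct, but it follows a genuinely different route from the paper. The paper explicitly acknowledges two possible strategies: factoring $j_*$ as a composition of open embeddings over the components of $E$ (which is what you do), or using Saito's computation of the nearby cycles functor in the normal crossing case (which is what the paper does). The paper reduces to a polydisk with $E=\{x_1\cdots x_r=0\}$, invokes local uniqueness of the weight filtration to justify computing only the underlying $\mathscr{D}$-modules, and then reads off the answer from Saito's formula $\gr_{n+\ell}^W j_* \cong P_N \gr_{n+\ell-2}^W \psi_{g,1}(-1)$ together with the explicit description of the primitive parts as direct sums of the modules $M(\mu,J)$. Your approach is more elementary in that it avoids the nearby cycles machinery entirely, relying only on the Gysin short exact sequence for a single smooth divisor and on strictness/semisimplicity of the weight filtration; the paper's approach is more direct in that it produces the answer in one step rather than by induction, and it ties the result explicitly to Saito's structural theorems. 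Both identify the same Tate twists by the same bookkeeping, and the perversity input you flag (that $\derR b_*$ of the constant Hodge module across an NCD complement stays in degree~$0$) is precisely the content of \cite[Thm.~3.27]{Saito:MixedHodgeModules}, which the paper also cites.
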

\begin{proof}
  One possibility is to factor $j_*$ as a composition of open embeddings over
  the irreducible components of the simple normal crossing divisor $E$, as in
  \cite[Thm.~3.27]{Saito:MixedHodgeModules}.  Here, we explain a different
  argument, based on Saito's computation of the nearby cycles functor in the
  normal crossing case \cite[Thm.~3.3]{Saito:MixedHodgeModules}.
  
  To begin with, we observe that the weight filtration on a graded-polarisable
  mixed Hodge module is, even locally, unique: the reason is that there are no
  nontrivial morphisms between polarisable Hodge modules of different weights.
  This reduces the problem to the case where $\wtilde{X}$ is a polydisk, say
  with coordinates $x_1, …, x_n$, and where $E$ is the divisor
  $g = x_1⋯x_r = 0$.  Moreover, it is enough to prove the statement for the
  underlying $𝒟$-modules.  Indeed, by \cite[Thm.~3.21]{Saito:HodgeModules},
  every polarisable Hodge module on $\wtilde{X}$, whose underlying $𝒟$-module is
  the direct image of $𝒪_{E_J}$, comes from a polarisable variation of Hodge
  structure on $E_J$, hence must be isomorphic to the push forward of
  $ℚ_{E_J}^H(k)$ for some $k ∈ ℤ$.  The Tate twist is then determined by the
  weight, because $n + ℓ = \dim E_J + k$.
  
  After embedding $\wtilde{X}$ into $\wtilde{X}⨯ℂ$, via the graph of
  $g = x_1⋯x_r$, we have, according to
  \cite[(2.11.10)]{Saito:MixedHodgeModules}, that
  \[
    \gr_{n+ℓ}^W j_* ℚ_{\wtilde{X} ∖ E}^H \decal{n} ≅
    \begin{cases}
      0 & \text{if $ℓ < 0$,} \\
      ℚ_{\wtilde{X}}^H \decal{n} & \text{if $ℓ = 0$,} \\
      P_N \gr_{n + ℓ - 2}^W ψ_{g,1} ℚ_{\wtilde{X}}^H \decal{n}(-1)
      & \text{if $ℓ > 0$,}
    \end{cases}
  \]
  where $ψ_{g,1}$ denotes the nearby cycles functor (with respect to the
  coordinate function $t$ on $\wtilde{X} ⨯ ℂ$).  In our normal crossing setting,
  the nearby cycles functor is computed explicitly in
  \cite[Thm.~3.3]{Saito:MixedHodgeModules}.  In the notation introduced in
  \cite[§3.4]{Saito:MixedHodgeModules}, the right $𝒟_{\wtilde{X}}$-module
  associated to $𝒪_{\wtilde{X}}$ is isomorphic to $M(μ, \varnothing)$, where
  $μ = (-1, …, -1) ∈ ℤ^n$.  By \cite[(3.5.4)]{Saito:MixedHodgeModules}, the
  right $𝒟_{\wtilde{X}}$-module underlying
  $P_N \gr_{n + ℓ - 2}^W ψ_{g,1} ℚ_{\wtilde{X}}^H \decal{n}(-1)$ is therefore
  isomorphic to the direct sum of $M(μ, J)$, where $J ⊆ \{1, …, r\}$ runs over
  all subsets of size $ℓ$.  But $M(μ, J)$ is exactly the right
  $𝒟_{\wtilde{X}}$-module associated to the push forward of $𝒪_{E_J}$, and so we
  get the desired result.
\end{proof}

\subsection{Push forward to $Y$}
\approvals{Christian & yes \\Stefan & yes}

Recall that $f \colon \wtilde{X} → Y$ is the projective holomorphic mapping
obtained by composing our resolution of singularities $r \colon \wtilde{X} → X$
with the closed embedding $i_X \colon X ↪ Y$.  We now define a family of mixed
Hodge modules $N_ℓ ∈ \MHM(Y)$, indexed by $ℓ ∈ ℤ$, by setting
\[
  N_ℓ := H^ℓ f_* \Bigl( j_* ℚ_{\wtilde{X} ∖ E}^H \decal{n} \Bigr).
\]
Note that each $N_ℓ$ is again a graded-polarisable mixed Hodge module on $Y$,
due to the fact that $f$ is a projective morphism (see
\theoremref{thm:direct-image}).  Clearly, $\Supp N_0 = X$, and
$\Supp N_ℓ ⊆ X_{\sing}$ for $ℓ ≠ 0$.

\begin{lem}\label{lem:Nl}
  Maintaining \settingref{setting:7-1} and using the notation introduced above,
  we have $N_ℓ = 0$ for $ℓ ≤ -1$.  The mixed Hodge module $N_0$ has no
  nontrivial subobjects whose support is contained in $X_{\sing}$.
\end{lem}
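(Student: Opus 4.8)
The plan is to deduce both statements from properties of the underlying perverse sheaf $\rat N_ℓ$, using that Saito's direct image is compatible with $\derR f_*$ on underlying perverse sheaves together with standard $t$-exactness facts. First I would record the following factorisation. Since $r$ is a strong log resolution, $\wtilde{X} ∖ E = r^{-1}(X_{\reg})$, and $r$ restricts there to an isomorphism $ρ \colon \wtilde{X} ∖ E \xrightarrow{\sim} X_{\reg}$; with $j_X \colon X_{\reg} ↪ X$ the inclusion of the regular locus this gives $f ◦ j = i_X ◦ j_X ◦ ρ$, hence
\[
  \rat N_ℓ = {}^p ℋ^ℓ\Bigl( \derR f_* \derR j_* ℚ_{\wtilde{X} ∖ E}\decal{n} \Bigr)
  ≅ i_{X*}\, {}^p ℋ^ℓ\Bigl( \derR j_{X*}\, ℚ_{X_{\reg}}\decal{n} \Bigr),
\]
where I have used $\derR f_* ◦ \derR j_* = \derR(f◦j)_*$, the identity $ρ_* ℚ_{\wtilde{X} ∖ E}\decal{n} ≅ ℚ_{X_{\reg}}\decal{n}$, and the $t$-exactness of $i_{X*}$ for the perverse $t$-structures (push-forward along a closed embedding).

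For the vanishing $N_ℓ = 0$ when $ℓ ≤ -1$, I would argue that $ℚ_{X_{\reg}}\decal{n}$ is a perverse sheaf on $X_{\reg}$ (which is smooth of pure dimension $n$), and that $\derR j_{X*}$, being right adjoint to the exact functor $j_X^{-1}$, is left $t$-exact. Therefore ${}^p ℋ^ℓ\bigl( \derR j_{X*} ℚ_{X_{\reg}}\decal{n} \bigr) = 0$ for $ℓ < 0$, so $\rat N_ℓ = 0$; since a mixed Hodge module with vanishing underlying perverse sheaf is zero, this gives $N_ℓ = 0$.

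For the second statement, put $P := {}^p ℋ^0\bigl( \derR j_{X*} ℚ_{X_{\reg}}\decal{n} \bigr)$, so that $\rat N_0 ≅ i_{X*}P$. Suppose $N' ⊆ N_0$ is a nonzero subobject in $\MHM(Y)$ with $\Supp N' ⊆ X_{\sing}$. Then $\rat N'$ is a subobject of $i_{X*}P$ supported on $X_{\sing}$, so by full faithfulness and exactness of $i_{X*}$ it has the form $i_{X*}K$ for a subobject $K ⊆ P$ in the category of perverse sheaves on $X$ with $\Supp K ⊆ X_{\sing}$. Now $K$ is perverse and $\derR j_{X*} ℚ_{X_{\reg}}\decal{n}$ lies in non-negative perverse degrees, so $\Hom(K, P) = \Hom\bigl( K, \derR j_{X*}\, ℚ_{X_{\reg}}\decal{n} \bigr) = \Hom\bigl( j_X^{-1}K, \, ℚ_{X_{\reg}}\decal{n} \bigr)$ by adjunction, and $j_X^{-1}K = 0$ since $\Supp K$ is disjoint from $X_{\reg}$. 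Hence the inclusion $K ↪ P$ is zero, so $K = 0$ and $N' = 0$ — contradiction. Thus $N_0$ has no nontrivial subobject supported on $X_{\sing}$.

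I expect the whole argument to be essentially bookkeeping with the perverse $t$-structure; the only points requiring care are the compatibility $\rat(H^ℓ f_* -) = {}^p ℋ^ℓ(\derR f_* \rat(-))$ for the projective morphism $f$ and the left $t$-exactness of $\derR j_{X*}$. An alternative proof of the second statement would use weights: by the description of the weight filtration on $j_* ℚ^H_{\wtilde{X} ∖ E}\decal{n}$ that module has weights $≥ n$, so $N_0$ has weights $≥ n$, and one identifies $W_n N_0$ with a direct summand of $H^0 f_* ℚ^H_{\wtilde{X}}\decal{n}$ containing the intersection-complex Hodge module with strict support $X$; but the perverse-sheaf argument above is shorter and cleaner.
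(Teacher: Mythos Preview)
Your proof is correct and follows essentially the same approach as the paper's: both reduce to the underlying perverse sheaves, identify $\rat N_ℓ$ with ${}^p ℋ^ℓ\bigl(\derR j_{X*}\,ℚ_{X_{\reg}}\decal{n}\bigr)$ (up to the closed push-forward $i_{X*}$), and then use the adjunction $\Hom(K,\derR j_{X*}\,ℚ_{X_{\reg}}\decal{n}) ≅ \Hom(j_X^{-1}K,\,ℚ_{X_{\reg}}\decal{n})$ to kill anything supported on $X_{\sing}$. Your phrasing of the first vanishing via left $t$-exactness of $\derR j_{X*}$ is a slightly cleaner packaging of the same adjunction the paper invokes directly.
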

\begin{proof}
  It suffices to prove this for the underlying perverse sheaves $\rat N_ℓ$.  By
  construction, $\rat N_ℓ$ is the $ℓ$-th perverse cohomology sheaf of the
  constructible complex
  \[
    \derR f_* \bigl( j_* ℚ_{\wtilde{X} ∖ E} \decal{n} \bigr) %
    ≅ \derR j_* ℚ_{X_{\reg}} \decal{n}.
  \]
  Now, if $K ∈ \Dbc(ℚ_X)$ is any constructible complex, then
  \[
    \Hom_{\Dbc(ℚ_X)} \Bigl( K,\, \derR j_* ℚ_{X_{\reg}} \decal{n} \Bigr) %
    ≅ \Hom_{\Dbc(ℚ_{X_{\reg}})} \Bigl( j^{-1} K,\, ℚ_{X_{\reg}} \decal{n}
    \Bigr),
  \]
  and the right-hand side vanishes if $\Supp K ⊆ X_{\sing}$.  The first
  assertion of \lemmaref{lem:Nl} thus follows by taking
  $K = \rat N_ℓ \decal{-ℓ}$ for $ℓ ≤ -1$.  Once it is known that $N_ℓ = 0$ for
  $ℓ ≤ -1$, the second assertion follows by taking $K$ to be any subobject of
  $\rat N_0$.
\end{proof}

Each mixed Hodge module $N_ℓ$ has weight $≥ n+ℓ$, in the following sense.

\begin{lem}\label{lem:Nell-weight}
  Maintaining \settingref{setting:7-1} and using the notation introduced above,
  we have $W_{n+ℓ-1} N_ℓ = 0$.  The module $W_{n+ℓ} N_ℓ$ is a quotient of
  $H^ℓ f_* ℚ_{\wtilde{X}}^H \decal{n}$.
\end{lem}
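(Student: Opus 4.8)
The plan is to deduce both assertions from Saito's weight spectral sequence for the projective morphism $f$ (see \theoremref{thm:fl-mixed}), together with the explicit description of the bottom of the weight filtration on $M := j_* \bQ_{\wtilde{X} ∖ E}^H \decal{n}$ provided by \propositionref{prop:wf}. The single fact I would isolate first is the general \emph{weight-shift principle}: if $P ∈ \MHM(\wtilde{X})$ satisfies $W_{w-1} P = 0$, then $W_{w+ℓ-1} H^ℓ f_* P = 0$ for every $ℓ ∈ ℤ$. This follows at once from \theoremref{thm:fl-mixed}. Indeed, in the weight spectral sequence $E_1^{p,q} = H^{p+q} f_* \gr_{-p}^W P ⇒ H^{p+q} f_* P$, each term $E_1^{p,q}$ is a polarisable Hodge module of weight $q$, and it vanishes whenever $\gr_{-p}^W P = 0$, hence whenever $-p ≤ w-1$. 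Since the sequence degenerates at $E_2$ and $\gr_q^W H^ℓ f_* P ≅ E_2^{ℓ-q,\,q}$, which is a subquotient of $E_1^{ℓ-q,\,q}$, we conclude that $\gr_q^W H^ℓ f_* P = 0$ as soon as $q ≤ w+ℓ-1$, which is the claim. Applying this to $P = M$, for which $W_{n-1} M = 0$ by \propositionref{prop:wf}, gives immediately $W_{n+ℓ-1} N_ℓ = 0$, the first assertion.

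For the second assertion I would exploit the short exact sequence of mixed Hodge modules on $\wtilde{X}$
\[
  0 → W_n M → M → M'' → 0, \qquad M'' := M / W_n M,
\]
where $W_n M ≅ \bQ_{\wtilde{X}}^H \decal{n}$ by \propositionref{prop:wf} and $W_n M'' = 0$ by construction. Applying the direct image and taking the associated long exact sequence of higher direct images in $\MHM(Y)$ yields an exact sequence
\[
  H^ℓ f_* \bQ_{\wtilde{X}}^H \decal{n} \xrightarrow{α} N_ℓ \xrightarrow{β} H^ℓ f_* M''.
\]
The weight-shift principle applied to $M''$ (with $w = n+1$) gives $W_{n+ℓ} H^ℓ f_* M'' = 0$; since $β$ is a morphism of mixed Hodge modules it is compatible with weights, so $β(W_{n+ℓ} N_ℓ) = 0$, hence $W_{n+ℓ} N_ℓ ⊆ \ker β = \img α$. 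In the other direction, $H^ℓ f_* \bQ_{\wtilde{X}}^H \decal{n} ∈ \HM(Y, n+ℓ)$ is \emph{pure} of weight $n+ℓ$ by \theoremref{thm:direct-image}, and hence so is its quotient $\img α$ (quotients of polarisable Hodge modules of a given weight are again pure of that weight, by semisimplicity), so $W_{n+ℓ}(\img α) = \img α$; functoriality of the weight filtration applied to the inclusion $\img α ↪ N_ℓ$ then forces $\img α ⊆ W_{n+ℓ} N_ℓ$. The two inclusions give $W_{n+ℓ} N_ℓ = \img α$, which is by definition a quotient of $H^ℓ f_* \bQ_{\wtilde{X}}^H \decal{n}$.

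I do not expect a genuine obstacle here: once \propositionref{prop:wf} and \theoremref{thm:fl-mixed} are available the whole argument is formal. The only points that demand a little care are the bookkeeping with the indices in the weight spectral sequence when proving the weight-shift principle (keeping straight that $\gr_q^W H^{p+q} f_* P ≅ E_2^{p,q}$), and the small semisimplicity input that a quotient of a polarisable pure Hodge module of weight $w$ is again pure of weight $w$ — both entirely standard within Saito's formalism. One could also bypass the short exact sequence and read the second assertion directly off the spectral sequence, since $E_1^{-n,\,n+ℓ} = H^ℓ f_* \gr_n^W M = H^ℓ f_* \bQ_{\wtilde{X}}^H \decal{n}$ and $W_{n+ℓ} N_ℓ = \gr_{n+ℓ}^W N_ℓ = E_2^{-n,\,n+ℓ}$, again invoking semisimplicity to upgrade the resulting subquotient to a quotient.
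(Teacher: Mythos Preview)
Your proof is correct and, for the first assertion, identical in spirit to the paper's: both use the weight spectral sequence of \theoremref{thm:fl-mixed} together with $W_{n-1} M = 0$ to kill $E_1^{p,q}$ for $p ≥ -n+1$ and hence $\gr_q^W N_ℓ$ for $q ≤ n+ℓ-1$.

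For the second assertion the paper is more direct: since $E_1^{-n+1,\,n+ℓ} = 0$, the term $E_2^{-n,\,n+ℓ}$ is automatically a \emph{quotient} (not merely a subquotient) of $E_1^{-n,\,n+ℓ} = H^ℓ f_* \bQ_{\wtilde{X}}^H \decal{n}$, and since $W_{n+ℓ-1} N_ℓ = 0$ one has $W_{n+ℓ} N_ℓ = \gr_{n+ℓ}^W N_ℓ ≅ E_2^{-n,\,n+ℓ}$. Your route via the short exact sequence $0 → W_n M → M → M'' → 0$ and the long exact sequence of higher direct images is a perfectly valid alternative that makes the weight-shift principle reusable; the trade-off is that it invokes purity and semisimplicity where the paper needs only the edge position in the spectral sequence. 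Your closing remark that one must ``invoke semisimplicity to upgrade the resulting subquotient to a quotient'' in the direct spectral-sequence reading is unnecessary: the outgoing differential from $E_1^{-n,\,n+ℓ}$ already has trivial target, so the quotient structure is free.
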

\begin{proof}
  This is proved in \cite[Prop.~2.26]{Saito:MixedHodgeModules}.  For the
  convenience of the reader, we explain how to deduce it from the degeneration
  of the weight spectral sequence in \theoremref{thm:fl-mixed}.  Since $f$ is a
  projective morphism, the weight spectral sequence
  \[
    E_1^{p,q} = H^{p+q} f_* \gr_{-p}^W j_* ℚ_{\wtilde{X} ∖ E}^H \decal{n}
    \Longrightarrow N_{p+q}
  \]
  degenerates at $E_2$, and the induced filtration on $N_ℓ$ is the weight
  filtration $W_• N_ℓ$.  More precisely, $E_1^{p,q}$ and $E_2^{p,q}$ are Hodge
  modules of weight $q$, and
  \[
    \gr_q^W N_{p+q} ≅ E_2^{p,q}.
  \]
  As $j_* ℚ_{\wtilde{X} ∖ E}^H \decal{n}$ has weight $≥ n$, we have
  $E_1^{p,q} = 0$ for $p ≥ -n+1$, whence $\gr_w^W N_ℓ = 0$ for $w ≤ n+ℓ-1$.
  This also shows that $W_{n+ℓ} N_ℓ$ is a quotient of
  $E_1^{-n,n+ℓ} = H^ℓ f_* ℚ_{\wtilde{X}}^H \decal{n}$.
\end{proof}

\subsection{Relation with logarithmic differentials on the resolution}
\approvals{Christian & yes \\Stefan & yes}

Now we can relate the coherent $𝒪_Y$-module $f_* Ω^p_{\wtilde{X}}(\log E)$ to
the de Rham complex of the mixed Hodge module $N_0$.  In line with the notation
used before, write $(\Nmod_ℓ, F_• \Nmod_ℓ)$ for the filtered regular holonomic
$𝒟_Y$-module underlying the mixed Hodge module $N_ℓ$.

\begin{prop}\label{prop:Xwrx}
  Maintaining \settingref{setting:7-1} and using the notation introduced above,
  we have $f_* Ω^p_{\wtilde{X}}(\log E) ≅ ℋ^{p-n} \gr_{-p}^F \DR(\Nmod_0)$ for
  every $p ∈ ℤ$.
\end{prop}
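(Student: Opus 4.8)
The plan is to follow the proof of \propositionref{prop:7-1-ng} as closely as possible, with the pure Hodge module $M_X$ and its decomposition by strict support replaced by the mixed Hodge module $j_* ℚ_{\wtilde{X} ∖ E}^H \decal{n}$ and its direct image $f_+$. First I would apply \propositionref{prop:fl-grDR} (in the form stated in the Note following it) to the filtered $𝒟_{\wtilde{X}}$-module $\bigl( 𝒪_{\wtilde{X}}(\ast E), F_• 𝒪_{\wtilde{X}}(\ast E) \bigr)$ and the projective morphism $f \colon \wtilde{X} → Y$, obtaining
\[
  \derR f_* \gr_{-p}^F \DR\bigl( 𝒪_{\wtilde{X}}(\ast E) \bigr) ≅ \gr_{-p}^F \DR\bigl( f_+(R_F 𝒪_{\wtilde{X}}(\ast E)) \bigr)
\]
in $\Dbcoh(𝒪_Y)$. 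Combining this with the filtered quasi-isomorphism $\gr_{-p}^F \DR\bigl( 𝒪_{\wtilde{X}}(\ast E) \bigr) ≅ Ω^p_{\wtilde{X}}(\log E) \decal{n-p}$ of \propositionref{fact:log-DR}, and taking $ℋ⁰ \derR f_*$, this reduces the claim to the identification
\[
  ℋ^{p-n} \gr_{-p}^F \DR\bigl( f_+(R_F 𝒪_{\wtilde{X}}(\ast E)) \bigr) ≅ ℋ^{p-n} \gr_{-p}^F \DR(\Nmod_0).
\]

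By the strictness part of \theoremref{thm:fl-mixed}, the complex $f_+(R_F 𝒪_{\wtilde{X}}(\ast E))$ is strict, with cohomology module in degree $ℓ$ equal to the Rees module $R_F \Nmod_ℓ$ of the filtered $𝒟_Y$-module underlying $N_ℓ$. Two facts control these pieces. First, $N_ℓ = 0$ for $ℓ ≤ -1$, by \lemmaref{lem:Nl}. Second, since the Hodge filtration on $𝒪_{\wtilde{X}}(\ast E)$ satisfies $F_{-1} 𝒪_{\wtilde{X}}(\ast E) = 0$, \propositionref{prop:Fc} (with $m = 0$ and $c = \dim Y - \dim \wtilde{X} = \codim_Y X$) gives $F_{c-1} \Nmod_ℓ = 0$ for every $ℓ$. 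Plugging the vanishing $\gr_j^F \Nmod_ℓ = 0$ for $j ≤ c-1$ into the formula \eqref{eq:Ax} for $\gr_{-p}^F \DR(\Nmod_ℓ)$ on the $(n+c)$-dimensional manifold $Y$, I get that all its terms in cohomological positions $< p-n$ vanish; hence $\gr_{-p}^F \DR(\Nmod_ℓ)$ is concentrated in degrees $≥ p-n$ for every $ℓ$.

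To finish, since $\Nmod_ℓ = 0$ for $ℓ < 0$, the canonical-truncation triangle in $\Dbcoh G(R_F 𝒟_Y)$ reads $R_F \Nmod_0 → f_+(R_F 𝒪_{\wtilde{X}}(\ast E)) → τ_{≥ 1} f_+(R_F 𝒪_{\wtilde{X}}(\ast E)) \xrightarrow{+1}$, and applying the exact functor $\gr_{-p}^F \DR$ yields a distinguished triangle in $\Dbcoh(𝒪_Y)$. The remainder term $τ_{≥ 1} f_+(R_F 𝒪_{\wtilde{X}}(\ast E))$ has cohomology modules $R_F \Nmod_ℓ$ only for $ℓ ≥ 1$, so the canonical-truncation spectral sequence (equivalently, induction on the number of nonzero cohomology modules) together with the degree bound of the previous paragraph shows that $ℋ^d \gr_{-p}^F \DR\bigl( τ_{≥ 1} f_+(R_F 𝒪_{\wtilde{X}}(\ast E)) \bigr) = 0$ for all $d ≤ p-n$. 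The long exact sequence of the triangle then forces $ℋ^{p-n} \gr_{-p}^F \DR(\Nmod_0) ≅ ℋ^{p-n} \gr_{-p}^F \DR\bigl( f_+(R_F 𝒪_{\wtilde{X}}(\ast E)) \bigr)$, which is exactly what is needed. The main obstacle, compared with the pure case, is that the Decomposition Theorem is not available for mixed Hodge modules, so one cannot split off the $N_0$-summand directly; instead the relevant cohomology sheaf must be extracted by the truncation argument above, and the step that makes this work is the uniform lower bound $p-n$ on the cohomological degrees of $\gr_{-p}^F \DR(\Nmod_ℓ)$ coming from \propositionref{prop:Fc}. Care is needed only in the bookkeeping of these degrees and in checking that the two boundary maps in the long exact sequence vanish.
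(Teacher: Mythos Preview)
Your proof is correct and follows essentially the same approach as the paper. The paper packages the same two vanishing inputs ($F_{c-1}\Nmod_\ell = 0$ from \propositionref{prop:Fc} and $\Nmod_\ell = 0$ for $\ell \le -1$ from \lemmaref{lem:Nl}) into the spectral sequence $E_2^{a,b} = ℋ^a \gr_{-p}^F \DR(\Nmod_b) \Rightarrow ℋ^{a+b} \gr_{-p}^F \DR\bigl(f_+(R_F 𝒪_{\wtilde{X}}(\ast E))\bigr)$ and reads off that the only surviving term on the line $a+b = p-n$ is $E_2^{p-n,0}$; your truncation-triangle argument is just an unwinding of this spectral sequence, with identical content.
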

\begin{proof}
  Fix an integer $p ∈ ℤ$.  \propositionref{fact:log-DR}, together with
  \propositionref{prop:fl-grDR} about the compatibility of the de Rham complex
  with direct images, implies that
  \[
    \derR f_* Ω^p_{\wtilde{X}}(\log E) \decal{n-p} ≅ \gr_{-p}^F \DR \Bigl( f_{+}
    \bigl( R_F 𝒪_{\wtilde{X}}(\ast E) \bigr) \Bigr).
  \]
  Because the complex computing the direct image is strict by
  \theoremref{thm:fl-mixed}, we have a convergent spectral sequence
  \[
    E_2^{a,b} = ℋ^a \gr_{-p}^F \DR(\Nmod_b) \Longrightarrow ℋ^{a+b} \gr_{-p}^F
    \DR \Bigl( f_{+} \bigl( R_F 𝒪_{\wtilde{X}}(\ast E) \bigr) \Bigr),
  \]
  and we are interested in the terms with $a+b = p-n$.  \propositionref{prop:Fc}
  guarantees that $F_{c-1} \Nmod_ℓ = 0$ for every $ℓ ∈ ℤ$, whence
  $E_2^{a,b} = 0$ for $a ≤ p-n-1$.  Also, $\Nmod_ℓ = 0$ for $ℓ ≤ -1$ by
  \lemmaref{lem:Nl}, and so $E_2^{a,b} = 0$ for $b ≤ -1$.  The spectral sequence
  therefore gives us the desired isomorphism.
\end{proof}

The analysis of the higher direct images quickly gets complicated.  For that
reason, we shall only consider what happens in the case of $1$-forms with log
poles.  Here, one has the following simple relation between
$\derR f_* Ω¹_{\wtilde{X}}(\log E)$ and the complex $\gr_{-1}^F \DR(\Nmod_0)$.

\begin{prop}\label{prop:1-forms-N0}
  Maintaining \settingref{setting:7-1} and using the notation introduced above,
  we have a canonical isomorphism
  \[
    \derR f_* Ω¹_{\wtilde{X}}(\log E) \decal {n-1} %
    ≅ \gr_{-1}^F \DR(\Nmod_0).
  \]
  In particular,
  $R^{n-1} f_* Ω¹_{\wtilde{X}}(\log E) ≅ ℋ⁰ \gr_{-1}^F \DR(\Nmod_0)$.
\end{prop}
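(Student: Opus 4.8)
The plan is to run, for logarithmic $1$-forms, the same argument that produced \propositionref{prop:spidz} and \propositionref{prop:7-1-ng} for holomorphic forms, with the mixed Hodge module $N_0$ now playing the role of $\Mmod_X$, and to show that the extra summands $N_ℓ$ with $ℓ \geq 1$ make no contribution to the $(-1)$-st graded piece of the de Rham complex. Combining \propositionref{fact:log-DR} with \propositionref{prop:fl-grDR} first yields
\[
  \bigl(\derR f_* Ω¹_{\wtilde{X}}(\log E)\bigr)\decal{n-1} ≅ \gr_{-1}^F \DR\Bigl( f_{+}\bigl(R_F 𝒪_{\wtilde{X}}(\ast E)\bigr)\Bigr),
\]
so the entire statement concerns the right-hand side. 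By \theoremref{thm:fl-mixed}, the complex $f_{+}(R_F 𝒪_{\wtilde{X}}(\ast E))$ is strict, with $ℋ^ℓ f_{+}(R_F 𝒪_{\wtilde{X}}(\ast E)) ≅ R_F \Nmod_ℓ$, and $\Nmod_ℓ = 0$ for $ℓ \leq -1$ by \lemmaref{lem:Nl}. Applying the exact functor $\gr_{-1}^F\DR$ to the truncation triangles of this complex for the standard t-structure --- whose graded pieces are the $R_F\Nmod_ℓ\decal{-ℓ}$ --- reduces the proposition to the single claim that $\gr_{-1}^F\DR(\Nmod_ℓ)$ is acyclic for every $ℓ \geq 1$: granting this, the triangles collapse to an isomorphism with $\gr_{-1}^F\DR(\Nmod_0)$, and the ``in particular'' follows by taking $ℋ^0$ and tracking the shift.

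For the claim I would use two facts about $N_ℓ$ with $ℓ \geq 1$: that $F_{c-1}\Nmod_ℓ = 0$, which follows from \propositionref{prop:Fc} together with $F_{-1}𝒪_{\wtilde{X}}(\ast E) = 0$; and that $W_{n+ℓ-1} N_ℓ = 0$, i.e. $N_ℓ$ has weights $\geq n+ℓ$, by \lemmaref{lem:Nell-weight}. For $ℓ \geq 2$, feeding these into \propositionref{prop:DR-acyclic} (weight bound $n+ℓ$, Hodge bound $c$, $\dim Y = n+c$) shows that $\gr_p^F\DR(\Nmod_ℓ)$ is acyclic unless $-n \leq p \leq -ℓ$, a range that excludes $p = -1$; hence $\gr_{-1}^F\DR(\Nmod_ℓ)$ is acyclic.

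The delicate point --- and the one genuine obstacle --- is $ℓ = 1$, where the crude estimate only forbids $p = -1$ once the weight is $\geq n+2$, and so does not by itself suffice. Here I would pass to the weight filtration $W_\bullet N_1$, which has $W_n N_1 = 0$, so that the only nonzero graded pieces are $\gr_w^W N_1 \in \HM(Y, w)$ with $w \geq n+1$, each inheriting $F_{c-1}(\gr_w^W\Nmod_1) = 0$; since $\gr_{-1}^F\DR$ is exact it is enough to treat these one at a time. For $w \geq n+2$, \propositionref{prop:DR-acyclic} again gives acyclicity of $\gr_{-1}^F\DR(\gr_w^W\Nmod_1)$. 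For $w = n+1$ one has $\gr_{n+1}^W N_1 = W_{n+1} N_1$, which by \lemmaref{lem:Nell-weight} is a quotient of $H^1 f_* ℚ^H_{\wtilde{X}}\decal{n} ≅ M_1$, hence --- polarisable Hodge modules of a fixed weight forming a semisimple category --- a direct summand of the pure module $M_1$; and $\gr_{-1}^F\DR(\Mmod_1)$ is acyclic, precisely as established inside the proof of \propositionref{prop:xa2} (using $F_c\Mmod_1 = 0$ together with the self-duality of the pure de Rham complex). Additivity of $\gr_{-1}^F\DR$ then gives acyclicity of $\gr_{-1}^F\DR(W_{n+1}\Nmod_1)$, and with it, via the weight filtration, acyclicity of $\gr_{-1}^F\DR(\Nmod_1)$. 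This finishes the claim and hence the proposition.
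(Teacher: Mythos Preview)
Your proof is correct and follows essentially the same approach as the paper: both reduce to the acyclicity of $\gr_{-1}^F \DR(\Nmod_ℓ)$ for $ℓ \geq 1$ (the paper's \lemmaref{lem:grF-1-acyclic}), and both establish this by passing to the weight-graded pieces and using that $\gr_{n+ℓ}^W N_ℓ$ is a quotient of $M_ℓ$, with $F_c \Mmod_ℓ = 0$. The only cosmetic differences are that you invoke \propositionref{prop:DR-acyclic} as a black box where the paper unrolls the duality computation by hand, and you phrase the reduction via truncation triangles rather than the spectral sequence the paper uses; for the bottom weight piece $w = n+1$ of $N_1$ you appeal to semisimplicity to get a direct summand of $M_1$, whereas the paper simply observes that a quotient inherits $F_c = 0$ --- both routes are fine.
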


The proof of \propositionref{prop:1-forms-N0} relies on the following lemma,
which we discuss first.

\begin{lem}\label{lem:grF-1-acyclic}
  Maintaining \settingref{setting:7-1} and using the notation introduced above,
  the complex $\gr_{-1}^F \DR(\Nmod_ℓ)$ is acyclic for every $ℓ ≠ 0$.
\end{lem}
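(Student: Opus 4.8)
The plan is to reduce to the pure case by means of the weight filtration on $N_\ell$, then to exploit the self-duality of the de Rham complex (\corollaryref{cor:duality-pure}) together with the range of the Hodge filtration provided by \propositionref{prop:Fc}; the only genuine obstacle will be a single weight-$(n+1)$ piece of $N_1$, which requires a separate, sharper estimate.

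The case $\ell \le -1$ is trivial, since $N_\ell = 0$ by \lemmaref{lem:Nl}; so assume $\ell \ge 1$. By \lemmaref{lem:Nell-weight} one has $W_{n+\ell-1} N_\ell = 0$, so the graded pieces $P := \gr^W_w N_\ell \in \HM(Y,w)$ of the weight filtration are nonzero only for $w \ge n+\ell \ge n+1$. Because the Hodge filtration of a mixed Hodge module is strict with respect to $W_\bullet$, each exact sequence $0 \to W_{w-1} N_\ell \to W_w N_\ell \to \gr^W_w N_\ell \to 0$ stays exact after applying $F_p$ and tensoring with the locally free sheaves $\Omega^j_Y$; hence the exact functor $\gr_{-1}^F \DR(\argbl)$ takes it to a short exact sequence of complexes of $\sO_Y$-modules. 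A straightforward induction up the (finite) weight filtration, using the long exact cohomology sequences, therefore reduces the lemma to proving that $\gr_{-1}^F \DR(\mathcal{P})$ is acyclic for each pure piece $P = \gr^W_w N_\ell$, $w \ge n+1$, where $(\mathcal{P}, F_\bullet \mathcal{P})$ denotes the underlying filtered $\sD_Y$-module.

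For such a $P$ I would choose a polarisation and invoke \corollaryref{cor:duality-pure}, giving $\gr_{-1}^F \DR(\mathcal{P}) \cong \derR \sHom_{\sO_Y}\bigl( \gr_{1-w}^F \DR(\mathcal{P}),\, \omega^\bullet_Y \bigr)$; so it suffices to show $\gr_{1-w}^F \DR(\mathcal{P}) = 0$. By formula \eqref{eq:Aw} this complex only involves the $\sO_Y$-modules $\gr_k^F \mathcal{P}$ with $k \le 1-w+(n+c) \le c+1-\ell$. Now \propositionref{prop:Fc}, applied to $f$ and the mixed Hodge module $j_* \bQ^H_{\wtilde{X} \setminus E} \decal{n}$ (whose underlying $\sD$-module $\sO_{\wtilde{X}}(\ast E)$ has $F_{-1} = 0$), yields $F_{c-1} \Nmod_\ell = 0$, and passing to the subquotient $P$ preserves this, so $F_{c-1} \mathcal{P} = 0$. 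Hence $\gr_k^F \mathcal{P} = 0$ for all $k \le c-1$, and the complex $\gr_{1-w}^F \DR(\mathcal{P})$ vanishes identically as soon as $c+1-\ell \le c-1$, i.e.\ in every case except $\ell = 1$ together with $w = n+1$.

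The remaining case $\ell = 1$, $w = n+1$ is the main obstacle, and it will be resolved with a better bound on the Hodge filtration of this one piece. By \lemmaref{lem:Nell-weight}, $P = \gr^W_{n+1} N_1 = W_{n+1} N_1$ is a quotient, in $\HM(Y,n+1)$, of $H^1 f_* \bQ^H_{\wtilde{X}} \decal{n} = M_1$ (notation of \parref{ssec:7.pm}), and there $F_c \Mmod_1 = 0$ (as recorded in \parref{ssec:7.pm}, e.g.\ via \lemmaref{lem:5-2}). Since morphisms of Hodge modules are strict for $F$, $F_c \mathcal{P}$ is a quotient of $F_c \Mmod_1 = 0$, hence $F_c \mathcal{P} = 0$. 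Then $\gr_{1-w}^F \DR(\mathcal{P}) = \gr_{-n}^F \DR(\mathcal{P})$ has top term $\Omega^{n+c}_Y \otimes \gr_c^F \mathcal{P} = 0$, and every other term involves $\gr_k^F \mathcal{P}$ with $k < c$, so this complex vanishes as well. This settles all cases and completes the argument.
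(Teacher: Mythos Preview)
Your proof is correct and follows essentially the same route as the paper's: reduce to the pure graded pieces of the weight filtration, apply \corollaryref{cor:duality-pure} to pass to $\gr_{1-w}^F \DR$, use the bound $F_{c-1}\Nmod_\ell = 0$ from \propositionref{prop:Fc} to kill all but one boundary case, and dispose of that case via \lemmaref{lem:Nell-weight} and the sharper bound $F_c \Mmod_\ell = 0$. The only cosmetic difference is that you isolate the single exceptional pair $(\ell,w)=(1,n+1)$, whereas the paper treats the whole family $w=n+\ell$ uniformly; your step ``$c+1-\ell\le c-1$, i.e.\ except $\ell=1$, $w=n+1$'' tacitly also uses the sharper bound $k\le 1-w+(n+c)$ to rule out $\ell=1$, $w\ge n+2$, which you might make explicit.
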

\begin{proof}
  Recall from \lemmaref{lem:Nell-weight} that $N_ℓ ∈ \MHM(Y)$ has weight
  $≥ n+ℓ$, which means that $\gr_w^W N_ℓ = 0$ for $w ≤ n+ℓ-1$.
  \propositionref{prop:Fc} guarantees that $F_{c-1} \Nmod_ℓ = 0$ for every
  $ℓ ≥ 0$.  This implies that $F_{c-1} \gr_w^W \Nmod_ℓ = 0$ for every $w ∈ ℤ$.
  According to \corollaryref{cor:duality-pure}, we have
  \[
    \gr_{-1}^F \DR(\gr_w^W \!  \Nmod_ℓ) %
    ≅ \derR \sHom_{𝒪_Y} \Bigl( \gr_{1-w}^F \DR(\gr_w^W \!  \Nmod_ℓ), ω_Y^•
    \Bigr),
  \]
  and the complex $\gr_{1-w}^F \DR(\gr_w^W \!  \Nmod_ℓ)$ only uses the
  $𝒪_Y$-modules $\gr_p^F \gr_w^W \!  \Nmod_ℓ$ in the range
  \[
    1-w ≤ p ≤ 1-w + \dim Y = c + 1 - ℓ - \bigl( w - (n+ℓ) \bigr).
  \]
  As $F_{c-1} \gr_w^W \!  \Nmod_ℓ = 0$ and $ℓ ≥ 1$, we see that
  $\gr_{1-w}^F \DR(\gr_w^W \!  \Nmod_ℓ) = 0$, except maybe in the special case
  $w = n+ℓ$.  But by the $E_2$-degeneration of the weight spectral sequence,
  $\gr_{n+ℓ}^W N_ℓ$ is a quotient of $M_ℓ = H^ℓ f_* ℚ_{\wtilde{X}}^H \decal{n}$,
  and since we already know that $F_c \Mmod_ℓ = 0$, we also have
  $F_c \gr_{n+ℓ}^W \Nmod_ℓ = 0$ for $ℓ ≥ 1$.  This proves that
  $\gr_{-1}^F \DR(\gr_w^W \!  \Nmod_ℓ)$ is acyclic for every $ℓ ≥ 1$ and every
  $w ∈ ℤ$.  Since the functor $\gr_{-1}^F \DR$ is exact on mixed Hodge modules,
  it follows that the complex $\gr_{-1}^F \DR(\Nmod_ℓ)$ is also acyclic.
\end{proof}

\begin{proof}[Proof of \propositionref{prop:1-forms-N0}]
  Because $\Nmod_j = 0$ for $j ≤ -1$, and because the complex computing the
  direct image is strict by \theoremref{thm:fl-mixed}, we have a canonical
  morphism
  \[
    (\Nmod_0, F_• \Nmod_0) → f_+ \bigl( R_F 𝒪_{\wtilde{X}}(\ast E) \bigr)
  \]
  in the derived category $\Dbcoh G(R_F 𝒟_Y)$.  As a first step, we are going to
  show that the induced morphism
  \begin{equation}\label{eq:mor-ind}
    \gr_{-1}^F \DR(\Nmod_0) → \gr_{-1}^F \DR
    \Bigl( f_+ \bigl( R_F 𝒪_{\wtilde{X}}(\ast E) \bigr) \Bigr)
  \end{equation}
  between complexes of $𝒪_Y$-modules is a quasi-isomorphism.
  \lemmaref{lem:grF-1-acyclic} implies that the spectral sequence
  \[
    E_2^{a,b} = ℋ^a \gr_{-1}^F \DR(\Nmod_b) \Longrightarrow ℋ^{a+b} \gr_{-1}^F
    \DR \Bigl( f_+ \bigl( R_F 𝒪_{\wtilde{X}}(\ast E) \bigr) \Bigr),
  \]
  degenerates at $E_2$, and so we have a collection of isomorphisms
  \[
    \mathscr{H}^a \gr_{-1}^F \DR(\Nmod_0) ≅ \mathscr{H}^a \gr_{-1}^F \DR \Bigl(
    f_+ \bigl( R_F 𝒪_{\wtilde{X}}(\ast E) \bigr) \Bigr).
  \]
  These isomorphisms are induced by the morphism in \eqref{eq:mor-ind}, which is
  therefore a quasi-isomorphism.  Now the compatibility of the de Rham complex
  with direct images, together with \propositionref{fact:log-DR}, implies that
  \[
    \gr_{-1}^F \DR(\Nmod_0) %
    ≅ \derR f_* \gr_{-1}^F \DR \bigl( 𝒪_{\wtilde{X}}(\ast E) \bigr) %
    ≅ \derR f_* Ω¹_{\wtilde{X}}(\log E) \decal{n-1},
  \]
  as asserted by the proposition.
\end{proof}

\subsection{The weight filtration on $N_0$}
\approvals{Christian & yes \\Stefan & yes}
\label{ssec:twfoN0}

We describe how the weight filtration interacts with the complex
$\gr_{-1}^F \DR(\Nmod_0)$.

\begin{prop}[The complex $\gr_{-1}^F \DR(\Nmod_0)$]\label{prop:grF-1-N0}
  Maintaining \settingref{setting:7-1} and using the notation introduced above,
  the complex $\gr_{-1}^F \DR(\gr_w^W \Nmod_0)$ is acyclic for $w ∉ \{n, n+1\}$
  and
  \begin{align}
    \gr_{-1}^F \DR(\gr_n^W \!  \Nmod_0) &≅ \gr_{-1}^F \DR(\Mmod_X) \label{eq:Wn} \\
    \gr_{-1}^F \DR(\gr_{n+1}^W \!  \Nmod_0) &≅ \bigoplus_{i ∈ I} \derR f_* 𝒪_{E_i} \decal{n-1}.  \label{eq:Wn+1}
  \end{align}
\end{prop}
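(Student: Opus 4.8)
The plan is to analyse the weight filtration $W_\bullet N_0$, using that only weights $\geq n$ occur (Lemma \ref{lem:Nell-weight} gives $W_{n-1}N_0=0$), that the duality functor pins down the ``large'' weights, and that the two remaining weights $n$ and $n+1$ are controlled by the intersection complex $M_X$ and by the components $E_i$ respectively. First I would dispose of all weights $w\geq n+2$. Since $\gr_w^W N_0\in\HM(Y,w)$ and $F_{c-1}\Nmod_0=0$ (\propositionref{prop:Fc} applied to $f$ and $j_*ℚ_{\wtilde X\setminus E}^H\decal n$, whose $𝒟$-module $𝒪_{\wtilde X}(*E)$ has $F_{-1}=0$; cf.\ the proof of \propositionref{prop:Xwrx}), we get $F_{c-1}\gr_w^W\Nmod_0=0$, and \corollaryref{cor:duality-pure} gives
\[
  \gr_{-1}^F \DR(\gr_w^W\Nmod_0)\;≅\;\derR\sHom_{𝒪_Y}\Bigl(\gr_{1-w}^F\DR(\gr_w^W\Nmod_0),\,ω_Y^{\bullet}\Bigr).
\]
By formula \eqref{eq:Ax} the right–hand complex only involves $\gr_p^F\gr_w^W\Nmod_0$ with $1-w\leq p\leq 1-w+\dim Y$, and since $F_{c-1}=0$ these all vanish unless $w\leq n+1$. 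This is exactly the computation in the proof of \lemmaref{lem:grF-1-acyclic}, and together with $\gr_w^W\Nmod_0=0$ for $w<n$ it reduces the proposition to the cases $w=n$ and $w=n+1$.

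\textbf{Weight $n$.} Here I claim $\gr_n^W N_0=W_n N_0≅M_X$, which gives \eqref{eq:Wn}. The inclusion $ℚ_{\wtilde X}^H\decal n=W_n\bigl(j_*ℚ_{\wtilde X\setminus E}^H\decal n\bigr)↪j_*ℚ_{\wtilde X\setminus E}^H\decal n$ from \propositionref{prop:wf} pushes forward to a morphism $H^0 f_*ℚ_{\wtilde X}^H\decal n≅M_X⊕M_0\to N_0$; its restriction to the summand $M_X$ has kernel supported on $X_{\sing}$, hence is injective because $M_X$ has strict support $X$. So $M_X$ is a weight‑$n$ subobject of $N_0$, hence $M_X⊆W_n N_0=\gr_n^W N_0$. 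Over $X_{\reg}$ both $M_X$ and $N_0$ restrict to $ℚ_{X_{\reg}}^H\decal n$ ($r$ is an isomorphism there and $E$ is disjoint from $r^{-1}(X_{\reg})$), so the quotient $\gr_n^W N_0/M_X$ is supported on $X_{\sing}$; by semisimplicity of polarisable Hodge modules it is a direct summand of $\gr_n^W N_0$, hence a subobject of $N_0$ supported on $X_{\sing}$, hence zero by \lemmaref{lem:Nl}. Thus $\gr_n^W N_0=M_X$.

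\textbf{Weight $n+1$.} Here I would use the weight spectral sequence of \theoremref{thm:fl-mixed} for $f$ applied to $j_*ℚ_{\wtilde X\setminus E}^H\decal n$, which degenerates at $E_2$ with $\gr_w^W N_0≅E_2^{-w,w}$ and $E_1^{p,q}=H^{p+q}f_*\gr_{-p}^W\bigl(j_*ℚ_{\wtilde X\setminus E}^H\decal n\bigr)$. Writing $f_{E_J}:=f|_{E_J}$ and using \propositionref{prop:wf}, the three terms contributing to $E_2^{-(n+1),n+1}$ are $E_1^{-(n+2),n+1}=\bigoplus_{|J|=2}H^{-1}(f_{E_J})_*ℚ_{E_J}^H(-2)\decal{n-2}$, the term $E_1^{-(n+1),n+1}=\bigoplus_{i∈I}H^0(f_{E_i})_*ℚ_{E_i}^H(-1)\decal{n-1}$, and $E_1^{-n,n+1}=M_1=H^1 f_*ℚ_{\wtilde X}^H\decal n$. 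Now apply the exact functor $\gr_{-1}^F\DR$: the term coming from $M_1$ is acyclic by \lemmaref{lem:grF-1-acyclic}, and the term coming from $E_1^{-(n+2),n+1}$ is acyclic as well (see the obstacle paragraph). Since polarisable Hodge modules are semisimple, kernels, images and cokernels of the $d_1$‑differentials split off, so $\gr_{-1}^F\DR$ of the neighbouring $E_1$‑terms contributing nothing forces
\[
  \gr_{-1}^F\DR(\gr_{n+1}^W\Nmod_0)\;=\;\gr_{-1}^F\DR(E_2^{-(n+1),n+1})\;≅\;\bigoplus_{i∈I}\gr_{-1}^F\DR\bigl(\text{$𝒟$-module of }H^0(f_{E_i})_*ℚ_{E_i}^H(-1)\decal{n-1}\bigr).
\]
It remains to identify each summand with $\derR f_*𝒪_{E_i}\decal{n-1}$. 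Using the Tate‑twist shift $\gr_{-1}^F\DR(\argbl(-1))=\gr_0^F\DR(\argbl)$, the identity $\gr_0^F\DR_{E_i}(ℚ_{E_i}^H\decal{n-1})=𝒪_{E_i}\decal{n-1}$, and the compatibility of $\gr_{-1}^F\DR$ with proper push‑forward (\propositionref{prop:fl-grDR}), one has $\derR f_*𝒪_{E_i}\decal{n-1}=\gr_{-1}^F\DR_Y\bigl((f_{E_i})_+R_F(\text{filtered $𝒟$-module of }ℚ_{E_i}^H(-1)\decal{n-1})\bigr)$; the Decomposition Theorem \corollaryref{cor:decomposition-MF} rewrites this as $\bigoplus_k\gr_{-1}^F\DR_Y\bigl(H^k(f_{E_i})_*ℚ_{E_i}^H(-1)\decal{n-1}\bigr)\decal{-k}$, and the summands with $k≠0$ are acyclic — for $k\geq1$ by the duality/Hodge‑range argument used above (the weight $n+1+k$ is too large), and for $k\leq-1$ because $R^k(f_{E_i})_*ω_{E_i}=0$. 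Summing over $i$ gives \eqref{eq:Wn+1}.

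\textbf{Main obstacle.} The delicate step is the acyclicity of $\gr_{-1}^F\DR$ of the higher direct images of the Hodge modules $ℚ_{E_J}^H(-|J|)\decal{n-|J|}$ (this is needed both for $E_1^{-(n+2),n+1}$ and for the $k<0$ terms in the last step). The bound $F_{c-1}=0$ furnished by \propositionref{prop:Fc} is just short: after dualising via \corollaryref{cor:duality-pure} one is left with a single potentially nonzero graded piece $\gr_c^F$ (or $\gr_{c+1}^F$, \dots). I expect this to be the part that needs real care. One clean route is to identify these graded pieces of $H^k(f_{E_J})_*\bigl(ℚ_{E_J}^H(-|J|)\decal{n-|J|}\bigr)$ with $R^k(f_{E_J})_*Ω^{\bullet}_{E_J}$, via \propositionref{prop:fl-grDR}, and then invoke that $R^k$ of a coherent sheaf vanishes for $k<0$ together with Grauert–Riemenschneider vanishing for $k>0$; a more conceptual route is to quote Saito's strengthening of Kollár's package \cite[Prop.~2.6]{Saito:Kollar}, which yields precisely the required splittings and vanishings for direct images of Hodge modules of geometric origin. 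Everything else — the duality identifications, the spectral‑sequence bookkeeping, and the semisimplicity splittings — is routine.
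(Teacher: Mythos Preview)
Your approach is essentially the paper's: weight spectral sequence, identify $\gr_n^W N_0 \cong M_X$, handle $w=n+1$ via the three $E_1$-terms, and kill $w\geq n+2$ by the duality/Hodge-range argument of \lemmaref{lem:grF-1-acyclic}. Two points deserve correction.

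First, your ``main obstacle'' for $E_1^{-(n+2),n+1}$ is not an obstacle at all, and the paper dispatches it in one line. By \propositionref{prop:fl-grDR} and the Decomposition Theorem, $\gr_{-1}^F\DR(ℰ_1^{-(n+2),n+1})$ is a direct summand (up to shift) of $\bigoplus_{|J|=2}\derR(f_{E_J})_*\gr_{-1}^F\DR_{E_J}(𝒪_{E_J}(-2))$; but the Tate twist gives $\gr_{-1}^F\DR_{E_J}(𝒪_{E_J}(-2))=\gr_{1}^F\DR_{E_J}(𝒪_{E_J})$, and this complex is \emph{identically zero} because $\gr_p^F 𝒪_{E_J}=0$ for $p\neq 0$. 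No Grauert--Riemenschneider, no Saito--Kollár. Second, in your identification of $\gr_{-1}^F\DR(ℰ_1^{-(n+1),n+1})$ with $\bigoplus_i\derR f_*𝒪_{E_i}\decal{n-1}$, your handling of the summands $H^k(f_{E_i})_*ℚ_{E_i}^H(-1)\decal{n-1}$ for $k\leq -1$ is not right: ``$R^k(f_{E_i})_*ω_{E_i}=0$ for $k<0$'' is tautological and does not by itself kill $\gr_{-1}^F\DR(H^k)$. The clean fix is to use the same trick again: by \propositionref{prop:fl-grDR} and the decomposition, $\gr_q^F\DR_Y(H^k)$ is for every $k$ a summand of $\derR(f_{E_i})_*\gr_q^F\DR_{E_i}(𝒪_{E_i}(-1))$, and the source vanishes for $q\geq 0$; now Hard Lefschetz $H^{-k}\cong H^k(k)$ gives $\gr_{-1}^F\DR(H^{-k})\cong\gr_{k-1}^F\DR(H^k)$, which is acyclic for $k\geq 1$. (Minor: the acyclicity of $\gr_{-1}^F\DR(\Mmod_1)$ is proved in \propositionref{prop:xa2}, not in \lemmaref{lem:grF-1-acyclic}, whose statement concerns $\Nmod_\ell$.)
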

\begin{proof}
  Consider again the weight spectral sequence
  \[
    E_1^{p,q} = H^{p+q} f_* \gr_{-p}^W j_* ℚ_{\wtilde{X} ∖ E}^H \decal{n}
    \Longrightarrow N_{p+q}.
  \]
  Because $f$ is projective, the spectral sequence degenerates at $E_2$, and the
  induced filtration on $N_ℓ$ is the weight filtration $W_• N_ℓ$, see
  \theoremref{thm:fl-mixed}.  More precisely, what happens is that $E_1^{p,q}$
  and $E_2^{p,q}$ are polarisable Hodge modules of weight $q$, and
  \[
    \gr_q^W \!  N_{p+q} ≅ E_2^{p,q}.
  \]
  Now $j_* ℚ_{\wtilde{X} ∖ E}^H \decal{n}$ has weight $≥ n$, and so
  $E_1^{p,q} = 0$ for $p ≥ -n+1$, and $W_{n-1} N_0 = 0$.  Moreover, $W_n N_0$ is
  the cokernel of the morphism $d_1 \colon E_1^{-n-1,n} → E_1^{-n,n}$.  Using
  the description of the weight filtration in \propositionref{prop:wf}, we
  compute that
  \[
    E_1^{-n,n} = H⁰ f_* \gr_n^W j_* ℚ_{\wtilde{X} ∖ E}^H \decal{n} %
    ≅ H⁰ f_* ℚ_{\wtilde{X}}^H \decal{n} ≅ M_X ⊕ M_0
  \]
  and that the support of $E_1^{-n+1,n}$ is contained inside $X_{\sing}$.
  Because $N_0$ has no subobjects that are supported inside $X_{\sing}$ (by
  \lemmaref{lem:Nl}) , and $M_X$ has neither subobjects nor quotient objects
  that are supported inside $X_{\sing}$ (by construction), we conclude that
  $W_n N_0 ≅ M_X$.  This already proves \eqref{eq:Wn}.

  Likewise, $\gr_{n+1}^W \!  N_0$ is the cohomology of the complex of Hodge
  modules of weight $n+1$
  \begin{equation} \label{eq:cx-HM}
    \begin{tikzcd}
      E_1^{-n-2,n+1} \rar{d_1} & E_1^{-n-1,n+1} \rar{d_1} & E_1^{-n,n+1}.
    \end{tikzcd}
  \end{equation}
  By a similar computation as above, we have $E_1^{-n,n+1} ≅ M_1$ and
  \begin{align*}
    E_1^{-n-1,n+1} &≅ \bigoplus_{i ∈ I} H⁰ f_* ℚ_{E_i}^H (-1) \, \decal{n-1} \\
    E_1^{-n-2,n+1} &≅ \bigoplus_{i,j ∈ I} H^{-1} f_* ℚ_{E_i ∩ E_j}^H (-2) \, \decal{n-2}.
  \end{align*}
  We showed during the proof of \propositionref{prop:xa2} that
  $\gr_{-1}^F \DR(\Mmod_1)$ is acyclic.  At the same time, using the
  compatibility of the de Rham complex with direct images, we have
  \[
    \gr_{-1}^F \DR(ℰ_1^{-n-1,n+1}) %
    ≅ \bigoplus_{i ∈ I} \derR f_* \gr_0^F \DR(𝒪_{E_i}) %
    ≅ \bigoplus_{i ∈ I} \derR f_* 𝒪_{E_i} \decal{n-1}.
  \]
  By a similar calculation and the Decomposition Theorem, the complex
  $\gr_{-1}^F \DR(ℰ_1^{-n-2,n+1})$ is isomorphic to a direct summand in
  \[
    \bigoplus_{i,j ∈ I} \derR f_* \gr_1^F \DR(𝒪_{E_i ∩ E_j})
  \]
  and therefore acyclic.  Since morphisms between mixed Hodge modules strictly
  preserve the Hodge filtration, it now follows from \eqref{eq:cx-HM} that
  \[
    \gr_{-1}^F \DR(\gr_{n+1}^W \!  \Nmod_0) %
    ≅ \bigoplus_{i ∈ I} \derR f_* 𝒪_{E_i} \decal{n-1},
  \]
  proving \eqref{eq:Wn+1}.

  Since $W_{n-1} N_0 = 0$, the complex $\gr_{-1}^F \DR(\gr_w^W \!  \Nmod_0)$ is
  certainly acyclic for $w ≤ n-1$.  It remains to show that it is also acyclic
  for $w ≥ n+2$.  The proof of this fact is the same as that of
  \lemmaref{lem:grF-1-acyclic}, and so we omit it.
\end{proof}

\begin{cor}\label{cor:x7}
  Maintaining \settingref{setting:7-1} and using the notation introduced above,
  we obtain a long exact sequence
  \[
    \dotsb → ℋ^j \gr_{-1}^F \DR(\Mmod_X) %
    → ℋ^j \gr_{-1}^F \DR(\Nmod_0) %
    → \bigoplus_{i ∈ I} R^{n-1+j} f_* 𝒪_{E_i}
	 → \dotsb
  \]
\end{cor}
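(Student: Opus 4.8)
The plan is to deduce the long exact sequence formally from \propositionref{prop:grF-1-N0} by a dévissage along the weight filtration $W_\bullet N_0$. The structural fact I will use repeatedly is that $\gr_{-1}^F \DR$ is an exact functor with values in $\Dbcoh(𝒪_Y)$, so that a short exact sequence of graded-polarisable mixed Hodge modules — whose morphisms are automatically strict for the Hodge filtration — yields a distinguished triangle after applying $\gr_{-1}^F \DR$, and hence a long exact sequence of cohomology sheaves.

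First I would record the elementary facts. By \lemmaref{lem:Nell-weight} (or the proof of \propositionref{prop:grF-1-N0}) one has $W_{n-1} N_0 = 0$, so that $W_n N_0 = \gr_n^W N_0$; and since the weight filtration of a mixed Hodge module is finite, there is an integer $m$ with $W_m N_0 = N_0$. Next I would run a descending induction on $w$ from $m$ down to $n+2$: applying $\gr_{-1}^F \DR$ to the short exact sequences $0 → W_{w-1} N_0 → W_w N_0 → \gr_w^W N_0 → 0$, and using that by \propositionref{prop:grF-1-N0} the complex $\gr_{-1}^F \DR(\gr_w^W \Nmod_0)$ is acyclic for every $w ≥ n+2$, the associated long exact cohomology sequences show at each step that the map $\gr_{-1}^F \DR(W_{w-1} \Nmod_0) → \gr_{-1}^F \DR(W_w \Nmod_0)$ is a quasi-isomorphism. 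Composing these (the induction being vacuous if $m ≤ n+1$), I obtain that the inclusion $W_{n+1} N_0 ↪ N_0$ induces a quasi-isomorphism $\gr_{-1}^F \DR(W_{n+1} \Nmod_0) ≅ \gr_{-1}^F \DR(\Nmod_0)$.

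Then I would apply $\gr_{-1}^F \DR$ to the remaining short exact sequence $0 → W_n N_0 → W_{n+1} N_0 → \gr_{n+1}^W N_0 → 0$. Using $W_n N_0 = \gr_n^W N_0$ together with the identifications \eqref{eq:Wn} and \eqref{eq:Wn+1} of \propositionref{prop:grF-1-N0} and the quasi-isomorphism from the previous step, this becomes a distinguished triangle
\[
  \gr_{-1}^F \DR(\Mmod_X) → \gr_{-1}^F \DR(\Nmod_0) → \bigoplus_{i ∈ I} \derR f_* 𝒪_{E_i} \decal{n-1} → \gr_{-1}^F \DR(\Mmod_X) \decal{1}
\]
in $\Dbcoh(𝒪_Y)$. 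Passing to the associated long exact sequence of cohomology sheaves and noting that $ℋ^j \bigl( \derR f_* 𝒪_{E_i} \decal{n-1} \bigr) ≅ R^{n-1+j} f_* 𝒪_{E_i}$ gives precisely the sequence asserted in the corollary.

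The only point that requires care is the legitimacy of the dévissage: one must invoke that morphisms of mixed Hodge modules are strict for the Hodge filtration in order to pass from a short exact sequence of such modules to a distinguished triangle of the underlying filtered $𝒟_Y$-modules, and one must use that the weight filtration has only finitely many nonzero graded pieces so that the induction terminates. Both are standard facts in Saito's formalism, so no genuine obstacle arises; the real content of the corollary is entirely contained in \propositionref{prop:grF-1-N0}, which may be assumed.
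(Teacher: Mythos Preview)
Your proof is correct and follows essentially the same route as the paper: both deduce from \propositionref{prop:grF-1-N0} that $\gr_{-1}^F \DR(W_{n+1}\Nmod_0) \to \gr_{-1}^F \DR(\Nmod_0)$ is a quasi-isomorphism, apply $\gr_{-1}^F \DR$ to the short exact sequence $0 \to W_n N_0 \to W_{n+1} N_0 \to \gr_{n+1}^W N_0 \to 0$ to obtain the same distinguished triangle, and then pass to cohomology. Your explicit descending induction along the weight filtration and the remark on strictness simply spell out what the paper leaves implicit.
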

\begin{proof}
  \propositionref{prop:grF-1-N0} implies that the complex
  $\gr_{-1}^F \DR(W_{n-1} \Nmod_0)$ is acyclic, and that the natural morphism
  \[
    \gr_{-1}^F \DR(W_{n+1} \Nmod_0) → \gr_{-1}^F \DR(\Nmod_0)
  \]
  is a quasi-isomorphism.  We therefore get a distinguished triangle
  \[
    \gr_{-1}^F \DR(\Mmod_X) → \gr_{-1}^F \DR(\Nmod_0) → \bigoplus_{i ∈ I}
    \derR f_* 𝒪_{E_i} \decal{n-1} →
    \gr_{-1}^F \DR(\Mmod_X) \decal{1}
  \]
  in the derived category $\Dbcoh(𝒪_Y)$.  The claim follows by passing to cohomology.
\end{proof}

\subsection{Application to the extension problem}
\label{sec:08eoetflog}
\approvals{Christian & yes \\Stefan & yes}

In analogy with \parref{sec:08eoetf}, we conclude with a brief discussion of the
effect that extendability of log $n$-forms has on $\DR(\Nmod_0)$.  Once again,
\corollaryref{cor:reflexive-mixed-ng} and the result below can be used to show
if $n$-forms extend with log poles, then all forms extend with log poles.  This
gives another proof for \theoremref{thm:main-log} in the (most important) case
$k = n$.  Since we are now working with mixed Hodge modules, the reader may find
it instructive to compare the proof below with that of the analogous result for
pure Hodge modules in \parref{sec:08eoetf}

\begin{prop}[Extension of log $n$-forms and $N_Y$] \label{prop:main-ineqlog}
  Maintaining \settingref{setting:7-1} and using the notation introduced above,
  assume that the morphism $r_* Ω^n_{\wtilde{X}}(\log E) ↪ j_* Ω^n_{X_{\reg}}$
  is an isomorphism.  Then one has
  \[
    \dim \Supp ℋ^j \gr_p^F \DR(\Nmod_Y) ≤ -(j+p+2)
  \]
  for all integers $j, p ∈ ℤ$ with $p+j ≥ -n + 1$.
\end{prop}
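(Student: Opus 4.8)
The plan is to apply \theoremref{thm:reflexive-mixed} to the mixed Hodge module $N_Y$, in exactly the way that \propositionref{prop:main-ineq} applies \theoremref{thm:reflexive-pure} to the intersection complex $M_X$ in the holomorphic case. Since $N_Y$ has support $X$ of constant dimension $n$ and codimension $c$ in $Y$, it suffices to verify that $N_Y$ satisfies the three hypotheses \ref{en:mixed-2}, \ref{en:mixed-3}, \ref{en:mixed-4} of \settingref{set:5.13}; the asserted inequalities are then literally the conclusion of \theoremref{thm:reflexive-mixed}. (If $\rat N_Y$ fails to have connected support after localising, one first passes to a summand and replaces $X$ by its support, using that $\DR$ and the weight filtration are compatible with this, just as in \parref{sec:08eoetf}.)

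Conditions \ref{en:mixed-2} and \ref{en:mixed-3} follow from the structural results already established in \parref{sec:pf-mixed}. For \ref{en:mixed-3}, acyclicity of $\gr_p^F \DR(\Nmod_Y)$ for $p ≥ 1$ follows by combining \lemmaref{lem:Nell-weight} and \propositionref{prop:Fc} (which bound the weight filtration on one side and give $F_{c-1}\Nmod_Y = 0$) with \propositionref{prop:DR-acyclic}; equivalently, one uses \propositionref{fact:log-DR} and \propositionref{prop:fl-grDR} to identify the relevant graded pieces with $\derR f_*$ of $\gr_p^F$ of the log de Rham complex $Ω^•_{\wtilde X}(\log E)$, which vanishes for $p ≥ 1$. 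For \ref{en:mixed-2}, \propositionref{prop:perverse-quotients} reduces the claim to: $\DR(\Nmod_Y)$ has no nontrivial quotient perverse sheaf whose support has dimension $≤ n-1$; since on $X_{\reg}$ the underlying perverse sheaf is the simple object $ℚ_{X_{\reg}}\decal{n}$, any such quotient must be supported on $X_{\sing}$, and this is ruled out by \lemmaref{lem:Nl} (applied after Verdier duality, relating subobjects of $N_0$ to quotients of $N_Y$).

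The heart of the proof — and the only place the hypothesis is used — is condition \ref{en:mixed-4}: $\dim \Supp ℋ^j \gr_0^F \DR(\Nmod_Y) ≤ -(j+2)$ for every $j ≥ -n+1$. Here I would run the chain of isomorphisms from the proof of \propositionref{prop:main-ineq}, with \propositionref{prop:duality-mixed} playing the role of \corollaryref{cor:duality-pure}: using \propositionref{prop:duality-mixed} together with the relation between $N_Y$, $N_0$ and logarithmic differentials recorded in \propositionref{prop:Xwrx}, and the fact that $F_{c-1}\Nmod_0 = 0$ forces $\gr_{-n}^F \DR(\Nmod_0)$ to be concentrated in a single cohomological degree, one identifies $\gr_0^F \DR(\Nmod_Y) ≅ \derR \sHom_{𝒪_Y}\bigl( f_* Ω^n_{\wtilde X}(\log E),\, ω_Y^{\bullet} \bigr)$. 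The assumption that $r_* Ω^n_{\wtilde X}(\log E) ↪ j_* Ω^n_{X_{\reg}}$ is an isomorphism says precisely that sections of the coherent $𝒪_Y$-module $f_* Ω^n_{\wtilde X}(\log E)$ (whose support $X$ has constant dimension $n$) extend uniquely across every subspace of dimension $≤ n-2$, so \corollaryref{cor:ccor-ng} gives exactly $\dim \Supp R^j \sHom_{𝒪_Y}\bigl( f_* Ω^n_{\wtilde X}(\log E),\, ω_Y^{\bullet} \bigr) ≤ -(j+2)$ for $j ≥ -n+1$, which is \ref{en:mixed-4}.

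The main obstacle I anticipate is the duality bookkeeping in \ref{en:mixed-4}. Unlike the pure case, the de Rham complex of a mixed Hodge module is not self-dual, so one is forced to pass through $𝔻 N_Y$ (equivalently, through $N_0$ and a Tate twist) and keep careful track of the induced shift of the Hodge filtration; moreover one must check that the pertinent graded pieces of the de Rham complex are concentrated in a single cohomological degree, so that \corollaryref{cor:ccor-ng} applies directly rather than only the weaker \propositionref{prop:reflexive-complex-ng} — this is exactly where the precise shape of the Hodge filtration on $𝒪_{\wtilde X}(\ast E)$ from \propositionref{fact:log-DR} and the vanishing $F_{c-1}\Nmod_0 = 0$ are needed. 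By contrast, \ref{en:mixed-2} and \ref{en:mixed-3} are routine consequences of \lemmaref{lem:Nl}, \lemmaref{lem:Nell-weight}, \propositionref{prop:Fc} and \propositionref{prop:DR-acyclic}.
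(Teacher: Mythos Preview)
Your overall strategy and your arguments for \ref{en:mixed-2} and \ref{en:mixed-4} match the paper's proof essentially verbatim. The gap is in your verification of \ref{en:mixed-3}.

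You claim that \lemmaref{lem:Nell-weight} and \propositionref{prop:Fc} give $F_{c-1}\Nmod_Y = 0$ and a weight bound, and that \propositionref{prop:DR-acyclic} then forces $\gr_p^F \DR(\Nmod_Y)$ to be acyclic for $p \ge 1$. But both cited results are about the $N_\ell$ (the cohomology of the direct image), not about $N_Y = 𝔻(N_0)(-n)$; in particular \propositionref{prop:Fc} gives $F_{c-1}\Nmod_0 = 0$, not $F_{c-1}\Nmod_Y = 0$. More seriously, even if one grants $F_{c-1}\Nmod_Y = 0$, \propositionref{prop:DR-acyclic} needs a \emph{lower} bound on the weight ($W_{w-1} = 0$) to control the top of the range $c-d \le p \le d-w-c$, whereas $N_Y$ only has weight bounded \emph{above} by $n$ (dual to $W_{n-1}N_0 = 0$); the higher-weight pieces of $N_0$ coming from the strata $E_J$ in \propositionref{prop:wf} make the weights of $N_Y$ dip below $n$, so you cannot take $w = n$. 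Your ``equivalently'' argument via \propositionref{fact:log-DR} and \propositionref{prop:fl-grDR} has the same defect: it shows $\gr_p^F \DR(\Nmod_0)$ is acyclic for $p \ge 1$, which is the wrong module.

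The fix is exactly the duality bookkeeping you already carry out for \ref{en:mixed-4}: use \propositionref{prop:duality-mixed} (with the Tate twist) to write
\[
\gr_p^F \DR(\Nmod_Y) \;\cong\; \derR\sHom_{𝒪_Y}\bigl(\gr_{-(p+n)}^F \DR(\Nmod_0),\, \omega_Y^{\bullet}\bigr),
\]
and then observe that $F_{c-1}\Nmod_0 = 0$ makes $\gr_{-(p+n)}^F \DR(\Nmod_0)$ the zero complex once $p \ge 1$. This is precisely how the paper handles \ref{en:mixed-3}.
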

\begin{proof}
  This time, we aim to apply \theoremref{thm:reflexive-mixed}.  Recall that $X$
  is reduced of pure dimension $n$; that the mixed Hodge module $N_0 ∈ \MHM(Y)$
  has support equal to $X$; and that we defined $N_Y := 𝔻(N_0)(-n) ∈ \MHM(Y)$ by
  taking the $(-n)$-th Tate twist of the dual mixed Hodge module.  Taking into
  account the Tate twist, the formula for the de Rham complex of the dual mixed
  Hodge module in \propositionref{prop:duality-mixed} becomes
  \begin{equation}\label{eq:rel-NY-N0}
    \gr_p^F \DR(\Nmod_Y) %
    ≅ \derR \sHom_{𝒪_Y} \Bigl( \gr_{-(p+n)}^F \DR(\Nmod_0), ω_Y^• \Bigr).
  \end{equation}
  Let us now verify that all the conditions in \theoremref{thm:reflexive-mixed}
  are satisfied in our setting.

  \begin{claim}\label{cl:mixed-2a}
    One has $\dim \Supp ℋ^j \DR(\Nmod_Y) ≤ -(j+1)$ for every $j ≥ -n + 1$.
  \end{claim}
  \begin{proof}[Proof of \claimref{cl:mixed-2a}]
    Recall that the module $N_0$ has weight $≥ n$, in the sense that
    $W_{n-1} N_0 = 0$, and that its support is $\Supp N_0 = X$.  The dual module
    $N_Y$ will then have weight $≤ n$, in the sense that $W_n N_Y = N_Y$, and
    $\Supp N_Y = X$.  By \lemmaref{lem:Nl}, the perverse sheaf $\DR(\Nmod_0)$
    has no nontrivial subobjects whose support is contained in $X_{\sing}$.
    Consequently, the perverse sheaf $\DR(\Nmod_Y)$, isomorphic to the Verdier
    dual of $\DR(\Nmod_0)$, has no nontrivial quotient objects whose support is
    contained in $X_{\sing}$.  Now apply
    \propositionref{prop:perverse-quotients}.  \qedhere~(\claimref{cl:mixed-2a})
  \end{proof}
  
  \begin{claim}\label{cl:mixed-3a}
    The complex of $𝒪_Y$-modules $\gr_p^F \DR(\Nmod_Y)$ is acyclic for every
    $p ≥ 1$.
  \end{claim}
  \begin{proof}[Proof of \claimref{cl:mixed-3a}]
    Recall that $F_{c-1} \Nmod_0 = 0$, where $c = \dim Y - \dim X$.  For
    dimension reasons, the complex $\gr_{-(p+n)}^F \DR(\Nmod_0)$ is trivial for
    $p ≥ 1$.  Now \eqref{eq:rel-NY-N0} implies that the complex
    $\gr_p^F \DR(\Nmod_Y)$ is acyclic for every $p ≥ 1$.
    \qedhere~(\claimref{cl:mixed-3a})
  \end{proof}
    
  \begin{claim}\label{cl:mixed-4a}
    One has $\dim \Supp ℋ^j \gr_0^F \DR(\Nmod_Y) ≤ -(j+2)$ for every
    $j ≥ -n + 1$.
  \end{claim}
  \begin{proof}[Proof of \claimref{cl:mixed-4a}]
    Since $F_{c-1} \Nmod_0 = 0$, the formula in \eqref{eq:Ax} implies that the
    complex
    \begin{equation}\label{eq:N0-shf}
      \gr_{-n}^F \DR(\Nmod_0) ≅ ℋ⁰ \gr_{-n}^F \DR(\Nmod_0)
    \end{equation}
    is actually a sheaf in degree $0$.  Using the assumption that
    $r_* Ω^n_{\wtilde{X}}(\log E) ≅ j_* Ω^n_{X_{\reg}}$, the following
    inequalities will therefore hold for all $j ≥ -n+1$:
    \begin{align*}
      -(j+2) & ≥ \dim \Supp R^j \sHom_{𝒪_Y} \bigl( f_* Ω^n_{\wtilde{X}}(\log E), ω^•_Y \bigr) && \text{by \corollaryref{cor:ccor-ng}} \\
             & = \dim \Supp R^j \sHom_{𝒪_Y} \bigl( ℋ⁰ \gr_{-n}^F \DR(\cN_0), ω^•_Y \bigr) && \text{by \propositionref{prop:Xwrx}} \\
             & = \dim \Supp R^j \sHom_{𝒪_Y} \bigl( \gr_{-n}^F \DR(\cN_0), ω_Y^• \bigr) && \text{by \eqref{eq:N0-shf}} \\
             & = \dim \Supp ℋ^j \gr_0^F \DR(\Nmod_Y) && \text{by \eqref{eq:rel-NY-N0}}
    \end{align*}
    This gives us the desired result.  \qedhere~(\claimref{cl:mixed-4a})
  \end{proof}
  
  Having checked all the conditions, we can now apply
  \theoremref{thm:reflexive-mixed} and conclude the proof of
  \propositionref{prop:main-ineqlog}.
\end{proof}

%
%
\svnid{$Id: S10-proof-ext.tex 269 2020-01-20 11:28:53Z kebekus $}

\section{Intrinsic description, proof of Theorems~\ref*{thm:extension-Kahler} and \ref*{thm:extension-Kahler-log}}
\subversionInfo
\label{par:short}

\subsection{Proof of Theorem~\ref*{thm:extension-Kahler}}
\approvals{Christian & yes \\Stefan & yes}

In this section, we prove the criterion for extension of holomorphic forms in
\theoremref{thm:extension-Kahler}.  In fact, the result is really just a
reformulation of \propositionref{prop:7-1-ng}, although it takes some work to
see that this is the case.

\subsection*{Setup}
\approvals{Christian & yes \\Stefan & yes}

Let $X$ be a reduced complex space of pure dimension $n$.  Since the statement
to be proved is local on $X$, we may assume that we are in the setting described
in \parref{sec:pf-setup}.  In particular, $X$ is a complex subspace of an open
ball $Y ⊆ ℂ^{n+c}$, and $f \colon \wtilde{X} → Y$ denotes the composition of a
projective resolution of singularities $r \colon \wtilde{X} → X$ with the closed
embedding $i_X \colon X ↪ Y$.  Because $Y$ is a Stein manifold, all Kähler
differentials on $X$ are restrictions of holomorphic differential forms from
$Y$; in particular, if $z_1, …, z_{n+c}$ are holomorphic coordinates on $Y$,
then the sheaf $Ω^p_X$ is generated by the global sections
\[
  i_X^*(\dz_{i_1} Λ \dotsb Λ \dz_{i_p}),
\]
where $1 ≤ i_1 < i_2 < \dotsb < i_p ≤ n+c$.  Having set up the notation, we can
now prove the following (slightly more precise) local version of
\theoremref{thm:extension-Kahler}.

\begin{thm}[Local version of Theorem~\ref*{thm:extension-Kahler}]
  In the setting above, a holomorphic $p$-form $α ∈ H⁰(X_{\reg}, Ω^p_X)$ extends
  to a holomorphic $p$-form on $\wtilde{X}$ if, and only if, the holomorphic
  $n$-forms $α Λ \dz_{i_1} Λ \dotsb Λ \dz_{i_{n-p}}$ and
  $d α Λ \dz_{i_{1}} Λ \dotsb Λ \dz_{i_{n-p-1}}$ on $X_{\reg}$ extend to
  holomorphic $n$-forms on $\wtilde{X}$, for every choice of indices
  $1 ≤ i_1 ≤ i_2 ≤ \dotsb ≤ i_{n-p} ≤ n+c$.
\end{thm}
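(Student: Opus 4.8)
The ``only if'' direction is immediate: if $α$ extends to $\wtilde α ∈ H^0(\wtilde X, Ω^p_{\wtilde X})$, then $\wtilde α \wedge f^*(\dz_{i_1} \wedge \dotsb \wedge \dz_{i_{n-p}})$ and $(d\wtilde α) \wedge f^*(\dz_{i_1} \wedge \dotsb \wedge \dz_{i_{n-p-1}})$ are holomorphic $n$-forms on $\wtilde X$; since $d$ commutes with restriction and $\wtilde X ∖ E ≅ X_{\reg}$, they restrict on $X_{\reg}$ to $α \wedge \dz_{i_1} \wedge \dotsb$ and $dα \wedge \dz_{i_1} \wedge \dotsb$, which therefore extend.

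For the ``if'' direction the plan is to reduce everything to the following \emph{Key Lemma}: for each $0 ≤ p ≤ n$, a section $α ∈ j_* Ω^p_{X_{\reg}}(X)$ lies in $r_* Ω^p_{\wtilde X}(X)$ if and only if (i) each $n$-form $α \wedge \dz_{i_1} \wedge \dotsb \wedge \dz_{i_{n-p}}$ extends to $\wtilde X$ (for every choice of indices), and (ii) $dα ∈ r_* Ω^{p+1}_{\wtilde X}(X)$. Granting the Key Lemma, the theorem would follow by applying it twice: first in degree $p+1$ to the closed form $dα$ — whose differential $d(dα) = 0$ extends trivially, so that $dα$ extends as soon as the $n$-forms $dα \wedge \dz_{i_1} \wedge \dotsb \wedge \dz_{i_{n-p-1}}$ do — and then in degree $p$ to $α$ itself. (For $p = n$ the Key Lemma is content-free, which is exactly why the extension problem in top degree governs all lower degrees.)

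To prove the Key Lemma I would translate it into the language of the intersection Hodge module $\cM_X$ by means of \propositionref{prop:7-1-ng}. Since $F_{c-1} \cM_X = 0$ (recorded in \parref{ssec:icpx}), the complex $\gr_{-p}^F \DR(\cM_X)$ sits in degrees $-(n-p), \dotsc, 0$ with bottom term $Ω^{p+c}_Y ⊗_{\sO_Y} \gr_c^F \cM_X$, so \propositionref{prop:7-1-ng} gives
\[
  r_* Ω^p_{\wtilde X} = f_* Ω^p_{\wtilde X} ≅ \sH^{-(n-p)} \gr_{-p}^F \DR(\cM_X) = \ker\bigl( δ_p \colon Ω^{p+c}_Y ⊗ \gr_c^F \cM_X → Ω^{p+c+1}_Y ⊗ \gr_{c+1}^F \cM_X \bigr),
\]
where $δ_p$ is the first, $\sO_Y$-linear, differential of that complex; taking $p = n$ and using \propositionref{prop:spidz} gives $f_* Ω^n_{\wtilde X} ≅ ω_Y ⊗_{\sO_Y} \gr_c^F \cM_X$, which is torsion free over $X$, so $\gr_c^F \cM_X$ is torsion free over $X$. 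Over $Y ∖ X_{\sing}$ the module $\cM_X$ is the $𝒟$-module direct image of $\sO_{X_{\reg}}$, and the Hodge-star isomorphisms on the manifolds $X_{\reg}$ and $Y$ identify $Ω^p_{X_{\reg}}$ with $\ker δ_p$ there; under this identification and the isomorphism $ω_Y ⊗ \gr_c^F \cM_X ≅ f_* Ω^n_{\wtilde X}$, the form $α$ corresponds to the $\sO_Y$-linear homomorphism $\widehat α \colon Ω^{n-p}_Y|_{X_{\reg}} → (f_* Ω^n_{\wtilde X})|_{X_{\reg}}$, $β ↦ α \wedge β$. Because $Y$ is a ball, $Ω^{n-p}_Y$ is free over $\sO_Y$ on the basis $\{\dz_{i_1} \wedge \dotsb \wedge \dz_{i_{n-p}}\}$, so $\widehat α$ extends to a section of $Ω^{p+c}_Y ⊗ \gr_c^F \cM_X$ over all of $X$ precisely when each of its basis values $α \wedge \dz_{i_1} \wedge \dotsb$ extends to a holomorphic $n$-form on $\wtilde X$ — that is condition (i).

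Granting (i), $\widehat α$ is a section over $X$ whose image $δ_p \widehat α$ in $Ω^{p+c+1}_Y ⊗ \gr_{c+1}^F \cM_X$ vanishes over the dense open set $X_{\reg}$, and $α$ extends if and only if $δ_p \widehat α$ vanishes on all of $X$. This is the crux, and where condition (ii) is genuinely needed: one cannot conclude $δ_p \widehat α = 0$ from its vanishing over $X_{\reg}$, because $\gr_{c+1}^F \cM_X$ — equivalently the cokernel $\img δ_p$ of the inclusion $r_* Ω^p_{\wtilde X} \into Ω^{p+c}_Y ⊗ \gr_c^F \cM_X$ — need not be torsion free over $X$, so $δ_p \widehat α$ is an a priori nonzero torsion section supported on $X_{\sing}$. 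To kill it I would run the same pairing construction in degree $p+1$: it realises $dα$ as an $\sO_Y$-linear homomorphism $\widehat{dα} \colon Ω^{n-p-1}_Y → f_* Ω^n_{\wtilde X}$ whose basis values are the $n$-forms $dα \wedge \dz_{i_1} \wedge \dotsb \wedge \dz_{i_{n-p-1}}$, so that $dα ∈ r_* Ω^{p+1}_{\wtilde X}(X)$ is equivalent to $\widehat{dα}$ being a section of $Ω^{p+1+c}_Y ⊗ \gr_c^F \cM_X$ over $X$ which moreover lies in the kernel of the corresponding first differential in degree $p+1$; factoring $δ_p$ through the $\sO_Y$-linear Kodaira–Spencer map $\gr_c^F \cM_X → Ω^1_Y ⊗ \gr_{c+1}^F \cM_X$ and comparing with the analogous factorisation applied to $\widehat{dα}$, one finds that torsion freeness of $\gr_c^F \cM_X$ forces $δ_p \widehat α = 0$ exactly when $dα$ extends. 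The hardest part is this last comparison: matching the honest de Rham differential on differential forms against the $\sO_Y$-linear differentials of $\gr_\bullet^F \DR(\cM_X)$, so as to tame the torsion of $\gr_{c+1}^F \cM_X$ along $X_{\sing}$.
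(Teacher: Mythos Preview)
Your overall strategy matches the paper's: use \propositionref{prop:7-1-ng} to identify $f_*Ω^p_{\wtilde X}$ with the leftmost kernel of $\gr_{-p}^F\DR(\Mmod_X)$, then test extension of the resulting section of $Ω^{p+c}_Y\otimes F_c\Mmod_X$ by wedging with the coordinate $(n-p)$-forms. Your ``Key Lemma'' and the paper's \claimref{claim:wedge-test} are the same statement.

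Where you diverge is at the torsion step, and your proposed workaround is not convincing. You correctly observe that $\gr_{c+1}^F\Mmod_X$ may have torsion supported on $X_{\sing}$, so that vanishing of $\delta_p\widehat\alpha$ on $X_{\reg}$ does not immediately give vanishing on $X$. But your plan to repair this---factoring through a Kodaira--Spencer map and invoking torsion-freeness of $\gr_c^F$---does not address the problem: the obstruction lives in $\gr_{c+1}^F$, not in $\gr_c^F$, and you do not explain how torsion-freeness of the source kills a torsion element in the target. You yourself flag this as ``the hardest part'' without completing it.

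The paper's resolution is much simpler: do not pass to the associated graded at all. The kernel of the $\sO_Y$-linear map $\delta_p$ into $\gr_{c+1}^F$ equals the set of $s\in\Omega^{p+c}_Y\otimes F_c\Mmod_X$ for which the \emph{filtered} de Rham differential $\nabla s$ (a section of $\Omega^{p+c+1}_Y\otimes F_{c+1}\Mmod_X$) already lies in the subsheaf $\Omega^{p+c+1}_Y\otimes F_c\Mmod_X$. Now $F_{c+1}\Mmod_X\subset\Mmod_X$, and $\Mmod_X$ has strict support $X$, hence admits no nonzero sections supported on $X_{\sing}$. So if $\widehat\alpha$ extends to $s$ and $\nabla\widehat\alpha$ extends to $t\in\Omega^{p+c+1}_Y\otimes F_c\Mmod_X$, then $\nabla s-t$ is a section of $\Omega^{p+c+1}_Y\otimes F_{c+1}\Mmod_X$ supported on $X_{\sing}$, hence zero; thus $\nabla s=t\in F_c$ and $\alpha$ extends. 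The ``matching'' of the de Rham $d$ against the graded differential that you find hard is then vacuous: $\nabla$ on the filtered de Rham complex \emph{is} the de Rham differential, and $\nabla\widehat\alpha$ corresponds to $d\alpha$ by construction.
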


\subsection*{The intersection complex}
\approvals{Christian & yes \\Stefan & yes}

As in \parref{ssec:7.pm}, we use the notation $M_X ∈ \HM(Y, n)$ for the
polarisable Hodge module on $Y$ whose underlying perverse sheaf is the
intersection complex of $X$, and we let $(\Mmod_X, F_• \Mmod_X)$ be its
underlying filtered $𝒟_Y$-module.  According to \propositionref{prop:7-1-ng}, we
have
\[
  f_* Ω^p_{\wtilde{X}} ≅ ℋ^{-(n-p)} \gr_{-p}^F \DR(\Mmod_X).
\]
Recall from \parref{ssec:sqdR} that the de Rham complex
\[
  \DR(\Mmod_X) = \Bigl\lbrack \Mmod_X \xrightarrow{∇} Ω¹_Y ⊗ \Mmod_X
  \xrightarrow{∇} \dotsb \xrightarrow{∇} Ω^{n+c}_Y ⊗ \Mmod_X \Bigr\rbrack,
\]
is concentrated in degrees $-(n+c), …, 0$.  Since $\dim Y - \dim X = c$, one has
$F_{c-1} \Mmod_X = 0$, which means that the complex of coherent $𝒪_Y$-modules
\[
  \gr_{-p}^F \DR(\Mmod_X) = \Bigl\lbrack Ω^{p+c}_Y ⊗ F_c \Mmod_X \xrightarrow{∇}
  Ω^{p+c+1}_Y ⊗ \gr_{c+1}^F \Mmod_X \xrightarrow{∇} \dotsb \xrightarrow{∇}
  Ω^{n+c}_Y ⊗ \gr_{n-p+c}^F \Mmod_X \Bigr\rbrack
\]
is concentrated in degrees $-(n-p), …, 0$.  The result in
\propositionref{prop:7-1-ng} therefore becomes
\begin{equation}\label{eq:iso-any}
  f_* Ω^p_{\wtilde{X}} %
  ≅ \ker \Bigl( ∇ \colon Ω^{p+c}_Y ⊗ F_c \Mmod_X → Ω^{p+c+1}_Y ⊗ \gr_{c+1}^F \Mmod_X \Bigr).
\end{equation}
This yields an isomorphism between the space of holomorphic $p$-forms on the
resolution $\wtilde{X}$, and the space of holomorphic $(p+c)$-forms on $Y$ with
coefficients in the coherent $𝒪_Y$-module $F_c \Mmod_X$ whose image under the
differential in the de Rham complex is again a holomorphic $(p+c+1)$-form on $Y$
with coefficients in $F_c \Mmod_X$.  The isomorphism
\begin{equation}\label{eq:iso-top}
f_* Ω_{\wtilde{X}}^n ≅ Ω_Y^{n+c} ⊗ F_c \Mmod_X
\end{equation}
is an important special case of this.

\begin{claim}\label{claim:wedge-test}
  With notation as above, the image of the restriction morphism
  \[
    H⁰ \bigl( Y, Ω^{p+c}_Y ⊗ F_c \Mmod_X \bigr) %
    → H⁰ \bigl( Y ∖ X_{\sing}, Ω^{p+c}_Y ⊗ F_c \Mmod_X \bigr)
  \]
  consists exactly of those $(p+c)$-forms with values in $F_c \Mmod_X$ whose
  wedge product with any element of $H⁰(Y, Ω^{n-p}_Y)$ belongs to the image of
  \[
  H⁰ \bigl( Y, Ω_Y^{n+c} ⊗ F_c \Mmod_X \bigr) %
  → H⁰ \bigl( Y ∖ X_{\sing}, Ω^{n+c}_Y ⊗ F_c \Mmod_X \bigr).
  \]
\end{claim}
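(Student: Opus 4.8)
The plan is to prove the claim directly, by expanding forms in the standard coordinate basis of $Y$. Despite its placement here --- the sheaves $\Omega^{p+c}_Y \otimes F_c \Mmod_X$ and $\Omega^{n+c}_Y \otimes F_c \Mmod_X$ are exactly those appearing in \eqref{eq:iso-any} and \eqref{eq:iso-top} --- the statement itself uses only that $Y$ is an open ball, and hence a Stein manifold: on such a $Y$ the sheaf $\Omega^{p+c}_Y$ is a free $\mathcal{O}_Y$-module with global basis the monomials $\dz_I = \dz_{i_1} \wedge \dotsb \wedge \dz_{i_{p+c}}$, where $I = \{i_1 < \dotsb < i_{p+c}\}$ ranges over the subsets of $\{1, \dotsb, n+c\}$ of cardinality $p+c$, and $\Omega^{n+c}_Y$ is trivialised by the nowhere-vanishing section $\dz_1 \wedge \dotsb \wedge \dz_{n+c}$. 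Via this trivialisation the second restriction morphism in the statement becomes the restriction map $H^0(Y, F_c \Mmod_X) \to H^0(Y \setminus X_{\sing}, F_c \Mmod_X)$, so that the claim amounts to describing, in coordinates, which sections of $\Omega^{p+c}_Y \otimes F_c \Mmod_X$ over $Y \setminus X_{\sing}$ extend across $X_{\sing}$.

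First I would settle the easy inclusion: if a $(p+c)$-form $\sigma$ with values in $F_c \Mmod_X$ on $Y \setminus X_{\sing}$ is the restriction of a global form $\wtilde{\sigma} \in H^0(Y, \Omega^{p+c}_Y \otimes F_c \Mmod_X)$, then for every $\beta \in H^0(Y, \Omega^{n-p}_Y)$ the wedge product $\wtilde{\sigma} \wedge \beta$ is a global section of $\Omega^{n+c}_Y \otimes F_c \Mmod_X$ whose restriction to $Y \setminus X_{\sing}$ equals $\sigma \wedge \beta$; hence $\sigma \wedge \beta$ lies in the image of the second restriction morphism, by functoriality of restriction.

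For the converse inclusion I would expand a given $\sigma$ as $\sigma = \sum_I \dz_I \otimes s_I$ in the basis above, with $s_I \in H^0(Y \setminus X_{\sing}, F_c \Mmod_X)$, and then test against complementary monomials. For a fixed $I$ of cardinality $p+c$ set $I^c := \{1, \dotsb, n+c\} \setminus I$, so that $\dz_{I^c} \in H^0(Y, \Omega^{n-p}_Y)$. Since $\dz_J \wedge \dz_{I^c} = 0$ whenever $J \neq I$ has cardinality $p+c$, while $\dz_I \wedge \dz_{I^c} = \pm \dz_1 \wedge \dotsb \wedge \dz_{n+c}$, one finds $\sigma \wedge \dz_{I^c} = \pm (\dz_1 \wedge \dotsb \wedge \dz_{n+c}) \otimes s_I$. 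By hypothesis this section extends across $X_{\sing}$, and since $\dz_1 \wedge \dotsb \wedge \dz_{n+c}$ is a unit, that is precisely the statement that $s_I$ extends to a section of $F_c \Mmod_X$ over all of $Y$. Running over all index sets $I$ and re-assembling the finite sum $\sigma = \sum_I \dz_I \otimes s_I$ then produces the desired global extension of $\sigma$.

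I do not expect a genuine obstacle here; the proof is essentially bookkeeping. The one point to get right is the elementary computation of the previous paragraph --- that wedging $\sigma$ with the complementary monomial $\dz_{I^c}$ annihilates every term except the one with coefficient $s_I$, which it returns up to the unit $\dz_1 \wedge \dotsb \wedge \dz_{n+c}$. Note that nothing in the argument uses $F_{c-1} \Mmod_X = 0$ or any deeper property of $M_X$: the claim is a pure reformulation, in coordinates, of the extendability of sections of the coherent sheaf $\Omega^{p+c}_Y \otimes F_c \Mmod_X$ across $X_{\sing}$.
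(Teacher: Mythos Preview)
Your proof is correct and follows essentially the same approach as the paper: expand in the coordinate basis $\dz_I$, wedge with complementary monomials $\dz_{I^c}$ to isolate each coefficient $s_I$, and use the trivialisation of $Ω^{n+c}_Y$ to see that extension of the wedge product is the same as extension of $s_I$. Your write-up is in fact slightly more explicit than the paper's, which leaves the ``complementary monomial'' step to the reader.
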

\begin{proof}[Proof of \claimref{claim:wedge-test}]
  The isomorphism in \eqref{eq:iso-top} shows that $F_c \Mmod_X$ is a rank-one
  coherent sheaf supported on $X$, whose restriction to $X_{\reg}$ is isomorphic
  to the line bundle $\det N_{X_{\reg} \mid Y}$.  Using the coordinate functions
  $z_1, …, z_{n+c}$ on the ball $Y$, we may write any given element of
  $H⁰ \bigl( Y ∖ X_{\sing}, Ω^{p+c}_Y ⊗ F_c \Mmod_X \bigr)$ uniquely in the form
  \[
    \sum (\dz_{i_1} Λ \dotsb Λ \dz_{i_{p+c}}) ⊗ λ_{i_1, …, i_{p+c}},
  \]
  with coefficients
  $λ_{i_1, …, i_{p+c}} ∈ H⁰ \bigl( Y ∖ X_{\sing}, F_c \Mmod_X \bigr)$.  Clearly
  such an element belongs to the image of the restriction morphism if and only
  if all the coefficients are in the image of $H⁰(Y, F_c \Mmod_X)$.  The
  assertion now follows by taking wedge products with all possible $(n-p)$-forms
  of the type $\dz_{i_1} Λ \dotsb Λ \dz_{i_{n-p}}$.
  \qedhere\quad(\claimref{claim:wedge-test})
\end{proof}

\subsection*{End of proof}
\approvals{Christian & yes \\Stefan & yes}

Now suppose we are given a holomorphic $p$-form $α ∈ H⁰(X_{\reg}, Ω^p_X)$ on the
set of nonsingular points of $X$.  Using the isomorphism in \eqref{eq:iso-any},
it determines a unique element
$\wtilde{α} ∈ H⁰ \bigl( Y ∖ X_{\sing}, Ω^{p+c}_Y ⊗ F_c \Mmod_X \bigr)$ with the
property that
\[
  ∇ \wtilde{α} ∈ H⁰ \bigl( Y ∖ X_{\sing}, Ω^{p+c+1}_Y ⊗ F_c \Mmod_X
  \bigr),
\]
and one checks easily that $∇ \wtilde{α}$ corresponds to the $(p+1)$-form $d α$
under the isomorphism in \eqref{eq:iso-any}.  Again using \eqref{eq:iso-any}, we
conclude that $α$ extends to a holomorphic $p$-form on $\wtilde{X}$ if and only
$\wtilde{α}$ belongs to the image of
\[
  H⁰ \bigl( Y, Ω^{p+c}_Y ⊗ F_c \Mmod_X \bigr) %
  → H⁰ \bigl( Y ∖ X_{\sing}, Ω^{p+c}_Y ⊗ F_c \Mmod_X \bigr)
\]
and $∇ \wtilde{α}$ belongs to the image of
\[
  H⁰ \bigl( Y, Ω^{p+c+1}_Y ⊗ F_c \Mmod_X \bigr) %
  → H⁰ \bigl( Y ∖ X_{\sing}, Ω^{p+c+1}_Y ⊗ F_c \Mmod_X \bigr).
\]
According to \claimref{claim:wedge-test}, we can test for these two conditions
after taking wedge products with elements in $H⁰(Y, Ω^{n-p}_Y)$ respectively
$H⁰(Y, Ω^{n-p-1}_Y)$.  Because the restriction mapping from the differentials on
$Y$ to the Kähler differentials on $X$ is surjective, we get the desired
conclusion.  This ends the proof of \theoremref{thm:extension-Kahler}.  \qed

\subsection{Proof of Theorem~\ref*{thm:extension-Kahler-log}}
\approvals{Christian & yes \\Stefan & yes}

The proof of \theoremref{thm:extension-Kahler-log} is nearly identical to that
of \theoremref{thm:extension-Kahler}.  The only difference is that one has to
work with $Ω^p_{\wtilde{X}}(\log E)$ instead of $Ω^p_{\wtilde{X}}$; that one has
to use the mixed Hodge module $N_0$ instead of the pure Hodge module $M_X$; and
that one should apply \propositionref{prop:Xwrx} instead of
\propositionref{prop:7-1-ng}.  We leave the details to the care of the reader.
\qed

%
%
\svnid{$Id: S11-proof-ext.tex 269 2020-01-20 11:28:53Z kebekus $}

\section{Extension, proof of Theorems~\ref*{thm:main-new} and \ref*{thm:main-log}}
\subversionInfo
\label{sec:pf-1-x}

\subsection{Proof of Theorem~\ref*{thm:main-new}}
\approvals{Christian & yes \\Stefan & yes}

It clearly suffices to prove \theoremref{thm:main-new} only in the case
$p = k-1$, with $1 ≤ k ≤ n$.  Again, we relax the assumptions a little bit and
allow $X$ to be any reduced complex space of pure dimension $n$.  This makes the
entire problem local on $X$.  After shrinking $X$, if necessary, we may
therefore assume that we are given a holomorphic form
$α ∈ H⁰(X_{\reg}, Ω_X^{k-1})$; our task is to show that $α$ extends
holomorphically to the complex manifold $\wtilde{X}$.  We aim to apply
\theoremref{thm:extension-Kahler}, and so we consider an arbitrary open subset
$U ⊆ X$ and a pair of Kähler differentials $β ∈ H⁰(U, Ω^{n-k+1}_X)$ and
$γ ∈ H⁰(U, Ω^{n-k}_X)$.  We need to check that the holomorphic $n$-forms $α Λ β$
and $d α Λ γ$ on $U_{\reg}$ extend to holomorphic $n$-forms on $r^{-1}(U)$.
This is again a local problem, and after further shrinking $X$, we may therefore
assume without loss of generality that $U = X$ and that we have a closed
embedding $i_X \colon X ↪ Y$, where $Y$ is an open ball in $ℂ^{n+c}$.  Letting
$z_1, \dotsc, z_{n+c}$ be holomorphic coordinates on $Y$, the sheaf of Kähler
differentials $Ω_X^p$ is then generated by the global sections
\[
i_X^{\ast}(\dz_{i_1} Λ \dotsb Λ \dz_{i_p}),
\]
where $1 ≤ i_1 < i_2 < \dotsb < i_p ≤ n+c$.  Since $n-k+1 ≥ 1$, we can thus write
$$
β = \sum_{j=1}^{n+c} i_X^{\ast}(dz_j) Λ β_j
$$
for certain Kähler differentials $β_j ∈ H⁰(X, Ω_X^{n-k})$.  The holomorphic
$k$-forms $α Λ i_X^{\ast}(dz_j)$ and $d α$ extend holomorphically to
$\wtilde{X}$, by assumption, and so \theoremref{thm:extension-Kahler} guarantees
that the holomorphic $n$-forms $α Λ i_X^{\ast}(dz_j) Λ β_j$ and $dα Λ γ$ extend
to $\wtilde{X}$ as well.  It follows that $α Λ β$ and $dα Λ γ$ extend to
$\wtilde{X}$, and this implies that $α$ itself extends to $\wtilde{X}$, by
another application of \theoremref{thm:extension-Kahler}.  \qed

\subsection{Proof of Theorem~\ref*{thm:main-log}}
\approvals{Christian & yes \\Stefan & yes}

The proof of \theoremref{thm:main-log} is nearly identical to the proof of
\theoremref{thm:main-new}.  The only difference is that one uses
\theoremref{thm:extension-Kahler-log} instead of
\theoremref{thm:extension-Kahler}.  \qed

%
%
\svnid{$Id: S12-proof-ext.tex 269 2020-01-20 11:28:53Z kebekus $}

\section{Extension for $(n-1)$-forms, proof of Theorem~\ref*{thm:extension-n-1}}
\subversionInfo
\approvals{Christian & yes \\Stefan & yes}

We maintain the notation and assumptions of \theoremref{thm:extension-n-1}, but
we allow $X$ to be any reduced complex space of pure dimension $n$.  Recall that
$r \colon \wtilde{X} → X$ is a log resolution such that the natural morphism
$r_* Ω_{\wtilde{X}}^n ↪ j_* Ω_{X_{\reg}}^n$ is an isomorphism.  Our task is to
show that the natural morphism
\[
  r_* Ω^{n-1}_{\wtilde{X}}(\log E)(-E) ↪ j_* Ω^{n-1}_{X_{\reg}}
\]
is an isomorphism, or equivalently, that sections of
$f_* Ω^{n-1}_{\wtilde{X}}(\log E)(-E)$ extend uniquely across $X_{\sing}$.  It
is easy to see by duality that all the sheaves
$r_* Ω^p_{\wtilde{X}}(\log E)(-E)$ are independent of the choice of log
resolution.  Shrinking $X$ and replacing $r$ with the canonical strong
resolution of singularities, we may assume that we are in the setting described
in \parref{sec:pf-setup} and \parref{sec:pf-mixed}.  We use the notation
introduced there.

\subsection*{The weight filtration on $N_0$}
\approvals{Christian & yes \\Stefan & yes}

The proof relies the results of \parref{ssec:twfoN0}, where we analysed the
weight filtration on the mixed Hodge module
$N_0 = H⁰ f_* \bigl( j_* ℚ_{\wtilde{X} ∖ E}^H \decal{n} \bigr) ∈ \MHM(Y)$.  To
begin, recall from \propositionref{prop:1-forms-N0} that we have an isomorphism
\[
  \derR f_* Ω¹_{\wtilde{X}}(\log E) \decal {n-1} %
  ≅ \gr_{-1}^F \DR(\Nmod_0).
\]
Using Grothendieck duality for the proper holomorphic mapping
$f \colon \wtilde{X} → Y$, we obtain
\[
  \derR \sHom_{𝒪_Y} \Bigl( \derR f_* Ω^{n-1}_{\wtilde{X}}(\log E)(-E), ω^•_Y \Bigr) %
  ≅ \derR f_* Ω¹_{\wtilde{X}}(\log E) \, \decal{n} %
  ≅ \gr_{-1}^F \DR(\Nmod_0) \, \decal{1}.
\]
According to the extension criterion for complexes in
\propositionref{prop:reflexive-complex-ng}, it is therefore sufficient to prove
the collection of inequalities
\begin{equation} \label{eq:ineq-N0}
  \dim \Supp ℋ^j \gr_{-1}^F \DR(\Nmod_0) ≤ -(j+1) \quad \text{for every $j ≥ -n+2$.}
\end{equation}
On the other hand, recall from \corollaryref{cor:x7} that, for all $j ∈ ℤ$, one
has an exact sequence
\begin{equation}\label{eq:u23}
  ℋ^j \gr_{-1}^F \DR(\Mmod_X) %
  → ℋ^j \gr_{-1}^F \DR(\Nmod_0) %
  → \bigoplus_{i ∈ I} R^{n-1+j} f_* 𝒪_{E_i},
\end{equation}
The inequalities in \eqref{eq:ineq-N0} will follow from the analogous
inequalities for the dimension of the support of the first and third term
in~\eqref{eq:u23}.

\subsection*{The first term in~\eqref{eq:u23}}
\approvals{Christian & yes \\Stefan & yes}

The first term is easily dealt with.  Since we are in the setting of
\theoremref{thm:main-new}, an application of \propositionref{prop:main-ineq}
gives the additional inequalities
\begin{equation}\label{eq:u23-1}
  \dim \Supp ℋ^j \gr_{-1}^F \DR(\Mmod_X) ≤ -(j+1) \quad \text{for every
    $j ≥ -n+2$.}
\end{equation}
This is half of what we need to prove \eqref{eq:ineq-N0}.

\subsection*{The third term in~\eqref{eq:u23}}
\approvals{Christian & yes \\Stefan & yes}

Now we turn to the third term.  Fix an index $i ∈ I$.  Pushing forward the
standard short exact sequence
\[
  0 → 𝒪_{\wtilde{X}}(-E_i) → 𝒪_{\wtilde{X}} → 𝒪_{E_i} → 0
\]
along $f \colon \wtilde{X} → Y$ gives us an exact sequence
\[
  \underbrace{R^{n-1+j} f_* 𝒪_{\wtilde{X}}}_{=: \sf A} %
  → R^{n-1+j} f_* 𝒪_{E_i} %
  → \underbrace{R^{n+j} f_* 𝒪_{\wtilde{X}}(-E_i)}_{=: \sf B}.
\]
But then, the following inequalities will hold for every $j ≥ -n+2$,
\begin{align*}
  \dim \Supp \sf B & ≤ -(j+1) && \text{for dimension reasons} \\
  \dim \Supp \sf A & = \dim \Supp ℋ^{j-1} \gr_0^F \DR(\Mmod_X) && \text{by \propositionref{prop:spidz}} \\
                   & ≤ -(j-1+2) && \text{by \propositionref{prop:main-ineq}}
\end{align*}
In summary, we have $\dim \Supp R^{n-1+j} f_* 𝒪_{E_i} ≤ -(j+1)$ for every
$i ∈ I$ and every $j ≥ -n+2$.  As discussed above, together with
\eqref{eq:u23-1} this suffices to the inequalities in \eqref{eq:ineq-N0}.  The
proof of \theoremref{thm:extension-n-1} is therefore complete.  \qed

\vspace{\baselineskip}

We again record the following corollary of the proof.

\begin{cor}
  In the setting of \theoremref{thm:extension-n-1}, one has
  \[
    \dim \Supp R^j f_* Ω¹_{\wtilde{X}}(\log E) ≤ n-2-j, \quad\text{for every $j ≥ 1$.} \eqno \qed
  \]
\end{cor}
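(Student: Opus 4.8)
The plan is to recognise this corollary as a direct restatement of the dimension bounds obtained while proving \theoremref{thm:extension-n-1}, transported through the identification of \propositionref{prop:1-forms-N0}; there is essentially nothing new to prove, only a re-indexing to carry out.

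Concretely, I would start from the canonical isomorphism
\[
  \derR f_* Ω¹_{\wtilde{X}}(\log E) \decal{n-1} ≅ \gr_{-1}^F \DR(\Nmod_0)
\]
of \propositionref{prop:1-forms-N0} in $\Dbcoh(𝒪_Y)$. Taking the $a$-th cohomology sheaf of both sides and using $ℋ^a(K \decal{n-1}) = ℋ^{a+n-1}(K)$ gives
\[
  R^{a+n-1} f_* Ω¹_{\wtilde{X}}(\log E) ≅ ℋ^a \gr_{-1}^F \DR(\Nmod_0) \quad\text{for every $a ∈ ℤ$.}
\]
Next I would invoke the estimate \eqref{eq:ineq-N0}, which was established in the course of proving \theoremref{thm:extension-n-1} by combining the long exact sequence of \corollaryref{cor:x7} with the bounds \eqref{eq:u23-1} coming from \propositionref{prop:main-ineq} and the bound $\dim \Supp R^{n-1+a} f_* 𝒪_{E_i} ≤ -(a+1)$ proved there; it reads
\[
  \dim \Supp ℋ^a \gr_{-1}^F \DR(\Nmod_0) ≤ -(a+1) \quad\text{for every $a ≥ -n+2$.}
\]

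Finally, I would substitute $a = j-n+1$: the condition $a ≥ -n+2$ becomes $j ≥ 1$, and combining the two displays yields
\[
  \dim \Supp R^j f_* Ω¹_{\wtilde{X}}(\log E) ≤ -\bigl((j-n+1)+1\bigr) = n-2-j \quad\text{for every $j ≥ 1$,}
\]
which is the assertion. Since $f = i_X ◦ r$ with $i_X$ a closed embedding, one has $R^j f_* = (i_X)_* R^j r_*$, so this equivalently bounds $\Supp R^j r_* Ω¹_{\wtilde{X}}(\log E)$. There is no real obstacle here: the only step is the bookkeeping just described, all of the substance having already been carried out in the proof of \theoremref{thm:extension-n-1}.
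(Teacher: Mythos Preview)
Your proposal is correct and is exactly the intended argument: the paper records this corollary with a \qed immediately after proving \theoremref{thm:extension-n-1}, since it is nothing but the inequality \eqref{eq:ineq-N0} reindexed via the isomorphism of \propositionref{prop:1-forms-N0}. Your bookkeeping with $a = j-n+1$ is accurate.
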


%
%
\svnid{$Id: S13-proof-MOP.tex 269 2020-01-20 11:28:53Z kebekus $}

\section{Local vanishing, proof of Theorem~\ref*{thm:MOP}}
\subversionInfo
\approvals{Christian & yes \\Stefan & yes}

We maintain the notation and assumptions of \theoremref{thm:MOP}, but we allow
$X$ to be any reduced complex space of pure dimension $n$.  Recall that
$r \colon \wtilde{X} → X$ is a log resolution of singularities such that
$R^{n-1} r_* 𝒪_{\wtilde{X}} = 0$.  Our goal is to prove that
$R^{n-1} r_* Ω¹_{\wtilde{X}}(\log E) = 0$.  Both the assumptions and the
conclusion of \theoremref{thm:MOP} are independent of the choice of the
resolution: the former because complex manifolds have rational singularities,
the latter by \cite[Lem.~1.1]{Mustata+Olano+Popa:LocalVanishing}.  We may
therefore assume that we are in the setting described in \parref{sec:pf-setup},
and use the notation introduced there.

\subsection*{Reduction to a statement about $M_X$}
\approvals{Christian & yes \\Stefan & yes}

We have already done pretty much all the necessary work during the proof of
\theoremref{thm:extension-n-1}, and so we shall be very brief.  As in the proof
of \theoremref{thm:extension-n-1}, we have an isomorphism
\[
  R^{n-1} f_* Ω¹_{\wtilde{X}}(\log E) ≅ ℋ⁰ \gr_{-1}^F \DR(\Nmod_0).
\]
\corollaryref{cor:x7} provides us with an exact sequence
\[
  ℋ⁰ \gr_{-1}^F \DR(\Mmod_X) %
  → ℋ⁰ \gr_{-1}^F \DR(\Nmod_0) %
  → \bigoplus_{i ∈ I} R^{n-1} f_* 𝒪_{E_i}.
\]
The assumption that $R^{n-1} f_* 𝒪_{\wtilde{X}} = 0$ yields
$R^{n-1} f_* 𝒪_{E_i} = 0$ for every $i ∈ I$, because $𝒪_{E_i}$ is a quotient of
$𝒪_{\wtilde{X}}$.  To prove \theoremref{thm:MOP}, it will therefore suffice to
prove the vanishing of $ℋ⁰ \gr_{-1}^F \DR(\Mmod_X)$, and this is what we will do
next.

\subsection*{End of proof}
\approvals{Christian & yes \\Stefan & yes}

Recall from \eqref{eq:DS} that
$ℋ^{-1} \gr_0^F \DR(\Mmod_X) ≅ R^{n-1} f_* 𝒪_{\wtilde{X}}$, which vanishes by
assumption.  As in the proof of \lemmaref{lem:reflexive-pure}, consider the
short exact sequence of complexes
\[
  0 → F_{-1} \DR(\Mmod_X) → F_0 \DR(\Mmod_X) → \gr^F_0 \DR(\Mmod_X) → 0,
\]
and the associated sequence of cohomology sheaves
\[
  ⋯ → \underbrace{ℋ^{-1} \gr_0^F \DR(\Mmod_X)}_{= 0\text{ by ass.}} %
  → ℋ⁰ F_{-1} \DR(\Mmod_X) %
  → \underbrace{ℋ⁰ F_0 \DR(\Mmod_X)}_{= 0\text{ by
      Cor.~\ref{cor:IC-vanishing}}} %
  → ⋯
\]
to see that $ℋ⁰ F_{-1} \DR(\Mmod_X) = 0$.  Next, we look at the sequence
\[
  0 → F_{-2} \DR(\Mmod_X) → F_{-1} \DR(\Mmod_X) → \gr^F_{-1} \DR(\Mmod_X) → 0
\]
and its cohomology,
\[
  ⋯ → \underbrace{ℋ⁰ F_{-1} \DR(\Mmod_X)}_{= 0} %
  → ℋ⁰ \gr^F_{-1} \DR(\Mmod_X) %
  → \underbrace{ℋ¹ F_{-2} \DR(\Mmod_X)}_{\mathclap{= 0 \text{, since
        concentr.~in non-pos.~degrees}}} %
  → ⋯,
\]
to conclude the proof.  \qed

%
%
\svnid{$Id: S14-proof-pullBack.tex 269 2020-01-20 11:28:53Z kebekus $}

\section{Pull-back, proof of Theorem~\ref*{thm:pullBack}}
\label{ssec:pbform}
\subversionInfo
\approvals{Christian & yes \\Stefan & yes}

As promised in \parref{sec:fpb}, the following result specifies the ``natural
universal properties'' mentioned in \theoremref{thm:pullBack}.  With
\theoremref{thm:main-new} at hand, the proof is almost identical to the proof
given in \cite{MR3084424} for spaces with klt singularities.

\begin{thm}[Functorial pull-back for reflexive forms]\label{thm:PB-thmA}
  Let ${\sf RSing}$ be the category of complex spaces with rational
  singularities, where morphisms are simply the holomorphic mappings.  Then,
  there exists a unique contravariant functor,
  \begin{equation}\label{eq:CVF}
    \begin{matrix}
      \drefl \colon & {\sf RSing} & → & \{ ℂ\text{-vector spaces} \},\\
      & X & ↦ & H⁰ \bigl( X,\, Ω^{[p]}_X \bigr)
    \end{matrix}
  \end{equation}
  that satisfies the following ``compatibility with Kähler differentials''.  If
  $f \colon Z → X$ is any morphism in ${\sf RSing}$ such that the open set
  $Z° := Z_{\reg} ∩ f^{-1}(X_{\reg})$ is not empty, then there exists a
  commutative diagram
  \[
    \begin{tikzcd}[column sep=huge, row sep=large]
      H⁰ \bigl( X,\, Ω^{[p]}_X \bigr) \rar{\drefl f} \dar[swap]{\restr_X} &
      H⁰ \bigl( Z,\, Ω^{[p]}_Z \dar{\restr_Z} \bigr) \\
      H⁰ \bigl( X_{\reg},\, Ω^p_{X_{\reg}} \bigr) \rar[swap]{\dK(f|_{Z°})} &
      H⁰ \bigl( Z°,\, Ω^p_{Z°} \bigr),
    \end{tikzcd}
  \]
  where $\dK (f|_{Z°})$ denotes the usual pull-back of Kähler differentials, and
  where $\dK (f|_{Z°})$ denotes the usual pull-back of Kähler differentials, and
  $\drefl f$ denotes the linear map of complex vector spaces induced by the
  contravariant functor \eqref{eq:CVF}.
\end{thm}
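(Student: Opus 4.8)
The plan is to follow the argument of \cite{MR3084424}, where the analogue of \theoremref{thm:PB-thmA} is established for spaces with klt singularities, replacing every appeal to the klt extension theorem of \cite{GKKP11} by \corollaryref{cor:rational}. First I would reduce the statement to a bookkeeping problem about morphisms: the action of $\drefl$ on objects is prescribed, so it suffices to attach to each morphism $f\colon Z → X$ in ${\sf RSing}$ a $ℂ$-linear map $\drefl f \colon H⁰(X,Ω^{[p]}_X) → H⁰(Z,Ω^{[p]}_Z)$, to check that $\drefl$ respects composition (so that it becomes a contravariant functor), to verify the asserted compatibility with Kähler differentials, and finally to show that these requirements pin $\drefl$ down uniquely. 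The basic input is that resolutions are transparent to global forms: for any (strong) resolution $μ\colon \wtilde Z → Z$ of a space $Z ∈ {\sf RSing}$, \corollaryref{cor:rational} together with normality gives $μ_* Ω^p_{\wtilde Z} ≅ j_* Ω^p_{Z_{\reg}} = Ω^{[p]}_Z$, hence a canonical isomorphism $H⁰(Z,Ω^{[p]}_Z) ≅ H⁰(\wtilde Z,Ω^p_{\wtilde Z})$; since $\wtilde Z$ is smooth, this isomorphism is uniquely characterised by the property that it agrees with the pull-back of Kähler differentials over $μ^{-1}(Z_{\reg}) ≅ Z_{\reg}$.

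To construct $\drefl f$, I would first treat the case $Z° ≠ ∅$, i.e.\ the case where $f$ does not map $Z$ into $X_{\sing}$. Choose projective resolutions $μ\colon \wtilde Z → Z$ and $ρ\colon \wtilde X → X$. Then $(f◦μ)^{-1}(X_{\reg})$ is a dense open subset of the irreducible complex manifold $\wtilde Z$, so the rational map $\wtilde Z \dashrightarrow \wtilde X$ induced by $f$ makes sense, and after a sequence of blow-ups $b\colon \wtilde Z' → \wtilde Z$, necessarily birational, it becomes a morphism $g\colon \wtilde Z' → \wtilde X$ with $ρ◦g = f◦μ◦b$. Given $σ ∈ H⁰(X,Ω^{[p]}_X) ≅ H⁰(\wtilde X,Ω^p_{\wtilde X})$, the holomorphic form $g^*σ$ descends along the birational morphism $b$ of complex manifolds to a holomorphic $p$-form on $\wtilde Z$, hence to an element $\drefl f(σ) ∈ H⁰(Z,Ω^{[p]}_Z)$. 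For an arbitrary $f\colon Z → X$ I would factor it through its graph, $Z \xrightarrow{\,Γ_f\,} Z⨯X \xrightarrow{\operatorname{pr}_2} X$. The projection $\operatorname{pr}_2$ has nonempty associated open set and $Z⨯X ∈ {\sf RSing}$, so $\drefl \operatorname{pr}_2$ is covered by the previous case; this reduces the construction to defining the restriction map $\drefl ι$ for a closed embedding $ι\colon W ↪ X$ with $W ∈ {\sf RSing}$. When $W$ is not contained in $X_{\sing}$ this is again covered by the previous case, while when $ι(W) ⊆ X_{\sing}$ I would argue by induction on $\dim X$ exactly as in \cite{MR3084424}, using that $\dim \overline{ι(W)} ≤ \dim X_{\sing} < \dim X$ and feeding in \corollaryref{cor:rational} wherever the klt extension theorem is used there.

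It then remains to verify the three properties. For \emph{well-definedness} of $\drefl f$, independence of the chosen resolutions and blow-ups, any two choices are dominated by a common third, and on the resulting smooth model the two candidate pull-backs of $σ$ are holomorphic $p$-forms that coincide: on a dense open set whenever the relevant maps meet $X_{\reg}$, and otherwise by the same inductive restriction statement. \emph{Compatibility with composition} follows by choosing the auxiliary resolutions coherently for a composable pair $Z → X → V$ and reducing to the functoriality of pull-back of holomorphic forms between manifolds. For \emph{compatibility with Kähler differentials} when $Z° ≠ ∅$, one evaluates the square on $Z°$, where the resolutions are isomorphisms and $g^*$ restricts to the ordinary Kähler pull-back, and then uses that $H⁰(Z,Ω^{[p]}_Z) ↪ H⁰(Z°,Ω^p_{Z°})$ is injective because $Z$ is normal. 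For \emph{uniqueness}, observe that for any $f$ both $μ\colon \wtilde Z → Z$ and $ρ\colon \wtilde X → X$ have nonempty associated open sets, so $\drefl μ$ and $\drefl ρ$ are forced by compatibility with Kähler differentials, and are isomorphisms by \corollaryref{cor:rational}; expressing $\drefl f$ through $\drefl μ^{-1}$, $\drefl ρ$ and the pull-back of holomorphic forms along a morphism of manifolds, which is itself forced by compatibility, shows that any functor with the required properties must coincide with the one just constructed.

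The principal obstacle, as in \cite{MR3084424}, is the restriction step along a closed embedding $ι\colon W ↪ X$ whose image lies in $X_{\sing}$: there the compatibility with Kähler differentials is vacuous, so both the construction and its well-definedness have to be obtained through an induction on $\dim X$ that must be set up with care, since the intermediate subvarieties of $X_{\sing}$ appearing in the induction need not themselves belong to ${\sf RSing}$; \corollaryref{cor:rational} is exactly the extension statement that makes this induction go through. A secondary, routine difficulty is arranging all the auxiliary resolutions and blow-ups in a single compatible tower so that the composition law can be checked directly.
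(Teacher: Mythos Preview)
Your overall plan—transport the argument of \cite{MR3084424} from klt to rational singularities by swapping the extension theorem—is the right one and matches the paper. But you are missing one of the two replacements the paper makes, and it is precisely the one that drives the step you flag as the ``principal obstacle''.

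In \cite{MR3084424}, the hard case (image contained in $X_{\sing}$) is handled by showing that the pulled-back form $τ_{\wtilde X} ∈ H⁰(\wtilde X, Ω^p_{\wtilde X})$, when restricted to the exceptional locus over the Zariski closure $T$ of the image, actually descends to a form on an open part $T°$ of $T_{\reg}$. This descent step needs the vanishing $H⁰(E_t, Ω^p_{E_t}/\tor) = 0$ for the fibres $E_t$ over points $t ∈ T°$; in the klt setting this is deduced from Hacon--McKernan's solution to Shokurov's rational connectivity conjecture. That argument is \emph{not} available for general rational singularities, and \corollaryref{cor:rational} does not supply it either: the extension theorem tells you forms lift to $\wtilde X$, not that they are constant along exceptional fibres. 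The paper explicitly points out that this second ingredient must be replaced by Namikawa's lemma \cite[Lem.~1.2]{MR1819886}, which gives exactly the required fibre vanishing for rational singularities. Your proposal only makes the first substitution, so your induction on $\dim X$ for closed embeddings $ι \colon W ↪ X$ with $ι(W) ⊆ X_{\sing}$ has no mechanism to produce a form on (an open part of) $W$ from the form on $\wtilde X$.

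A smaller point: your graph factorisation through $Z ⨯ X$ is a perfectly reasonable reduction, but it is not how the paper (or \cite{MR3084424}) proceeds; there one factors directly through a resolution $\wtilde T$ of the Zariski closure $T = \overline{f(Z)}$, which keeps the geometry closer to $X$ and makes the descent-along-fibres argument above more transparent.
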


\begin{rem}[Rational vs.~weakly rational singularities in Theorem~\ref{thm:PB-thmA}]\label{rem:rvwr}
  We do not expect Theorem~\ref{thm:PB-thmA} to hold true if one replaces
  ``rational'' by ``weakly rational'' singularities.  As we will see in Step~2
  of the sketched proof, the result relies on a theorem of Namikawa which is
  specific to rational singularities.
\end{rem}

The universal properties spelled out in \theoremref{thm:PB-thmA} above have a
number of useful consequences that we briefly mention.  Again, statements and
proof are similar to the algebraic, klt case.  To avoid repetition, we merely
mention those consequences and point to the paper \cite{MR3084424} for precise
formulations and proofs.

\begin{fact}[\protect{Additional properties of pull-back, \cite[§5]{MR3084424}}]
  The pull-back functor of \theoremref{thm:PB-thmA} has the following additional
  properties.
  \begin{enumerate}
  \item Compatibility with open immersions, \cite[Prop.~5.6]{MR3084424}.
  \item Compatibility with Kähler differentials for morphisms to smooth targets
    varieties, \cite[Prop.~5.7]{MR3084424}.
  \item Induced pull-back morphisms at the level of sheaves,
    \cite[Cor.~5.10]{MR3084424}.
  \item Compatibility with wedge products and exterior derivatives,
    \cite[Prop.~5.13]{MR3084424}.  \qed
  \end{enumerate}
\end{fact}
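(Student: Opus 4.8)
The four listed properties are all proved for klt singularities in \cite[§5]{MR3084424}, and the plan is to observe that those arguments rely on only two ingredients, both of which are available unchanged here: the universal characterisation of $\drefl$ supplied by \theoremref{thm:PB-thmA} (compatibility with Kähler differentials on a dense open set, together with the functorial composition law), and the fact that reflexive sheaves on normal complex spaces are rigid, i.e.\ determined by their restriction to the regular locus. Since \theoremref{thm:PB-thmA} provides exactly such a functor on the category of complex spaces with rational singularities, the klt proofs transfer verbatim. I would therefore organise the argument by first isolating the common mechanism and then dispatching the four items.

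The common mechanism is the \emph{uniqueness} clause of \theoremref{thm:PB-thmA}, underpinned by rigidity. Because $X$ is normal, the singular locus $X_{\sing}$ has codimension at least two and $\Omega^{[p]}_X = j_* \Omega^p_{X_{\reg}}$, so the restriction map $H^0(X, \Omega^{[p]}_X) \to H^0(X_{\reg}, \Omega^p_{X_{\reg}})$ is injective; the same holds for $Z$. Consequently, to identify two $\mathbb{C}$-linear maps into $H^0(Z, \Omega^{[p]}_Z)$ it suffices to compare them after restriction to the (dense) open set $Z^\circ = Z_{\reg} \cap f^{-1}(X_{\reg})$, where everything is expressed through the classical pull-back $\dK$ of Kähler differentials. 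For each property I would exhibit an explicit candidate map assembled from Kähler data on the smooth loci, verify that it fits into the compatibility square of \theoremref{thm:PB-thmA}, and then invoke uniqueness to identify the candidate with $\drefl f$.

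Concretely: for compatibility with an open immersion $\iota \colon U \hookrightarrow X$, the ordinary restriction $\sigma \mapsto \sigma|_U$ agrees with $\dK(\iota)$ over $U_{\reg}$, hence equals $\drefl \iota$ by uniqueness. For a morphism $f \colon X \to Y$ to a \emph{smooth} target, $\Omega^p_Y$ is already reflexive, and the composite that first pulls a genuine $p$-form back by $f^*$ and then takes its image in $\Omega^{[p]}_X$ is compatible with $\dK$ on $X_{\reg}$, so uniqueness again identifies it with $\drefl f$. The sheaf-level statement is obtained by running the global construction over every open $U \subseteq X$: compatibility with open immersions together with the composition law forces the maps $H^0(U, \Omega^{[p]}_X) \to H^0(f^{-1}(U), \Omega^{[p]}_Z)$ to commute with restriction, so they glue into a morphism of sheaves, equivalently into $\drefl f \colon f^{[*]} \Omega^{[p]}_X \to \Omega^{[p]}_Z$ as in \theoremref{thm:pullBack}. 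Finally, the identities $\drefl f(\alpha \wedge \beta) = \drefl f(\alpha) \wedge \drefl f(\beta)$ and $\drefl f(d\alpha) = d(\drefl f(\alpha))$ hold after restriction to $Z^\circ$ by the classical behaviour of $\dK$ with respect to wedge products and the exterior derivative, and therefore hold globally by rigidity.

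The main obstacle I anticipate is the sheaf-theoretic item (3): passing from a pull-back defined on global sections to a morphism of coherent sheaves requires verifying that the constructed maps are genuinely compatible with restriction to smaller opens, which is precisely what compatibility with open immersions is designed to supply. Some care is needed to check that functoriality of the abstract functor $\drefl$ descends correctly to this local bookkeeping — that is, that $\drefl(g \circ b) = \drefl b \circ \drefl g$ for open immersions $b$ reduces, via item (1), to ordinary restriction. Once this compatibility is established, the remaining three properties are formal consequences of rigidity and the universal characterisation, so the bulk of the work — as in \cite{MR3084424} — is organisational rather than substantive.
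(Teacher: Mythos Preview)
Your proposal is correct and aligns with the paper's approach: the paper itself offers no proof beyond the remark that ``statements and proof are similar to the algebraic, klt case'' and a pointer to \cite[§5]{MR3084424}, so your observation that the arguments there rely only on the universal characterisation of $\drefl$ and rigidity of reflexive sheaves on normal spaces is exactly the intended justification. You have in fact supplied more detail than the paper does.
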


\subsection{Sketch of proof for Theorem~\ref*{thm:PB-thmA}}
\approvals{Christian & yes \\Stefan & yes}

For quasi-projective varieties with klt singularities, the result has already
been shown in \cite[Thm.~5.2]{MR3084424}.  If $X$ is a complex space with
arbitrary rational singularities, the proof given in \cite{MR3084424} applies
with minor modifications once the following obvious adjustments are made.
\begin{itemize}
\item Replace all references to the extension theorem \cite[Thm.~1.4]{GKKP11},
  which works for klt spaces only, by references to \theoremref{thm:main-new},
  which also covers the case of rational singularities.
  
\item Equation~\cite[(6.10.5)]{MR3084424} is shown for klt spaces using
  Hacon-McKernan's solution of Shokurov's rational connectivity conjecture.
  However, is has been shown by Namikawa, \cite[Lem.~1.2]{MR1819886}, that the
  equation holds more generally, for arbitrary complex spaces with rational
  singularities.
    
\item If $X$ in ${\sf RSing}$ is a complex space that does not necessarily carry
  an algebraic structure, then one also needs to modify the proof of
  \cite[Lem.~6.15]{MR3084424}, replacing the reference to
  \cite[Cor.~2.12(ii)]{GKK08} by its obvious generalisation to complex spaces.
\end{itemize}

For the convenience of the reader, we include a sketch of proof that summarises
the main ideas and simplifies \cite{MR3084424} a little.  Let $f \colon Z → X$
be any holomorphic map between normal complex spaces with rational
singularities.  Given any $σ ∈ H⁰ \bigl( X,\, Ω^{[p]}_X \bigr)$, we explain the
construction of an appropriate pull-back form
$τ ∈ H⁰ \bigl( Z,\, Ω^{[p]}_Z \bigr)$ and leave it to the reader to check that
this $τ$ is independent of the choices made, and satisfies all required
properties.

\subsubsection*{Step 1}
\approvals{Christian & yes \\Stefan & yes}

To find a reflexive form $τ ∈ H⁰ \bigl( Z,\, Ω^{[p]}_Z \bigr)$, it is equivalent
to find a big, open subset $Z° ⊆ Z_{\reg}$ and an honest form
$τ° ∈ H⁰ \bigl( Z°,\, Ω^p_{Z°} \bigr)$.  We can therefore assume from the outset
that $Z$ is smooth.  Next, let $T := \overline{f(Z)}$ denote the Zariski closure
of the image, and let $\wtilde{T}$ be a desingularisation.  The morphism $f$
factors as
\[
  \begin{tikzcd}[column sep=huge]
    Z \rar[dashed,swap]{\text{meromorphic}} \arrow[bend left=15]{rrr}{f} & \wtilde{T}
    \rar[swap]{\text{desingularisation}} & T \rar[swap]{\text{inclusion}} & X
  \end{tikzcd}
\]
Now, if we can find an appropriate pull-back form
$τ_{\wtilde{T}} ∈ H⁰ \bigl( \wtilde{T},\, Ω^p_{\wtilde{T}} \bigr)$, we could
use the standard fact \cite[Rem.~1.8(1)]{MR1326624} that the meromorphic map
$Z \dasharrow \wtilde{T}$ is well-defined on a big, Zariski-open subset of $Z$
to find the desired form $τ$ by pulling back.  Replacing $Z$ by $\wtilde{T}$, if
need be, we may therefore assume without loss of generality that $Z$ is smooth
and that the image $T := f(Z)$ is closed in Zariski topology.

\subsubsection*{Step 2}
\approvals{Christian & yes \\Stefan & yes}

Next, choose a desingularisation $π \colon \wtilde{X} → X$ such that
$E := \supp π^{-1}(T)$ is an snc divisor.  We will then find a Zariski open
subset $T° ⊆ T_{\reg}$ with preimage $E° := \supp π^{-1}(T°)$ such that
$E° → T°$ is relatively snc.  The assumption that $X$ has rational singularities
is used in the following claim\footnote{The paper \cite{MR3084424} uses
  Hacon-McKernan's solution of Shokurov's rational connectivity conjecture and
  the more involved technique ``projection to general points of $T$'' to prove
  this result.}.

\begin{claim}\label{cl:1}
  If $t ∈ T°$ is any point with fibre $E_t := \supp π^{-1}(t)$, then
  $$
  H⁰ \left( E_t,\; \factor{Ω^{p}_{E_t}}{\tor} \right)= 0.
  $$
\end{claim}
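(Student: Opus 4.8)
The plan is to reduce the statement to a cohomological vanishing on the irreducible components of the fibre, and then to feed in that $X$ has rational singularities. Since the assertion is local around $t$, I would first shrink $X$ to a small Stein neighbourhood, so that $T$ becomes a contractible Stein space that is smooth at $t$ (recall $t ∈ T° ⊆ T_{\reg}$) and $E_t = \supp π^{-1}(t)$ is the fibre over $t$ of the relatively snc family $E° → T°$. In particular $E_t$ is a reduced complex space with simple normal crossings and, since $π$ is projective, $E_t$ is projective; I write $F_1, …, F_m$ for its irreducible components, which (together with all of their mutual intersections) are smooth projective varieties. Pulling $p$-forms back to the normalisation $ν \colon \bigsqcup_i F_i → E_t$ yields an injection $Ω^p_{E_t}/\tor ↪ \bigoplus_i ν_* Ω^p_{F_i}$ of coherent sheaves, because on a reduced space the kernel of $Ω^p → ν_* Ω^p$ is precisely the torsion subsheaf and the right-hand side is torsion-free. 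Hence it suffices to prove that $H^0(F_i, Ω^p_{F_i}) = 0$ for every $i$ and every $p ≥ 1$.

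Next I would use Hodge symmetry on the smooth projective variety $F_i$, which gives $\dim H^0(F_i, Ω^p_{F_i}) = \dim H^p(F_i, 𝒪_{F_i})$; so the whole claim reduces to the vanishing $H^p(F_i, 𝒪_{F_i}) = 0$ for $p ≥ 1$. This is exactly the cohomological behaviour of resolution fibres over a space with rational singularities: because $X$ has rational singularities, $R^q π_* 𝒪_{\wtilde{X}} = 0$ for all $q ≥ 1$, and Namikawa's analysis in \cite[Lem.~1.2]{MR1819886}, which in essence combines the theorem on formal functions with the equisingular, locally product-like structure of the relatively snc family $E° → T°$ over the smooth point $t$ (so that each $F_i$ appears as a fibre of a smooth projective morphism and Grauert-type base change applies), yields precisely that the components of $E_t$ have vanishing higher structure-sheaf cohomology. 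Granting this, the reduction of the previous paragraph finishes the proof of \claimref{cl:1}.

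I expect the main obstacle to be exactly this last ingredient. In the klt setting one has a much more elementary argument, since every component $F_i$ is then rationally (chain-)connected and therefore automatically satisfies $H^p(F_i, 𝒪_{F_i}) = 0$; this ``rational connectivity plus projection to general points of $T$'' route is the one taken in \cite{MR3084424}. For general rational singularities the fibres need not be rationally connected, so the argument has to become Hodge-theoretic, and that is where \cite[Lem.~1.2]{MR1819886} does the real work. A secondary point that I would still want to verify carefully, rather than wave at, is the step from the vanishing on infinitesimal neighbourhoods of $E_t$ produced by formal functions to the vanishing on the reduced components $F_i$ themselves; here one genuinely uses that $T$ is smooth at $t$ and that the stratification of $E°$ is relatively snc over $T°$.
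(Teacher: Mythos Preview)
Your reduction to the irreducible components $F_i$ is valid as a sufficient condition, but it commits you to proving the \emph{stronger} statement $H^0(F_i, Ω^p_{F_i}) = 0$ for each component separately, and your justification of that step does not go through. Namikawa's Lemma~1.2 does \emph{not} show that the individual components of the exceptional fibre have vanishing higher structure-sheaf cohomology; what it proves is $H^0(E, Ω^p_E/\tor) = 0$ for the \emph{entire} snc exceptional divisor $E$ of a resolution of a rational singularity, via the mixed Hodge structure on $H^{\bullet}(E)$ together with the vanishing $H^p(E, 𝒪_E) = 0$. Your alternative route --- formal functions for $R^p π_* 𝒪_{\wtilde{X}} = 0$ together with the smooth family $E_i° → T°$ and Grauert-type base change --- also does not produce $H^p(F_i, 𝒪_{F_i}) = 0$: formal functions at $t ∈ X$ controls the inverse system over thickenings of the full fibre $π^{-1}(t)$ inside $\wtilde{X}$, and there is no evident mechanism to pass from $R^p π_* 𝒪_{\wtilde{X}} = 0$ to $R^p(π|_{E_i})_* 𝒪_{E_i} = 0$ for a single divisorial component $E_i$. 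You correctly flag this as the main obstacle, but you do not actually overcome it, and it is not clear that the component-wise vanishing is even true for arbitrary rational singularities.

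The paper's argument avoids the component-wise statement entirely, and this is the idea you are missing. When $E_t ⊆ \wtilde{X}$ happens to be a divisor, Namikawa's lemma applies as stated and gives the claim directly. When it is not --- that is, when $\dim T > 0$ so that $E_t$ has codimension $≥ 2$ in $\wtilde{X}$ --- one takes a further blow-up $\wtilde{X}' → \wtilde{X}$ making the preimage of $t$ an snc divisor $E'_t ⊆ \wtilde{X}'$. Since $\wtilde{X}' → X$ is again a resolution of a space with rational singularities, Namikawa now gives $H^0(E'_t, Ω^p_{E'_t}/\tor) = 0$. One then descends to $E_t$ using the functorial pull-back for the sheaves $Ω^p/\tor$ along the surjection $g \colon E'_t → E_t$ (see \cite[§2.2]{MR3084424}): the induced map $H^0(E_t, Ω^p_{E_t}/\tor) → H^0(E'_t, Ω^p_{E'_t}/\tor)$ is injective because the source sheaf is torsion-free and $g$ is dominant on every irreducible component of $E_t$. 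No detour through Hodge symmetry on the $F_i$ is needed.
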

\begin{proof}[Proof of \claimref{cl:1}]
  In case where $E_t ⊂ \wtilde{X}$ is a divisor, this is a result of Namikawa,
  \cite[Lem.~1.2]{MR1819886}.  If $E_t$ is not a divisor, we can blow up and
  apply Namikawa's result upstairs.  The claim then follows from the elementary
  fact that sheaves of ``Kähler differentials modulo torsion'' have good
  pull-back properties, \cite[§2.2]{MR3084424}.  \qedhere\mbox{\quad(\claimref{cl:1})}
\end{proof}

\subsubsection*{Step 3}
\approvals{Christian & yes \\Stefan & yes}

Again using that $X$ has rational singularities, \theoremref{thm:main-new}
yields a form
$τ_{\wtilde{X}} ∈ H⁰ \bigl( \wtilde{X},\, Ω^p_{\wtilde{X}} \bigr)$.  The
following claim asserts that its restriction to $E°$ comes from a form $τ_{T°}$
on $T°$.

\begin{claim}\label{cl:2}
  There exists a unique differential form
  $τ_{T°} ∈ ∈ H⁰ \bigl( T°,\, Ω^p_{T°} \bigr)$ such that
  $τ_{\wtilde{X}}|_{E°}$ and $\dK(π|_{E°})(τ_{T°})$ agree up to torsion.
\end{claim}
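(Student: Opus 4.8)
The plan is to obtain $τ_{T°}$ by descending the form $τ_{\wtilde{X}}|_{E°}$ along the relatively snc morphism $ρ := π|_{E°} \colon E° → T°$ produced in Step~2, using \claimref{cl:1} to control the ``vertical'' part of the form fibre by fibre. First I would apply the Kähler restriction $Ω^p_{\wtilde{X}} → Ω^p_{E°}$ to $τ_{\wtilde{X}}$ and pass to the torsion-free quotient, obtaining a section $σ ∈ H^0(E°, Ω^p_{E°}/\tor)$. Since $E° → T°$ is relatively snc, each irreducible component of $E°$ is submersive over $T°$ and the components meet transversally over $T°$; in particular the Kähler pull-back $ρ^* Ω^p_{T°} → Ω^p_{E°}/\tor$ is injective, as one checks on each component, i.e.\ on the normalisation of $E°$. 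The task then reduces to showing that $σ$ lies in the image of this injection and that the resulting $p$-form on $T°$ is regular.

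For the descent I would use the standard filtration of $Ω^p_{E°}$ (modulo torsion) associated with the fibration $ρ$, whose graded pieces over $T°$ are the torsion-free parts of $ρ^* Ω^{p-j}_{T°} ⊗ Ω^j_{E°/T°}$ for $0 ≤ j ≤ p$, the bottom one ($j=0$) being $ρ^* Ω^p_{T°}$. The image of $σ$ in a higher graded piece ($j ≥ 1$) is the obstruction to descent, and I would make it vanish by restricting to a fibre $E_t$ for $t ∈ T°$: using $Ω^j_{E°/T°}|_{E_t} ≅ Ω^j_{E_t}$ and the triviality of $ρ^* Ω^{p-j}_{T°}|_{E_t}$, this restriction lands in a finite direct sum of copies of $H^0(E_t, Ω^j_{E_t}/\tor)$, which vanishes by \claimref{cl:1} in degree $j$ (its proof, via Namikawa's lemma and a blow-up, goes through verbatim in every positive degree). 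Running this argument over the dense locus where $ρ$ is smooth, and using that $Ω^p_{E°}/\tor$ is torsion-free, I conclude that $σ = ρ^* τ'$ for a rational $p$-form $τ'$ on $T°$ that is regular where $ρ$ is smooth. As $ρ$ is surjective (because $π$ is and $T ⊆ X$), any polar divisor of $τ'$ would pull back to a polar divisor of the regular section $σ$; hence $τ'$ is regular in codimension one, and therefore on all of the smooth space $T°$. Setting $τ_{T°} := τ'$, the identity $\dK(π|_{E°})(τ_{T°}) = ρ^* τ_{T°}$ agrees with $σ = τ_{\wtilde{X}}|_{E°}$ up to torsion by construction. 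Uniqueness is immediate: the difference of two candidates pulls back to a torsion section of the torsion-free sheaf $Ω^p_{E°}/\tor$, hence to $0$, and injectivity of $ρ^* Ω^p_{T°} → Ω^p_{E°}/\tor$ forces the two candidates to coincide.

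The step I expect to be the main obstacle is the fibrewise vanishing: one must set up the filtration on $Ω^p_{E°}$ carefully on the reduced, reducible snc space $E°$, keep precise track of torsion under restriction to fibres, and make sure \claimref{cl:1} is being invoked on spaces and in degrees to which it genuinely applies. This is exactly where the relatively snc structure of $E° → T°$ is indispensable; the remaining steps are bookkeeping, parallel to the klt case treated in \cite{MR3084424}.
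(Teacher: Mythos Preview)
Your approach is essentially the same as the paper's: the paper's proof is a one-liner citing \claimref{cl:1} together with the ``standard relative differential sequences for sheaves of Kähler differentials modulo torsion'' from \cite[Prop.~3.11]{MR3084424}, and what you have written is precisely an unpacking of that machinery---the filtration of $Ω^p_{E°}/\tor$ by vertical degree, fibrewise vanishing via \claimref{cl:1}, and descent along the relatively snc map. Your observation that \claimref{cl:1} must be invoked in every positive degree $j ≤ p$ (and that its proof via Namikawa's lemma works uniformly in $j$) is exactly right and is implicit in the paper's citation.
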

\begin{proof}[Proof of \claimref{cl:2}]
  Almost immediate from \claimref{cl:1} and standard relative differential
  sequences for sheaves of Kähler differentials modulo torsion,
  \cite[Prop.~3.11]{MR3084424}.  \qedhere\mbox{\quad(\claimref{cl:2})}
\end{proof}

Pulling the form $τ_{T°}$ back to $Z° := f^{-1}(T°)$, we find a form $τ°$ on the
open set $Z° := f^{-1}(T°)$, which is a non-empty subset of $Z$ since
$T := f(Z)$ is closed in Zariski topology, but need not be big.  We leave it to
the reader to follow the arguments in \cite[§6 and 7]{MR3084424} to see that
this $τ°$ extends to a form $τ$ on all of $Z$ that it is independent of the
choices made and satisfies all required properties.  \qed

\appendix
\phantomsection\addcontentsline{toc}{part}{Appendix}
%
%
\svnid{$Id: SAA-weaklyRatl.tex 242 2018-12-03 15:15:41Z kebekus $}

\section{Weakly rational singularities}
\label{sec:wratlSings}
\subversionInfo

\subsection{Definition and examples}
\label{ssec:wratlDef}
\approvals{Christian & yes \\Stefan & yes}

Let $X$ be a normal complex space.  The main result of this paper asserts that
if top-forms on $X_{\reg}$ extend to regular top-forms on one desingularisation,
then the same will hold for reflexive $p$-forms, for all values of $p$ and all
desingularisations.  Spaces whose top-forms extend therefore seem to play an
important role.  We refer to them as spaces with \emph{weakly rational}
singularities and briefly discuss their main properties in this appendix.

\begin{defn}[Weakly rational singularities]
  Let $X$ be a normal complex space.  We say that $X$ has \emph{weakly rational
    singularities} if the Grauert-Riemenschneider sheaf $ω_X^{\GR}$ is
  reflexive.  In other words, $X$ has weakly rational singularities if for every
  (equivalently: one) resolution of singularities, $r \colon \wtilde{X} → X$,
  the sheaf $r_* ω_{\wtilde{X}}$ is reflexive.  We say that a variety has weakly
  rational singularities if its underlying complex space does.
\end{defn}

\begin{example}[Rational singularities]
  Recall from \parref{sssec:wratl} that rational singularities are weakly
  rational.  For a concrete example, let $X$ be the affine cone over a Fano
  manifold $Y$ with conormal bundle $L := ω_Y^{-1}$, as discussed in
  \cite[§3.8]{MR3057950}.  By \cite[Prop.~3.13]{MR3057950}, this implies that
  $X$ has rational singularities because $L^m$ is the tensor product of $ω_Y$
  with the ample line bundle $ω_Y^{-1} ⊗ L^m$.  A perhaps more surprising
  example is that any affine cone over an Enriques surface has rational
  singularities.
\end{example}

\begin{example}[Varieties with small resolutions]
  If a normal complex space $X$ admits a small resolution, then $X$ has weakly
  rational singularities.  For a concrete example of a non-rational singularity
  of this form, consider an elliptic curve $E$ and a very ample line bundle
  $L ∈ \Pic(E)$.  Let $\wtilde{X} → E$ be the total space of the vector bundle
  $L^{-1} ⊕ L^{-1}$ and identify $E$ with the zero-section in $\wtilde{X}$.  We
  claim that there exists a normal, affine variety $X$ and a birational morphism
  $r \colon \wtilde{X} → X$ that contracts $E ⊂ \wtilde{X}$ to a normal point
  $x ∈ X$ and is isomorphic elsewhere.  An elementary computation shows that
  $R¹r_* 𝒪_{\wtilde{X}} \ne 0$, so $X$ does not have rational singularities.
  \Publication{The preprint version of this paper spells out more
    details.}\Preprint{
    
    To construct the contraction in detail, one might either invoke \cite[Thm.~3
    on p.~59]{MR673560}, or argue directly as follows.  Write $ℒ$ for the sheaf
    of holomorphic sections in $L$ and consider the nef, locally free sheaf
    $ℰ := ℒ ⊕ ℒ ⊕ 𝒪_E$.  The space $ℙ(ℰ)$ is a natural compactification of
    $\wtilde{X}$, the bundle $𝒪_{ℙ(ℰ)}(1)$ is nef and big on $ℙ(ℰ)$, and its
    restriction to $\wtilde{X}$ is trivial.  We can therefore identify sections
    in $𝒪_{ℙ(ℰ)}(m)$ with functions on $\wtilde{X}$, set
    $$
    X := \Spec \bigoplus_{m ∈ ℕ} H⁰ \bigl( ℙ(ℰ),\, 𝒪_{ℙ(ℰ)}(m) \bigr)
    $$
    and obtain the desired map $r \colon \wtilde{X} → X$.  Denoting the ideal
    sheaf of $E ⊂ V$ by $𝒥_E$ and the $m$th infinitesimal neighbourhood of $E$
    in $\wtilde{X}$ by $E_m$, the cohomology of the standard sequence
    $$
    0 → \underbrace{\factor{𝒥_E^m}{𝒥_E^{m+1}}}_{≅ \Sym^m (ℒ ⊕
      ℒ)} → 𝒪_{E_m} → 𝒪_{E_{m-1}} → 0
    $$
    then shows that the restrictions
    $H¹\bigl( E,\, 𝒪_{E_m} \bigr) → H¹\bigl( E,\, 𝒪_{E_{m-1}}\bigr)$ are
    isomorphic for all $m$, so that
    $$
    (R¹ r_*𝒪_{\wtilde{X}})^{\what\ }_x = \underset{\leftarrow}{\lim}\: H¹\bigl( E,\, 𝒪_{E_m} \bigr) \ne
    0,
    $$
    as required.}
\end{example}

Perhaps somewhat counter-intuitively, there are example of log-canonical
varieties $X$ whose singularities are weakly rational but not rational.  If
$K_X$ is Cartier and $ω_X$ is locally generated by one element, this can of
course not happen, so that the canonical divisors of the examples will never be
Cartier.

\begin{example}[Some log canonical singularities are weakly rational, not rational]
  To start, let $E$ be a smooth projective variety of positive irregularity
  whose canonical divisor is torsion, but not linearly trivial.  Let
  $L ∈ \Pic(E)$ be very ample, and let $X$ be the affine cone over $E$ with
  conormal bundle $L$.  By \cite[§3.8]{MR3057950}, $X$ is log canonical and does
  \emph{not} have rational singularities.  Yet, \propositionref{prop:extCone}
  asserts that the singularities of $X$ are weakly rational.  \Publication{The
    preprint version of this paper discusses a concrete example.}

  \Preprint{For a concrete example, let $S$ be a K3 surface obtained as a double
    cover of the projective plane branched along a non-singular degree six.
    Observe that the Galois involution $σ ∈ \Aut(S)$ acts non-trivially on
    $H⁰(S,\, ω_S) ≅ ℂ$.  Let $C$ be an elliptic curve, and let
    $τ ∈ \Aut(C)$ be a translation by a torsion element of degree two, so that
    $τ$ is again an involution.  Consider the involution $(σ,τ) ∈ \Aut(S ⨯ C)$,
    which is fixed point free, and choose $E$ to be the quotient,
    $E := (S ⨯ C)/ℤ_2$.  The threefold $E$ admits no global top-form by choice
    of $σ$, and has positive irregularity since it admits a morphism to the
    elliptic curve $C/ℤ_2$.}
\end{example}

\begin{rem}[Incompatible definitions in the literature]
  There already exists a notion of ``weakly rational'' in the
  literature.  Andreatta-Silva \cite{MR765916} call a variety $X$ weakly
  rational if $R^{\dim X-1}r_* 𝒪_{\wtilde{X}} = 0$ for one (or equivalently, any)
  resolution of singularities.  They seem to be assuming implicitly that $X$ has
  isolated singularities, although they do not include this assumption into the
  definition.  (For a complex space with isolated singularities, both definitions are
  equivalent.)
\end{rem}

\subsection{Behaviour with respect to standard constructions}
\approvals{Christian & yes \\Stefan & yes}

In view of their importance for our result, we briefly review the main
properties of weakly rational singularities, in particular their behaviour under
standard operations of birational geometry.

\subsubsection{Positive results}
\approvals{Christian & yes \\Stefan & yes}

In the positive direction, we show that weakly rational singularities are
stable under general hyperplane sections, and that a space has weakly rational
singularities if it is covered by a space with weakly rational singularities.

\begin{prop}[Stability under general hyperplane sections]
  Let $X$ be a quasi-projective variety with weakly rational singularities, let
  $L ∈ \Pic(X)$ be a line bundle and $\bL ⊆ |L|$ be a finite-dimensional,
  basepoint free linear system whose general member is connected.  Then, there
  exists a dense, Zariski-open subset $\bL° ⊆ \bL$ such that any hyperplane
  $H ∈ \bL°$ has weakly rational singularities, and satisfies the adjunction
  formula
  \begin{equation}\label{eq:adjunction}
    ω_H^{\GR} ≅ ω_X^{\GR} ⊗ 𝒪_X(H) ⊗ 𝒪_H.
  \end{equation}
\end{prop}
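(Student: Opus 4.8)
The plan is to prove the proposition by a Bertini-type argument combined with base change for Grauert--Riemenschneider sheaves. Fix a log resolution $r \colon \wtilde{X} \to X$; since the Grauert--Riemenschneider sheaf is independent of the resolution, we may use this one throughout. The key point is that for a \emph{general} member $H \in \bL$, the preimage $\wtilde{H} := r^{-1}(H)$ is again smooth (by Bertini, applied on $\wtilde{X}$ to the basepoint-free system $r^* \bL$), the induced map $r|_{\wtilde{H}} \colon \wtilde{H} \to H$ is a resolution of singularities, and $\wtilde{H}$ is a member of the free system $|r^* L|$ on $\wtilde{X}$. One must also arrange, again for general $H$, that $H$ is normal (this is where one uses that $X$ has weakly rational, hence Cohen--Macaulay in codimension... — more precisely, one invokes that $X$ is normal and applies Seidenberg's theorem or the Bertini theorem for normality, using $\dim X \geq 2$), that $H$ avoids the non-normal and bad loci appropriately, and that $\wtilde{H}$ meets the components of the exceptional divisor $E$ transversally so that $r|_{\wtilde{H}}$ is still a log resolution.

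\textbf{The adjunction computation.} Granting such a general $H$, the plan is to compute $\omega_H^{\GR} = (r|_{\wtilde{H}})_* \omega_{\wtilde{H}}$ using the adjunction formula on the smooth manifold $\wtilde{X}$, namely $\omega_{\wtilde{H}} \cong \omega_{\wtilde{X}} \otimes \mathcal{O}_{\wtilde{X}}(\wtilde{H}) \otimes \mathcal{O}_{\wtilde{H}} \cong \omega_{\wtilde{X}} \otimes r^* \mathcal{O}_X(H) \otimes \mathcal{O}_{\wtilde{H}}$. Pushing forward along $r|_{\wtilde{H}}$ and using the projection formula, one gets
\[
  (r|_{\wtilde{H}})_* \omega_{\wtilde{H}} \cong \bigl( (r|_{\wtilde{H}})_* \bigl( \omega_{\wtilde{X}} \otimes \mathcal{O}_{\wtilde{H}} \bigr) \bigr) \otimes \mathcal{O}_X(H) \otimes \mathcal{O}_H.
\]
It then remains to identify $(r|_{\wtilde{H}})_* (\omega_{\wtilde{X}} \otimes \mathcal{O}_{\wtilde{H}})$ with $\omega_X^{\GR} \otimes \mathcal{O}_H = r_* \omega_{\wtilde{X}} \otimes \mathcal{O}_H$. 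This is a base-change statement: restricting the short exact sequence $0 \to \omega_{\wtilde{X}} \otimes r^* \mathcal{O}_X(-H) \to \omega_{\wtilde{X}} \to \omega_{\wtilde{X}} \otimes \mathcal{O}_{\wtilde{H}} \to 0$ and pushing forward, one gets $r_* \omega_{\wtilde{X}} \otimes \mathcal{O}_X(-H) \to r_* \omega_{\wtilde{X}} \to (r|_{\wtilde{H}})_* (\omega_{\wtilde{X}} \otimes \mathcal{O}_{\wtilde{H}}) \to R^1 r_* \omega_{\wtilde{X}} \otimes \mathcal{O}_X(-H)$. Now Grauert--Riemenschneider vanishing gives $R^1 r_* \omega_{\wtilde{X}} = 0$, so the sequence shows $(r|_{\wtilde{H}})_* (\omega_{\wtilde{X}} \otimes \mathcal{O}_{\wtilde{H}}) \cong r_* \omega_{\wtilde{X}} \otimes \mathcal{O}_H$ for $H$ a general (hence Cartier, non-zero-divisor) member, proving \eqref{eq:adjunction}.

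\textbf{Reflexivity of $\omega_H^{\GR}$.} Finally one must show $\omega_H^{\GR}$ is reflexive, i.e.\ that $H$ itself has weakly rational singularities. Since $\omega_X^{\GR}$ is reflexive and $\mathcal{O}_X(H)$ is a line bundle, $\omega_X^{\GR} \otimes \mathcal{O}_X(H)$ is reflexive on $X$; restricting a reflexive sheaf to a general member of a basepoint-free system yields a reflexive sheaf on that member (a standard fact, since a general hyperplane avoids the locus where the sheaf fails to be locally free in the relevant codimension, and one uses that $H$ is normal of dimension $\geq 2$ when $\dim X \geq 3$; the case $\dim X = 2$ is trivial or handled separately). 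Combined with \eqref{eq:adjunction} this gives reflexivity of $\omega_H^{\GR}$.

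\textbf{Main obstacle.} The routine but delicate part is the Bertini bookkeeping: ensuring simultaneously, for $H$ in a single dense open $\bL^\circ$, that $H$ is normal and connected, that $\wtilde{H}$ is smooth, that $r|_{\wtilde{H}}$ remains a \emph{log} resolution (transversality to all strata of $E$), and that the restriction of the reflexive sheaf $\omega_X^{\GR} \otimes \mathcal{O}_X(H)$ stays reflexive. Each of these is a standard genericity statement, but the weakly-rational hypothesis enters only through the single vanishing $R^1 r_* \omega_{\wtilde{X}} = 0$ (which holds unconditionally by Grauert--Riemenschneider) together with the reflexivity of $\omega_X^{\GR}$ itself; so the real content is organizing the general-position arguments cleanly rather than any deep new input.
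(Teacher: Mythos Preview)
Your proposal is correct and follows essentially the same strategy as the paper: fix a resolution, use Bertini/Seidenberg to get $H$ normal and $\wtilde H = r^{-1}(H)$ smooth, establish the adjunction formula \eqref{eq:adjunction}, and then use that the restriction of the reflexive sheaf $\omega_X^{\GR}$ to a general $H$ remains reflexive.

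The one noteworthy technical difference is in how the adjunction isomorphism is established. You prove it \emph{directly}: push forward the short exact sequence for $\wtilde H \subset \wtilde X$ and invoke Grauert--Riemenschneider vanishing $R^1 r_* \omega_{\wtilde X} = 0$ to kill the obstruction, yielding $(r|_{\wtilde H})_*(\omega_{\wtilde X}|_{\wtilde H}) \cong \omega_X^{\GR} \otimes \mathcal O_H$ on the nose. The paper instead writes down only the natural base-change \emph{morphism} $\iota^* r_*(\omega_{\wtilde X}(\wtilde H)) \to (r_H)_* \omega_{\wtilde H}$, and then argues it is an isomorphism because the source is reflexive (by the generic-restriction property for reflexive sheaves), the target is torsion-free, and the map is an isomorphism on the big open set where $X$ and $H$ are both smooth. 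Your route via GR vanishing is slightly more direct and avoids this reflexive/torsion-free comparison step for the adjunction formula itself; the paper's route, on the other hand, does not explicitly invoke GR vanishing at all. Both are standard and short. (Minor remark: your parenthetical ``weakly rational, hence Cohen--Macaulay in codimension\ldots'' is not right---weakly rational does not imply Cohen--Macaulay---but you don't actually use it; normality of a general $H$ follows from normality of $X$ via Seidenberg/Bertini, as you then correctly say.)
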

\begin{proof}
  Choose a resolution of singularities, $r \colon \wtilde{X} → X$.  There exists
  a dense, Zariski-open $\bL° ⊆ L$ such that any hyperplane $H ∈ \bL°$ satisfies
  the following properties.
  \begin{enumerate}
  \item\label{il:A3} The hypersurface $H$ is normal, connected and
    $H_{\sing} = X_{\sing} ∩ H$: Seidenberg's theorem, \cite{Seidenberg50}, and
    the fact that a variety is smooth along a Cartier divisor if the divisor
    itself is smooth.
  \item\label{il:B3} The preimage $\wtilde{H} := r^{-1} H$ is smooth: Bertini's
    theorem.
  \item\label{il:C3} The restriction $ω_X^{\GR}|_H$ is reflexive:
    \cite[Thm.~12.2.1]{EGA4-3}.
  \end{enumerate}
  We claim that the adjunction formula~\eqref{eq:adjunction} holds for $H$,
  which together with \ref{il:C3} implies that $H ∈ \bL°$ has weakly rational
  singularities.  The setup is summarised in the following diagram
  \[
    \begin{tikzcd}[row sep=large, column sep=18 ex]
      \wtilde{H} \dar[swap]{r_H\text{, resolution}} \rar{\wtilde{ι}\text{, closed embedding}} &
      \wtilde{X} \dar{r\text{, resolution}} \\
      H \rar[swap]{ι\text{, closed embedding}} & X
    \end{tikzcd}
  \]
  We obtain an adjunction morphism,
  \begin{equation}\label{eq:ad2}
    \begin{aligned}
      ι^* \bigl( ω^{\GR}_X(H) \bigr) & ≅ ι^* r_* \bigl( ω_{\wtilde{X}}(\wtilde{H}) \bigr) && \text{Projection formula} \\
      & → (r_H)_* \wtilde{ι}^* \bigl( ω_{\wtilde{X}}(\wtilde{H}) \bigr) && \text{Cohomology and base change} \\
      & ≅ (r_H)_* ω_{\wtilde{H}}
      ≅ ω^{\GR}_H && \text{Adjunction and smoothness of } \wtilde{H}
    \end{aligned}
  \end{equation}
  which is clearly an isomorphism over the big open subset of $H$ where $H$ and
  $X$ are both smooth.  More can be said.  Item~\ref{il:C3} implies that the
  left hand side of \eqref{eq:ad2} is reflexive, while the right hand side of
  \eqref{eq:ad2} is a push forward of a torsion free sheaf, hence torsion free.
  As a morphism from a reflexive to a torsion free sheaf that is isomorphic in
  codimension one, the adjunction morphism must then in fact be isomorphic.
\end{proof}

As a second positive result, we show that images of weakly rational
singularities under arbitrary finite morphisms are again weakly rational.  This
can be seen as an analogue of the fact that quotients of rational singularities
under the actions of finite groups are again rational.  \Publication{The proof
  follows along the lines of \cite[proof of Cor.~3.2]{GKK08} and is therefore
  omitted here.  The preprint version of this paper spells out all details.}

\begin{prop}[Stability under finite quotients]
  Let $γ \colon X → Y$ be a proper, surjective morphism between normal complex
  spaces.  Assume that $γ$ is finite, or that it bimeromorphic and small.  If
  $X$ has weakly rational singularities, then so does $Y$.  \Publication{\qed}
\end{prop}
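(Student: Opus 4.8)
The plan is to derive the proposition from the analytic description of weakly rational singularities recalled in \parref{sssec:wratl}: a normal complex space $Z$ of pure dimension $n$ has weakly rational singularities if and only if, for every open subset $W ⊆ Z$ and every holomorphic $n$-form $η ∈ H⁰(W_{\reg}, Ω^n_{W_{\reg}})$, the $(n,n)$-form $η Λ \overline{η}$ is locally integrable on $W$. As being weakly rational is a local property on $Y$, I may assume that $Y$ is connected, hence of pure dimension $n$; then $X$ is also of pure dimension $n$, and $γ$ has a well-defined degree $d ≥ 1$, with $d = 1$ in the small bimeromorphic case. It then suffices to fix an arbitrary open set $V ⊆ Y$ together with a holomorphic $n$-form $ω ∈ H⁰(V_{\reg}, Ω^n_{V_{\reg}})$ and to show that $ω Λ \overline{ω}$ is locally integrable on $V$. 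Write $U := γ^{-1}(V)$.

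The first step is to manufacture a pull-back form $σ ∈ H⁰(U_{\reg}, Ω^n_{U_{\reg}})$. The Kähler pull-back $γ^* ω$ restricts to a genuine holomorphic $n$-form on the open set $U_{\reg} ∩ γ^{-1}(V_{\reg})$ in the finite case, and on $γ^{-1}(V_{\reg}) ∖ \operatorname{Exc}(γ)$ in the small case; in either case the complement of this open set inside $U_{\reg}$ is contained in an analytic subset of codimension $≥ 2$. Here one uses that $V_{\sing}$ has codimension $≥ 2$ in $V$ because $Y$ is normal, that a finite morphism preserves the dimension of an analytic subset, and that $\operatorname{Exc}(γ)$ has codimension $≥ 2$ in $X$ in the small case. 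Riemann's extension theorem for holomorphic forms on a complex manifold then produces the desired extension $σ$. Since $X$ has weakly rational singularities, the criterion applied to the pair $(U, σ)$ shows that $σ Λ \overline{σ}$ is locally integrable on $U$.

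The second step transfers this integrability down to $V$ by a change of variables. The restriction of $γ$ to $U_{\reg} ∩ γ^{-1}(V_{\reg})$ is a proper, generically $d$-to-one holomorphic map onto $V_{\reg}$ which is a local biholomorphism away from a set of measure zero (the ramification divisor in the finite case, $\operatorname{Exc}(γ)$ in the small case), and wherever $σ$ equals $γ^* ω$ one has $σ Λ \overline{σ} = γ^*\bigl( ω Λ \overline{ω} \bigr)$. By the area formula for holomorphic maps — the complex Jacobian factor appearing in the pull-back being exactly the one demanded by the change-of-variables formula, and the loci $U_{\reg} ∩ γ^{-1}(V_{\sing})$ and $V_{\sing}$ being negligible — the form $σ Λ \overline{σ}$ is locally integrable on $γ^{-1}(W)$ if and only if $ω Λ \overline{ω}$ is locally integrable on $W$, for every open $W ⊆ V$. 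Given $y ∈ V$, pick a relatively compact neighbourhood $W$ of $y$; since $γ$ is proper, $γ^{-1}(W)$ is relatively compact, so $σ Λ \overline{σ}$ is integrable over it by the first step, and hence $ω Λ \overline{ω}$ is integrable over $W$. As $y$, $V$ and $ω$ were arbitrary, $Y$ has weakly rational singularities.

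A variant closer to \cite[Cor.~3.2]{GKK08} bypasses the analytic criterion altogether: in the finite case the trace map $\operatorname{Tr}_{X/Y} \colon γ_* ω_X^{\GR} → ω_Y^{\GR}$, together with the natural morphism in the opposite direction, realises $ω_Y^{\GR}$ — after dividing by $d$ — as a direct summand of $γ_* ω_X^{\GR}$, which is reflexive because $γ$ is finite, $Y$ is normal and $ω_X^{\GR}$ is reflexive; in the small bimeromorphic case any resolution of $X$ is already a resolution of $Y$, so $ω_Y^{\GR} ≅ γ_* ω_X^{\GR}$, and the direct image of a reflexive sheaf under a small morphism onto a normal space is again reflexive. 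Either way $ω_Y^{\GR}$ is reflexive. The only genuine obstacle in the analytic argument, I expect, is the bookkeeping in the first two steps: identifying precisely which loci — ramification and exceptional sets, singular loci of $X$ and $Y$ — are negligible for integration and do not obstruct the Hartogs-type extension of $γ^* ω$. Once these are isolated, the extension and the change-of-variables comparison are routine.
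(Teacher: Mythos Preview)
Your proof is correct, and it takes a genuinely different route from the paper's argument.  The paper works directly at the level of resolutions: after choosing a log resolution $r_Y \colon \wtilde{Y} → Y$, it passes to a resolution $\wtilde{X}$ of the normalised fibre product $X ⨯_Y \wtilde{Y}$, obtains a generically finite map $Γ \colon \wtilde{X} → \wtilde{Y}$, and then invokes \cite[Cor.~2.12(ii)]{GKK08} to the effect that a rational top-form on $\wtilde{Y}$ has no pole along $E$ if and only if its pull-back to $\wtilde{X}$ has no pole along $Γ^{-1}(E)$.  The latter is then verified using the hypothesis on $X$.  Your main argument instead exploits the analytic description of weakly rational singularities via local integrability of $ω Λ \overline{ω}$ and transports integrability downstairs by a change-of-variables computation; your variant, realising $ω_Y^{\GR}$ as a summand of $γ_* ω_X^{\GR}$, is closest in spirit to the \cite{GKK08} approach.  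What your analytic route buys is that it avoids constructing auxiliary resolutions and citing an external pole-order lemma; what the paper's route buys is that it stays purely sheaf-theoretic and never needs the $L²$ reformulation.

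One small inaccuracy worth tightening: the restriction of $γ$ to $U_{\reg} ∩ γ^{-1}(V_{\reg})$ need not be proper, nor literally surjective onto $V_{\reg}$ (points of $V_{\reg}$ lying in $γ(X_{\sing})$ may have no smooth preimage).  What you actually use, and what is true, is that after removing the branch locus and $γ(X_{\sing})$ from $V_{\reg}$ --- both of measure zero --- the map becomes a finite étale cover of degree $d$, which is all the area formula requires.  For the trace-map variant you should also note that the existence of a trace $γ_* ω_X^{\GR} → ω_Y^{\GR}$ (as opposed to $γ_* ω_X → ω_Y$) is not completely formal: one constructs it by resolving the normalised fibre product over a resolution of $Y$ and using the finite trace upstairs.
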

\Preprint{%
  \begin{proof}
    The case of a small morphism is rather trivial, so we consider finite
    morphisms only.  We assume without loss of generality $Y$ is Stein.  Let
    $r_Y \colon \wtilde{Y} → Y$ be a log-resolution, with exceptional set
    $E ⊂ \wtilde{Y}$.

    Since $Y$ is Stein, to prove that $Y$ has weakly rational singularities, it
    suffices to show that for any given section $σ ∈ H⁰(Y,\, ω_Y)$, the
    associated rational form $\wtilde{σ}$ on $\wtilde{Y}$, which might a priori
    have poles along $E$, does in fact not have any poles.  To this end, let
    $\wtilde{X}$ be a strong resolution of the normalised fibre product
    $X ⨯_Y \wtilde{Y}$.  The following diagram summarises the
    situation:
	 \[
	 \begin{tikzcd}[row sep=huge, column sep=18 ex,ampersand replacement=\&]
      \wtilde{X} \rar{Γ\text{, generically finite}} \dar[swap]{r_X\text{, desing.}}
			\& \wtilde{Y} \dar{r_Y\text{, desing.}} \\
      X \rar[swap]{γ\text{, finite}} \& Y.
	\end{tikzcd}
	\]

    Set $F := \supp Γ^{-1}E$ and consider the rational differential form
    $\wtilde{τ}$ on $\wtilde{X}$, which might a priori have poles along $F$.
    Since $Γ$ is generically finite, \cite[Cor.~2.12(ii)]{GKK08}
    applies\footnote{The reference \cite{GKK08} works in the algebraic setting.
      However, the result quoted here (and its proof) will also be true for
      complex spaces.} to show that $\wtilde{σ}$ is without poles along $E$ if
    and only if $\wtilde{τ}$ is without poles along $F$, or more precisely:
    without poles along those components of $F$ that dominate components of $E$.
    
    To show that $\wtilde{τ}$ has no pole indeed, observe that finiteness of $γ$
    and reflexivity of $ω_X$ imply that there exists a section
    $τ ∈ H⁰(X,\, ω_X)$ that agrees with $dγ(σ)$ wherever $X$ and $Y$ are smooth.
    The assumption that $X$ has weakly rational singularities will then give a
    regular differential form on $\wtilde{X}$, without poles, that agrees with
    $dr_X(τ)$ wherever $X$ is smooth.  This form clearly equals $\wtilde{τ}$.
	 \end{proof}}

\subsubsection{Negative results}
\approvals{Christian & yes \\Stefan & yes}

In spite of the positive results above, the following examples show that the
class of varieties with weakly rational singularities does not remain invariant
when taking quasi-étale covers or special hyperplane sections, even in the
simplest cases.

\begin{example}[Instability under special hyperplane sections]
  Grauert-Riemenschneider construct a normal, two-dimensional, isolated
  hypersurface singularity where $ω_X^{\GR}$ is not reflexive,
  \cite[p.~280f]{GR70}.  In particular, $X$ does \emph{not} have weakly rational
  singularities and a naive adjunction formula for the Grauert-Riemenschneider
  sheaf as in \eqref{eq:adjunction} does not hold in this case.
\end{example}

\begin{example}[Instability under quasi-étale covers]
  Any cone $Y$ over an Enriques surface has rational singularities and admits a
  quasi-étale cover by a cone $X$ over a K3 surface, which is Cohen-Macaulay,
  but does \emph{not} have rational singularities, \cite[Ex.~3.6]{MR3057950}.
  As we saw in \parref{sssec:wratl}, this implies that $X$ does \emph{not}
  have weakly rational singularities.  We obtain examples of quasi-étale maps
  $X → Y$ between isolated, log-canonical singularities where $Y$ is weakly
  rational while $X$ is not.
\end{example}

%
%
\svnid{$Id: SAB-cones.tex 263 2020-01-14 12:37:18Z kebekus $}

\section{Cones over projective manifolds}
\approvals{Christian & yes \\Stefan & yes}
\label{app:cones}

Cones over projective manifolds are a useful class of examples to illustrate how
the extension problem for $p$-forms is related to the behaviour of the canonical
sheaf.  We follow the notation introduced in Kollár's book \cite{MR3057950} and
work in the following setting.

\begin{setting}[\protect{Cones over projective manifolds, compare \cite[§3.1]{MR3057950}}]\label{setting:cone}
  Fix a number $n ≥ 2$ and a smooth projective variety $Y$ of dimension
  $\dim Y = n-1$, together with an ample line bundle $L ∈ \Pic(Y)$.  Following
  \cite[§3.8]{MR3057950}, we define the \define{affine cone over $Y$ with
    conormal bundle $L$} as the affine algebraic variety
  \[
    X := \Spec \bigoplus_{m ≥ 0} H⁰ \bigl(Y,\, L^m\bigr)
  \]
  The ring is finitely generated since $L$ is ample.  The variety $X$ is normal
  of dimension $n$ and smooth outside of the \emph{vertex} $\vec{v}$, which is
  the point corresponding to the zero ideal.  Unless $Y = ℙ^{n-1}$ and
  $L = 𝒪_{ℙ^{n-1}}(1)$, the vertex will always be an isolated singular point.

  Since $Y$ is smooth, the partial resolution of singularities constructed in
  \cite[§3.8]{MR3057950}, say $r \colon \wtilde{X} → X$, is in fact a log
  resolution of singularities.  The variety $\wtilde{X}$ is isomorphic to the
  total space of the line bundle $L^{-1}$ and the $r$-exceptional set
  $E ⊊ \wtilde{X}$ is identified with the zero-section of that bundle.
\end{setting}

\Preprint{
  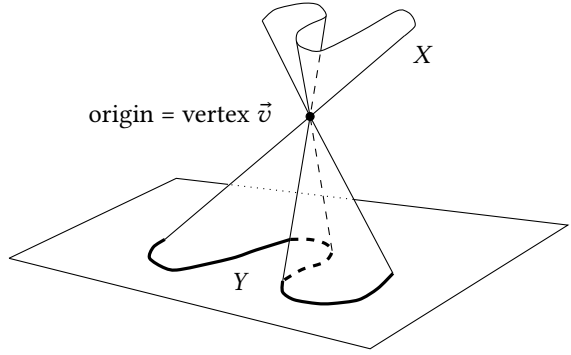
\begin{figure}[h]
    \centering
    \begin{tikzpicture}
      \node[align=center,black] at (4,5.9) {origin = vertex $\vec{v}$};
      \node[align=center,black] at (4.8,3.7) {$Y$};
      \node[align=center,black] at (7.2,6.7) {$X$};
      \draw (1.75,4) -- (6.5,2.8) -- (9.1,4.44) -- (6.28,4.78);
      \draw [dotted] (6.28,4.78) -- (4.63,4.98);
      \draw (4.63,4.98) -- (3.95,5.06) -- (1.75,4);
      \draw [very thick] plot [smooth] coordinates
      {(3.8,4.25) (3.65,4.15) (3.6,4) (3.8,3.87) (4,3.85) (4.68,4) (5,4.11) (5.45,4.25)};
      \draw [very thick,dashed] plot [smooth] coordinates
      {(5.45,4.25) (5.7,4.25) (5.9,4.2) (6,4.1) (5.9,3.95) (5.6,3.85) (5.35,3.7)};
      \draw [very thick] plot [smooth] coordinates
      {(5.35,3.7) (5.35,3.55) (5.5,3.45) (5.8,3.4) (6,3.4) (6.35,3.45) (6.6,3.6) (6.8,3.8)};
      \filldraw (5.71,5.88) circle (1.5pt);
      \draw (3.8,4.25) -- (5.71,5.88) -- (6.95,6.95);
      \draw (6.8,3.8) -- (5.71,5.88) -- (5.08,7.15);
      \draw (5.35,3.7) -- (5.71,5.88) -- (5.53,6.88);
      \draw [dashed] (6,4.1) -- (5.71,5.88) -- (5.85,6.75);
      \draw (5.85,6.75) -- (5.92,7.21);
      \draw plot [smooth] coordinates
      {(5.08,7.15) (5.25,7.3) (5.6,7.38) (5.82,7.36) (5.92,7.21) (5.7,7.1) (5.55,7)
        (5.53,6.88) (5.6,6.8) (5.85,6.75) (6.36,7) (6.85,7.28) (7,7.26) (7.07,7.18) (7.09,7.08)
        (6.99,6.98) (6.95,6.95)};
    \end{tikzpicture}
    
    \caption{Cone over a smooth variety}
    \label{fig:cone}
  \end{figure}

  \begin{note}
    The definition is motivated by the geometric construction of cones, as
    illustrated in Figure~\vref{fig:cone}.  Suppose that $Y$ is a submanifold of
    $ℙ^d$.  The affine cone over $Y$, with vertex the origin in $ℂ^{d+1}$, is
    the union of all the lines in $ℂ^{d+1}$ corresponding to the points of $Y$.
    Its coordinate ring is the graded $ℂ$-algebra
    \[
      ℂ[x_0, x_1, …, x_d] / I_Y,
    \]
    where $I_Y$ is the homogeneous ideal of $Y$.  The affine cone is not always
    normal, but it is easy to see that the coordinate ring of its normalisation
    is $\Spec R$, where $R$ is the section ring of the very ample line bundle
    $𝒪_Y(1)$.  Our definition is slightly more general, because $L$ is only
    assumed to be ample.
  \end{note}
}

\subsection{Extension of differential forms}
\approvals{Christian & yes \\Stefan & yes}

Now we turn out attention to the extension problem for differential forms.  The
following result can be summarised very neatly by saying that if $n$-forms
extend, then $p$-forms extend for every $0 ≤ p ≤ n$.

\begin{prop}[Extension of differential forms on cones]\label{prop:extCone}
  Assume \settingref{setting:cone}.  Then, $p$-forms extend for all $p ≤ n-2$.
  The following equivalences hold in addition.
  \begin{align}
    \label{il:X1} \text{$(n-1)$-forms extend} & ⇔ H⁰\bigl(Y,\, ω_Y ⊗ L^{-m}\bigr) = 0, \forall m ≥ 1.  \\
    \label{il:X2} \text{$n$-forms extend} & ⇔ H⁰\bigl(Y,\, ω_Y ⊗ L^{-m}\bigr) = 0, \forall m ≥ 0.
  \end{align}
\end{prop}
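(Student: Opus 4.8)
The plan is to compute everything by hand, exploiting the $\bC^*$-action on the cone and the two affine morphisms $\pi \colon \wtilde X \to Y$ (the total space of $L^{-1}$) and its restriction $\pi^\circ \colon \wtilde X \setminus E \cong X_{\reg} \to Y$ (a $\bC^*$-bundle). First I would reduce the extension problem to a homogeneous surjectivity statement. The space $X$ is affine with at most an isolated singularity at $\vec{v}$, and both $r_*\Omega^p_{\wtilde X}$ and $j_*\Omega^p_{X_{\reg}} = \bigl(\Omega^p_X\bigr)^{**}$ are coherent $\sO_X$-modules whose natural map is injective with cokernel supported at $\vec{v}$; since $H^1$ of any coherent sheaf on the affine $X$ vanishes, it follows that $p$-forms extend if and only if the restriction map $H^0(\wtilde X,\Omega^p_{\wtilde X}) \to H^0(X_{\reg},\Omega^p_{X_{\reg}})$ is surjective. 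This map is $\bC^*$-equivariant, so it suffices to check surjectivity on each weight space; and because $\pi$, $\pi^\circ$ are affine, one has $H^\bullet(\wtilde X,\Omega^p_{\wtilde X}) = H^\bullet(Y,\pi_*\Omega^p_{\wtilde X})$ and $H^\bullet(X_{\reg},\Omega^p_{X_{\reg}}) = H^\bullet(Y,\pi^\circ_*\Omega^p_{X_{\reg}})$, so the whole question becomes one about coherent sheaves on $Y$.

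Next I would compute the graded pieces of these push-forwards. The $p$-th exterior power of the relative cotangent sequence $0 \to \pi^*\Omega^1_Y \to \Omega^1_{\wtilde X} \to \Omega^1_{\wtilde X/Y} \to 0$, with $\Omega^1_{\wtilde X/Y} \cong \pi^*L$ (up to the shift given by the weight of the fibre coordinate), pushes down via $\pi_*\sO_{\wtilde X} = \bigoplus_{d\ge 0}L^d$ to show that the weight-$d$ piece $\bigl[\pi_*\Omega^p_{\wtilde X}\bigr]_d$ is zero for $d < 0$, equals $\Omega^p_Y$ for $d = 0$, and for $d \ge 1$ is an extension of $\Omega^{p-1}_Y\otimes L^d$ by $\Omega^p_Y\otimes L^d$. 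On $X_{\reg}$ one has instead $\Omega^1_{X_{\reg}/Y} \cong \sO_{X_{\reg}}$, trivialised by $d\log t$, so $\pi^\circ_*\sO_{X_{\reg}} = \bigoplus_{d\in\bZ}L^d$ and $\bigl[\pi^\circ_*\Omega^p_{X_{\reg}}\bigr]_d$ is an extension of $\Omega^{p-1}_Y\otimes L^d$ by $\Omega^p_Y\otimes L^d$ for \emph{every} $d\in\bZ$; moreover the restriction map is an isomorphism on the weights $d\ge 1$. Hence the only weights where surjectivity can fail are $d\le 0$, where the source contributes $H^0(Y,\Omega^p_Y)$ (if $d = 0$) or nothing (if $d < 0$).

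It then remains to analyse those weights. For $d < 0$ the target's sections fit into $0 \to H^0(Y,\Omega^p_Y\otimes L^d) \to H^0\bigl(Y,\bigl[\pi^\circ_*\Omega^p_{X_{\reg}}\bigr]_d\bigr) \to H^0(Y,\Omega^{p-1}_Y\otimes L^d) \to H^1(Y,\Omega^p_Y\otimes L^d)$; since $L^{-d}$ is ample, the Kodaira--Akizuki--Nakano vanishing theorem in its Serre-dual form --- $H^0(Y,\Omega^q_Y\otimes L^d) = 0$ whenever $q < \dim Y = n-1$ --- kills both flanking terms when $p\le n-2$, kills the right-hand one and leaves $H^0(Y,\omega_Y\otimes L^d)$ when $p = n-1$, and (using $\Omega^n_Y = 0$) leaves exactly $H^0(Y,\omega_Y\otimes L^d)$ when $p = n$. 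For $d = 0$ the comparison reduces to whether the connecting homomorphism $H^0(Y,\Omega^{p-1}_Y) \to H^1(Y,\Omega^p_Y)$, which is cup product with $c_1(L)$, is injective; by the Hard Lefschetz theorem ($c_1(L)$ being a Kähler class) this holds as soon as $p-1\le n-2$, i.e.\ $p\le n-1$, while for $p = n$ the target $H^1(Y,\Omega^n_Y)$ vanishes and injectivity merely says $H^0(Y,\omega_Y) = 0$. Assembling the weights then gives the three assertions: $p$-forms extend for all $p\le n-2$; $(n-1)$-forms extend if and only if $H^0(Y,\omega_Y\otimes L^{-m}) = 0$ for all $m\ge 1$; and $n$-forms extend if and only if $H^0(Y,\omega_Y\otimes L^{-m}) = 0$ for all $m\ge 0$.

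The main obstacle is the bookkeeping in the middle step: keeping the $\bC^*$-weights and the shift coming from the fibre coordinate straight, so as to pin down precisely which summands $L^d$ of $\pi_*\sO_{\wtilde X}$ and $\pi^\circ_*\sO_{X_{\reg}}$ enter each $\Omega^p$, and to verify that the restriction map really is an isomorphism on the pole-free weights $d\ge 1$ and that the weight-zero connecting map is cup product with $c_1(L)$. Once that is in place, the two vanishing theorems make the case analysis mechanical.
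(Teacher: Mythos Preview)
Your proposal is correct and follows essentially the same route as the paper's proof: both push the relative cotangent sequence for the affine projection $\wtilde X \to Y$ (and its restriction to $\wtilde X \setminus E$) down to $Y$, decompose into the summands $L^m$, and then invoke Nakano vanishing for the negative-weight terms and Hard Lefschetz for the weight-zero connecting map. Your explicit use of the $\bC^*$-grading is only a cosmetic reorganisation of the paper's direct-sum bookkeeping.
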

\begin{proof}
  Since $\wtilde{X} ∖ E$ is isomorphic to $X_{\reg}$, the question is simply
  under what conditions on $Y$ and $L$ the restriction mapping
  \[
    H⁰ \bigl( \wtilde{X}, Ω_{\wtilde{X}}^p \bigr) → H⁰ \bigl( \wtilde{X} ∖ E,
    Ω_{\wtilde{X}}^p \bigr)
  \]
  is an isomorphism for different values of $p ∈ \{0, 1, …, n\}$.  We use the
  identification of $\wtilde{X}$ with the total space of the line bundle
  $L^{-1}$ and denote the projection by $q \colon \wtilde{X} → Y$.  The sequence
  of differentials and the sequence of $p$th exterior powers now read as
  follows,
  \[
    0 → q^* Ω_Y¹ → Ω_{\wtilde{X}}¹ → q^* L → 0 %
    \quad\text{and}\quad %
    0 → q^* Ω_Y^p → Ω_{\wtilde{X}}^p → q^* \bigl( Ω_Y^{p-1} ⊗ L \bigr) → 0.
  \]
  Now both $q \colon \wtilde{X} → Y$ and its restriction $q|_{\wtilde{X} ∖ E}$
  are affine, and
  \[
    q_* 𝒪_{\wtilde{X}} ≅ \bigoplus_{m ≥ 0} L^m \quad \text{and} \quad
    (q|_{\wtilde{X} ∖ E})_* 𝒪_{\wtilde{X} ∖ E} ≅ \bigoplus_{m ∈ ℤ} L^m.
  \]
  We therefore obtain the following commutative diagram with exact
  rows:
\[
\begin{tikzpicture}[baseline= (a).base]
\node[scale=0.87] (a) at (0,0){
\begin{tikzcd}[column sep=small,row sep=small]
	0 \rar & \displaystyle \bigoplus_{m ≥ 0} H⁰(Y, Ω_Y^p ⊗ L^m)
		\rar \dar[start anchor={[yshift=2.2ex]},end anchor={[yshift=-0.3ex]}]{α}
			& H⁰ \bigl( \wtilde{X}, Ω_{\wtilde{X}}^p \bigr)
		\rar \dar & \displaystyle \bigoplus_{m ≥ 1} H⁰(Y, Ω_Y^{p-1} ⊗ L^m)
		\rar \dar[start anchor={[yshift=2.2ex]},end anchor={[yshift=-0.3ex]}]{β}
			& \displaystyle \bigoplus_{m ≥ 0} H¹(Y, Ω_Y^p ⊗ L^m)
		\dar[start anchor={[yshift=2.2ex]},end anchor={[yshift=-0.3ex]}] \\
	0 \rar & \displaystyle \bigoplus_{m ∈ ℤ} H⁰(Y, Ω_Y^p ⊗ L^m)
		\rar & H⁰ \bigl( \wtilde{X} ∖ Y, Ω_{\wtilde{X}}^p \bigr)
		\rar & \displaystyle \bigoplus_{m ∈ ℤ} H⁰(Y, Ω_Y^{p-1} ⊗ L^m)
		\rar & \displaystyle \bigoplus_{m ∈ ℤ} H¹(Y, Ω_Y^p ⊗ L^m)
\end{tikzcd}
};
\end{tikzpicture}
\]
  Consider the first vertical arrow, labelled $α$, in the commutative diagram
  above.  By the Nakano vanishing theorem, we have
  $H⁰ \bigl(Y,\, Ω_Y^p ⊗ L^m\bigr) = 0$ for $m ≤ -1$ and $p ≤ \dim Y - 1$, and
  so $α$ is an isomorphism if and only if
  \begin{equation}\label{eq:cones-n-1}
    H⁰\bigl(Y,\, ω_Y ⊗ L^m\bigr) = 0, \quad \forall m ≤ -1.
  \end{equation}
  Consider next the third vertical arrow, labelled $β$, in the commutative
  diagram.  For the same reason as before, we have
  $H⁰\bigl(Y,\, Ω_Y^{p-1} ⊗ L^m\bigr) = 0$ for $m ≤ -1$ and $p-1 ≤ \dim Y-1$.
  For $m = 0$, the horizontal arrow
  \[
    H⁰\bigl(Y,\, Ω_Y^{p-1}\bigr) → H¹\bigl(Y,\, Ω_Y^p\bigr)
  \]
  in the second row is cup product with the first Chern class of the ample line
  bundle $L$; by the Hard Lefschetz Theorem, it is injective as long as
  $p-1 ≤ \dim Y-1$.  Consequently, $β$ is an isomorphism if and only if
  \begin{equation}\label{eq:cones-n}
    H⁰\bigl(Y,\, ω_Y ⊗ L^m\bigr) = 0, \quad \forall m ≤ 0.
  \end{equation}
  The conclusion is that $p$-forms extend for $p ≤ n-2$ without any extra
  assumptions on $(Y, L)$; since the cone over $(Y, L)$ has an isolated
  singularity at the vertex, this is consistent with the result by Steenbrink
  and van Straten \cite[Thm.~1.3]{SS85}.  Moreover, $(n-1)$-forms extend iff the
  condition in \eqref{eq:cones-n-1} is satisfied, and $n$-forms extend iff the
  condition in \eqref{eq:cones-n} is satisfied.
\end{proof}

\subsection{Characterisation of standard singularity types}
\approvals{Christian & yes \\Stefan & yes}

The following summary of several well-known results relates different classes of
singularities to properties of the line bundle $L$, in particular to the
vanishing of higher cohomology for $L$ and its powers.

\begin{prop}[Classes of singularities on cones] \label{pro:singCone}
  Assume \settingref{setting:cone}.  Then, the following equivalences hold.
  \begin{align}
    \text{$X$ has rational singularities} & ⇔ Hⁱ\bigl(Y,\, L^m\bigr) = 0, \forall i > 0, \forall m ≥ 0.  \\
    \text{$X$ has Du Bois singularities} & ⇔ Hⁱ\bigl(Y,\, L^m\bigr) = 0, \forall i > 0, \forall m > 0.  \\
    \text{$X$ is Cohen-Macaulay} & ⇔ Hⁱ\bigl(Y,\, L^m\bigr) = 0, \forall \dim Y > i > 0, \forall m ≥ 0.  \\
    \intertext{The singularity types of the minimal model program are described as follows.}
    \text{$X$ is $ℚ$-Gorenstein} & ⇔ ∃ m : K_Y \sim_{ℚ} L^m.  \\
    \text{$X$ is klt} & ⇔ ∃ m < 0 : K_Y \sim_{ℚ} L^m.  \\
    \text{$X$ is log canonical} & ⇔ ∃ m ≤ 0 : K_Y \sim_{ℚ} L^m.
  \end{align}
\end{prop}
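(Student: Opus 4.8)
The plan is to reduce all six equivalences to a single cohomological computation on the resolution $r \colon \wtilde{X} → X$ of \settingref{setting:cone}, and then to feed the outcome into the standard homological characterisations of each class of singularities.  Write $q \colon \wtilde{X} → Y$ for the bundle projection identifying $\wtilde{X}$ with the total space of $L^{-1}$, so that $E ⊆ \wtilde{X}$ is the zero-section, $E ≅ Y$, $N_{E \mid \wtilde{X}} ≅ L^{-1}$, and $\mathcal{O}_{\wtilde{X}}(E)|_E ≅ L^{-1}$.  Since $q$ is affine with $q_* \mathcal{O}_{\wtilde{X}} ≅ \bigoplus_{m ≥ 0} L^m$ and $q_* \mathcal{O}_{\wtilde{X}}(-E) ≅ \bigoplus_{m ≥ 1} L^m$, since $q|_{\wtilde{X} ∖ E}$ is affine with $q_* \mathcal{O}_{\wtilde{X} ∖ E} ≅ \bigoplus_{m ∈ ℤ} L^m$, and since $X$ is affine, the Leray spectral sequence degenerates and gives, for every $i ∈ ℤ$,
\[
  H^i\bigl( \wtilde{X},\, \mathcal{O}_{\wtilde{X}} \bigr) ≅ \bigoplus_{m ≥ 0} H^i(Y, L^m), \qquad H^i\bigl( \wtilde{X},\, \mathcal{O}_{\wtilde{X}}(-E) \bigr) ≅ \bigoplus_{m ≥ 1} H^i(Y, L^m),
\]
and likewise $H^i\bigl( X_{\reg},\, \mathcal{O}_{X_{\reg}} \bigr) ≅ \bigoplus_{m ∈ ℤ} H^i(Y, L^m)$ via $\wtilde{X} ∖ E ≅ X_{\reg}$.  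This is the only computation needed.

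For rational singularities one uses that $X$, which is normal by \settingref{setting:cone}, has rational singularities if and only if $R^i r_* \mathcal{O}_{\wtilde{X}} = 0$ for all $i ≥ 1$; as $X$ is affine, this is equivalent to the vanishing of $H^i(\wtilde{X}, \mathcal{O}_{\wtilde{X}}) ≅ \bigoplus_{m ≥ 0} H^i(Y, L^m)$ for $i ≥ 1$.  For Du Bois singularities one invokes the criterion of Kovács–Schwede–Smith \cite{KSS10}: because $r$ is a log resolution that is an isomorphism over $X ∖ \{\vec{v}\}$ with $E = r^{-1}(\vec{v})$, the space $X$ is Du Bois if and only if the natural morphism from the ideal sheaf $\mathcal{I}_{\vec{v}}$ to $\derR r_* \mathcal{O}_{\wtilde{X}}(-E)$ is an isomorphism in the derived category; since $r_* \mathcal{O}_{\wtilde{X}}(-E) ≅ \bigoplus_{m ≥ 1} H^0(Y, L^m)$ is already equal to $\mathcal{I}_{\vec{v}}$, this reduces to the vanishing of $H^i(\wtilde{X}, \mathcal{O}_{\wtilde{X}}(-E)) ≅ \bigoplus_{m ≥ 1} H^i(Y, L^m)$ for $i ≥ 1$.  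For the Cohen–Macaulay case one passes to local cohomology at the vertex: $X$ is Cohen–Macaulay if and only if $H^i_{\vec{v}}(\mathcal{O}_X) = 0$ for $i < n$.  The local-cohomology exact sequence, combined with $H^{> 0}(X, \mathcal{O}_X) = 0$, identifies $H^i_{\vec{v}}(\mathcal{O}_X) ≅ H^{i-1}(X_{\reg}, \mathcal{O}_{X_{\reg}}) ≅ \bigoplus_{m ∈ ℤ} H^{i-1}(Y, L^m)$ for $2 ≤ i ≤ n-1$, while $H^i_{\vec{v}}(\mathcal{O}_X) = 0$ for $i ≤ 1$ since $X$ is normal of dimension $≥ 2$; finally, for $1 ≤ i-1 ≤ \dim Y - 1$ and $m ≤ -1$ the summand $H^{i-1}(Y, L^m) ≅ H^{\dim Y - (i-1)}(Y,\, ω_Y ⊗ L^{-m})^{\vee}$ vanishes by Kodaira vanishing, so only the terms with $m ≥ 0$ survive.

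For the three minimal-model-program classes, the key is the discrepancy of $E$.  The relative dualizing sheaf of the line-bundle total space is $ω_{\wtilde{X}/Y} ≅ q^* L$, so $ω_{\wtilde{X}} ≅ q^*(ω_Y ⊗ L)$; together with $\mathcal{O}_{\wtilde{X}}(E)|_E ≅ L^{-1}$, adjunction along $E ≅ Y$ gives $K_{\wtilde{X}}|_E \sim_{ℚ} K_Y + L$.  If $K_X$ is $ℚ$-Cartier, write $K_{\wtilde{X}} = r^* K_X + a E$; restricting to $E$ and using $(r^* K_X)|_E \sim_{ℚ} 0$ yields $K_Y + L \sim_{ℚ} -a L$, hence $K_Y \sim_{ℚ} m L$ with $m = -(a+1)$.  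Conversely, if $K_Y \sim_{ℚ} m L$, then $N K_{\wtilde{X}} \sim -N(m+1) E$ for a suitable integer $N$, and applying $r_*$ — which annihilates the $r$-exceptional divisor and preserves linear equivalence — shows $N K_X \sim 0$, so $X$ is $ℚ$-Gorenstein; the same computation gives the discrepancy $a(E; X) = -(m+1)$.  Since $r$ is a log resolution whose only exceptional divisor $E$ is smooth, $X$ is klt (respectively log canonical) if and only if $a(E; X) > -1$ (respectively $≥ -1$), i.e. if and only if $m < 0$ (respectively $m ≤ 0$).

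I expect the only real friction to lie in the Cohen–Macaulay step — pinning down exactly which groups $H^{i-1}(Y, L^m)$ contribute to $H^i_{\vec{v}}(\mathcal{O}_X)$, and carrying out the Kodaira-vanishing reduction from ``all $m ∈ ℤ$'' to ``$m ≥ 0$'' — and in fixing the sign in the discrepancy identity $a(E; X) = -(m+1)$; the rest amounts to quoting the relevant homological criteria (from \cite{MR3057950} and \cite{KSS10}, together with standard references on local cohomology and Kodaira vanishing) and substituting the cohomology formulas above.
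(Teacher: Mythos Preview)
Your argument is correct. The paper does not actually give a proof of this proposition: it simply refers the reader to \cite[Lem.~3.1, Cor.~3.11, Prop.~3.13 and Prop.~3.14]{MR3057950} for the first, third, fourth, fifth and sixth equivalences, and to \cite[Thm.~2.5]{MR3247804} for the Du~Bois case. What you have written is, in effect, a direct verification of those references' statements in the special setting of \settingref{setting:cone}, using exactly the standard tools one would expect (Leray for the affine bundle $q$, Schwede's ideal-sheaf criterion for Du~Bois, local cohomology plus Kodaira vanishing for Cohen--Macaulay, and the discrepancy computation via $ω_{\wtilde{X}} ≅ q^{*}(ω_Y ⊗ L)$ and $𝒪_{\wtilde{X}}(E) ≅ q^{*} L^{-1}$).

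Two small bibliographic points. The Du~Bois criterion you invoke --- that $X$ is Du~Bois if and only if $\mathcal{I}_{\Sigma} → \derR r_{*} 𝒪_{\wtilde{X}}(-E)$ is a quasi-isomorphism for a log resolution with reduced exceptional locus $E$ over $\Sigma ⊇ X_{\sing}$ --- is due to Schwede and Kovács rather than to \cite{KSS10} (which concerns $r_{*} ω_{\wtilde{X}}(E)$); the paper itself defers this equivalence to Graf--Kovács \cite{MR3247804}. And in the Cohen--Macaulay step your bookkeeping is fine: the range $2 ≤ i ≤ n-1$ translates to $1 ≤ i-1 ≤ \dim Y - 1$, so Serre duality sends $H^{i-1}(Y, L^{m})$ with $m ≤ -1$ to $H^{\dim Y - (i-1)}(Y, ω_Y ⊗ L^{-m})$ with strictly positive index, and Kodaira vanishing applies exactly as you say.
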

\begin{proof}
  See \cite[Lem.~3.1, Cor.~3.11, Prop.~3.13 and Prop.~3.14]{MR3057950} and
  \cite[Thm~2.5]{MR3247804}.
\end{proof}

Comparing \propositionref{prop:extCone} and \ref{pro:singCone}, we find that the
extension property of $p$-forms is a comparatively mild condition on $(Y, L)$.
It is not as cohomological in nature as ``rational'', ``Du Bois'' and
``Cohen-Macaulay'', and certainly not nearly as restrictive as being klt, which
only happens in the special case where $Y$ is a Fano manifold and $L$ is
$ℚ$-linearly equivalent to a positive multiple of $-K_Y$.  This suggests looking
for an extension theorem that goes beyond the class of singularities used in the
Minimal Model Program.



\end{document}